\newcommand{\trash}[1]{}
\newtheorem{theorem}{Theorem}
\newtheorem{assumption}{Assumption}
\newtheorem{lemma}{Lemma}
\newtheorem{corollary}{Corollary}
\newtheorem{definition}{Definition}
\newtheorem{proposition}{Proposition}
\newtheorem{remark}{Remark}
\newtheorem{notation}{Notation}
\newenvironment{todo}{
\begin{center}
 \color{red}
 }{\end{center}}
\newcommand{\opnorm}[1]{{\vert\kern-0.25ex\vert\kern-0.25ex\vert #1 
  \vert\kern-0.25ex\vert\kern-0.25ex\vert}}
\newcommand{\1}{\mathbbm{1}}
\DeclareMathOperator{\Var}{Var}
\newcommand{\R}{\mathbb{R}}
\newcommand{\E}{\mathbb{E}}
\newcommand{\dt}{\mathrm{d}t}
\newcommand{\dWt}{\mathrm{d}W_t}
\newcommand{\bs}[1]{\boldsymbol{#1}}
\def\barroman#1{\sbox0{#1}\dimen0=\dimexpr\wd0+1pt\relax
  \makebox[\dimen0]{\rlap{\vrule width\dimen0 height 0.06ex depth 0.06ex}%
    \rlap{\vrule width\dimen0 height\dimexpr\ht0+0.03ex\relax 
            depth\dimexpr-\ht0+0.09ex\relax}%
    \kern.5pt#1\kern.5pt}}
\newcommand{\Cbias}{C_{\mathrm{B}}}
\newcommand{\Ctbias}{\widetilde{C}_{\mathrm{B}}}
\newcommand{\Cvar}{C_{\mathrm{V}}}
\newcommand{\settheoremtag}[1]{
	\let\oldtheassumption\theassumption
	\renewcommand{\theassumption}{#1}
	\g@addto@macro\endassumption{
        \global\let\theassumption\oldtheassumption
    }
}
\newcommand{\leqnomode}{\tagsleft@true}
\newcommand{\reqnomode}{\tagsleft@false}
\begin{document}

\date{\today}

\title{Estimation of the invariant measure of a multidimensional diffusion from noisy observations}

\author[R. Maillet; G. Szymanski]
{Rapha\"el Maillet and Gr\'egoire Szymanski \\
~\\
\textit{C\MakeLowercase{eremade}, {U}\MakeLowercase{niversité} P\MakeLowercase{aris}-D\MakeLowercase{auphine and} CMAP, É\MakeLowercase{cole} P\MakeLowercase{olytechnique}}}

\address{Raphaël Maillet, Universit\'e Paris-Dauphine \& PSL, CNRS, CEREMADE, 75016 Paris, France}
\email{maillet@ceremade.dauphine.fr}
\address{Gr\'egoire Szymanski, Ecole Polytechnique, CMAP, route de Saclay, 91128 Palaiseau and Universit\'e Paris-Dauphine \& PSL, CNRS, CEREMADE, 75016 Paris, France}
\email{gregoire.szymanski@polytechnique.edu}

\begin{abstract} 
We introduce a new approach for estimating the invariant density of a multidimensional diffusion when dealing with high-frequency observations blurred by independent noises. We consider the intermediate regime, where observations occur at discrete time instances $k\Delta_n$ for $k=0,\dots,n$, under the conditions $\Delta_n\to 0$ and $n\Delta_n\to\infty$. Our methodology involves the construction of a kernel density estimator that uses a pre-averaging technique to proficiently remove noise from the data while preserving the analytical characteristics of the underlying signal and its asymptotic properties. The rate of convergence of our estimator depends on both the anisotropic regularity of the density and the intensity of the noise. We establish conditions on the intensity of the noise that ensure the recovery of convergence rates similar to those achievable without any noise. Furthermore, we prove a Bernstein concentration inequality for our estimator, from which we derive an adaptive procedure for the kernel bandwidth selection.
\end{abstract}

\maketitle

\noindent \textbf{Mathematics Subject Classification (2010)}: 
{62G05, 62G07, 62G20, 60J60.}

\noindent \textbf{Keywords}: 
{Non-parametric estimation,  high-frequency asymptotics, noisy observation, pre-averaging, ergodic diffusion, anisotropic density estimation, concentration inequality.}

\tableofcontents

\section{Introduction}
\label{sec:introduction}

\subsection{Setting}

In this paper, we revisit the classical problem of estimating the distribution of a signal blurred by additive noise. We focus on a $d$-dimensional deconvolution model 
\begin{equation}
\label{eq:model}
Y_{i,n} = X_{i,n} + \tau_n \xi_{i,n}, \quad i = 0, \dots, n,
\end{equation}
where the variables $(X_{i,n})_i$ are stationary with common distribution $\mu$ and $(\xi_{i,n})_{i,n}$ is an i.i.d sequence independent of $X$. The sequence $(\tau_n)$ represents the noise intensity and is assumed known. When the $(X_{i,n})_i$ are independent, the noise is Gaussian and the target density is $\alpha$-Hölder regular, the best achievable pointwise quadratic rate of estimation is $\log(n)^{-\alpha/2} \tau_n^{\alpha}$, see \cite{fan1991optimal, comte2013anisotropic} for more details. When $\tau_n$ is of order $1$, the resulting convergence rate becomes logarithmic, and the estimators cannot be used in practice. In this work, we show the situation improves when the process $(X_{i,n})_i$ exhibits Markovian properties. Such structure can significantly enhance the efficiency of denoising the observations. \\

Throughout this paper, we consider a $d$-dimensional stochastic process $X$, defined on an appropriate probability space $(\Omega, \mathcal{A}, \mathbb{P})$, governed by the following dynamics
\begin{equation} 
\label{eq:sde}
\mathrm{d}X_t = b(X_t) \, \mathrm{d}t + \sigma(X_t) \, \mathrm{d}W_t.
\end{equation}
Here, $W$ represents a $d$-dimensional Brownian motion, and the functions $b : \mathbb{R}^d \to \mathbb{R}^d$ and $\sigma: \mathbb{R}^d \to \mathbb{R}^{d}\otimes \R^d$ are the transport and diffusion coefficients of $X$, respectively. We specifically consider the case where $\sigma$ is the identity matrix. Under mild conditions on $b$ and $\sigma$, the process $X$ is ergodic and admits a unique stationary distribution, denoted by $\overline{\mu}^b$. Furthermore, $\overline{\mu}^b$ is absolutely continuous with respect to the Lebesgue measure on $\mathbb{R}^d$ and has a density also denoted by $\overline{\mu}^b$.\\

We aim at estimating $\overline{\mu}^b$ from the $(Y_i)_{i=1,\dots,n}$ defined  in Equation \eqref{eq:model}, with
\begin{equation}
\label{eq:observation}
X_{i,n} = X_{i\Delta_n}, \quad i = 0, \dots, n,
\end{equation}
for some positive sequence $(\Delta_n)_n$. If $\Delta_n$ is of order $1$ or larger, the variables $(X_{i,n})$ are exponentially $\beta$-mixing and we would retrieve results similar to the i.i.d case. When $\Delta_n \to 0$ and $n\Delta_n \to \infty$, we can use the Markovian and ergodic structure of \eqref{eq:sde} to build an estimator of $\overline{\mu}^b$ with polynomial rate even when $\tau_n = 1$.

\subsection{Motivation}

Historically, diffusion models were first introduced as approximations of discrete Markov chains. Over time, their relevance has significantly expanded across various domains of applied mathematics \cite{papanicolaou1995diffusion, bergstrom1993history, hull2003options, bailey1957mathematical, ricciardi2013diffusion}. 
Statistical inference for diffusion processes has attracted extensive study due to the model's significance in many applied fields. This research first included both parametric and non-parametric estimation of the parameters $b$ and $\sigma$. The estimation of the invariant measure $\overline{\mu}^b$ is also studied \cite{nguyen1979ergodic, delecroix1980estimation, bosq2012nonparametric}, pushed by its association with various numerical methods, such as Markov Chain Monte Carlo \cite{lamberton2002recursive, panloup2008recursive}. Note also that the non-parametric estimation of $\overline{\mu}^b$ and the estimation of the transport $b$ are intertwined \cite{schmisser2013penalized}.\\

Initiated by \cite{rosenblatt1956remarks, parzen1962estimation}, non-parametric density estimation has been extensively studied in the context of i.i.d observations \cite{tsybakov2009nonparametric}. A natural estimator of the common density $\overline{\mu}$ of i.i.d. observations $(X_{i,n})_i$ is given by 
\begin{equation*}
    \widehat{\mu}_n(x) = \frac{1}{n} \sum_{k=0}^{n-1} \bs{K}_{\bs{h}}(x-X_k)
\end{equation*}
where we write $\bs{K}_{\bs{h}}(y) = \prod_i h_i^{-1} K(y_ih_i^{-1})$ for any $\bs{h} = (h_1, \dots, h_d) \in (0, +\infty)^d$ and $y = (y_1, \dots, y_d) \in \R^d$,  and where $K : \mathbb{R} \to \mathbb{R}$ is a bounded kernel. Oracle inequalities show that this estimator can achieve the convergence rate $n^{-\alpha/(2\alpha + d)}$ where $\alpha$ is the Hölder regularity of $\overline{\mu}$. This rate is optimal, see e.g. \cite{tsybakov2009nonparametric} for details. The tuning parameter $\bs{h}$ needs to be chosen in an adaptive way to achieve this rate, see \cite{goldenshluger2008universal, goldenshluger2009structural, goldenshluger2011bandwidth}. The extension to ergodic processes is not trivial and earlier statistical studies focus on two different asymptotic regimes: continuous observations of the process $(X_t)_{t\leq T}$ with $T \to \infty$, or low frequency observations given by $(X_{i\Delta})_{i}$ with $ 0 \leq i \leq n$. Since $X$ is exponentially $\beta$-mixing under mild assumptions on $b$, low frequency observations naturally relate to the i.i.d. case and present similar convergence rate. On the other hand, continuous observations are can be studied using fine probabilistic tools for ergodic continuous-time Markovian dynamics \cite{bakry2008rate, cattiaux2008deviation, lezaud2001chernoff, paulin2015concentration} and precise estimates on the transition densities \cite{chen1997estimation, qian2003comparison, qian2004representation}. The seminal works of \cite{dalalyan2006asymptotic, dalalyan2007asymptotic} first established convergence rates of kernel estimators under continuous observations of $X$ over a time interval $[0,T]$. In these works, the invariant density is still estimated through a kernel based estimator as in the i.i.d. case. \cite{dalalyan2006asymptotic, dalalyan2007asymptotic} also obtain the convergence rate
\begin{equation*}
    T^{-\alpha/(2\alpha+d-2)}
\end{equation*}
when $\overline{\mu}^b$ is $\alpha$-Hölder and $d \geq 3$. When considering an anisotropic framework, the convergence rate depends on the effective average smoothness \cite{strauch2018adaptive} and minimax rates are derived in \cite{amorino2021minimax}.\\



Two significant limitations still need to be addressed in this setup: First, what happens if we access discrete high-frequency observations of $X$, i.e. we observe $(X_{i\Delta_n})_{i}$ for $0 \leq i \leq n$ when $\Delta_n \to 0$ and $n\Delta_n \to \infty$. Secondly, can these methods be applied if these observations are polluted with a noise.\\

The question of discrete observations naturally arise with the advent of high-frequency data collection, and in particular in finance \cite{ait2014high}. Hence, understanding the estimation rates under different asymptotic conditions becomes crucial. This includes specifying conditions on $\Delta_n$ that determine when continuous or low-frequency observations are more analytically pertinent. While this topic has only recently received substantial attention, pioneering works like \cite{gobet2004nonparametric} and \cite{chorowski2016spectral}, which explore random sampling times, stand out. However, this question has recently been addressed in full generalities in \cite{amorino2023estimation} where the breakeven point between the high frequency observations similar to continuous observations and low frequency observations similar to the i.i.d. case is identified and studied. The continuous rate is known to be optimal \cite{amorino2021minimax} in all dimensions, but the question of the optimality of the low frequency rate is still an open question, with the exception of the one dimensional case, solved in \cite{amorino2022malliavin}.\\


The second limitation in previous studies is to effectively incorporate noise into these analyses. Noise is an often unavoidable element in practical applications, such as financial modeling, biological experiments, and sensor data analysis due to measurement errors or external noise. Underestimating noise can lead to biased and unreliable estimations. This question was extensively studied in the context of noisy observations of i.i.d random variables. When noise is assumed to have an additive structure, existing literature uses Fourier inversion and kernel-based methods to recover the distribution of interest \cite{devroye1989consistent, liu1989consistent, stefanski1990deconvolving}. Later works \cite{carroll1988optimal,fan1991optimal, fan1993adaptively} establish minimax optimality of this procedure under the assumptions that the noise distribution is known and has a non-vanishing Fourier transform. It is important to note that in this setup, the convergence rates are slow. For instance, when the distribution of interest is $\alpha$-Hölder regular and the noises are independent standard Gaussian variables, the optimal rate of convergence for any estimator is only $\log(n)^{-\alpha/2}$. The rates of convergences in such framework has been studied under different set of assumptions depending on both regularities of the noise and the density of interest in \cite{comte2013anisotropic}. However, literature on noisy ergodic setups remains sparse, with a few notable contributions including \cite{schmisser2011nondrift, schmisser2012nonvolatility}. Unlike the i.i.d. case, the structure of \eqref{eq:sde} and \eqref{eq:observation} allows better extraction of the information hidden by the noise, leading to improved estimation rates. Indeed, using the Hölder regularity of $X$, we can denoise high-frequency data while preserving the analytical properties of the signal. We achieve this through a pre-averaging technique, as in \cite{jacod2009microstructure}. This approach has been widely studies and could be generalized to other noises, as demonstrated in \cite{jacod2010limit, hautsch2013preaveraging, jacod2015microstructure} for high-frequency statistics and \cite{schmisser2011nondrift, schmisser2012nonvolatility} within an ergodic framework. In this paper, we show that when the noise is relatively small (see Section \ref{sec:upper_bounds} for details) we can estimate the invariant density rates to the non-noisy case studied in \cite{amorino2023estimation}.  When the noise $\tau_n$ is relatively large, we can still estimate the invariant density with convergence rate given by
\begin{equation*}
    (\tau_n^2\Delta_n)^{\frac{\alpha}{2+2\alpha}}.
\end{equation*}
This rate is polynomial in $\Delta_n$ even for $\tau_n$ constant, which is a great improvement compared to the i.i.d. case. Note also that this rate does not depend on the dimension, see Section \ref{sec:upper_bounds} for insights.




\subsection{Organization of the paper}

In Section \ref{sec:model}, we present the statistical model, including the underlying assumptions and the probabilistic framework of our analysis. We provide clear definitions for our observation scheme and outline the requirements for the noise source. In Section \ref{sec:estimation_procedure}, we systematically construct the kernel estimator for the invariant density $\widehat\mu_{n, \bs{h}, p}$. This estimator differs from the standard kernel estimator due to the need to preprocess the data, which arises from the presence of noise. To reduce the impact of noise, we use a preaveraging strategy. Specifically, for some integer $p \geq 1$, we divide our number of observations by $p$ by averaging them over a range of size $p$. Section \ref{sec:upper} presents the upper bounds for the quadratic risk $\E[|\widehat\mu_{n,\bs{h},p}(x) - \widebar \mu^b(x)|^2]$, providing insights into the optimal hyperparameters selection. Although the obtained rates may not achieve minimax optimality, they align with the expected non-parametric estimation rates in similar contexts. Furthermore, it enables us to precisely understand the noise intensity threshold above which averaging is required to achieve a better convergence rate, given by $\Delta_n^{1/\alpha}$, whenever $\widebar\mu^b$ is assumed to be $\alpha$-Hölder. Section \ref{sec:bernstein} includes the derivation of a Bernstein-type concentration inequality allowing adaptive selection of the bandwidth $\bs{h}$. We then proceed to a numerical analysis section (Section \ref{sec:numerical}), which encompasses experiments and discussions on the estimation procedure. This is followed by a section consolidating essential probabilistic results instrumental in variance control of our estimator, see Section \ref{sec:UpperBound}. Lastly, all proofs are gathered in the Appendix.



\section{Statistical and Probabilistic framework}
\label{sec:model}

\subsection{Notation} For all $x\in \R^d$, we denote by $|x|^2 := x\cdot x$ the Euclidian norm. Throughout the paper, we denote by $\mathcal{P}(\R^d)$ the space of probability measures on $\R^d$. Moreover, for any differentiable function, $f : \R^d \to \R$,  $\nabla f$ stands for the gradient  of $f$. Similarly, if $f$ admits $k$ derivative in the $i$-th component, we denote this derivative by $\partial_i^k f$. Finally, for any $g : \R^d \to \R^d$, $\nabla\cdot g$ denotes the divergence of $g$. 
~\\

For any $\sigma$-finite measure $\nu$ on  $\R^d$, for any $q \geq 1$, we say that $f : \R^d \to \R$ belongs to $L^q(\nu)$ whenever
\begin{equation*}
	\| f \|_{L^q(\nu)}^q := \int_{\R^d} |f(x)|^q \, \nu(\mathrm{d}x) < +\infty. 
\end{equation*}
When $\nu$ is the Lesbegue measure on $\R^d$, we only denote $L^q$ and the associated norm $\| \cdot \|_q$. Finally, when $q = +\infty$ and $\nu$ is the Lebesgue measure, we define 
\begin{equation*}
	\| f \|_{\infty} := \sup_{x\in \R^d} |f(x)|. 
\end{equation*}

\subsection{The statistical diffusion model}

We consider a stochastic process $X$ defined on a rich enough probability space $(\Omega, \mathcal{A}, \mathbb{P})$. For a given Lipschitz continuous function $b$ and probability measure $\mu$ on $\mathbb{R}^d$, we can define a probability $\mathbb{P}^{b}_{\mu}$ under which $X$ is solution of the following stochastic differential equation
\begin{equation}
\label{eq:diff:X}
\mathrm{d}X_t = b(X_t) \, \dt + \dWt, \;\; \mathcal{L}(X_0) = \mu,
\end{equation}
where $W$ is a $d$-dimensional $\mathbb{P}^{b}_{\mu}$-Brownian motion. We denote by $\mathbb{F} = \left( \mathcal{F}_t\right)_{t \geq 0}$ the filtration generated by $W$. We write $\mathbb{E}^b_{\mu}$ for the expectation with respect to $\mathbb{P}^b_{\mu}$. We also use $\mathbb{P}_x^b$  and $\mathbb{E}^b_x$ instead of $\mathbb{P}^b_{\delta_x}$ and $\mathbb{E}^b_{\delta_x}$. In the paper, for any probability measure $\nu \in \mathcal{P}(\R^d)$, for any $q > 1$, we say that $f : \R^d \to \R$ belongs to $L^q(\nu)$ whenever
\begin{equation*}
	\| f \|_{L^q(\nu)}^q := \int_{\R^d} |f(x)|^q \, \nu(\mathrm{d}x) < +\infty. 
\end{equation*}
When $\nu$ is the Lesbegue measure on $\R^d$, we only denote $L^q$ and the associated norm $\| \cdot \|_q$. In the following, we always assume the following conditions on $b$.

\begin{assumption}
\label{assumption:boundedness}
The function $b$ is differentiable and satisfies 
\begin{equation*}
   |b(0)| \leq b_0 
   \;\; \text{ and } \;\;
   \forall i \in \{1, \dots, d\}, \quad |\!|\partial_i b|\!|_\infty \leq b_1/d,
\end{equation*}
where  $b_0$ and $b_1$ are positive constants. 
This ensures in particular that $\|\nabla \cdot b\|_{\infty} \leq b_1$. 
\end{assumption}

\begin{assumption}
\label{assumption:potential}
There exists a function $V: \mathbb{R}^d \to \mathbb{R}$ differentiable and bounded below by a constant $V_0$ such that $b = - \nabla V$ and $V(0) = 0$.
\end{assumption}

\begin{assumption}
\label{assumption:ergodicity}
There exists $\widetilde C_b> 0$ and $\widetilde\rho_b > 0$ such that $\langle x , b(x)\rangle \leq -\widetilde C_b|x|$, $\forall x \: : |x| \geq \widetilde\rho_b$. 
\end{assumption}

By definition, $X$ solution of \eqref{eq:sde} is a Markov process. Assumption \ref{assumption:boundedness} shows that the drift force exhibits at most linear growth, which implies that there exists a constant $C_0>0$ such that the transition density $p^b_t(x, y)$ for all $t>0$ and for all $(x, y) \in \mathbb{R}^d \times \mathbb{R}^d$ with $|x-y|^2<t$ is satisfies
\begin{equation}
\label{eq:toy:boundtransition}
p^b_t(x, y) \leq C_0(t^{-d / 2}+t^{3 d / 2}),
\end{equation}
see for example \cite{chen1997estimation}. This inequality is crucial to obtain robust upper bounds when estimating the invariant measure of a stochastic process, as demonstrated, for instance in \cite{dalalyan2007asymptotic,strauch2018adaptive}. Assumption \ref{assumption:potential} is also usual when studying the asymptotic behaviour of a diffusion process. However, it is not a necessary condition \cite{bhattacharya1978criteria, dalalyan2006asymptotic}, since the existence and uniqueness of the invariant measure primarily depend on the asymptotic behaviour of the drift force $b$ and its growth at infinity. In the present paper, Assumption \ref{assumption:potential} is needed at a later stage in order to derive upper bounds similar to \eqref{eq:toy:boundtransition} for the transition density of the pre-averaged process, see Section \ref{lemma:proof:bound:pxy}.\\


Under Assumptions \ref{assumption:potential} and \ref{assumption:ergodicity}, the process $X$ defined by \eqref{eq:diff:X} admits a unique stationary distribution denoted by $\overline{\mu}^b$, absolutely continuous with respect to the Lebesgue measure on $\mathbb{R}^d$ whose density, also denoted by $\overline{\mu}^b$, is explicitly given for all $x\in\R^{2d}$ by
\begin{equation*}
\overline{\mu}^b(x) = Z_V^{-1} \exp ( - 2 V(x) ) \,\, \text{ where } \,\, Z_V = \int_{\mathbb{R}^d} \exp ( - 2 V(y) ) \, \mathrm{d}y. 
\end{equation*}

Assumption \ref{assumption:ergodicity} is common to guarantee an exponentially fast convergence towards equilibrium. In the case where the drift force is derived from a potential $V$, a direct link exists between the classical Poincare inequality and Equation \ref{eq:ergodicity}, and Assumption \ref{assumption:ergodicity} implies that $X$ satisfies a Poincaré inequality as shown in \cite{bakry2014analysis}. More precisely, there exists $C_{PI} > 0$, depending only on $\widetilde C_b$, such that for any $f \in L^2(\overline{\mu}^b)$ satisfying $\mathbb{E}_{\overline{\mu}^b}^b [ f(X_0) ] = 0$ and any $t\geq 0$, we have
\begin{equation}\label{eq:ergodicity}
	\mathrm{Var}_{\overline{\mu}^b}^b[P^b_t f(X_0)] \leq e^{-2tC_{PI}^{-1}} \mathbb{E}_{\overline{\mu}^b}^b [ f(X_0)^2 ],
\end{equation}
where $(P^b_t)_{t\geq 0}$ is the semi-group associated to the process $X$, acting on any measurable functions $f : \R^d \to \R$ by
\begin{equation}
\label{eq:defSG}
	\forall \, t \geq 0, \, \forall \, x \in \mathbb{R}^d, \, P^b_t f(x) = \E^b_x[f(X_t)]. 
\end{equation}

It is well known that the accuracy of the estimation of the invariant density $\overline{\mu}^b$ strongly depends on the regularity of $\overline{\mu}^b$ \cite{dalalyan2007asymptotic, strauch2018adaptive, amorino2021minimax, amorino2023estimation}. Therefore, we assume that $\overline{\mu}^b$ belongs to the anisotropic Hölder class $\mathcal{H}_d(\bs{\alpha}, \bs{\mathcal{L})}$, which is defined below.

\begin{definition}
\label{def:anisotropic}
Let $\bs{\alpha}=\left(\alpha_1, \ldots, \alpha_d\right) \in (0, \infty)^d$ and $\bs{\mathcal{L}}=\left(\mathcal{L}_1, \ldots, \mathcal{L}_d\right) \in (0, \infty)^d$. A function $g: \mathbb{R}^d \rightarrow \mathbb{R}$ is said to belong to the anisotropic Hölder class $\mathcal{H}_d(\bs{\alpha}, \bs{\mathcal{L}})$ of functions if, for all $1 \leq i \leq d$, $g$ is $\lfloor \alpha_i \rfloor$-differentiable in the $i$-th variable and the partial derivatives satisfy for $0 \leq k \leq \lfloor \alpha_i \rfloor$
\begin{align*}
\|\partial_i^k g\|_{\infty} \leq \mathcal{L}_i \,\, \text{ and }  \,\,
 \forall t \in \mathbb{R},\,
\big\|\partial_i^{\lfloor \alpha_i \rfloor} g(\cdot + te_i) - \partial_i^{\lfloor \alpha_i \rfloor}g(\cdot)\big\|_{\infty} \leq  \mathcal{L}_i |t|^{\alpha_i - \lfloor \alpha_i \rfloor}
\end{align*}
where $(e_1, \ldots, e_d)$ is the canonical basis of $\mathbb{R}^d$.
\end{definition}

\begin{definition}
\label{def:personal_class}
Let $\bs{\alpha}=\left(\alpha_1, \ldots, \alpha_d\right) \in (0, \infty)^d$, $\bs{\mathcal{L}}=\left(\mathcal{L}_1, \ldots, \mathcal{L}_d\right) \in (0, \infty)^d$ and $b_0, b_1 > 0$. We write $\bs{\mathfrak{b}} =  (\bs{\alpha}, \bs{\mathcal{L}}, b_0, b_1, V_0)$ and $\Sigma(\bs{\mathfrak{b}})$ the set of functions $b: \mathbb{R}^d \to \mathbb{R}^d$ satisfying Assumption \ref{assumption:boundedness} with constants $b_0$ and $b_1$, Assumption \ref{assumption:potential} with constant $V_0$, Assumption \ref{assumption:ergodicity} and such that $\overline{\mu}^b$ belongs to the anisotropic Hölder class $\mathcal{H}_d(\bs{\alpha}, \bs{\mathcal{L})}$. 
\end{definition}

In this paper, we always assume that $b$ belongs to $\Sigma(\bs{\mathfrak{b}})$, for some $\bs{\mathfrak{b}} = (\bs{\alpha}, \bs{\mathcal{L}}, b_0, b_1, V_0)$, with $\alpha_1 \leq \dots \leq \alpha_d$. Moreover, $X$ is assumed to be observed at discrete times $i\Delta_n$, for $0\leq i \leq n$, and blurred by a noise composed of independent standard Gaussian variables. We observe
\begin{equation*}
    Y_{i,n} = X_{i\Delta_n} + \tau_n \xi_{i,n},
\end{equation*}
where $\xi_{i,n}$ are i.i.d Gaussian variables, $\Delta_n \to 0$ and $n\Delta_n \to \infty$. 

\section{Estimation procedure}
\label{sec:estimation_procedure}

In this section, we plan to use a kernel type estimation procedure. We consider a bounded kernel with compact support $K : \mathbb{R} \to \mathbb{R}$, that is a measurable function such that $\int_{\mathbb{R}} K(y) \, \mathrm{d}y = 1$. We assume that it is of order $l \geq 1$, \textit{i.e.} that for all $1 \leq k \leq l-1$, we have
\begin{equation}
\label{eq:def:K}
\int_{\mathbb{R}} {y}^k K(y) \, \mathrm{d}y = 0. 
\end{equation}
Now for any $\bs{h} = (h_1, \dots, h_d) \in (0, +\infty)^d$ and $y = (y_1, \dots, y_d) \in \R^d$, we define 
\begin{equation}\label{eq:definition:kernel}
\bs{K}_{\bs{h}}(y) = \prod_{i=1}^d h_i^{-1} K(y_ih_i^{-1}).
\end{equation}
In the case where the density of interest $\overline{\mu}^b$ belongs to the anisotropic Hölder class $\mathcal{H}_d(\bs{\alpha}, \bs{\mathcal{L}})$, we always assume that $l \geq \lceil \alpha_d \rceil$. \\

First, note that the natural kernel based estimator of the invariant density in absence of noise does not work in our context. Indeed, since $(Y_{k,n})$ is a stationary process, the estimator
\begin{equation}
\label{eq:naiveestimator}
\widehat{\mu}_{KB, n,\bs{h},p}(x) = \frac{1}{n } \sum_{k=0}^{n - 1} \bs{K}_{\bs{h}}( x-Y_{k,n})
\end{equation}
estimates the density of $Y_{1,n}$, which is given by
\begin{equation*}
    \overline{\mu}^b * \varphi_{\tau_n} (x) = \int_{\mathbb{R}^d} 
    \overline{\mu}^b (x-y) \varphi_{\tau_n} (y)
    \,
    \mathrm{d}y
    = \int_{\mathbb{R}^d} 
    \overline{\mu}^b (x-\tau_n y) \varphi_{1} (\tau_n y)
    \,
    \mathrm{d}y
\end{equation*}
where $\varphi_{\tau}(x) = (2\pi \tau^2)^{-d/2} \exp(-|x|^2/(2\tau^2))$. Therefore, the presence of noise in \eqref{eq:naiveestimator} creates an additional bias satisfying
\begin{equation}
\label{eq:biaisnoprev}
|
    \overline{\mu}^b * \varphi_{\tau_n} (x)
    - \mu(x)
|
\leq 
    \int_{\mathbb{R}^d} 
    |\overline{\mu}^b (x-\tau_n y) - \overline{\mu}^b(x)| \varphi_{1} (\tau_n y)
    \,
    \mathrm{d}y
\end{equation}
which is of order $\tau_n$. This bias is dominating the usual bias of kernel based estimators when $\tau_n$ is large. In particular, when $\tau_n$ is of order $1$, the estimator is not consistent. Therefore, we need to reduce the influence of the noise in the observation.
\\

To that extent, we implement in this paper a preaveraging approach and we compute local average the observations over batches of size $p$. This yields to the following modified observations $
	( p^{-1}\sum_{\ell=0}^{p-1}Y_{kp + \ell,n})_{1\leq k \leq \lfloor n/p\rfloor}
$. Intuitively, this approach should work because of the regularity of $X$. Indeed, we have
\begin{equation}
\label{eq:decomposition_X}
    \frac1p\sum_{\ell=0}^{p-1}Y_{kp + \ell,n} = 
    X_{kp\Delta_n} 
    + 
    \frac1p\sum_{\ell=0}^{p-1}\big (X_{(kp + \ell)\Delta_n} - X_{kp\Delta_n} \big)
    + 
    \frac{\tau_n}{p}\sum_{\ell=0}^{p-1}\xi_{kp + \ell,n}.
\end{equation}
This expression can be seen as $X_{kp\Delta_n} + \text{\textit{noise}}$ and the effective sampling frequency becomes $(p\Delta_n)^{-1}$. Moreover, the noise now comes from two sources: in addition to the noise $(\xi_{i,n})_{i,n}$, we now have a preaveraging error $p^{-1}\sum_{\ell=0}^{p-1} (X_{(kp + \ell)\Delta_n} - X_{kp\Delta_n} )$. Using the pathwise regularity of $X$, this error should remain small when $p$ is not too large. Moreover, since the random variable $(\xi_{i,n})_{i,n}$ are independent standard Gaussian variables, we can rewrite $p^{-1}\sum_{\ell=0}^{p-1}\xi_{kp + \ell,n} = \tau_n p^{-1/2} \widetilde \xi_{k,n}$ where $\widetilde \xi_{k,n}$ is also a standard Gaussian variable. We next define a kernel estimator based on the preaveraged observations. For any non negative integer $p$ and any $x \in \mathbb{R}^d$, we define
\begin{equation}\label{eq:estimator}
\widehat{\nu}_{n,\bs{h},p}(x) = \frac{1}{\lfloor n/p \rfloor} \sum_{k=0}^{\lfloor n/p \rfloor - 1} \bs{K}_{\bs{h}}\Big( x-p^{-1}\sum_{\ell=0}^{p-1}Y_{kp + \ell,n}\Big).
\end{equation}
Remark that when $p=1$, this estimator is the usual estimator used for estimating the invariant density from discrete observations, see \cite{strauch2018adaptive, amorino2023estimation}. From what precedes, it first seems natural that $\widehat{\nu}_{n,\bs{h},p}$ estimates $\overline{\mu}^b * \varphi_{\tau_n p^{-1/2}}$ which is the invariant density of $X_{kp\Delta_n} + \tau_n p^{-1/2} \widetilde \xi_{k,n}$. However, the preaveraging error $p^{-1}\sum_{\ell=0}^{p-1} (X_{(kp + \ell)\Delta_n} - X_{kp\Delta_n} )$ induces an additional term. Indeed, we have
\begin{equation}
\label{eq:decomposition:preav}
    \frac{1}{p}\sum_{\ell=0}^{p-1}\big (X_{(kp + \ell)\Delta_n} - X_{kp\Delta_n} \big)
    =
    \frac{1}{p}\sum_{\ell=0}^{p-1} \bigg( \int^{(kp + \ell)\Delta_n}_{kp\Delta_n} b(X_s)\,
    \mathrm{d}s  \bigg)
    +
    \frac{1}{p}\sum_{\ell=0}^{p-1} \big (W_{(kp + \ell)\Delta_n} - W_{kp\Delta_n} \big).
\end{equation}
The first sum is negligible compared to the second one. Moreover, $p^{-1} \sum_{\ell=0}^{p-1}  (W_{(kp + \ell)\Delta_n} - W_{kp\Delta_n} )$ is a centered Gaussian variable with variance $(12p)^{-1} (p-1)(2p-1) \Delta_n$, independent of $X_{kp\Delta_n}$. Therefore, this term has an effect on the estimation comparable to that of $\tau_n p^{-1/2} \widetilde \xi_{k,n}$. Combining these two terms, we deduce that $\widehat{\nu}_{n,\bs{h},p}$ estimates $\overline{\mu}^b * \varphi_{\widetilde \tau_{n,p}}$ where 
\begin{equation}
\label{eq:defwttau}
    \widetilde{\tau}_{n,p} = \Big( 
    \frac{\tau_n^2}{p} 
    +
    \frac{(p-1)(2p-1)\Delta_n}{12p} 
    \Big)^{1/2}.
\end{equation}
This analysis is formalised in the following proposition.
\begin{proposition}
\label{prop:bound:kernelbias}
Under Assumptions \ref{assumption:boundedness}, \ref{assumption:potential} and \ref{assumption:ergodicity}, there exists $\Cbias > 0$ such that
for any $p \in \{ 1, \dots, \lceil \Delta_n^{-1/2} \rceil \}$ and any $x \in \mathbb{R}^d$, we have
\begin{equation}
\label{eq:estimate:bias:nu}
\Big|
	\mathbb{E}_{\overline{\mu}^b}^b [\widehat{\nu}_{n,\bs{h},p}(x)] 
	- 
	\overline{\mu}^b * \varphi_{\widetilde \tau_{n,p}}(x)
\Big|
\leq
\Cbias
\begin{cases}
\sum_{i} h_i^{\alpha_i}
& \text{ if } p = 1,
\\
\sum_{i} h_i^{\alpha_i} +  \sqrt{p\Delta_n}
 & \text{ if } p\geq 2.
\end{cases}
\end{equation}
\end{proposition}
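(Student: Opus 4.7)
The plan is to exploit stationarity, decompose the preaveraged observation into a Gaussian part plus a drift perturbation, and split the total bias into a standard kernel bias and a drift-induced bias that can be controlled separately.

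By stationarity of $(Y_{i,n})_i$, $\E_{\overline{\mu}^b}^b[\widehat\nu_{n,\bs h,p}(x)] = \E_{\overline{\mu}^b}^b[\bs K_{\bs h}(x-\bar Y)]$ with $\bar Y := p^{-1}\sum_{\ell=0}^{p-1}Y_{\ell,n}$. The decompositions \eqref{eq:decomposition_X}--\eqref{eq:decomposition:preav} give $\bar Y = X_0 + D + G + N$, where $D := p^{-1}\sum_\ell \int_0^{\ell\Delta_n}b(X_s)\,\mathrm{d}s$ is the drift term, $G := p^{-1}\sum_\ell(W_{\ell\Delta_n}-W_0)$ is centered Gaussian with variance $\sigma_p^2 := (p-1)(2p-1)\Delta_n/(12p)$, and $N := \tau_n p^{-1}\sum_\ell \xi_{\ell,n}$ is independent Gaussian with variance $\tau_n^2/p$. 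The key observation is that $G + N$ is centered Gaussian with covariance $\widetilde\tau_{n,p}^2 I_d$ and independent of $X_0$ ($G$ depends only on Brownian increments for $t > 0$; $N$ is independent of everything). Hence $\bar Y^{\mathrm{id}} := X_0 + G + N$ has density $\overline{\mu}^b * \varphi_{\widetilde\tau_{n,p}}$ and I decompose
\begin{equation*}
\E[\bs K_{\bs h}(x-\bar Y)] - \overline{\mu}^b * \varphi_{\widetilde\tau_{n,p}}(x) = A_1 + A_2,
\end{equation*}
where $A_1 := \E[\bs K_{\bs h}(x-\bar Y^{\mathrm{id}})] - \overline{\mu}^b * \varphi_{\widetilde\tau_{n,p}}(x)$ is a kernel bias and $A_2 := \E[\bs K_{\bs h}(x-\bar Y)] - \E[\bs K_{\bs h}(x-\bar Y^{\mathrm{id}})]$ is a drift bias.

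For $A_1$, Gaussian convolution preserves anisotropic H\"older regularity (differentiation commutes with the Gaussian and H\"older seminorms pass through the convolution), so $\overline{\mu}^b * \varphi_{\widetilde\tau_{n,p}}$ lies in $\mathcal H_d(\bs\alpha, \bs{\mathcal L}')$ with constants uniform in $n, p$. A Taylor expansion to order $l \geq \lceil \alpha_d \rceil$, together with the vanishing moments \eqref{eq:def:K} of $K$, yields $|A_1| \leq C \sum_i h_i^{\alpha_i}$. For $A_2$: when $p=1$, the sum defining $D$ is empty, so $D \equiv 0$ and $A_2 = 0$. When $p \geq 2$, I write $A_2 = \int \bs K_{\bs h}(x-y)\bigl(f_{\bar Y}(y) - \overline{\mu}^b * \varphi_{\widetilde\tau_{n,p}}(y)\bigr)\mathrm{d}y$ and use $\|\bs K_{\bs h}\|_{L^1} = \|K\|_{L^1}^d$ to reduce to the uniform density bound $\|f_{\bar Y} - \overline{\mu}^b * \varphi_{\widetilde\tau_{n,p}}\|_\infty \leq C\sqrt{p\Delta_n}$. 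Factoring out the independent Gaussian noise $N$, this reduces in turn to $\|f_U - \overline{\mu}^b * \varphi_{\sigma_p}\|_\infty \leq C\sqrt{p\Delta_n}$ where $U := p^{-1}\sum_\ell X_{\ell\Delta_n}$.

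This last uniform density estimate is the main technical content of the proof. It is obtained via the Gaussian upper bound on the transition density of the preaveraged process (Lemma \ref{lemma:proof:bound:pxy}, which relies on Assumption \ref{assumption:potential}), combined with moment estimates $\E|D| \leq C p\Delta_n$ that follow from the linear growth of $b$ (Assumption \ref{assumption:boundedness}) and stationary moment control. The constraint $p \leq \lceil \Delta_n^{-1/2} \rceil$ then ensures $p\Delta_n \leq \sqrt{p\Delta_n}$, absorbing the drift contribution into the claimed rate. The main obstacle is precisely this density comparison: the naive Lipschitz bound $|A_2| \leq \mathrm{Lip}(\bs K_{\bs h})\,\E|D|$ is too weak because $\mathrm{Lip}(\bs K_{\bs h})$ blows up as $\bs h \to 0$, and one must fully exploit the smoothing effect of the Gaussian component $G + N$---encoded in the heat-kernel-type estimate of Lemma \ref{lemma:proof:bound:pxy}---to obtain a bound independent of the bandwidth $\bs h$.
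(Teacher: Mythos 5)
Your decomposition is structurally the same as the paper's: your $A_1$ is the standard kernel bias (the paper's $B_1$), and your $A_2$ is the drift contribution (the paper's $B_2$). You also correctly note that $A_2 = 0$ when $p=1$, and correctly reduce $A_2$ to a uniform density comparison $\|f_U - \overline{\mu}^b * \varphi_{\sigma_p}\|_\infty$ with $U = p^{-1}\sum_\ell X_{\ell\Delta_n}$; indeed one can check that
\begin{equation*}
f_U(u) - \big(\overline{\mu}^b * \varphi_{\sigma_p}\big)(u)
\;=\;
\E\Big[\overline{\mu}^b\big(u - \overline{W}^n_p\big)\big(M_{p\Delta_n}^b(u - \overline{W}^n_p) - 1\big)\Big],
\end{equation*}
which is precisely the quantity the paper controls in its treatment of $B_2$. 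So the crux is correctly identified.

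The gap is in \emph{how} you claim to control that quantity. You assert it follows from the Gaussian upper bound on the preaveraged transition density (Lemma \ref{lemma:bound:pxy}) combined with the moment estimate $\E|D| \lesssim p\Delta_n$, but neither tool does the job. Lemma \ref{lemma:bound:pxy} is a one-sided upper bound on $\mathfrak{p}^b_{p,n}$; it is used in the paper only for variance control (Propositions \ref{prop:variance} and \ref{lem:corr:short}--\ref{lem:corr:long}) and cannot yield a two-sided comparison between $f_U$ and $\overline{\mu}^b * \varphi_{\sigma_p}$. As for $\E|D|$: the random drift $D$ is strongly correlated with $X_0$ and with $G$ (all three are functionals of the same SDE path), so $f_U$ is not a convolution of $f_{X_0+G}$ against anything involving $D$, and a first moment bound on $D$ gives no handle on a sup-norm density difference. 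You yourself observe the Lipschitz route blows up as $\bs h \to 0$; the moment-plus-density-bound route has the same deficiency, just hidden. A further diagnostic: your plan would give a bound of order $p\Delta_n$, strictly better than the paper's $\sqrt{p\Delta_n}$, which should raise suspicion.

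What actually makes the argument work, and what your sketch omits, is the Girsanov change of measure together with the \emph{martingale} structure of $M_t^b(y)$. The paper writes the density difference as $\E[\psi(y - \overline{W}^n_p)]$ with $\psi(y) = \overline{\mu}^b(y)(M_{p\Delta_n}^b(y) - 1)$; the martingale property gives $\E[\psi(y)] = 0$ for each fixed $y$, so after a coordinate-wise Taylor expansion one is left with $\E[\nabla\psi(y - \lambda\overline{W}^n_p)\cdot\overline{W}^n_p]$, and Cauchy--Schwarz extracts $\E[|\overline{W}^n_p|^2]^{1/2} \asymp \sqrt{p\Delta_n}$ (not $\E|D|$) as the small factor, the remaining factor being an $O(1)$ second moment of $\nabla\psi$ controlled via Assumptions \ref{assumption:boundedness}--\ref{assumption:potential}, Lemma \ref{le:upbound}, a Brownian-bridge decomposition, and Burkholder--Davis--Gundy. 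Even that step requires a nontrivial argument (Theorems \ref{th:fubini} and \ref{th4}) to justify that $\psi$ is $C^1$ in the first place, since $\psi$ involves a stochastic integral. None of this cancellation is visible from a crude moment bound on $D$, so as written your proof has a genuine hole at the step you call ``the main technical content.''
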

Although bounding the bias of kernel based estimators is usually an easy task, this is not the case here. The proof of Proposition \ref{prop:bound:kernelbias} is indeed quite delicate and can be found in Section \ref{sec:propo:bias:nu}. Indeed, we cannot use Itô's formula because, from Equation \ref{eq:definition:kernel}, we see that for any $i \in \{1, \dots, d\}$, $\partial_i \bs{K}_{\bs{h}}(x) = h_i^{-1}\prod_j h_j^{-1}K(h_j^{-1}x_j)$ which would interfere and create a contribution of order $\prod_i h_i^{-1}$. Instead, using crucially Assumption \ref{assumption:potential}, we use Girsanov's theorem to remove the contribution of the drift term in \eqref{eq:decomposition:preav}. We then control the likelihood introduced via the change of measure which introduce the additional term $\sqrt{p\Delta_n}$ in \eqref{eq:estimate:bias:nu} when $p \geq 2$.\\

Proceeding as in \eqref{eq:biaisnoprev}, we see that \eqref{eq:estimate:bias:nu} implies
\begin{equation}
\label{eq:bias_nu}
\Big|
	\mathbb{E}_{\overline{\mu}^b}^b [\widehat{\nu}_{n,\bs{h},p}(x)] 
	- 
	\overline{\mu}^b(x)
\Big|
\leq
\Ctbias
(\sum_{i} h_i^{\alpha_i} +  \widetilde \tau_{n,p})
\end{equation}
for some $\Ctbias \geq \Cbias$. This can be improved by a deconvolution procedure. To do so, note that 
\begin{equation*}
    \overline{\mu}^b * \varphi_{\widetilde \tau_{n,p}}(x) = 
    \mathbb{E}[\overline{\mu}^b(x-\widetilde{\tau}_{n,p}\zeta)] 
\end{equation*}
where $\zeta$ is a standard $d$-dimensional Gaussian variable. Using the regularity of $\overline{\mu}^b$, we can proceed to a Taylor expansion of $\overline{\mu}^b(x + \widetilde{\tau}_{n,p}(\bs{\gamma}-\zeta))$ around $x$ for all $\bs{\gamma} = (\gamma_1, \dots, \gamma_d) \in \{0, \dots, l\}^d$. Computing explicitly the moments of $ (\bs{\gamma}-\zeta)$ appearing in this expression, we get a explicit expansion of $\overline{\mu}^b * \varphi_{\widetilde \tau_{n,p}}(x+\widetilde{\tau}_{n,p}\bs{\gamma})$ around $\overline{\mu}^b(x)$. We can isolate $\overline{\mu}^b(x)$ from this expression. Specifically, we introduce the matrix $A = (a_{k,i})_{0 \leq k,i \leq l}$, with coefficients given for any $k,i\in \{ 0, \dots, l\}$ by
\begin{equation}
\label{eq:def:a}
a_{k,i} = \sum_{j=0}^k {k \choose j} (-1)^j m_j i^{k-j}
\end{equation}
where $m_j$ stands for the the $j$-th moment of a standard Gaussian variable. The matrix $A$ is invertible, as shown in Appendix \ref{sec:appendix:A}. We denote its inverse by $A^{-1}$, and $\bs{u} = (u_0, \dots, u_l)$ stands for its first column. Then, for all $k \in \{ 0, \dots, l\}$,
\begin{align}
\label{eq:def:u}
\sum_{i=0}^l 
u_i \Big( \sum_{j=0}^k \frac{ (-1)^j m_j i^{k-j}}{j! (k-j)!} \Big)
=
\begin{cases}
1 & \text{ if } k=0,
\\
0 & \text{ otherwise.}
\end{cases}
\end{align}
For any multi-index $\bs{\gamma} = (\gamma_1, \dots, \gamma_d) \in \{0, \dots, l\}^d$, we define $\bs{u}_{\bs{\gamma}} = \prod_{i=1}^d u_{\gamma_i}$, and the following point-wise estimator
\begin{align}
\label{eq:def:est:mu}
	\widehat\mu_{n,\bs{h},p}(x) = \sum \bs{u}_{\bs{\gamma}} \widehat{\nu}_{n,\bs{h},p}(x+\bs{\gamma}\widetilde{\tau}_{n,p})
\end{align}
where the sum holds over all $\bs{\gamma} = (\gamma_1, \dots, \gamma_d) \in \{0, \dots, l\}^d$. The bias of $\widehat\mu_{n,\bs{h},p}(x)$ is precised in the following proposition, proved in Section \ref{sec:propo:bias:mu}

\begin{proposition}
\label{propo:small:bias:mu}
Suppose that $b \in \Sigma(\bs{\mathfrak{b}})$, with $\bs{\alpha}$ such that $\alpha_1 \leq \dots \leq \alpha_d$. Then there exists a constant $\Ctbias$ depending only in $\bs{\mathfrak{b}}$ so that for any $x \in \mathbb{R}^d$, any $p \in \{ 1, \dots, \lceil \Delta_n^{-1/2} \rceil \}$ and any $\bs{h} \in \mathbb{R}^d$, we have
\begin{align}
\label{eq:propo:small:bias:mu}
\big|\mathbb{E}_{\overline{\mu}^b}^b [\widehat\mu_{n,\bs{h},p}(x)] - \overline{\mu}^b(x)\big|
\leq \Cbias
\begin{cases}
\tau_{n}^{\alpha_1} +  \sum_{i=1}^d h_i^{\alpha_i} & \text{ if } p = 1,
\\
\sqrt{p\Delta_n} + \frac{\tau_{n}^{\alpha_1}}{p^{\alpha_1/2}}  + \sum_{i=1}^d h_i^{\alpha_i} & \text{ if } p\geq 2.
\end{cases}
\end{align}
\end{proposition}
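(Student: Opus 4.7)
The plan is to decompose the bias of $\widehat\mu_{n,\bs{h},p}(x)$ into two parts: a ``kernel bias'' controlled directly by Proposition~\ref{prop:bound:kernelbias}, and a ``deconvolution bias'' for which the specifically engineered weights $\bs{u}_{\bs{\gamma}}$ cancel all low-order Taylor contributions, leaving only a remainder of order $\widetilde\tau_{n,p}^{\alpha_1}$. Concretely, by linearity of expectation and the definition~\eqref{eq:def:est:mu}, I would write
$$\mathbb{E}_{\overline{\mu}^b}^b[\widehat\mu_{n,\bs{h},p}(x)] - \overline{\mu}^b(x) = (A) + (B),$$
with $(A) = \sum_{\bs{\gamma}} \bs{u}_{\bs{\gamma}} \bigl\{ \mathbb{E}_{\overline{\mu}^b}^b[\widehat\nu_{n,\bs{h},p}(x+\bs{\gamma}\widetilde\tau_{n,p})] - \overline{\mu}^b * \varphi_{\widetilde\tau_{n,p}}(x+\bs{\gamma}\widetilde\tau_{n,p}) \bigr\}$ and $(B) = \sum_{\bs{\gamma}} \bs{u}_{\bs{\gamma}} \overline{\mu}^b * \varphi_{\widetilde\tau_{n,p}}(x+\bs{\gamma}\widetilde\tau_{n,p}) - \overline{\mu}^b(x)$, the sums ranging over $\bs{\gamma}\in\{0,\dots,l\}^d$. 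Since $|\bs{u}_{\bs{\gamma}}| \leq (\max_i |u_i|)^d$ is controlled by a constant depending only on $l$ and $d$, Proposition~\ref{prop:bound:kernelbias} applied termwise yields $|(A)| \leq C(\sum_i h_i^{\alpha_i} + \mathbf{1}_{p\geq 2}\sqrt{p\Delta_n})$, which already matches the non-deconvolution contributions in~\eqref{eq:propo:small:bias:mu}.

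The main work lies in controlling $(B)$. Writing the convolution as an expectation, $\overline{\mu}^b * \varphi_{\widetilde\tau_{n,p}}(x+\bs{\gamma}\widetilde\tau_{n,p}) = \mathbb{E}[\overline{\mu}^b(x+\widetilde\tau_{n,p}(\bs{\gamma}-\zeta))]$ for $\zeta$ a standard $d$-dimensional Gaussian, and exploiting the tensor structure $\bs{u}_{\bs{\gamma}} = \prod_i u_{\gamma_i}$ together with the independence of the coordinates of $\zeta$, I would collapse the $d$ sums one variable at a time via the following one-dimensional building block: if $f:\mathbb{R}\to\mathbb{R}$ belongs to $\mathcal{H}(\alpha,\mathcal{L})$ with $\lfloor\alpha\rfloor \leq l$, then Taylor expansion of order $\lfloor\alpha\rfloor$ combined with the Hölder estimate on the highest derivative gives
$$f(y+\widetilde\tau_{n,p}(i-Z)) = \sum_{k=0}^{\lfloor\alpha\rfloor}\frac{(\widetilde\tau_{n,p}(i-Z))^k}{k!} f^{(k)}(y) + R_i(y,Z), \quad |R_i(y,Z)| \leq C\mathcal{L} \widetilde\tau_{n,p}^\alpha |i-Z|^\alpha.$$
Taking expectation over $Z$ and then summing against $u_i$, the defining relation~\eqref{eq:def:u} forces every $k\geq 1$ term to vanish, so the net outcome is $f(y)$ plus a remainder bounded by $C\mathcal{L}\widetilde\tau_{n,p}^\alpha$ (the Gaussian moments of $|i-Z|^\alpha$ are finite uniformly over $i \in \{0,\dots,l\}$).

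Iterating this reduction through the coordinates $i = d, d-1, \dots, 1$, and using that freezing any subset of the coordinates in $\overline{\mu}^b$ preserves the anisotropic Hölder regularity in the remaining coordinates with the same constant $\mathcal{L}_i$ (a direct consequence of Definition~\ref{def:anisotropic}), I obtain $|(B)| \leq C\sum_i \widetilde\tau_{n,p}^{\alpha_i} \leq C' \widetilde\tau_{n,p}^{\alpha_1}$ whenever $\widetilde\tau_{n,p}\leq 1$, since $\alpha_1 = \min_i \alpha_i$. From the explicit form~\eqref{eq:defwttau}, when $p=1$ one has $\widetilde\tau_{n,1} = \tau_n$, settling that case. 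For $p\geq 2$, the inequality $\sqrt{a+b}\leq \sqrt{a}+\sqrt{b}$ yields $\widetilde\tau_{n,p} \leq \tau_n p^{-1/2} + C\sqrt{p\Delta_n}$; raising to the power $\alpha_1$ and using $p\Delta_n \leq 1$ (which holds since $p \leq \lceil\Delta_n^{-1/2}\rceil$) gives $\widetilde\tau_{n,p}^{\alpha_1} \leq C(\tau_n^{\alpha_1} p^{-\alpha_1/2} + \sqrt{p\Delta_n})$. Combining $(A)$ and $(B)$ produces the claimed bound.

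The principal obstacle is this iterated, anisotropic Taylor expansion: one must verify that partial integration over the ``already-collapsed'' coordinates preserves the Hölder regularity in each remaining coordinate with a constant proportional to $\mathcal{L}_i$, so that the Taylor remainder at each step is controlled by $\widetilde\tau_{n,p}^{\alpha_i}$. Once this stability under partial integration is secured, the cancellation of all intermediate Taylor orders is purely algebraic, driven by the identity~\eqref{eq:def:u} that defines $\bs{u}$ as the first column of $A^{-1}$.
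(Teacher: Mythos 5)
Your proposal is correct and follows essentially the same route as the paper: reduce the bias of $\widehat\mu_{n,\bs{h},p}$ to the deconvolution term via Proposition~\ref{prop:bound:kernelbias} and boundedness of the weights $\bs{u}_{\bs{\gamma}}$, then collapse the $d$-fold sum one coordinate at a time using the tensor structure of $\bs{u}_{\bs{\gamma}}$, a one-dimensional Taylor expansion whose low-order terms vanish by the defining identity~\eqref{eq:def:u}, and the fact that the anisotropic Hölder bounds of Definition~\ref{def:anisotropic} are uniform in the other variables (the paper packages your ``1D building block'' as Lemma~\ref{lemma:debias:d1} and the coordinate collapse as an explicit induction on $I$, conditioning on the remaining Gaussian coordinates). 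The final translation from $\widetilde\tau_{n,p}^{\alpha_1}$ to $\tau_n^{\alpha_1}p^{-\alpha_1/2}+\sqrt{p\Delta_n}$ via subadditivity and $p\Delta_n\leq 1$ also matches.
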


We now turn to the variance of $\widehat\mu_{n,\bs{h},p}(x)$. Before stating the upper bound of the variance let us now define $k_0 := k_0(\bs{\alpha})$ such that $\alpha_1=\alpha_2=\dots=\alpha_{k_0}<\alpha_{k_0+1} \leq \dots \leq \alpha_d$. Let us also define 
\begin{equation}
\begin{aligned}\label{eq:D}
	& D_1 = \left\{ (\bs{\alpha}, k_0) , \: k_0 = 1 \text{ or } k_0 = 2 \text{ and } \alpha_2 < \alpha_3\right\}; \\
	& D_2 = \left\{ (\bs{\alpha}, k_0) , \: k_0 \geq 3 \right\};\\
	& D_3 = \left\{ (\bs{\alpha}, k_0) , \: k_0 = 1 \text{ and } \alpha_2 = \alpha_3\right\}.
\end{aligned}
\end{equation}
The upper bound is stated in the following proposition.
\begin{proposition}\label{prop:variance}
	Suppose that $b \in \Sigma(\bs{\mathfrak{b}})$. Suppose that $\bs{\alpha} =\left(\alpha_1, \dots, \alpha_d\right)$ satisfies $\alpha_1=\alpha_2=\dots=\alpha_{k_0}<\alpha_{k_0+1} \leq \dots \leq \alpha_d$, for some $k_0 \in\{1, \dots, d\}$. If $\hat{\mu}_{n, \bs{h},p}$ is the estimator proposed in \eqref{eq:def:est:mu}, then there exist $\Cvar>0$ uniform over $\Sigma(\bs{\mathfrak{b}})$ and $n_0>0$ such that, for $n \geq n_0$, the following holds true for all $x \in \mathbb{R}^d$ and all $\bs{h} \in (0,1]^d$.
\begin{itemize}
    \item If $d = 1$, then
\begin{equation}
\label{eq:variance:d1}
\mathrm{Var}_{\overline{\mu}^b}^b(\widehat\mu_{n,\bs{h},p}(x)) \leq \frac{\Cvar}{T_n} \Big( p\Delta_n h_1^{-1} + |\log(h_1)| \Big).
\end{equation}
	\item If $d=2$, then
\begin{equation}
\label{eq:variance:d2}
\mathrm{Var}_{\overline{\mu}^b}^b(\widehat\mu_{n,\bs{h},p}(x)) \leq \frac{\Cvar}{T_n} \Big( p\Delta_n h_1^{-1}h_2^{-1} + |\log(p\Delta_n)| + |\log(h_1h_2)| \Big).
\end{equation}
	\item If $d \geq 3$ and $(k_0,\bs{\alpha}) \in D_1$, then
\begin{equation}
\label{eq:variance:k01_al2lal3}
\mathrm{Var}_{\overline{\mu}^b}^b(\widehat\mu_{n,\bs{h},p}(x)) \leq \frac{\Cvar}{T_n} \Big( p\Delta_n\prod_{i=1}^dh_i^{-1} +\sum_{i=1}^d |\log(h_i)| \prod_{i=3}^d h_i^{-1}
\Big).
\end{equation}
	\item If $d \geq 3$ and $(k_0,\bs{\alpha}) \in D_2$, then
\begin{equation}
\label{eq:variance:k0geq3}
	\mathrm{Var}_{\overline{\mu}^b}^b(\widehat\mu_{n,\bs{h},p}(x))
 \leq
 \frac{\Cvar}{T_n} \Big(
 (\prod_{i=1}^{k_0} (h_i)^{(2-k_0)/k_0}  \prod_{i=k_0+1}^d h_i^{-1} + p\Delta_n\prod_{i=1}^dh_i^{-1} +\sum_{i=1}^d | \log h_i |
\Big).
\end{equation}
	\item If $d \geq 3$ and $(k_0,\bs{\alpha}) \in D_3$, then
\begin{equation}
\label{eq:variance:k0eg1}
	\mathrm{Var}_{\overline{\mu}^b}^b(\widehat\mu_{n,\bs{h},p}(x)) \leq  
 \frac{\Cvar}{T_n} \Big(
 \sum_{i=1}^d | \log h_i | 
    +
    p\Delta_n \prod_{i=1}^d h_i^{-1}
    +
    (h_2 h_3)^{-1/2} \prod_{i=4}^d h_i^{-1}
\Big).
\end{equation}
\end{itemize}
\end{proposition}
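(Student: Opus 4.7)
Since $\widehat\mu_{n,\bs{h},p}$ is a finite linear combination of translates $\widehat\nu_{n,\bs{h},p}(x + \bs\gamma\widetilde\tau_{n,p})$ indexed by $\bs\gamma \in \{0,\dots,l\}^d$ with coefficients $\bs{u}_{\bs\gamma}$ depending only on the kernel order $l$, a Cauchy--Schwarz argument reduces the proposition to proving the same bound for $\mathrm{Var}_{\overline{\mu}^b}^b(\widehat\nu_{n,\bs{h},p}(y))$ uniformly in $y \in \mathbb{R}^d$. Writing $Z_k = p^{-1}\sum_{\ell=0}^{p-1}Y_{kp+\ell,n}$ and $N = \lfloor n/p \rfloor$, the sequence $(Z_k)_k$ is stationary under $\mathbb{P}_{\overline{\mu}^b}^b$, so
\begin{align*}
\mathrm{Var}_{\overline{\mu}^b}^b\bigl(\widehat\nu_{n,\bs{h},p}(y)\bigr) = \frac{1}{N}\mathrm{Var}\bigl(\bs{K}_{\bs{h}}(y-Z_0)\bigr) + \frac{2}{N^2}\sum_{k=1}^{N-1}(N-k)\,\mathrm{Cov}\bigl(\bs{K}_{\bs{h}}(y-Z_0), \bs{K}_{\bs{h}}(y-Z_k)\bigr).
\end{align*}
The diagonal term is handled via $\mathbb{E}[\bs{K}_{\bs{h}}(y-Z_0)^2] \leq \|\bs{K}_{\bs{h}}\|_\infty \cdot \mathbb{E}[|\bs{K}_{\bs{h}}(y-Z_0)|] \lesssim \prod_i h_i^{-1}$, since the density of $Z_0$ is uniformly bounded (it is $\overline{\mu}^b$ convolved with a Gaussian of variance $\widetilde\tau_{n,p}^2$ and a small drift perturbation, cf.\ the proof of Proposition \ref{prop:bound:kernelbias}). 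Dividing by $N \asymp n/p$ and rewriting in terms of $T_n = n\Delta_n$ produces the contribution $p\Delta_n \prod_i h_i^{-1}/T_n$ present in every case of the proposition.

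For the covariance sum I split the lag range according to the physical time shift $t_k := kp\Delta_n$. In the long window $\{t_k > 1\}$, the Poincar\'e inequality \eqref{eq:ergodicity} combined with the uniform boundedness of $P^b_{t_k}\bs{K}_{\bs{h}}(y-\cdot)$ and of $\mathbb{E}[\bs{K}_{\bs{h}}(y-Z_0)]$ yields $|\mathrm{Cov}(\bs{K}_{\bs{h}}(y-Z_0), \bs{K}_{\bs{h}}(y-Z_k))| \lesssim e^{-t_k/C_{PI}}$, whose contribution sums to $O(1/T_n)$ and is absorbed into the bounds stated. In the short window $\{k \geq 1,\, t_k \leq 1\}$, I invoke the preaveraged transition density estimate of Section \ref{lemma:proof:bound:pxy}, which extends \eqref{eq:toy:boundtransition} to the pair $(Z_0, Z_k)$, giving a joint density bounded by $C\,\overline{\mu}^b(z)\bigl(t_k^{-d/2}\wedge \prod_i h_i^{-1}\bigr)$ on the relevant support. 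This leads to
\begin{align*}
|\mathrm{Cov}(\bs{K}_{\bs{h}}(y-Z_0), \bs{K}_{\bs{h}}(y-Z_k))| \lesssim \int\!\!\int \bs{K}_{\bs{h}}(y-z)\bs{K}_{\bs{h}}(y-z') \bigl(t_k^{-d/2} \wedge {\textstyle\prod_i} h_i^{-1}\bigr) \, dz\,dz'.
\end{align*}

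The main obstacle is the anisotropic integration and summation over $k$ in the short window. In dimension $d=1$, the bound $\sum_k \min(h_1^{-1}, t_k^{-1/2})$ contributes $|\log h_1|/(p\Delta_n)$, which yields the $|\log h_1|/T_n$ term. In dimension $d=2$, the analogous computation using the borderline integrability of $t^{-1}$ produces the two logarithmic factors $|\log(p\Delta_n)|$ (from the lower bound of summation in $k$) and $|\log(h_1 h_2)|$ (from rescaling against the kernel bandwidths). For $d \geq 3$, $t^{-d/2}$ is no longer integrable near zero, so the singularity must be balanced against the anisotropic bandwidths. I split the kernel integration into the $k_0$ smallest directions, corresponding to the minimal H\"older index $\alpha_1$, and the $d-k_0$ remaining ones: the latter always contribute a factor $\prod_{i>k_0}h_i^{-1}$ after direct integration, while the former must absorb the $t^{-d/2}$ singularity. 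The cases $D_1, D_2, D_3$ distinguish how this absorption is carried out. In the generic case $(k_0, \bs\alpha) \in D_2$, a rescaling and H\"older's inequality along the $k_0$ smallest directions produces the term $\prod_{i \leq k_0} h_i^{(2-k_0)/k_0} \prod_{i > k_0} h_i^{-1}$. The cases $D_1$ and $D_3$ are borderline and require separate treatment giving additional logarithmic corrections or the $(h_2h_3)^{-1/2}$ factor. Throughout, the adapted transition density bound from Section \ref{lemma:proof:bound:pxy} is essential to ensure that the estimates are uniform in $p \in \{1, \dots, \lceil\Delta_n^{-1/2}\rceil\}$, so that the noisy preaveraged setting inherits the scalings of the non-noisy case treated in \cite{amorino2023estimation}.
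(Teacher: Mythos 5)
Your skeleton is the same as the paper's: reduce from $\widehat\mu_{n,\bs{h},p}$ to $\widehat\nu_{n,\bs{h},p}$ via the finite linear combination structure (this is \eqref{eq:varmu:varnu} in the paper), expand the variance as a diagonal term plus a covariance sum indexed by the physical lag $t_k=kp\Delta_n$, use the spectral gap for large $t_k$ and preaveraged transition-density bounds for small $t_k$, and observe that the $k=0$ term gives $p\Delta_n\prod_i h_i^{-1}/T_n$ in every case. However, there are two genuine gaps in the way you carry this out.

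First, your long-window estimate is incorrect as stated. You claim that the Poincar\'e inequality \eqref{eq:ergodicity}, combined with boundedness of $P^b_{t_k}\bs{K}_{\bs{h}}(y-\cdot)$ and of $\mathbb{E}[\bs{K}_{\bs{h}}(y-Z_0)]$, yields $|\mathrm{Cov}(\bs{K}_{\bs{h}}(y-Z_0),\bs{K}_{\bs{h}}(y-Z_k))|\lesssim e^{-t_k/C_{PI}}$ with an $O(1)$ prefactor. That cannot be right: the spectral gap gives decay of the $L^2$-norm of $P^b_{t}f^c$ relative to $\mathrm{Var}(f^c)$, not of the sup-norm, and after Cauchy--Schwarz what comes out (paper's Lemma~\ref{lem:corr:long}) is $\mathfrak{C}^b_p(k;x)\lesssim e^{-C_{PI}^{-1}(k-1)p\Delta_n}\big(\prod_i h_i^{-1}\big)^2$. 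Because of this $\bs{h}$-dependent prefactor, you cannot absorb the entire range $t_k>1$ into $O(1/T_n)$. The exponential only dominates the prefactor once $t_k\gtrsim C_{PI}\sum_i|\log h_i|$, and in the intermediate band $1\lesssim t_k\lesssim C_{PI}\sum_i|\log h_i|$ you must instead use the constant bound $\mathfrak{C}^b_p(k;x)\lesssim 1$ (Lemma~\ref{lem:corr:mid}); summing over that band is precisely what produces the $\sum_{i=1}^d|\log h_i|/T_n$ term appearing in every case of the proposition. This also means you have misattributed the source of $|\log h_1|$ in $d=1$: there, $\sum_{k\geq 1,\,t_k\leq 1}(kp\Delta_n)^{-1/2}$ is actually $O(1/(p\Delta_n))$, i.e.\ $O(1/T_n)$ after normalisation, and the $|\log h_1|/T_n$ comes entirely from the missing mid-term window; similarly in $d=2$ only $|\log(p\Delta_n)|$ comes from the short window.

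Second, the single isotropic short-window estimate ``joint density bounded by $t_k^{-d/2}\wedge\prod_i h_i^{-1}$'' is not sufficient to reach the anisotropic term $\prod_{i\leq k_0}h_i^{(2-k_0)/k_0}\prod_{i>k_0}h_i^{-1}$ when $d\geq 3$. What is needed (paper's Lemma~\ref{lem:corr:short}) is a one-parameter family of covariance bounds $\mathfrak{C}^b_p(k;x)\lesssim(kp\Delta_n)^{-k_1/2}\prod_{i>k_1}h_i^{-1}$ valid for every $k_1\in\{1,\dots,d\}$, obtained by integrating the short-time transition density of the preaveraged pair in $k_1$ of the $d$ kernel directions and using the sup bound $h_i^{-1}$ in the remaining ones; the proof then chooses $k_1=k_0$ on one window and $k_1=d$ on the next, with cutoffs $j_1,j_2$ tuned to the $h_i$'s. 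Your ``rescaling and H\"older's inequality along the $k_0$ smallest directions'' is a placeholder for this, but as written it would only give the isotropic $\prod_i h_i^{(2-d)/d}/T_n$ (the $k_1=d$ case), which does not match the stated bounds unless $k_0=d$. Fixing both of these issues essentially reproduces the paper's proof, so your proposal is a correct skeleton but not yet a proof.
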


The proof of Proposition \ref{prop:variance} is delayed to Appendix \ref{sec:appendix:prop:variance}. It is important to point out that our results yield the same upper bounds for the variance term similar to those found in Proposition 1 of \cite{amorino2023estimation}. The only difference is the presence of a factor $p$ in front of $\Delta_n (T_n\prod_{i=1}^d h_i)^{-1}$. This change is natural as this term directly comes from the discretisation of the process. Here, the preaveraging induces a sub-sampling of the data, grouping the observations on windows of length $p$. The discretisation step $\Delta_n$ therefore becomes $p\Delta_n$. As seen in \cite{amorino2023estimation}, this term does not contribute, except when $\Delta_n$ is large. Here, the break-even point also depends on the noise intensity and is detailed in Section \ref{sec:upper_bounds}.
Note that our proof and the proof Proposition 1 of \cite{amorino2023estimation} follows the same paths. However, the introduction of additive noise structure requires new bounds on transition densities of the preaveraged process, see Section \ref{sec:UpperBound}. Note also that although Proposition \ref{prop:variance} is stated on the final estimator $\widehat\mu_{n,\bs{h},p}(x)$, the same result holds for $\widehat\nu_{n,\bs{h},p}(x)$. Indeed, in Appendix \ref{sec:appendix:prop:variance}, we prove it for $\widehat\nu_{n,\bs{h},p}(x)$ and the proof for $\widehat\mu_{n,\bs{h},p}(x)$ follows from
\begin{equation}
\label{eq:varmu:varnu}
    \mathrm{Var}_{\overline{\mu}^b}^b(\widehat\mu_{n,\bs{h},p}(x))
    \leq 
    \Big(\sum_{\gamma} |u_\gamma| \Big)^{2d} \sup_{y\in \R^d} \mathrm{Var}_{\overline{\mu}^b}^b(\widehat\nu_{n,\bs{h},p}(y))
\end{equation}
and the fact that $(\sum_{\gamma} |u_\gamma|)^{2d}$ is a constant independent of $n$.

\section{Upper bounds and hyper-parameter choice}
\label{sec:upper}

We quantify the quality of this estimation procedure by deriving an upper bound for the quadratic error 
\begin{align*}
	\mathcal{R}(\widehat\mu, b; x) = \mathbb{E}^{b}_{\overline{\mu}^b}[|\widehat\mu(x) -  \overline{\mu}^b(x)|^2].
\end{align*}	
Using the classical Bias-Variance decomposition of the quadratic error
\begin{equation}\label{eq:biasvariancedecompo}
    \mathcal{R}(\widehat\mu_{n,\bs{h},p}, b; \, x)  = \mathrm{B}_{n,\bs{h},p}^b(x)^2 + \mathrm{V}_{n,\bs{h},p}^b(x),
\end{equation}
where for all $x \in \mathbb{R}^d$, we have
\begin{align*}
	\mathrm{B}^b_{n,\bs{h},p}(x) = \big|\mathbb{E}_{\overline{\mu}^b}^b [\widehat\mu_{n,\bs{h},p}(x)] - \overline{\mu}^b(x)\big|\,\, 
	\text{ and } \,\, \mathrm{V}^b_{n,\bs{h},p}(x) = \Var_{\overline{\mu}^b}^b [\widehat\mu_{n,\bs{h},p}(x)]. 
\end{align*}
Upper bounds for both $\mathrm{B}^b_{n,\bs{h},p}(x)$ and $\mathrm{V}^b_{n,\bs{h},p}(x)$ are given by 
Propositions \ref{propo:small:bias:mu} and \ref{prop:variance} respectively. The variance obtained in Proposition \ref{prop:variance} depends on many case and the choice of the hyper-parameters $p$ and $\bs{h}$ naturally depends on these cases. In this section, we detail each case and the convergence rate that can be obtained using the best choices $p^*$ and $\bs{h}^*$. For future use, we write 
\begin{equation}
\label{eq:def:alpha_alpha3}
\overline{\alpha} = \Big(\frac{1}{d} \sum_{i=1}^{d} \alpha_i^{-1} \Big)^{-1} \;\;
\text{ and }\;\;
\overline{\alpha}_3 = \Big(\frac{1}{d-2} \sum_{i=3}^{d} \alpha_i^{-1} \Big)^{-1} 
\end{equation}

\subsection{General results}
\label{sec:upper_bounds}

Before studying the quadratic risk in this model, we first recall the results from \cite{amorino2023estimation} where the estimation of $\overline{\mu}^b$ is studied from discrete non-noisy observations. In \cite{amorino2023estimation}, the authors distinguish two regimes depending on $\Delta_n$. The breaking point $w_n^{HF}$ between these two regimes is defined by
\begin{equation*}
    w_n^{HF} = 
    \begin{cases}
        \log(T_n)T_n^{-1} & \, \text{ if } d=1, 2,
        \\
        \log(T_n)\Big(\frac{\log(T_n)}{T_n} \Big)^{\frac{\overline{\alpha}_3}{(2\overline{\alpha}_3 + d - 2)} \big( \frac{1}{\alpha_1} + \frac{1}{\alpha_2} \big)}  & \, \text{ if } d \geq 3 \text{ and } (\bs{\alpha}, k_0 ) \in D_1,
        \\
        T_n^{\frac{-\overline{\alpha}_3}{(2\overline{\alpha}_3 + d - 2)} \big( \frac{1}{\alpha_1} + \frac{1}{\alpha_2} \big)} & \, \text{ if } d \geq 3 \text{ and } (\bs{\alpha}, k_0 ) \in D_2\cup D_3.
    \end{cases}
\end{equation*}
They show that, when $\Delta_n \leq w^{HF}_n$, the estimator of the invariant measure behaves as in the continuous observation case and therefore exhibits the convergence rate $(v_n^{HF})^{1/2}$ defined as 
\begin{equation*}
    v_n^{HF} = 
    \begin{cases}
        \log(T_n)T_n^{-1} & \, \text{ if } d=1,2,
        \\
\Big(\frac{\log(T_n)}{T_n} \Big)^{\frac{2\overline{\alpha}_3}{2\overline{\alpha}_3 + d - 2}} & \, \text{ if } d \geq 3 \text{ and } (\bs{\alpha}, k_0 ) \in D_1,
        \\
        T_n^{-\frac{2\overline{\alpha}_3}{2\overline{\alpha}_3 + d - 2}} & \, \text{ if } d \geq 3 \text{ and } (\bs{\alpha}, k_0) \in D_2\cup D_3.
    \end{cases}
\end{equation*}
In order to get this convergence rate, when $d \geq 3$ \cite{amorino2023estimation} identifies the optimal bandwidth choice given by 
\begin{equation*}
    h^{*, HF}_i = \begin{cases}
         T_n^{-1/2} &\text{if } d =1,2, \\
        \left(\frac{\log(T_n)}{T_n}\right)^{\frac{\overline{\alpha}_3}{\alpha_i(2\overline{\alpha}_3 + d - 2)}} &\text{if } d \geq 3 \text{ and } (\bs{\alpha}, k_0) \in D_1,\\
        T_n^{-\frac{\overline{\alpha}_3}{\alpha_i(\overline{\alpha}_3 + d - 2)}} &\text{if } d \geq 3 \text{ and } (\bs{\alpha}, k_0) \in D_2\cup D_3. 
    \end{cases}
\end{equation*}
When $\Delta_n \geq w_n^{HF}$, the estimator of the invariant measure behaves as in the i.i.d. case and exhibits a convergence rate $n^{\frac{\overline{\alpha}}{2\overline{\alpha}+d}}$. Here, we want to reproduce this behaviour while also incorporating noise. In order to get this convergence rate, \cite{amorino2023estimation} also identifies the optimal bandwidth choice $\bs{h}^{*, LF}$ in that case, given by 
\begin{equation*}
    h^{*, LF}_i = n^{\frac{-\overline{\alpha}}{\alpha_i(2\overline{\alpha}+d)}}.
\end{equation*}
More over, the convergence rate of the low frequency estimator is $(v_n^{LF})^{1/2}$ where 
\begin{equation*}
    v_n^{LF} = n^{-\frac{2\overline{\alpha}}{2\overline{\alpha}+d}}.
\end{equation*}
In our case, the effective discretization step is $p \Delta_n$ and therefore, we expect that the switch between the two regimes of \cite{amorino2023estimation} appear when $p\Delta_n \approx w_n^{HF}$. However, $p$ also needs to be chosen to minimize the quadratic error bound. Applying the same analysis as in \cite{amorino2023estimation}, we then define $\bs{h}^{*, p}$ for each $p$ by
\begin{equation*}
    h^{*, p}_i
    = 
    \begin{cases}
        h^{*, HF}_i &\text{if } p\Delta_n \leq w_n^{HF},\\
        \Big(\frac{p}{n}\Big)^{\frac{\overline{\alpha}}{\alpha_i(2\overline{\alpha}+d)}} &\text{if } p\Delta_n \geq w_n^{HF}.\\
    \end{cases}
\end{equation*}
Then we get for each $p \geq 1$,
\begin{equation}
\label{eq:boundRp}
    \mathcal{R}(\widehat\mu_{n,\bs{h}^{*,p}, p}, b; x)
    \lesssim
    \begin{cases}
        p\Delta_n \1_{p\geq 2} + \frac{\tau_n^{2\alpha_1}}{p^{\alpha_1}} + v_n^{HF}&\text{if } p\Delta_n \leq w_n^{HF},\\
        p\Delta_n \1_{p\geq 2} + \frac{\tau_n^{2\alpha_1}}{p^{\alpha_1}} + (\frac{p}{n})^{\frac{2\overline{\alpha}}{\alpha_i(2\overline{\alpha}+d)}} &\text{if } p\Delta_n \geq w_n^{HF}.
    \end{cases}
\end{equation}
We then want to simplify the case $p\Delta_n \geq w_n^{HF}$ by removing one of the dependence in $p$. To do so, we use that the condition $p\Delta_n \gtrsim w_n^{HF}$ implies that $p\Delta_n \gtrsim  (\frac{p}{n})^{\frac{2\overline{\alpha}}{2\overline{\alpha} + d}}$. This statement is formalised in the following Lemma, proved in Section \ref{sec:preliminary:upper_bounds}.
\begin{lemma}
\label{lem:simplification}
For all $c_1 > 0$, there exists $c_2 > 0$ such that if $p\Delta_n \geq c_1 w_n^{HF}$, then $p\Delta_n \geq c_2 (\frac{p}{n})^{\frac{2\overline{\alpha}}{2\overline{\alpha} + d}}$
\end{lemma}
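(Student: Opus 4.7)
The strategy is to rewrite the desired conclusion in a form that matches the hypothesis after elementary manipulations, reducing the lemma to a purely deterministic comparison between $w_n^{HF}$ and a negative power of $T_n$. This final comparison is then checked case-by-case from the explicit definition of $w_n^{HF}$.

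First, writing $p/n = p\Delta_n / T_n$ where $T_n = n\Delta_n$, the target inequality $p\Delta_n \geq c_2 (p/n)^{2\overline{\alpha}/(2\overline{\alpha}+d)}$ becomes
\begin{equation*}
    (p\Delta_n)^{d/(2\overline{\alpha}+d)} T_n^{2\overline{\alpha}/(2\overline{\alpha}+d)} \geq c_2,
\end{equation*}
which, upon raising to the power $(2\overline{\alpha}+d)/d$, is equivalent to $p\Delta_n \cdot T_n^{2\overline{\alpha}/d} \geq c_2^{(2\overline{\alpha}+d)/d}$. Combined with the hypothesis $p\Delta_n \geq c_1 w_n^{HF}$, it therefore suffices to establish the deterministic bound $w_n^{HF}\cdot T_n^{2\overline{\alpha}/d} \geq c_3$ for some $c_3 > 0$ depending only on $(\bs{\alpha},d)$ and for $n$ large enough. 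The constant $c_2$ is then chosen proportional to $(c_1 c_3)^{d/(2\overline{\alpha}+d)}$.

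The remaining step is the case-by-case verification of $w_n^{HF} \gtrsim T_n^{-2\overline{\alpha}/d}$ following the three cases defining $w_n^{HF}$. For $d = 1, 2$, $w_n^{HF} = \log(T_n)/T_n$, so the inequality reduces to $\log(T_n) \geq c_3 T_n^{1-2\overline{\alpha}/d}$, which holds asymptotically. For $d \geq 3$ with $(\bs{\alpha},k_0) \in D_1$, the identity
\begin{equation*}
    \frac{\overline{\alpha}_3}{2\overline{\alpha}_3+d-2} = \frac{1}{2 + \sum_{i=3}^d \alpha_i^{-1}},
\end{equation*}
obtained directly from the definition $\overline{\alpha}_3 = (d-2)/\sum_{i=3}^d \alpha_i^{-1}$, allows us to rewrite the exponent of $T_n$ in $w_n^{HF}$ and compare it with $2\overline{\alpha}/d = 2/\sum_{i=1}^d \alpha_i^{-1}$; the logarithmic prefactor in the definition of $w_n^{HF}$ can then be absorbed in the constant. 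For $d \geq 3$ with $(\bs{\alpha},k_0) \in D_2 \cup D_3$, the same algebra applies without the logarithmic factors.

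The main obstacle is the case-by-case algebraic verification in each regime, especially the $D_1$ case, where both the logarithmic factors and the relation between $\overline{\alpha}_3$ and the two smallest regularities $\alpha_1,\alpha_2$ must be tracked. Since all the manipulations depend only on $(\bs{\alpha},d)$, the constant $c_3$, and hence $c_2$, can be chosen uniformly over the class $\Sigma(\bs{\mathfrak{b}})$.
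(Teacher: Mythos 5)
Your reduction is correct and, compared to the paper's route, arguably cleaner. Writing $p/n = p\Delta_n/T_n$ and raising to the power $(2\overline{\alpha}+d)/d$ does indeed turn the target inequality into $p\Delta_n\cdot T_n^{2\overline{\alpha}/d}\geq c_2^{(2\overline{\alpha}+d)/d}$, so under the hypothesis it suffices to show $w_n^{HF}\,T_n^{2\overline{\alpha}/d}\geq c_3$. The paper instead invokes Lemma~\ref{lem:simplificationp} to translate the hypothesis $p\Delta_n \gtrsim w_n^{HF}$ into $p\Delta_n \gtrsim \widetilde{w}_{n,p}^{HF}$ and then compares exponents in the variable $p/n$; both approaches land on the same underlying exponent comparison.

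However, you stop exactly at the point where the lemma actually has content. Writing $S=\sum_{i=1}^d\alpha_i^{-1}$, $S_3=\sum_{i\geq3}\alpha_i^{-1}$, $s=\alpha_1^{-1}+\alpha_2^{-1}$, the $D_2\cup D_3$ case of $w_n^{HF}T_n^{2\overline{\alpha}/d}\geq c_3$ reduces, via your identity $\overline{\alpha}_3/(2\overline{\alpha}_3+d-2)=1/(2+S_3)$, to
\begin{equation*}
\frac{2}{S} \;\geq\; \frac{s}{2+S_3},
\qquad\text{i.e.}\qquad
(2-s)(2+S)\geq 0,
\qquad\text{i.e.}\qquad
\alpha_1^{-1}+\alpha_2^{-1}\leq 2.
\end{equation*}
This is the only non-trivial inequality in the proof, and it is where the standing regularity assumption on $\alpha_1$ (the paper uses $\alpha_1>2$, which gives $\alpha_1,\alpha_2\geq 1$, hence $s<2$) enters. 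You flag the ``case-by-case algebraic verification'' and the ``relation between $\overline{\alpha}_3$ and $\alpha_1,\alpha_2$'' as obstacles but never carry them out, and you never identify the condition on $\alpha_1,\alpha_2$ that makes the exponent nonnegative. Without it, the claim $w_n^{HF}T_n^{2\overline{\alpha}/d}\geq c_3$ could simply fail (e.g.\ for very small $\alpha_1$). So the proposal is a correct and valid plan, but the decisive step is missing, and so is the hypothesis on which it hinges.
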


Using Lemma \ref{lem:simplification}, we simplify \eqref{eq:boundRp}. If $p=1$, we have
\begin{equation}
\label{eq:boundRp1}
    \mathcal{R}(\widehat\mu_{n,\bs{h}^{*,1}, 1}, b; x)
    \lesssim
    \begin{cases}
        \tau_n^{2\alpha_1} + v_n^{HF} &\text{if } \Delta_n \leq w_n^{HF},\\
        \tau_n^{2\alpha_1} + n^{-\frac{2\overline{\alpha}}{2\overline{\alpha}+d}} &\text{if } \Delta_n \geq w_n^{HF}.
    \end{cases}
\end{equation}
and for $p\geq 2$, we have
\begin{equation}
\label{eq:boundRpge2}
    \mathcal{R}(\widehat\mu_{n,\bs{h}^{*,p}, p}, b; x)
    \lesssim
    \begin{cases}
        p\Delta_n + \frac{\tau_n^{2\alpha_1}}{p^{\alpha_1}} + v_n^{HF}&\text{if } p\Delta_n \leq w_n^{HF},\\
        p\Delta_n + \frac{\tau_n^{2\alpha_1}}{p^{\alpha_1}} &\text{if } p\Delta_n \geq w_n^{HF}.
    \end{cases}
\end{equation}
From these, we see that the optimal choice of $p$ is given by 
\begin{equation}\label{eq:choicep}
    p^* =
    \Big\lceil \big(\tau_{n}^{2\alpha_1} \Delta_n^{-1} \big)^{1/(1+\alpha_1)} \Big\rceil \vee 1
\end{equation}
and from this choice, we take $\bs{h}^* = \bs{h}^{*, p^*}$. Note that this choice ensure that $p^* \Delta_n^{1/2}$ is bounded so that Proposition \ref{propo:small:bias:mu} applies. Four asymptotic regimes can be observed.

\begin{proposition}[Small noise intensity, high sampling frequency]
\label{prop:smallnoise:highsampling}
    Suppose that $p^{\ast}\Delta_n \leq w_n^{HF}$ and that $\tau_n^{2\alpha_1} \leq \Delta_n$. In that case, we have
    \begin{equation*}
        p^* = 1 \;\;\text{ and }\;\; \bs{h}^* = \bs{h}^{*,HF}
    \end{equation*}
    and we get
    \begin{equation*}
    \mathcal{R}(\widehat\mu_{n,\bs{h}^{*}, p^*}, b; x)
    \lesssim
    v_n^{HF} \wedge \tau_n^{\alpha_1}. 
    \end{equation*}
\end{proposition}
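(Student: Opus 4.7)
The first step is to verify the hyper-parameter identification claimed in the statement. Reading off the definition \eqref{eq:choicep}, the hypothesis $\tau_n^{2\alpha_1} \leq \Delta_n$ forces $(\tau_n^{2\alpha_1}\Delta_n^{-1})^{1/(1+\alpha_1)} \leq 1$, so that $p^* = \lceil (\tau_n^{2\alpha_1}\Delta_n^{-1})^{1/(1+\alpha_1)} \rceil \vee 1 = 1$. Consequently $\bs{h}^* = \bs{h}^{*,1}$, and since the second hypothesis now reads $p^*\Delta_n = \Delta_n \leq w_n^{HF}$, the case distinction in the definition of $\bs{h}^{*,p}$ gives $\bs{h}^* = \bs{h}^{*,HF}$.

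Next I would apply the bias-variance decomposition \eqref{eq:biasvariancedecompo} with these specific choices. Proposition \ref{propo:small:bias:mu} in the case $p=1$ yields $\mathrm{B}^b_{n,\bs{h}^*,1}(x) \leq \Cbias\bigl(\tau_n^{\alpha_1} + \sum_{i=1}^d (h_i^{*,HF})^{\alpha_i}\bigr)$. The explicit form of $\bs{h}^{*,HF}$ recalled in Section \ref{sec:upper_bounds} is designed precisely so that each $(h_i^{*,HF})^{\alpha_i}$ is of order $(v_n^{HF})^{1/2}$, as this is the bias-variance balancing already carried out in \cite{amorino2023estimation}. Squaring then gives $(\mathrm{B}^b_{n,\bs{h}^*,1}(x))^2 \lesssim \tau_n^{2\alpha_1} + v_n^{HF}$.

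For the variance term I would invoke Proposition \ref{prop:variance} with $p=1$ and $\bs{h} = \bs{h}^{*,HF}$. Running through the four cases of that proposition ($d=1$, $d=2$, and $d \geq 3$ with $(\bs{\alpha},k_0) \in D_1$, $D_2$, or $D_3$) and inserting the explicit expressions for $h_i^{*,HF}$ gives exactly the variance contribution $v_n^{HF}$ from \cite{amorino2023estimation}; the factor $p^*=1$ plays no role since it is the \emph{same} bandwidth-dependent expression as in the noise-free case. Combining this with the squared-bias bound through \eqref{eq:biasvariancedecompo} yields $\mathcal{R}(\widehat\mu_{n,\bs{h}^*,p^*}, b; x) \lesssim v_n^{HF} + \tau_n^{2\alpha_1}$, which is the stated rate.

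There is no genuinely difficult step here, as Propositions \ref{propo:small:bias:mu} and \ref{prop:variance} do all the analytic work. The mildly technical point is the case-by-case verification that the specific $\bs{h}^{*,HF}$ simultaneously saturates the kernel bias contribution $\sum_i h_i^{\alpha_i}$ and each of the variance bounds \eqref{eq:variance:d1}--\eqref{eq:variance:k0eg1} at the common level $(v_n^{HF})^{1/2}$; since this computation is independent of $p$ and $\tau_n$, it transfers verbatim from \cite{amorino2023estimation} to the present noisy setting.
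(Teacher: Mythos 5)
Your proof follows the paper's own argument step by step: identify $p^*=1$ and $\bs{h}^*=\bs{h}^{*,HF}$ from the hypotheses, bound the squared bias via Proposition~\ref{propo:small:bias:mu}, bound the variance via Proposition~\ref{prop:variance}, and combine through \eqref{eq:biasvariancedecompo}. Two points should be sharpened.

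First, the final rate. You conclude $\mathcal{R}(\widehat\mu_{n,\bs{h}^*,p^*},b;x)\lesssim v_n^{HF}+\tau_n^{2\alpha_1}$ and call this ``the stated rate,'' but the proposition actually claims $v_n^{HF}\wedge\tau_n^{\alpha_1}$. These are not the same: the bound $v_n^{HF}+\tau_n^{2\alpha_1}$ does not yield $\mathcal{R}\lesssim\tau_n^{\alpha_1}$ when $\tau_n^{\alpha_1}\ll v_n^{HF}$, and the hypothesis $\tau_n^{2\alpha_1}\leq\Delta_n\leq w_n^{HF}$ does not force $\tau_n^{2\alpha_1}\leq v_n^{HF}$ (for $d\geq 3$ one has $w_n^{HF}\gg v_n^{HF}$ since $\alpha_1^{-1}+\alpha_2^{-1}<2$). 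Note that the paper's own proof also ends at $v_n^{HF}+\tau_n^{2\alpha_1}$, matching \eqref{eq:boundRp1}, so the $\wedge$ and the exponent $\alpha_1$ in the statement appear to be misprints for $\vee$ and $2\alpha_1$; you should flag this rather than assert that your derived rate equals the stated one.

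Second, the variance step. You say that plugging $\bs{h}^{*,HF}$ into Proposition~\ref{prop:variance} gives $v_n^{HF}$ and that the computation is ``independent of $p$ and $\tau_n$'' and transfers verbatim. This buries where the high-frequency hypothesis is used: every variance bound in Proposition~\ref{prop:variance} carries an extra term $p\Delta_n\prod_{i=1}^d h_i^{-1}/T_n$, and showing it is dominated by $v_n^{HF}$ (even with $p=1$) requires $\Delta_n\lesssim w_n^{HF}$. The paper makes this explicit by factoring out $v_n^{HF}$ and showing the remainder $r_n^{HF}\to 0$ case by case. Your deferral to \cite{amorino2023estimation} is legitimate, but the argument should state that the constraint $\Delta_n\lesssim w_n^{HF}$ is precisely what controls the discretisation term.
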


\begin{proposition}[Large noise intensity, high sampling frequency]
\label{prop:largenoise:highsampling}
    Suppose that $p^{\ast}\Delta_n \leq w_n^{HF}$ and that $\tau_n^{2\alpha_1} \geq \Delta_n$. In that case, we have
    \begin{equation*}
        p^* = \Big\lceil \big(\tau_{n}^{2\alpha_1} \Delta_n^{-1} \big)^{1/(1+\alpha_1)} \Big\rceil \;\;\text{ and }\;\; \bs{h}^* = \bs{h}^{*,HF}
    \end{equation*}
    and we get
    \begin{equation*}
    \mathcal{R}(\widehat\mu_{n,\bs{h}^{*}, p^*}, b; x)
    \lesssim
    v_n^{HF} \wedge \big(\tau_n^2 \Delta_n\big)^{\frac{\alpha_1}{1+\alpha_1}}.
    \end{equation*}
\end{proposition}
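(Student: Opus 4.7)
The plan is to specialize the general bound \eqref{eq:boundRpge2} to the prescribed choices $p = p^{\ast}$ and $\bs{h} = \bs{h}^{\ast} = \bs{h}^{\ast,HF}$, and then balance the resulting terms algebraically.

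First I would verify admissibility. Since $\tau_n^{2\alpha_1} \geq \Delta_n$, the ceiling in \eqref{eq:choicep} forces $p^{\ast} \geq 2$, so the $p \geq 2$ regime of Propositions \ref{propo:small:bias:mu} and \ref{prop:variance} is in force. Because $p^{\ast}\Delta_n \leq w_n^{HF}\to 0$ and $w_n^{HF}$ decays strictly faster than $\Delta_n^{1/2}$ in each of the dimensional regimes listed at the beginning of Section \ref{sec:upper_bounds}, I also get $p^{\ast} \leq \lceil \Delta_n^{-1/2}\rceil$ for $n$ large, which is the other admissibility requirement of those propositions. Then, the hypothesis $p^{\ast}\Delta_n \leq w_n^{HF}$ places us in the first line of \eqref{eq:boundRpge2}, yielding
\begin{equation*}
\mathcal{R}(\widehat\mu_{n,\bs{h}^{\ast}, p^{\ast}}, b; x) \lesssim p^{\ast}\Delta_n + \frac{\tau_n^{2\alpha_1}}{(p^{\ast})^{\alpha_1}} + v_n^{HF}.
\end{equation*}

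Next comes the key algebraic step: substituting $p^{\ast} \asymp (\tau_n^{2\alpha_1}/\Delta_n)^{1/(1+\alpha_1)}$ into the two $p$-dependent terms. By construction, $p^{\ast}$ is the balance point of the preaveraging error $p\Delta_n$ and the residual noise-bias $\tau_n^{2\alpha_1}p^{-\alpha_1}$; a direct calculation confirms that both terms are of order $(\tau_n^{2\alpha_1})^{1/(1+\alpha_1)} \Delta_n^{\alpha_1/(1+\alpha_1)} = (\tau_n^2\Delta_n)^{\alpha_1/(1+\alpha_1)}$. This gives the additive bound $\mathcal{R}(\widehat\mu_{n,\bs{h}^{\ast}, p^{\ast}}, b; x) \lesssim (\tau_n^2\Delta_n)^{\alpha_1/(1+\alpha_1)} + v_n^{HF}$.

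Finally, to upgrade this sum into the claimed minimum, I would combine the hypothesis $p^{\ast}\Delta_n \leq w_n^{HF}$ with the equivalence $p^{\ast}\Delta_n \asymp (\tau_n^2\Delta_n)^{\alpha_1/(1+\alpha_1)}$ proved above, which gives $(\tau_n^2\Delta_n)^{\alpha_1/(1+\alpha_1)} \lesssim w_n^{HF}$, and then appeal to the fact that $w_n^{HF} \lesssim v_n^{HF}$ in each dimensional regime. For $d \in \{1,2\}$ this is immediate since $w_n^{HF} = v_n^{HF}$; for $d \geq 3$ it follows by comparing the explicit exponents in the definitions of $w_n^{HF}$ and $v_n^{HF}$, using that $\overline{\alpha}_3$, $\alpha_1$ and $\alpha_2$ satisfy $\alpha_1,\alpha_2 \leq \overline{\alpha}_3$. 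I expect the main obstacle to be precisely this last verification, since it reduces to careful but tedious bookkeeping across the classes $D_1$, $D_2$ and $D_3$; in contrast, the balance of the noise-bias terms at $p^{\ast}$ and the resulting algebraic identification of the rate $(\tau_n^2\Delta_n)^{\alpha_1/(1+\alpha_1)}$ is essentially a one-line calculation once the groundwork from \eqref{eq:boundRpge2} and Lemma \ref{lem:simplification} is in place.
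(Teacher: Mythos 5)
Your first three steps — checking $p^{\ast}\geq 2$ so the $p\geq 2$ branch applies, invoking the first line of \eqref{eq:boundRpge2}, and observing that both $p^{\ast}\Delta_n$ and $\tau_n^{2\alpha_1}(p^{\ast})^{-\alpha_1}$ are of order $(\tau_n^2\Delta_n)^{\alpha_1/(1+\alpha_1)}$, giving $\mathcal R \lesssim (\tau_n^2\Delta_n)^{\alpha_1/(1+\alpha_1)} + v_n^{HF}$ — are exactly the paper's argument (the paper cites Propositions \ref{propo:small:bias:mu} and \ref{prop:variance} directly instead of going through \eqref{eq:boundRpge2}, a cosmetic difference). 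The added admissibility check that $p^{\ast}\lesssim\Delta_n^{-1/2}$ is a nice touch the paper omits.

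Your fourth step, however, does not work, for two independent reasons. First, the inequality $w_n^{HF}\lesssim v_n^{HF}$ you invoke is false when $d\geq 3$. Under the paper's running regularity assumption $\alpha_1>2$ (used explicitly in the proof of Lemma \ref{lem:simplification} and in Step 1 of Section \ref{sec:propo:bias:nu}), one has $\alpha_1^{-1}+\alpha_2^{-1}<1<2$. Comparing exponents of $T_n^{-1}$, the exponent in $w_n^{HF}$ is a factor $\tfrac12(\alpha_1^{-1}+\alpha_2^{-1})<1$ smaller than that in $v_n^{HF}$, so in fact $w_n^{HF}\gg v_n^{HF}$ — the opposite of what you claim. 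The fact $\alpha_1,\alpha_2\leq\overline\alpha_3$ is true but does not yield the inequality $\alpha_1^{-1}+\alpha_2^{-1}\geq 2$ that would be needed. Second, and more fundamentally, showing that one summand dominates the other can never convert a sum of nonnegative terms into their \emph{minimum}: from $a\lesssim b$ you get $a+b\lesssim b = a\vee b$, the maximum, not $a\wedge b$. Notice that the paper's own proof of this proposition also only establishes the additive bound and stops with ``so we can conclude''; the $\wedge$ in the proposition's statement is therefore best read as a slip for $\vee$ (or a loose shorthand for ``of the combined order''). Your instinct that the displayed bound does not manifestly follow from the sum was sound, but the repair you propose cannot succeed, and indeed no argument can turn the sum into the minimum.
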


\begin{remark}
    In the previous proposition, note that since $\tau_n^{2\alpha_1} \geq \Delta_n$, the condition $p^{\ast}\Delta_n \lesssim w_n^{HF}$ is equivalent to $\Delta_n \lesssim \tau_n^{-2} (w_n^{HF})^{\frac{1+\alpha_1}{\alpha_1}}$.
\end{remark}

\begin{proposition}[Small noise intensity, Low sampling frequency]
\label{prop:smallnoise:lowsampling}
    Suppose that $p^*\Delta_n \geq w_n^{HF}$ and that $\tau_n^{2\alpha_1} \leq \Delta_n$. In that case, we have
    \begin{equation*}
        p^* = 1 \;\;\text{ and }\;\; \bs{h}^* = \bs{h}^{*,1}
    \end{equation*}
    and we get
    \begin{equation*}
    \mathcal{R}(\widehat\mu_{n,\bs{h}^{*}, p^*}, b; x)
    \lesssim
    n^{-\frac{2\overline{\alpha}}{2\overline{\alpha}+d}} \wedge \tau_n^{2\alpha_1}.
    \end{equation*}
\end{proposition}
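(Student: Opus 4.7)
My plan is to specialize the general risk bound \eqref{eq:boundRp} — obtained earlier by combining Propositions \ref{propo:small:bias:mu} and \ref{prop:variance} via the bias-variance decomposition \eqref{eq:biasvariancedecompo} — to the parameter choice \eqref{eq:choicep}, and to simplify using the two standing hypotheses $p^*\Delta_n \geq w_n^{HF}$ and $\tau_n^{2\alpha_1}\leq \Delta_n$.

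First, I would identify the announced value of $p^*$. The hypothesis $\tau_n^{2\alpha_1}\leq \Delta_n$ gives $(\tau_n^{2\alpha_1}\Delta_n^{-1})^{1/(1+\alpha_1)}\leq 1$, so its ceiling is at most $1$, and the $\vee 1$ appearing in \eqref{eq:choicep} then forces $p^*=1$. With $p^*=1$ fixed, the remaining assumption $p^*\Delta_n = \Delta_n \geq w_n^{HF}$ places $\bs{h}^{*,1}$ in the second branch of the definition of $h_i^{*,p}$, which yields $h_i^* = n^{-\overline{\alpha}/(\alpha_i(2\overline{\alpha}+d))}$, matching the low-frequency optimal bandwidth $h_i^{*,LF}$ exactly.

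Second, I would invoke the $p=1$ case of \eqref{eq:boundRp1} in the subregime $\Delta_n \geq w_n^{HF}$, which yields
\begin{equation*}
    \mathcal{R}(\widehat\mu_{n,\bs{h}^{*}, p^*}, b; x) \lesssim \tau_n^{2\alpha_1} + n^{-\frac{2\overline{\alpha}}{2\overline{\alpha}+d}},
\end{equation*}
which is precisely the announced bound (read as the sum, or equivalently the maximum up to a constant $2$, of the two terms). There is no serious technical obstacle at this stage: all the work is packaged into the bias control of Proposition \ref{propo:small:bias:mu} (whose proof uses a Girsanov change of measure) and the variance estimate of Proposition \ref{prop:variance} (which rests on refined transition-density bounds for the pre-averaged process). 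The only routine checks remaining are that $\bs{h}^{*,1}\in(0,1]^d$ for $n$ large enough (needed to invoke Proposition \ref{prop:variance}) and that $p^* \leq \lceil \Delta_n^{-1/2}\rceil$ (needed to invoke Proposition \ref{propo:small:bias:mu}), both of which are trivial because $p^*=1$.
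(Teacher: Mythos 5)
Your identification of the parameter values is correct: $\tau_n^{2\alpha_1}\leq\Delta_n$ does force $p^*=1$ via \eqref{eq:choicep}, and the regime condition $\Delta_n\geq w_n^{HF}$ then selects the low-frequency branch $h_i^{*,1}=n^{-\overline{\alpha}/(\alpha_i(2\overline{\alpha}+d))}$. Your reading of the right-hand side as the sum (equivalently the maximum, up to a factor $2$) of the two terms is also sound.

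The gap is in your second step. You treat \eqref{eq:boundRp} (hence \eqref{eq:boundRp1}) as an established intermediate result that "there is no serious technical obstacle" to invoking, with "all the work packaged into" Propositions \ref{propo:small:bias:mu} and \ref{prop:variance}. But \eqref{eq:boundRp} is not a proved consequence of those two propositions alone: it only becomes true after one also verifies that, with the candidate bandwidth $\bs{h}^{*,p}$ plugged in, the case-by-case variance bound of Proposition \ref{prop:variance} actually collapses to the single rate $(p/n)^{2\overline{\alpha}/(2\overline{\alpha}+d)}$. That verification is precisely the content of the paper's own proof of this proposition. Concretely, the paper (i) computes that the discretization term $T_n^{-1}\Delta_n\prod_i(h_i^*)^{-1}$ equals $v_n^{LF}=n^{-2\overline{\alpha}/(2\overline{\alpha}+d)}$ exactly, (ii) factorizes the full variance bound as $v_n^{LF}(1+r_n^{LF})$ with an explicit remainder $r_n^{LF}$ that depends on $d$, $k_0$, the $|\log h_i|$ terms and the $\prod_i(h_i^*)^{2/k_0}$-type terms, and (iii) shows $r_n^{LF}\lesssim 1$ (in fact $\to 0$) by deploying Lemma \ref{lem:simplification0} to convert $\Delta_n\gtrsim w_n^{HF}$ into the effective lower bound $\Delta_n\gtrsim\widetilde{w}_n^{HF}$, which then dominates the relevant bandwidth products case by case. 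None of this is contained in Propositions \ref{propo:small:bias:mu} or \ref{prop:variance} themselves; it is where the hypothesis $\Delta_n\geq w_n^{HF}$ actually enters the variance side of the argument, and your proof never touches it. In short, citing \eqref{eq:boundRp1} here is circular: it is the statement to be proved, dressed up as a lemma.
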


\begin{proposition}[Large noise intensity, Low sampling frequency]
\label{prop:largenoise:lowsampling}
    Suppose that $p^*\Delta_n \geq w_n^{HF}$ and that $\tau_n^{2\alpha_1} \geq \Delta_n$. In that case, we have
    \begin{equation*}
        p^* = \Big\lceil \big(\tau_{n}^{2\alpha_1} \Delta_n^{-1} \big)^{1/(1+\alpha_1)} \Big\rceil  \;\;\text{ and }\;\; \bs{h}^* = \bs{h}^{*,p^*}
    \end{equation*}
    and we get
    \begin{equation*}
    \mathcal{R}(\widehat\mu_{n,\bs{h}^{*}, p^*}, b; x)
    \lesssim
     \big(\tau_n^2 \Delta_n\big)^{\frac{\alpha_1}{1+\alpha_1}}.
    \end{equation*}
\end{proposition}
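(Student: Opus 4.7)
The approach is direct: specialize the general risk bound \eqref{eq:boundRpge2} to $p = p^*$ and $\bs{h} = \bs{h}^{*,p^*}$, then simplify using the two hypotheses and the definition of $p^*$. Since \eqref{eq:boundRpge2} has already been established in the excerpt as a consequence of the bias bound \eqref{eq:propo:small:bias:mu}, the variance bound of Proposition \ref{prop:variance}, and Lemma \ref{lem:simplification}, the remaining work is purely algebraic balancing.

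First, the hypothesis $\tau_n^{2\alpha_1} \geq \Delta_n$ ensures that $(\tau_n^{2\alpha_1}/\Delta_n)^{1/(1+\alpha_1)} \geq 1$, so $p^*$ defined in \eqref{eq:choicep} satisfies $p^* \asymp (\tau_n^{2\alpha_1}/\Delta_n)^{1/(1+\alpha_1)}$ (the ceiling and the maximum with $1$ only affect the prefactor). The hypothesis $p^*\Delta_n \geq w_n^{HF}$ then places us in the second (low-frequency) branch of \eqref{eq:boundRpge2}, which yields
\begin{equation*}
\mathcal{R}(\widehat\mu_{n,\bs{h}^{*,p^*},p^*},b;x) \lesssim p^*\Delta_n + \frac{\tau_n^{2\alpha_1}}{(p^*)^{\alpha_1}}.
\end{equation*}

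Substituting the value of $p^*$, both summands simplify to $(\tau_n^2\Delta_n)^{\alpha_1/(1+\alpha_1)}$. Indeed,
\begin{equation*}
p^*\Delta_n \asymp \Delta_n^{1-1/(1+\alpha_1)} \, \tau_n^{2\alpha_1/(1+\alpha_1)} = (\tau_n^2\Delta_n)^{\alpha_1/(1+\alpha_1)},
\end{equation*}
and a symmetric computation gives $\tau_n^{2\alpha_1}/(p^*)^{\alpha_1} \asymp (\tau_n^2\Delta_n)^{\alpha_1/(1+\alpha_1)}$. This is no accident: $p^*$ is precisely (up to a constant) the minimizer of the function $p \mapsto p\Delta_n + \tau_n^{2\alpha_1}p^{-\alpha_1}$, obtained by setting the derivative to zero, so the two terms automatically balance at the optimum. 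Summing yields the claimed bound.

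The only subtle point — and what I would identify as the main, if minor, obstacle — is checking that $p^*$ lies in $\{1,\dots,\lceil\Delta_n^{-1/2}\rceil\}$ so that Propositions \ref{propo:small:bias:mu} and \ref{prop:variance} apply. This reduces to the inequality $\tau_n^{2\alpha_1} \lesssim \Delta_n^{(1-\alpha_1)/2}$, which holds in any asymptotic regime in which the target rate $(\tau_n^2\Delta_n)^{\alpha_1/(1+\alpha_1)}$ is informative (i.e.\ tends to zero). Aside from this bookkeeping and the harmless rounding introduced by the ceiling in $p^*$, the argument is purely computational.
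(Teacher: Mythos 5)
Your argument is correct and follows essentially the same route as the paper's: specialize the general risk bound to $p^*$ and $\bs{h}^{*,p^*}$, observe that $p^*\Delta_n$ and $\tau_n^{2\alpha_1}/(p^*)^{\alpha_1}$ both reduce to $(\tau_n^2\Delta_n)^{\alpha_1/(1+\alpha_1)}$ up to constants, and dispose of the residual variance contribution $(p^*/n)^{2\overline\alpha/(2\overline\alpha+d)}$ via Lemma \ref{lem:simplification}. You lean directly on \eqref{eq:boundRpge2}, whereas the paper's appendix re-derives the decomposition from Propositions \ref{propo:small:bias:mu} and \ref{prop:variance} and invokes Lemma \ref{lem:simplificationp} together with the exponent inequalities $\alpha_1,\alpha_2>1$ — the same mechanism, differently packaged.

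One correction to your final remark: the range condition $\tau_n^{2\alpha_1}\lesssim \Delta_n^{(1-\alpha_1)/2}$ does \emph{not} follow from $(\tau_n^2\Delta_n)^{\alpha_1/(1+\alpha_1)}\to 0$ alone. For instance $\tau_n=\Delta_n^{-1/4}$ gives $\tau_n^2\Delta_n=\Delta_n^{1/2}\to 0$ but $\tau_n^{2\alpha_1}\Delta_n^{(\alpha_1-1)/2}=\Delta_n^{-1/2}\to\infty$ when $\alpha_1>1$. The justification the paper relies on is that $\tau_n$ stays bounded (e.g.\ $\kappa\ge 0$ in Section 4.2), whence $p^*\Delta_n^{1/2}\asymp \tau_n^{2\alpha_1/(1+\alpha_1)}\Delta_n^{(\alpha_1-1)/(2(1+\alpha_1))}\to 0$ precisely because $\alpha_1>1$.
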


\subsection{Analysis of the convergence rate}

In this section, we study the convergence rates found previously in the setup 
$\Delta_n = n^{-\theta}$ and $\tau_n = n^{-\kappa}$. Of course, since $\Delta_n \to 0$ and $T_n = n\Delta_n \to \infty$, we must impose $0 < \theta < 1$. We also require $\kappa \geq 0$ for convenience, although a deeper analysis shows that we can still estimate the invariant measure when $\kappa > -\theta / (2\alpha_1)$. For conciseness, we also ignore the logarithmic factors in the rates $w_n^{HF}$ and $v_n^{HF}$. We also introduce the following notations
\begin{equation*}
    \overline{\beta} = \frac{2\overline{\alpha}}{2\overline{\alpha} + d} 
    \;\;\text{ and }\;\;
    \overline{\beta}_3 = \frac{2\overline{\alpha}_3}{2\overline{\alpha}_3 + d - 2}.
\end{equation*}
Rewriting the conditions for each proposition shows that
\begin{itemize}
    \item Proposition \ref{prop:smallnoise:highsampling} applies when $\kappa \geq \theta / (2\alpha_1)$ and $\theta \geq \overline{\beta} (\alpha_1^{-1} + \alpha_2^{-1})/2$,
    \item Proposition \ref{prop:largenoise:highsampling} applies when $\kappa \leq \theta / (2\alpha_1)$ and $\kappa \geq (1-\theta) \overline{\beta}_3(1+\alpha_1) (\alpha_1^{-1} + \alpha_2^{-1})/2 - \theta \alpha_1$,
    \item Proposition \ref{prop:smallnoise:lowsampling} applies when $\kappa \geq \theta / (2\alpha_1)$ and $\theta \leq \overline{\beta} (\alpha_1^{-1} + \alpha_2^{-1})/2$,
    \item Proposition \ref{prop:largenoise:lowsampling} applies when $\kappa \leq \theta / (2\alpha_1)$ and $\kappa \leq (1-\theta) \overline{\beta}_3(1+\alpha_1) (\alpha_1^{-1} + \alpha_2^{-1})/2 - \theta \alpha_1$.
\end{itemize}
We can then identify the domains where each convergence rates operate and we refer to Figure \ref{fig:rate1} for an illustration of the different regimes

\begin{figure}[h]
    \centering
        \begin{tikzpicture}[scale = 1.2]
                \begin{axis}[
        axis lines=middle,
        axis line style={-},
        x axis line style = {-},
        y axis line style = {->},
        xtick=\empty,
        ytick=\empty,
        xlabel style={at=(current axis.right of origin), anchor=west},
        ylabel style={at=(current axis.above origin), anchor=south},
        xmin=-1, xmax=10,
        ymin=-1, ymax=10,
        clip=false
    ]

    \addplot[domain=0:8, samples=100, color=red!40!gray, thick] {x};
    \addplot[color=orange!40!gray,mark=none,thick]  coordinates {(5.55,9) (5.55,10) };
    \addplot[color=brown!40!gray,mark=none,dotted]  coordinates {(8,8) (8,0) };
    \addplot[color=orange!40!gray,mark=none,dotted]  coordinates {(5.55,9) (5.55,0) };
    \addplot[color=green!40!gray,mark=none,thick]  coordinates {(5.55,9) (0,9) };
    \addplot[color=yellow!40!gray,mark=none,thick]  coordinates {(9.4,0) (8,8) };
    \addplot[color=brown!40!gray,mark=none,thick]  coordinates {(5.55,9) (8,8) };
    \addplot[color=black,mark=none,dashed] coordinates {(9.7, -0.5) (9.7, 11)};

    \node at (axis cs:1.5,2.8) {\color{red!40!gray}\tiny$\kappa = \frac{\theta}{2\alpha_1}$};
    \node at (axis cs:0,10.3) {\footnotesize$\kappa$};
    \node at (axis cs:10.3,0) {\footnotesize$\theta$};
    \node at (axis cs:5.5,-0.5) {\color{orange!40!gray}\tiny$\theta = \overline{\beta} \frac{\alpha_1^{-1} + \alpha_2^{-1}}{2}$};
    \node at (axis cs:-1,8.5) {\color{green!40!gray}\tiny$\kappa = \frac{\bar\beta}{2\alpha_1}$};
    \node at (axis cs:8,-0.5) {\color{brown!40!gray}\tiny$\theta = \frac{\widebar{\beta}_3}{\widebar{\beta}_3+1}$};
    \node at (axis cs:2.65,9.6) {\footnotesize$n^{-\frac{2\bar\alpha}{2\bar\alpha+d}}$};
    \node at (axis cs:8.5,9.2) {\footnotesize$v_n^{HF}$};
    \node at (axis cs:6,3) {\tiny{$(\tau_n^2\Delta_n)^{\frac{\alpha_1}{1+\alpha_1}}$}};
    \node at (axis cs:4,6.8) {\footnotesize{$\tau_n^{2\alpha_1}$}};
    \node at (axis cs:9.7,-0.7) {\footnotesize{$\theta = 1$}};
    \end{axis}
        \end{tikzpicture}
        \caption{Rates of convergence of $\widebar\mu_{n, \bs{h}, p}(x)$.}
        \label{fig:rate1}
    \end{figure}
    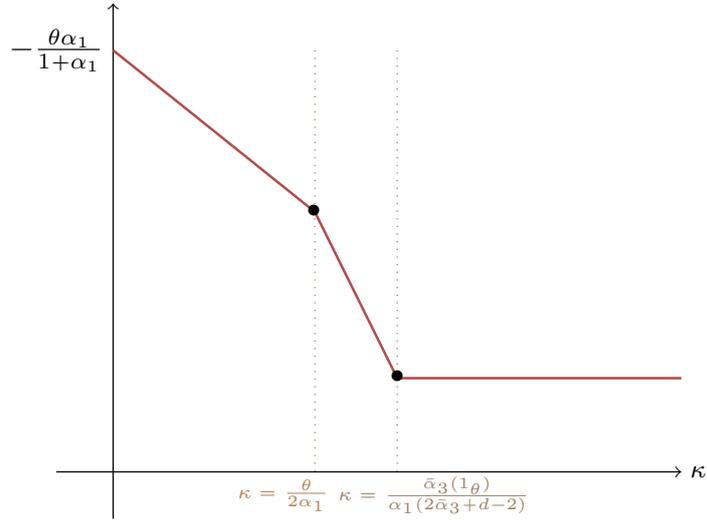
\begin{figure}[h]
        \begin{tikzpicture}[scale = 1.2]
                \begin{axis}[
        axis lines=middle,
        axis line style={-},
        x axis line style = {->},
        y axis line style = {->},
        xtick=\empty,
        ytick=\empty,
        xlabel style={at=(current axis.right of origin), anchor=west},
        ylabel style={at=(current axis.above origin), anchor=south},
        xmin=-1, xmax=10,
        ymin=-1, ymax=10,
        clip=false
    ]

    \addplot[color=brown!40!gray,mark=none,dotted]  coordinates {(5,9) (5,0) };
    \addplot[color=orange!40!gray,mark=none,dotted]  coordinates {(3.55,9) (3.55,0) };
    \addplot[color=red!40!gray,mark=none,thick]  coordinates {(0,9) (3.55,5.55) };
    \addplot[color=red!40!gray,mark=none,thick]  coordinates {(3.55,5.55) (5,2) };
    \addplot[color=red!40!gray,mark=none,thick]  coordinates {(5,2) (10,2) };

    \node at (axis cs:-1,9) {$-\frac{\theta \alpha_1}{1+\alpha_1}$};
    \node at (axis cs:10.3,0) {\footnotesize$\kappa$};
    \node at (axis cs:3,-0.5) {\color{orange!40!gray}\tiny$\kappa = \frac{\theta}{2\alpha_1}$};
    \node at (axis cs:5.65,-0.5) {\color{brown!40!gray}\tiny$\kappa = \frac{\bar\alpha_3(1_\theta)}{\alpha_1(2\bar\alpha_3 + d -2)}$};
    \node at (axis cs:3.53,5.53){\textbullet};
    \node at (axis cs:5,2){\textbullet};
    \end{axis}
        \end{tikzpicture}
        \caption{Evolution of the rate of convergence as a function of $\kappa$ with fixed $\theta$ in a logarithmic scale.}
        \label{fig:rate2}
\end{figure}

\subsection{Comments}

The rate $w_n^{HF}$ is the same in the cases $d\geq 3$, $k_0 \geq 3$ and $d \geq 3$, $k_0=1$ and $\alpha_2 = \alpha_3$. The same remark holds for $v_n^{HF}$ and in these two cases, the convergence rate of the estimator is therefore the same. We still distinguish these cases here because the proofs differ slightly. As noted in \cite{amorino2023estimation}, it is not clear that the high-frequency and the low-frequency rates meet when $\Delta_n \approx w_n^{HF}$.

\newpage
\section{Bernstein inequality and adaptive choice of the hyperparameters}
\label{sec:bernstein}

In this section, we introduce a concentration inequality of Bernstein's type for the kernel estimator built in Section \ref{sec:estimation_procedure}. This inequality provides a robust framework for analyzing the estimator's performance and variance. Leveraging this concentration inequality, we develop an adaptive approach for hyper-parameter selection in a data-driven way. 

\subsection{Bernstein inequality}
Recalling that $D_1, D_2$ and $D_3$ are defined in Equation \eqref{eq:D}, we get the following result.
\begin{theorem}[Bernstein inequality]\label{th:bernstein:loc}
Suppose that $b \in \Sigma(\bs{\mathfrak{b}})$. Then there exists $C,\tau$ positive and uniformly bounded over $\Sigma(\bs{\mathfrak{b}})$ such that for all $n,\bs{h},p$ satisfying $p\Delta_n \leq 1$, we have
\begin{align*}
	\mathbb{P}_{\overline{\mu}^b}^b\Big(| \widehat{\mu}_{n,\bs{h},p}(x) - \mathbb{E}^b_{\bar\mu^b}[\widehat{\mu}_{n,\bs{h},p}(x)] |> \varepsilon\Big)\leq K\exp \Big(-\frac{n_p^2\varepsilon^2\beta^2}{32 n_p v^2(\alpha, n, \bs{h},p) +\tau \beta\varepsilon\|\bs{K}_{\bs{h}}\|_{\infty} n_p\log n_p}\Big)
\end{align*}
where  $n_p = \lfloor n/p \rfloor$, $\beta = 1/\sqrt{d\ell}\|u\|_2$, $\| u\|_2^2 = \sum_{\bs{\gamma}} |u_{\bs{\gamma}}|^2$ and
\begin{align*}
v^2(\alpha, n, \bs{h},p) = & \operatorname{Var}^b_{\bar\mu^b}\Big(\bs{K}_{\bs{h}}(x - p^{-1} \sum_{\ell=0}^{p-1} Y_{\ell,n})\Big)\\
& +2 \sum_{k=1}^{\infty} \operatorname{Cov}^b_{\bar\mu^b}\Big(\bs{K}_{\bs{h}}(x - p^{-1} \sum_{\ell=0}^{p-1} Y_{\ell,n}), \bs{K}_{\bs{h}}(x - p^{-1} \sum_{\ell=0}^{p-1} Y_{kp + \ell,n})\Big).
\end{align*}
Moreover, we have the following estimates for $v^2(\alpha, n, \bs{h},p)$:
\begin{equation*}
    v^2(\alpha, n, \bs{h},p) \leq C
    \begin{cases}
    ({h_1h_2})^{-1} + \frac{|\log(p\Delta_n)|}{p\Delta_n} + \frac{|\log(h_1h_2)|}{p\Delta_n}
    & \, \text{ if } d = 1, 2,
    \\ \\
        {\prod_{i=1}^dh_i^{-1}} 
        + \frac{ \sum_{i=1}^d |\log(h_i)| }{p\Delta_n\prod_{i=3}^d h_i} 
        & \, \text{ if } d \geq 3 \text{ and } (\bs{\alpha}, k_0 ) \in D_1,
    \\ \\
     {\prod_{i=1}^dh_i^{-1}}
     +
     \frac{ \prod_{i=1}^{k_0} (h_i)^{2/k_0}  }{p\Delta_n \prod_{i=1}^d h_i}  
     + \frac{\sum_{i=1}^d | \log h_i |}{p\Delta_n} & \, \text{ if } d \geq 3 \text{ and } (\bs{\alpha}, k_0 ) \in D_2,
\\ \\
     {\prod_{i=1}^d h_i^{-1}}
     +
    \frac{\sum_{i=1}^d | \log h_i | }{p\Delta_n}
    +
    \frac{1}{p\Delta_n (h_2 h_3)^{1/2} \prod_{i=4}^d h_i^{1} }& \, \text{ if } d \geq 3 \text{ and } (\bs{\alpha}, k_0 ) \in D_3.
    \end{cases}
\end{equation*}
for some constant $C > 0$ uniform over $\Sigma(\bs{\mathfrak{b}})$ and independent of $n, \bs{h}$ and  $p$. 
\end{theorem}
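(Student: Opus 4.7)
\textbf{Reduction to $\widehat{\nu}_{n,\bs{h},p}$.} The plan is to proceed in three steps. First, I reduce the concentration of $\widehat{\mu}_{n,\bs{h},p}$ to that of $\widehat{\nu}_{n,\bs{h},p}$. From \eqref{eq:def:est:mu} I write $\widehat{\mu}_{n,\bs{h},p}(x) - \mathbb{E}^b_{\overline{\mu}^b}[\widehat{\mu}_{n,\bs{h},p}(x)] = \sum_{\bs{\gamma}} \bs{u}_{\bs{\gamma}} \delta_{\bs{\gamma}}$ where $\delta_{\bs{\gamma}} := \widehat{\nu}_{n,\bs{h},p}(x+\bs{\gamma}\widetilde{\tau}_{n,p}) - \mathbb{E}^b_{\overline{\mu}^b}[\widehat{\nu}_{n,\bs{h},p}(x+\bs{\gamma}\widetilde{\tau}_{n,p})]$ and the sum runs over the $N := (l+1)^d$ multi-indices in $\{0,\ldots,l\}^d$. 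Cauchy--Schwarz gives $|\sum_{\bs{\gamma}} \bs{u}_{\bs{\gamma}} \delta_{\bs{\gamma}}| \leq N^{1/2} \|u\|_2 \max_{\bs{\gamma}}|\delta_{\bs{\gamma}}|$, so that $\{|\widehat{\mu}_{n,\bs{h},p}(x) - \mathbb{E}^b_{\overline{\mu}^b}[\widehat{\mu}_{n,\bs{h},p}(x)]| > \varepsilon\} \subseteq \bigcup_{\bs{\gamma}}\{|\delta_{\bs{\gamma}}| > \beta\varepsilon\}$ with $\beta$ as in the statement. A union bound then reduces the problem to concentrating $\widehat{\nu}_{n,\bs{h},p}(y)$ at a single point $y = x + \bs{\gamma}\widetilde{\tau}_{n,p}$.

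\textbf{Bernstein for the preaveraged sum.} Set $\varphi_{k,n}(y) = \bs{K}_{\bs{h}}(y - p^{-1}\sum_{\ell=0}^{p-1} Y_{kp+\ell,n})$, so that $\widehat{\nu}_{n,\bs{h},p}(y) = n_p^{-1}\sum_{k=0}^{n_p-1}\varphi_{k,n}(y)$ and $\|\varphi_{k,n}\|_\infty \leq \|\bs{K}_{\bs{h}}\|_\infty$. The stationary sequence $(\varphi_{k,n})_k$ is a measurable block functional of $(X_t)_{t\geq 0}$ and of the independent Gaussian noise, so the spectral gap implied by Assumption \ref{assumption:ergodicity} and the Poincar\'e inequality \eqref{eq:ergodicity} yields exponential $\beta$-mixing at rate $e^{-c p\Delta_n k}$ for some $c > 0$. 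I then apply a Bernstein inequality for bounded stationary exponentially $\beta$-mixing sequences — for instance in the spirit of Merlev\`ede--Peligrad--Rio, or the Markov-chain versions of Adamczak or Paulin already cited in the paper — which yields the announced tail bound, with asymptotic variance $v^2(\alpha,n,\bs{h},p)$ as the true variance proxy and the extra $\log n_p$ factor in the sub-exponential part arising from Berbee-type decoupling into approximately independent blocks. Substituting $t = \beta\varepsilon$ and combining with the union bound of the previous step gives the stated inequality.

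\textbf{Control of $v^2$.} I decompose $v^2 = \mathrm{Var}(\varphi_{0,n}(y)) + 2\sum_{k\geq 1}\mathrm{Cov}(\varphi_{0,n}(y),\varphi_{k,n}(y))$. The first term is bounded by $\mathbb{E}[\bs{K}_{\bs{h}}(y - Z_{0,n})^2]$ where $Z_{0,n} = p^{-1}\sum_\ell Y_{\ell,n}$; since the density of $Z_{0,n}$ is a convolution of the bounded $\overline{\mu}^b$ with a Gaussian density and is therefore bounded, a change of variables yields a contribution of order $\prod_i h_i^{-1}$. The covariance series is treated exactly as in the proof of Proposition \ref{prop:variance} in Appendix \ref{sec:appendix:prop:variance}, using the transition density estimates for the preaveraged process established in Section \ref{sec:UpperBound}: short lags $k$ are controlled by Gaussian-type local bounds on the preaveraged transition density, long lags by the exponential contraction of the semigroup. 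The four announced bounds then follow case by case for $d=1,2$ and for $(\bs{\alpha},k_0) \in D_1, D_2, D_3$. In fact, the bounds of Proposition \ref{prop:variance} on $\mathrm{Var}^b_{\overline{\mu}^b}(\widehat{\nu}_{n,\bs{h},p})$ already match $T_n^{-1}\cdot p\Delta_n \cdot v^2$ up to a constant, so only minor bookkeeping is required to extract the four claimed expressions.

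\textbf{Main obstacle.} The crux is the Bernstein step: the preaveraged sequence $(\varphi_{k,n})_k$ is not itself Markovian — only $(X_t)$ is — so spectral Bernstein inequalities for Markov chains do not apply directly. The standard workaround is a Berbee coupling of $(\varphi_{k,n})_k$ with independent blocks followed by the Bernstein bound for bounded independent variables on each block, which is what produces the $\log n_p$ factor in the sub-exponential part of the tail. The restriction $p\Delta_n \leq 1$ plays a critical role here: it guarantees that the effective lag $p\Delta_n$ between consecutive preaveraged samples remains below the mixing time of $X$, so that the $\beta$-mixing coefficients decay geometrically in $k$ uniformly in $p$ and the series defining $v^2$ converges. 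A secondary difficulty is the case-by-case analysis of $v^2$ in the anisotropic regimes $D_2$ and $D_3$, which reuses the refined transition density bounds of Section \ref{sec:UpperBound}.
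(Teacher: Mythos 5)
Your proposal is broadly correct in outline but takes a genuinely different route from the paper on the central step, and it misses the paper's key device. You correctly observe that the preaveraged sequence $(\varphi_{k,n})_k$ is not itself Markovian and flag this as the main obstacle, proposing to circumvent it by establishing geometric $\beta$-mixing and then invoking a Bernstein inequality for $\beta$-mixing stationary sequences via Berbee block coupling. The paper instead resolves the very obstruction you raise by \emph{lifting}: it considers the genuine Markov chain $\bigl(p^{-1}\sum_{\ell=0}^{p-1} Y_{kp+\ell,n},\, X_{(k+1)p\Delta_n}\bigr)_{k\in\mathbb{N}}$, verifies that it is $\psi$-irreducible and aperiodic, checks the drift condition (V4) of Meyn--Tweedie for a Lyapunov function of the form $\widetilde W(x,y)=W(y)$ and a small set $\widetilde C=\mathbb{R}^d\times C$ (hence geometric ergodicity), and then applies the Bernstein inequality for geometrically ergodic Markov chains, Theorem~1.1 of \cite{lemanczyk2021general}, to the function $f(x_1,x_2)=\bs{K}_{\bs{h}}(x-x_1)$, which depends only on the first coordinate. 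So the assertion in your ``main obstacle'' paragraph --- that Markov-chain Bernstein inequalities ``do not apply directly'' --- is precisely what the lifting dispels: they do apply, after lifting.

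Two smaller points. First, the route you sketch from the Poincar\'e inequality \eqref{eq:ergodicity} to exponential $\beta$-mixing is not immediate: Poincar\'e gives $L^2$ decay of the semigroup, whereas $\beta$-mixing requires total-variation decay; the paper goes through (V4) and geometric ergodicity in the Meyn--Tweedie sense, which is what actually delivers both the total-variation contraction and the precise hypotheses of \cite{lemanczyk2021general}. Second, the exact form of the tail --- with a single $\log n_p$ factor in the sub-exponential part and the asymptotic variance $v^2$ as the exact variance proxy --- is what \cite{lemanczyk2021general} provides directly; Berbee-blocking arguments in the Merlev\`ede--Peligrad--Rio style typically produce a $(\log n_p)^2$ factor or a less sharp proxy, so matching the stated inequality exactly would require extra bookkeeping in your route. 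Your reduction from $\widehat{\mu}_{n,\bs{h},p}$ to $\widehat{\nu}_{n,\bs{h},p}$ via Cauchy--Schwarz and a union bound is the natural way to fill the gap the paper leaves implicit, and your treatment of $v^2$ by recycling the proof of Proposition~\ref{prop:variance} matches the paper (with the caveat, noted there, that the infinite sum requires the long-range covariance bound of Lemma~\ref{lem:corr:long}).
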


The Bernstein inequality presented in Theorem \ref{th:bernstein:loc} is standard and based on \cite{lemanczyk2021general}. Our proof requires the derivation of an exponential rate of convergence toward the invariant measure for the Markov process embedding the preaveraged process $(p^{-1}\sum_{\ell=0}^{p-1} Y_{kp+\ell,n})_k$. We refer to Section \ref{sec:proof:bernstein} for details. 

\subsection{Choice of the hyperparameters}

Bernstein inequality can be used to tune adaptively the parameter $\bs{h}$, following the ideas of 
 \cite{goldenshluger2008universal, goldenshluger2009structural, goldenshluger2011bandwidth}. Fix $p\geq1$. As previously discussed, for dimensions $d=1$ and $d=2$, the variance's upper bound does not rely on the smoothness. Consequently, there is no advantage to adopting a data-oriented adaptive strategy for $d<3$. 
	
	For $d \geq 3$, the strategy is to consider a range of possible bandwidths and to select the one that minimizes the error. For this aim, we introduce a heuristic representation of the bias and a penalty term proportional to the variance bound of Proposition ... The optimal bandwidths are those minimizing the sum of these two latter. Let us begin by introducing the grid

\begin{equation*}
\begin{aligned}
\mathcal{H}_n^p \subset\left\{\bs{h} \in(0,1]^d:, \: h_1 \leq \dots, \leq h_d, \:   \forall l=1, \ldots, d \quad\left(\frac{\log(n_p)^{3}}{n_p}\right)^{1/d} \leq h_l \leq 1 \right\},
\end{aligned}
\end{equation*}
where we recall that $n_p$ stands for $\lfloor n/p\rfloor$ for the sake of clarity. We also assume that $\# \mathcal{H}_n^p \leq T_n$. According to the set of candidate bandwidths, we can introduce the set of candidate estimators:
\begin{equation*}
\begin{aligned}
& \mathcal{F}\left(\mathcal{H}_n^p\right):=\left\{\hat{\mu}_{n,\bs{h}, p}(x): \quad x \in \mathbb{R}^d, \quad \bs{h} \in \mathcal{H}_n^p\right\} . \\
\end{aligned}
\end{equation*}

The goal of this section is to choose an estimator in the family $\mathcal{F}\left(\mathcal{H}_n^p\right)$, in a completely data-driven way. Following the idea in \cite{goldenshluger2008universal, goldenshluger2009structural, goldenshluger2011bandwidth}, our selection procedure relies on the introduction of auxiliary convolution estimators, $\widehat \mu_{n, (\bs{h}, \bs{\eta}), p}$ for $(\bs{h}, \bs{\eta}) \in \left( \mathcal{H}_n^p\right)^2$ which is the same estimator as the one introduced in Section \ref{sec:estimation_procedure} but with kernel $\bs{K}_{\bs{h}} \ast \bs{K}_{\bs{\eta}}$. The first important remark is that when the regularity of the function of interest is unknown, the upper bound on the variance for $p$ fixed can be rewritten for any $\bs{h} \in \mathcal{H}_n^p$,
\begin{align*}
	v^p_n(\bs{h}) = \frac{1}{T_n}  \Big(p\Delta_n \prod_{i=1}^d h_i^{-1} + \min& \Big( \sum_{i=1}^d|\log(h_i)|\prod_{i=3}^d h_i^{-1},  (h_2h_3)^{-1/2}\prod_{i=4}^d h_i^{-1}, \\
	& \min_{k_0\geq 3} \prod_{i=1}^{k_0} h_i^{\frac{2-k_0}{k_0}}\prod_{i=k_0+1}^d h_i^{-1}\Big)\Big).
\end{align*}
Moreover, one can write:
\begin{align*}
	& \sum_{i=1}^d|\log(h_i)|\prod_{i=3}^d h_i^{-1} \leq  \prod_{i=1}^{3} h_i^{\frac{-1}{3}}\prod_{i=4}^d h_i^{-1} \\
	\iff \qquad & h_3^2 \geq h_1h_2\sum_{i=1}^d|\log(h_i)|. 
\end{align*}
Using the fact that $h_1 \leq \dots \leq h_d$, we obtain
\begin{equation*}
	v^p_n(\bs{h}) = \frac{1}{T_n}\left(p\Delta_n \prod_{i=1}^d h_i^{-1} + \min\left( \sum_{i=1}^d|\log(h_i)|\prod_{i=3}^d 		h_i^{-1}, (h_2h_3)^{-1/2}\prod_{i=4}^d h_i^{-1}\right)\right).
\end{equation*}
With this purpose in mind, we introduce the following penalty function 
\begin{equation*}
	V^p_n(\bs{h}) = \widebar{\omega} \log(n_p) v^p_n(\bs{h}),
\end{equation*}
for some positive constant $\widebar{\omega}$ which has to be taken large. 
Now, following once again the procedure of \cite{goldenshluger2008universal,goldenshluger2009structural, goldenshluger2011bandwidth}, we define for any $\bs{h} \in \mathcal{H}_n^p$, 
\begin{equation}
	A^p_n(\bs{h}) =\max _{\bs{\eta} \in \mathcal{H}_n^p}\left\{\left|\widehat{\mu}_{n,(\bs{h}, \bs{\eta}),p}\left(x\right)-\widehat{\mu}_{n,\bs{\eta},p}\left(x\right)\right|^2-V^p_n(\bs{\eta})\right\}_{+}.
\end{equation} 
Finally, we define the choice procedure 
\begin{equation*}
	\bs{h}^* \in \mathrm{argmin}_{\bs{h} \in \mathcal{H}_n^p} \left\{ A^p_n(\bs{h}) +V^p_n(\bs{h}) \right\}.
\end{equation*}

\begin{remark}
	The choice of the penalty $A^p_n(\bs{h})$ and the threshold $V^p_n(\bs{h})$ are standard in the Goldenshulger-Lepski methodology: $A^p_n(\bs{h})$ is a kind of proxy for the estimation of the squared bias of $\widehat\mu_{n, \bs{h}, p}(x)$ while $V^p_n(\bs{h})$ is the exact penalty needed to balance the size of the variance of the estimator in $\bs{h}$, inflated by a logarithmic term and tuned with $ \widebar \omega> 0$. This enables one to control all the stochastic deviation terms. See section \ref{sec:proof:bernstein} for details.
\end{remark}

Setting this methodology up, we obtain for any pre-averaging level $p$, the following Orcale inequality 

\begin{proposition}[Oracle inequality]\label{prop:oracle}
	Assume that Assumptions \ref{assumption:boundedness}, \ref{assumption:potential} and \ref{assumption:ergodicity} hold and that $d \geq 3$, then there exists $n_0 \in \mathbb{N}$, such that for any $n \geq n_0$, 
	\begin{equation*}
		\mathbb{E}\left[ \left| \widehat\mu_{n, \bs{h}^*, p} - \widebar\mu^b(x)\right|^2\right] \leq c\inf_{\bs{h} \in \mathcal{H}_n^p} \left\{ \mathrm{B}_{n,\bs{h},p}^b(x)^2 + V_p^n(\bs{h})\right\} + c \: n_p^{-\gamma} + p\Delta_n\mathds{1}_{p\geq 2} + \frac{\tau_n^{2\alpha_1}}{p^\alpha_1},
	\end{equation*}
	for some $c > 0$ and $\gamma > 1$, and where we recall that $\mathrm{B}_{n,\bs{h},p}^b(x)$ is defined in Equation \eqref{eq:biasvariancedecompo}. 
\end{proposition}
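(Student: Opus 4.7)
The plan is to follow the Goldenshluger-Lepski methodology, relying on three ingredients: a triangle inequality exploiting the minimizing property of $\bs{h}^\ast$, a bias decomposition for the auxiliary convolution estimator $\widehat\mu_{n,(\bs{h},\bs{\eta}),p}$ based on the identity $\mathbb{E}[\widehat\mu_{n,(\bs{h},\bs{\eta}),p}(\cdot)] = \bs{K}_{\bs{h}} \ast \mathbb{E}[\widehat\mu_{n,\bs{\eta},p}(\cdot)]$, and the Bernstein inequality of Theorem \ref{th:bernstein:loc} to absorb stochastic fluctuations.

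Fix $\bs{h} \in \mathcal{H}_n^p$ arbitrary. The first step is the classical decomposition
\begin{equation*}
|\widehat\mu_{n,\bs{h}^\ast,p}(x) - \widebar\mu^b(x)|^2 \leq 3 T_1 + 3 T_2 + 3 T_3,
\end{equation*}
with $T_1 = |\widehat\mu_{n,\bs{h}^\ast,p}(x) - \widehat\mu_{n,(\bs{h},\bs{h}^\ast),p}(x)|^2$, $T_2 = |\widehat\mu_{n,(\bs{h},\bs{h}^\ast),p}(x) - \widehat\mu_{n,\bs{h},p}(x)|^2$, and $T_3 = |\widehat\mu_{n,\bs{h},p}(x) - \widebar\mu^b(x)|^2$. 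By the very definition of $A_n^p$, $T_1 \leq A_n^p(\bs{h}) + V_n^p(\bs{h}^\ast)$ and $T_2 \leq A_n^p(\bs{h}^\ast) + V_n^p(\bs{h})$; the minimality of $\bs{h}^\ast$ then gives $T_1 + T_2 \leq 2(A_n^p(\bs{h}) + V_n^p(\bs{h}))$. Using Proposition \ref{prop:variance}, one has $\mathbb{E}[T_3] \leq \mathrm{B}^b_{n,\bs{h},p}(x)^2 + V_n^p(\bs{h})$ once $\bar\omega$ is chosen large enough, so the argument reduces to controlling $\mathbb{E}[A_n^p(\bs{h})]$.

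For any $\bs{\eta} \in \mathcal{H}_n^p$, I insert $\mathbb{E}[\widehat\mu_{n,(\bs{h},\bs{\eta}),p}(x)]$ and $\mathbb{E}[\widehat\mu_{n,\bs{\eta},p}(x)]$ into the difference $\widehat\mu_{n,(\bs{h},\bs{\eta}),p}(x) - \widehat\mu_{n,\bs{\eta},p}(x)$ and split pathwise into three squares, which yields
\begin{equation*}
A_n^p(\bs{h}) \leq 3 \sup_{\bs{\eta}} B(\bs{\eta})^2 + 3 \sup_{\bs{\eta}} (Z_1(\bs{\eta})^2 - V_n^p(\bs{\eta})/6)_+ + 3 \sup_{\bs{\eta}} (Z_2(\bs{\eta})^2 - V_n^p(\bs{\eta})/6)_+,
\end{equation*}
where $Z_1, Z_2$ are the centered deviations of $\widehat\mu_{n,(\bs{h},\bs{\eta}),p}(x)$ and $\widehat\mu_{n,\bs{\eta},p}(x)$ respectively, and $B(\bs{\eta}) = |\mathbb{E}[\widehat\mu_{n,(\bs{h},\bs{\eta}),p}(x)] - \mathbb{E}[\widehat\mu_{n,\bs{\eta},p}(x)]|$. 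Because the auxiliary estimator uses the kernel $\bs{K}_{\bs{h}} \ast \bs{K}_{\bs{\eta}}$, a direct computation gives the key identity
\begin{equation*}
\mathbb{E}[\widehat\mu_{n,(\bs{h},\bs{\eta}),p}(y)] = (\bs{K}_{\bs{h}} \ast f_{\bs{\eta}})(y), \quad f_{\bs{\eta}}(y) := \mathbb{E}[\widehat\mu_{n,\bs{\eta},p}(y)],
\end{equation*}
so that $B(\bs{\eta})$ is the standard bias of a kernel smoother of bandwidth $\bs{h}$ applied to $f_{\bs{\eta}}$. Since $f_{\bs{\eta}}$ inherits the anisotropic Hölder regularity of $\widebar\mu^b$ uniformly in $\bs{\eta}$, a Taylor expansion up to order $\ell \geq \lceil \alpha_d \rceil$, in the spirit of the proof of Proposition \ref{propo:small:bias:mu}, yields $B(\bs{\eta})^2 \leq C(\sum_i h_i^{\alpha_i})^2$, and hence $\sup_{\bs{\eta}} B(\bs{\eta})^2 \leq C \mathrm{B}^b_{n,\bs{h},p}(x)^2 + C(p\Delta_n \mathds{1}_{p\geq 2} + \tau_n^{2\alpha_1}/p^{\alpha_1})$ after inserting the $\bs{h}$-independent correction terms coming from the preaveraging and denoising biases in Proposition \ref{propo:small:bias:mu}.

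Taking expectations and applying Theorem \ref{th:bernstein:loc} to both $\widehat\mu_{n,\bs{\eta},p}$ and to its convolved variant $\widehat\mu_{n,(\bs{h},\bs{\eta}),p}$, whose effective variance in the Bernstein exponent still obeys the bound $v_n^p(\bs{\eta})$ up to constants independent of $\bs{h}$ (because $\bs{K}_{\bs{h}} \ast \bs{K}_{\bs{\eta}}$ has $L^\infty$ norm dominated by $\|\bs{K}_{\bs{\eta}}\|_\infty \|K\|_1^d$), and choosing $\bar\omega$ large enough so that the logarithmic factor in $V_n^p$ dominates the Bernstein exponent, a union bound over the grid $\mathcal{H}_n^p$ of cardinality at most $T_n$ yields $\mathbb{E}[\sup_{\bs{\eta}} (Z_j(\bs{\eta})^2 - V_n^p(\bs{\eta})/6)_+] \leq C n_p^{-\gamma}$ for some $\gamma > 1$, $j=1,2$. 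Combining all pieces and taking the infimum over $\bs{h} \in \mathcal{H}_n^p$ concludes the proof. The hard part will be the extension of the Bernstein inequality to the convolved estimator: one must revisit the coupling and mixing arguments of Section \ref{sec:proof:bernstein} for the Markov embedding of the preaveraged process and verify that the covariance sums entering $v_n^p$ retain the same structure when the test function is $\bs{K}_{\bs{h}} \ast \bs{K}_{\bs{\eta}}$ rather than $\bs{K}_{\bs{\eta}}$ alone, uniformly over the auxiliary bandwidth $\bs{h}$.
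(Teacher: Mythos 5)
Your proposal follows the same Goldenshluger--Lepski architecture as the paper: a triangle decomposition through the auxiliary estimator $\widehat\mu_{n,(\bs{h},\bs{h}^*),p}$ exploiting the minimality of $\bs{h}^*$, a further split of each difference $\widehat\mu_{n,(\bs{h},\bs{\eta}),p}-\widehat\mu_{n,\bs{\eta},p}$ into a deterministic bias plus two centered deviations, Bernstein-type concentration (Theorem \ref{th:bernstein:loc}) for the deviations, and a union bound over the grid using $\#\mathcal{H}_n^p\lesssim T_n$. This is precisely how the paper organizes the argument (Lemma \ref{lemma:control:A} with $\barroman{I}$, $\barroman{II}$, $\barroman{III}$).

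The one place where your reasoning is not airtight is the treatment of the deterministic term $B(\bs{\eta})=\big|\E[\widehat\mu_{n,(\bs{h},\bs{\eta}),p}(x)]-\E[\widehat\mu_{n,\bs{\eta},p}(x)]\big|$. You argue from the identity $\E[\widehat\mu_{n,(\bs{h},\bs{\eta}),p}]=\bs{K}_{\bs{h}}\ast f_{\bs{\eta}}$ with $f_{\bs{\eta}}=\E[\widehat\mu_{n,\bs{\eta},p}]$, and then invoke "$f_{\bs{\eta}}$ inherits the anisotropic H\"older regularity of $\widebar\mu^b$ uniformly in $\bs{\eta}$" to Taylor-expand and obtain $B(\bs{\eta})^2\lesssim(\sum_i h_i^{\alpha_i})^2$. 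That regularity claim is not established anywhere in the paper and is not obviously correct: the proof of Proposition \ref{propo:small:bias:mu} shows $f_{\bs{\eta}}=\bs{K}_{\bs{\eta}}\ast\widebar\mu^b+\varepsilon_{n,p}$ where the remainder $\varepsilon_{n,p}$, coming from the Girsanov/pre-averaging correction $B_2$ and the deconvolution error, is controlled only in $L^\infty$ (of size $\sqrt{p\Delta_n}+\tau_n^{2\alpha_1}/p^{\alpha_1}$), with no uniform H\"older control. A Taylor expansion of $f_{\bs{\eta}}$ is therefore not justified. The paper sidesteps this entirely by exchanging the roles of the two kernels, writing $\E[\widehat\mu_{n,(\bs{h},\bs{\eta}),p}]=\bs{K}_{\bs{\eta}}\ast\mu_{\bs{h}}$ (which is the same identity by commutativity of convolution), so that
\begin{equation*}
\E[\widehat\mu_{n,(\bs{h},\bs{\eta}),p}(x)]-\E[\widehat\mu_{n,\bs{\eta},p}(x)]
=\bs{K}_{\bs{\eta}}\ast\big(\mu_{\bs{h}}-\widebar\mu^b\big)(x)-\varepsilon_{n,p}(x),
\end{equation*}
and then bounds this in sup-norm by $\|\bs{K}\|_1^d\,\|\mu_{\bs{h}}-\widebar\mu^b\|_\infty+\|\varepsilon_{n,p}\|_\infty\lesssim\mathrm{B}_{n,\bs{h},p}(x)+\sqrt{p\Delta_n}+\tau_n^{2\alpha_1}/p^{\alpha_1}$ directly from Proposition \ref{propo:small:bias:mu}, with no smoothness of $f_{\bs{\eta}}$ needed. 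Your sentence at the end about "inserting the $\bs{h}$-independent correction terms" suggests you sensed this, but as stated the argument needs the roles of $\bs{h}$ and $\bs{\eta}$ swapped so that the H\"older regularity is only used on $\widebar\mu^b$, where it is actually assumed. Everything else in your outline, including the $\|\bs{K}_{\bs{h}}\ast\bs{K}_{\bs{\eta}}\|_\infty\le\|\bs{K}_{\bs{\eta}}\|_\infty\|K\|_1^d$ trick to reuse Bernstein for the convolved estimator and the choice of $\widebar\omega$ large enough, matches the paper.
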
 

The proof of Proposition \ref{prop:oracle} is postponed to Section \ref{sec:proof:bernstein}.

\section{Numerical Analysis}
\label{sec:numerical}

In this section, we study the estimators derived in Section \ref{sec:estimation_procedure}. We first focus on the case $d=1$. Following the results of Section \ref{sec:upper} bandwidth $h$ is given by $h = T_n^{-1}$. The main challenge is thus to choose the preaveraging parameter $p$. Theoretically, the best approach would be to consider the bias-corrected estimator $\widehat\mu_{n,\bs{h},p}(x)$ defined in \eqref{eq:def:est:mu}. However, this estimator is unstable, due to the constant $(\sum_{\gamma} |u_\gamma| )^{2d}$ appearing in \eqref{eq:varmu:varnu}. Indeed, consider for instance the case
\begin{equation*}
V(x) = x^2/4 \quad \text{ and }\quad b(x) = -x/2.
\end{equation*}
Taking $n = 2^{14}$, $\Delta_n = n^{-1/2} = 2^{-7}$ and $\tau_n = 1$, we can compute the bias and the variance of both $\widehat\nu_{n,\bs{h},p}(x)$ and $\widehat\mu_{n,\bs{h},p}(x)$. The results are presented in Table \ref{tab:table_bias_correction} with $p = \lfloor (\tau_n^2 \Delta_n^{-1})^{1/2} \rfloor \vee 1$. In this example, we can see clearly that the variance increases by a non-negligible factor when doing the bias-correction procedure and consequently the mean squared error is higher, even if the bias is substantially smaller. 
\begin{table}[H]
\centering
\begin{tabular}{|c|c|c|}
\hline
 & Without bias correction & With bias correction \\ \hline
Error     &  \textbf{1.61e-3} & 8.2e-3 \\
Bias   & 1.21e-3 &  \textbf{3.48e-4}  \\
Variance & \textbf{1.67 e-3} & 9.77e-3\\ \hline
\end{tabular}
\caption{Impact of the bias correction.}
\label{tab:table_bias_correction}
\end{table}

Therefore, in most practical applications, the use of the initial estimator $\widehat\nu_{n,\bs{h},p}(x)$ leads to better results compared to $\widehat\mu_{n,\bs{h},p}(x)$. Therefore, in the following, we focus on $\widehat\nu_{n,\bs{h},p}(x)$ in our simulations. In that case, we can choose $p$ with easy asymptotics: indeed, the optimal choice for $p$ minimises the bias in \eqref{eq:bias_nu}, and thus we take $p^* = \lfloor (\tau_n^2 \Delta_n^{-1})^{1/2} \rfloor \vee 1$.

We now illustrate the effectiveness of the pre-averaging strategy and stack it up against the straightforward method where $p=1$. We still consider the case 
\begin{equation*}
V(x) = x^2/4 \quad \text{ and }\quad b(x) = -x/2.
\end{equation*}
with $n = 2^{14}$, $\Delta_n = n^{-1/2} = 2^{-7}$ and $\tau_n = 1$. The results are presented in Table \ref{tab:my_label} where the pointwise quadratic error is presented for different values of $p$ and at different points $x$.
\begin{table}[H]
\centering
\begin{tabular}{c|ccccc}
\hline
$p$ & {$x = 0$} & {$x = 0.25$} & {$x = 0.5$} & {$x = 1$} & {$x = 0.75$} \\ \hline
1    &  1.29e-1 & 1.20e-1 & 9.75e-2 & 6.80e-2 &  4.12e-2 \\
16   & 6.31e-2 &  5.62e-2 &  4.36e-2 &  2.57e-2 & 1.08e-2 \\
\color{blue!40!gray} $\bs{p^*}$  & \color{blue!40!gray}\textbf{1.04e-2} & \color{blue!40!gray}\textbf{1.08e-2} & \color{blue!40!gray}\textbf{6.49e-3} & \color{blue!40!gray}\textbf{2.83e-3} & \color{blue!40!gray}\textbf{1.70e-3} \\
1024 & 7.07e-2 & 5.05e-2 & 4.06e-2 & 1.67e-2 & 2.02e-2 \\
4096 & 7.49e-1 & 5.39e-1 & 3.25e-1 & 7.27e-2 & 5.22e-2 \\ \hline
\end{tabular}
\caption{Pointwise quadratic error.}
\label{tab:my_label}
\end{table}
~\\

We also illustrate the effectiveness of this method in dimension $2$, with the potential $V(x) = |x|^2/4$. In that case $\overline{\mu}^b$ is the density of a standard Gaussian distribution.
The following plots provides a visual comparison between the desired distribution and the outcomes of two estimation techniques in dimension $d=2$, with the same parameters as in the unidimensional case. The first plot is the target density. The bottom left plot shows the estimated distribution using no preaveraging method while the bottom right plot shows the results with preaveraging. As expected, the second method exhibits better results, confirming that pre-averaging refines the estimation process when the noise intensity is constant.

\begin{center}
\includegraphics[scale=0.4]{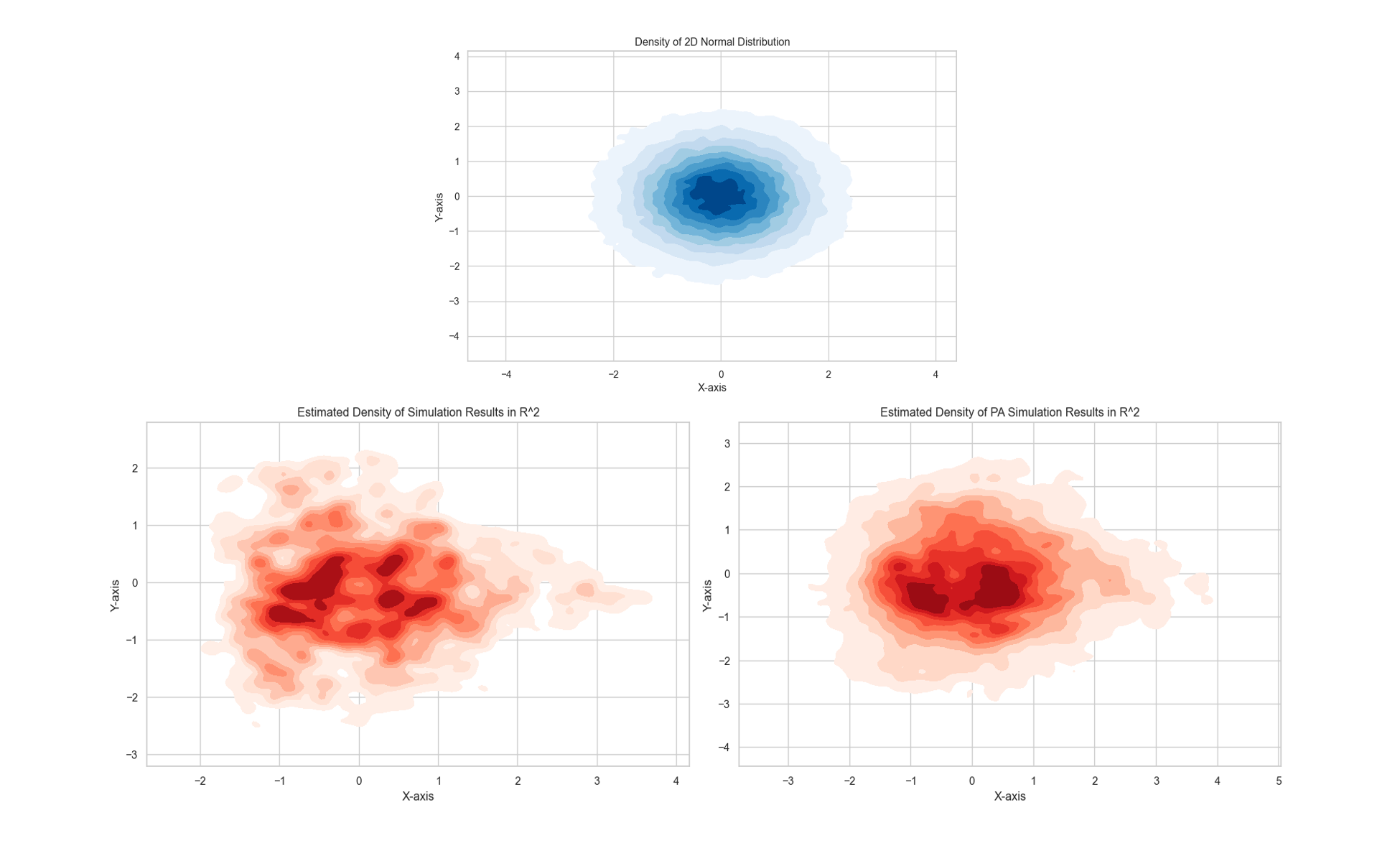}
\label{fig1}
\end{center}

\begin{remark}[Non-Gaussian noise]
    In this paper, we assume for simplicity that the noise is composed of independent $d$-dimensional standard Gaussian variables, also independent of the underlying process $X$. However, this assumption could be softened. Indeed, we only need that the noise is centered, independent from the diffusion $X$, identically distributed and that it has finite moment of order $l$. In the case that its distribution is that of a variable $\xi$, we should replace estimate \eqref{eq:estimate:bias:nu} of Proposition \ref{prop:bound:kernelbias} by
\begin{align*}
\Big|
	\mathbb{E}_{\overline{\mu}^b}^b [\widehat{\nu}_{n,\bs{h},p}(x)] 
	- 
	\mathbb{E}[\overline{\mu}^b(x - p^{-1/2} \tau_n \widetilde{\xi}_p - \widetilde{\tau}_{n,p}\zeta)] 
\Big|
\leq 
\Cbias
\begin{cases}
\sum_{i} h_i^{\alpha_i}
& \text{ if } p = 1,
\\
\sum_{i} h_i^{\alpha_i} +  \sqrt{p\Delta_n}
 & \text{ if } p\geq 2
\end{cases}
\end{align*}
where $\zeta$ is a standard Gaussian variable, $\widetilde{\tau}_{n,p}^2 =  (p-1)(2p-1)\Delta_n/12p$ and $\widetilde{\xi}_p = p^{-1/2} \sum_{k=1}^p \xi_k$ with $\xi_1, \dots, \xi_n$ i.i.d with common distribution $\xi$. In that case, the bias correction procedure should be changed accordingly and should be split into two parts. First, we repeat the same procedure with $\widetilde{\tau}_{n,p}^2 = (p-1)(2p-1)\Delta_n/12p$ to ensure the first bias corrected estimator centres around $\mathbb{E}[\overline{\mu}^b(x - p^{-1/2} \tau_n \widetilde{\xi}_p)] $. Then we repeat the same procedure with $p^{-1/2} \tau_n$ instead of $\widetilde{\tau}_{n,p}^2$ and where $m_j$ stands for the moment of order $j$ of $\widetilde{\xi}_p$. \\
\end{remark}

\section{Probabilistic tools: the Markovian structure of the preaveraged process}
\label{sec:UpperBound}


In this section, we gather some estimates about transition probabilities that will be very useful in the following proofs. Indeed, the proof of Proposition \ref{prop:variance} is based on an in-depth study of the Markov chain
\begin{equation}
\label{eq:lifted:chain}
    \Big(p^{-1} \sum_{\ell=0}^{p-1} Y_{kp + \ell,n}, X_{(k+1)p\Delta_n}\Big)_{k\in\mathbb{N}}.
\end{equation} 
First, note that the stationarity of $X$ ensures that this Markov chain is also stationary under $\mathbb{P}^b_{\bar\mu^b}$.  We write $\widebar\pi^b$ for its stationary distribution on $\R^{2d}$. We now summarize the notations used in the following.
\begin{notation}\label{notation}
	For any $t> 0$, $x,y\in \R^d$,
	\begin{itemize}
		\item $p^b_t(x;\cdot)$ stands for the transition density of $X$, that is density of $X_t$ conditionally on $X_0 = x$;\\
		\item $\mathfrak{p}^b_{p,n,t}(x;\, \cdot)$ stands for the density of  $p^{-1}\sum_{\ell = 0}^{p-1} X_{\ell\Delta_n + t}$ conditionally on $X_0 = x$; \\
		\item  $\mathfrak{p}^b_{p,n}(x;\, \cdot) =  \mathfrak{p}_{p,n,0}(x;\, \cdot)$ stands for the density of $p^{-1}\sum_{\ell = 0}^{p-1} X_{\ell\Delta_n}$ conditionally on $X_0 = x$;\\
		\item  $\mathfrak{p}^b_{p,n}(x;\, \cdot,\cdot)$ stands for the joint density of $(p^{-1} \sum_{\ell=0}^{p-1} X_{\ell\Delta_n}, \, X_{p\Delta_n})$ conditionally on $X_0 = x$; \\
		\item $\widebar\pi^b(\cdot, \,\cdot)$ is an invariant density of the Markov process $(p^{-1} \sum_{\ell=0}^{p-1} X_{(kp + \ell)\Delta_n}, X_{(k+1)p\Delta_n})_{k\in\mathbb{N}}$.
	\end{itemize}
\end{notation}

We now derive some bounds for these densities. The proofs of these results are relegated to Appendix \ref{appendix:densities}. We start with a short time control of the transition density for the pre-averaged process. 
\begin{lemma}
	\label{lemma:bound:pxy}
	Suppose that Assumptions \ref{assumption:boundedness} and \ref{assumption:potential} hold. Then there exists positive constants $\kappa_1$, $\lambda_1$ and $\eta_1$, such that if $p\Delta_n \leq \eta_1$, we have for any $x,y \in \R^d$,
	\begin{align}
		\label{eq:bound:density}
		\mathfrak{p}^b_{p,n}(x;\, y)
		&\leq \frac{\kappa_1}{(p\Delta_n)^{d/2}} \exp\Big(-\lambda_1 \frac{
			|y-x|^2
		}{p\Delta_n}
		+ V(x)
		\Big).
	\end{align}
\end{lemma}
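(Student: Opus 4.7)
The plan is to compare $\mathfrak{p}^b_{p,n}(x;\cdot)$ to the density of the analogous Brownian functional via a Girsanov change of measure. For $p \geq 2$ (the case $p=1$ reduces to a Dirac and the statement is vacuous off the diagonal), I will work on the interval $[0,T]$ with $T = (p-1)\Delta_n$. Let $\mathbb{P}^0_x$ denote the law under which $X_t = x + B_t$ is a Brownian motion started at $x$. Under $\mathbb{P}^0_x$, the pre-averaged sum is
$$p^{-1}\sum_{\ell=0}^{p-1} X_{\ell\Delta_n} = x + p^{-1}\sum_{\ell=0}^{p-1} B_{\ell\Delta_n},$$
which is a centered Gaussian vector whose components are i.i.d.\ with variance $\sigma_p^2 = \Delta_n(p-1)(2p-1)/(6p)$. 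A direct estimate gives $c\,p\Delta_n \leq \sigma_p^2 \leq C\,p\Delta_n$ for $p \geq 2$, so its density at $y-x$ is bounded above by $C(p\Delta_n)^{-d/2}\exp(-\lambda|y-x|^2/(p\Delta_n))$ for some constants $C,\lambda>0$.

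Next I would compute the Girsanov density. By Girsanov,
$$\frac{d\mathbb{P}^b_x}{d\mathbb{P}^0_x}\bigg|_{\mathcal{F}_T} = \exp\Big(\int_0^T b(X_s)\cdot dB_s - \tfrac{1}{2}\int_0^T |b(X_s)|^2\,ds\Big).$$
The key point is that Assumption \ref{assumption:potential} ($b = -\nabla V$) allows me to convert the stochastic integral into boundary terms via It\^o's formula applied to $V(X)$ under $\mathbb{P}^0_x$. Since $X_t = x+B_t$ has quadratic variation $t$ and no drift under $\mathbb{P}^0_x$, this yields
$$\int_0^T b(X_s)\cdot dB_s = V(x) - V(X_T) + \tfrac{1}{2}\int_0^T \Delta V(X_s)\,ds,$$
and hence
$$\frac{d\mathbb{P}^b_x}{d\mathbb{P}^0_x}\bigg|_{\mathcal{F}_T} = \exp\Big(V(x) - V(X_T) + \tfrac{1}{2}\int_0^T \big(\Delta V - |\nabla V|^2\big)(X_s)\,ds\Big).$$

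From here, Assumption \ref{assumption:boundedness} gives $\Delta V = -\nabla\cdot b \leq b_1$, Assumption \ref{assumption:potential} gives $V(X_T) \geq V_0$, and of course $|\nabla V|^2 \geq 0$. Consequently, for $T \leq p\Delta_n \leq \eta_1$,
$$\frac{d\mathbb{P}^b_x}{d\mathbb{P}^0_x}\bigg|_{\mathcal{F}_T} \leq \exp\big(V(x) - V_0 + \tfrac{1}{2}\eta_1 b_1\big) \leq C\,e^{V(x)}.$$
Since this is a pathwise bound, the density itself satisfies
$$\mathfrak{p}^b_{p,n}(x;y) = \mathbb{E}^0_x\bigg[\frac{d\mathbb{P}^b_x}{d\mathbb{P}^0_x}\,\Big|\,x + p^{-1}\sum_{\ell=0}^{p-1}B_{\ell\Delta_n} = y\bigg]\cdot q^0_{p,n}(y-x) \leq C\,e^{V(x)}\,q^0_{p,n}(y-x),$$
where $q^0_{p,n}$ is the Gaussian density computed above, and the claim follows after adjusting constants.

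The main obstacle is not the Gaussian estimate but the pathwise control of the Radon--Nikodym derivative: a naive application of Girsanov would leave the stochastic integral $\int_0^T b(X_s)\cdot dB_s$ unbounded along individual paths, so the gradient structure $b = -\nabla V$ from Assumption \ref{assumption:potential} is essential — it is what converts this term into the controllable boundary contribution $V(x) - V(X_T)$ plus an absolutely continuous correction whose integrand is bounded from above thanks to Assumption \ref{assumption:boundedness}. The short-time constraint $p\Delta_n \leq \eta_1$ is precisely what keeps this correction $O(1)$, matching the geometric content of the lemma.
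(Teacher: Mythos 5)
Your proof is correct, and it takes a noticeably more direct route than the paper's while sharing the same essential mechanism. Both arguments hinge on the identical Girsanov-plus-It\^o trick: after changing measure from $\mathbb{P}^b_x$ to the Wiener measure, the stochastic integral $\int_0^T b(X_s)\cdot dB_s$ is converted via $b = -\nabla V$ into the boundary term $V(x) - V(X_T)$ plus $\tfrac12\int_0^T \Delta V$, and then Assumptions \ref{assumption:boundedness} and \ref{assumption:potential} control the exponent pathwise. That is exactly the key step in the paper's proof as well, and it is where Assumption \ref{assumption:potential} does the heavy lifting in both arguments. The point of divergence is what you do afterwards. You observe that a pathwise bound $\frac{d\mathbb{P}^b_x}{d\mathbb{P}^0_x} \leq C e^{V(x)}$ immediately transfers to the density of any $\mathcal{F}_T$-measurable functional, and under $\mathbb{P}^0_x$ the pre-averaged statistic is a centered Gaussian vector with covariance $\sigma_p^2 I_d$ where $\sigma_p^2 \asymp p\Delta_n$ for $p\geq 2$; the bound drops out. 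The paper instead proves the strictly stronger \emph{joint} density bound \eqref{eq:bound:pxyz} for $\big(p^{-1}\sum_\ell X_{\ell\Delta_n}, X_{p\Delta_n}\big)$, keeping the factor $e^{-V(z)}$ attached to the terminal point; to do this it rescales, and then decomposes the Brownian path into a linear interpolation plus a Brownian bridge (following Gloter--Gobet) so that it can condition on $W_1$ and control the $e^{B_{p,n}(W_1)}$ factor separately. Your marginal argument avoids all of that, at the cost of discarding $-V(X_T)$ to a constant $-V_0$ instead of keeping it as $-V(z)$; for the stated lemma this is exactly what is needed, but the joint version \eqref{eq:bound:pxyz} is what the paper actually uses downstream (for Lemma \ref{le:invbound} and the covariance estimates in Proposition \ref{prop:variance}), so your proof establishes the lemma but would not let you skip the harder argument elsewhere. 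Two minor points worth making explicit in a clean write-up: $V$ needs to be $C^2$ for It\^o's formula, which follows from $b$ being differentiable in Assumption \ref{assumption:boundedness}; and the bounded exponential you derive shows directly that the Girsanov exponential is a bounded local martingale, hence a true martingale, so no separate Novikov check is required here even though the paper invokes one in the $\mathbb{P}^b\to\mathbb{Q}^b$ direction.
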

This lemma is arguably the most technical result in this section. Its proof, postponed to Appendix \ref{lemma:proof:bound:pxy}, is strongly inspired by Theorem 4 in \cite{gloter2008lamn} where a similar result is shown for a unidimensional diffusion. As noted in \cite{gloter2008lamn}, their approach cannot readily apply in a multidimensional setting because of a non-trivial time change. In our case the diffusion coefficient is assumed to be a real constant, and the outcomes can be extended. Our motivation for undertaking this proof also lies in the requirement to ensure uniformity of the bounds with respect to $p$, $n$. Moreover, here we want to get rid of the uniform boundedness of the drift term $b$. Its proof is based on the Girsanov theorem to remove the drift contribution. Then we condition at each time $\frac{i}{n}$ so that on each interval $[i/n, (i+1)/n]$, the Brownian Motion is a Brownian bridge. Then sharp estimates can be obtained using the properties of the Brownian Bridge.\\

We now study the transition density of $X$ in short time.
\begin{lemma} \label{lemma:bound:X}
Under Assumptions \ref{assumption:boundedness}, \ref{assumption:potential} and \ref{assumption:ergodicity}, there exists a positive constant $\kappa_3$, such that  for any $t\in(0,1)$, $b \in \Sigma(\bs{\mathfrak{b}})$, and $z \in \mathbb{R}^d$:
	\begin{align}\label{eq:uppbound:density:X}
		p^b_t(x; \,z) \leq \frac{\kappa_3}{t^{d/2}} \exp\Big(-\frac{|x-z|^2}{2t} + V(x) - V(z)\Big). 
	\end{align}
\end{lemma}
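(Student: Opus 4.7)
The plan is to apply Girsanov's theorem to remove the drift, reducing the problem to the heat kernel estimate for Brownian motion, and then to express the resulting Radon-Nikodym derivative in terms of $V$ via an Itô computation.

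More precisely, I would proceed as follows. Under $\mathbb{P}^b_x$, the process $X$ solves $dX_t = b(X_t)dt + dW_t$ with $X_0 = x$. Introduce an equivalent measure $\mathbb{Q}$ under which $\widetilde{W}_t := W_t + \int_0^t b(X_s)ds = X_t - x$ is a Brownian motion; by Girsanov, one can take
\begin{equation*}
    \frac{d\mathbb{P}^b_x}{d\mathbb{Q}}\Big|_{\mathcal{F}_t} = \exp\Bigl(\int_0^t b(X_s)\cdot d\widetilde{W}_s - \tfrac{1}{2}\int_0^t |b(X_s)|^2\,ds\Bigr).
\end{equation*}
Since $b$ is Lipschitz with linear growth (Assumption \ref{assumption:boundedness}), Novikov's condition is satisfied on $[0,t]$ for $t\in(0,1)$, so this change of measure is legitimate. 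Under $\mathbb{Q}$, $X_t = x + \widetilde{W}_t$, whose transition density is the Gaussian kernel $(2\pi t)^{-d/2}\exp(-|x-z|^2/(2t))$.

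The key step is to convert the stochastic integral in the exponential into a boundary term. Using $b = -\nabla V$ (Assumption \ref{assumption:potential}) and applying Itô's formula to $V(X_s)$ under $\mathbb{Q}$ (where $dX_s = d\widetilde{W}_s$), I would obtain
\begin{equation*}
    \int_0^t b(X_s)\cdot d\widetilde{W}_s = -\bigl(V(X_t) - V(x)\bigr) + \tfrac{1}{2}\int_0^t \Delta V(X_s)\,ds = V(x) - V(X_t) - \tfrac{1}{2}\int_0^t \nabla\cdot b(X_s)\,ds.
\end{equation*}
Substituting, the Radon-Nikodym derivative factorizes as
\begin{equation*}
    \frac{d\mathbb{P}^b_x}{d\mathbb{Q}}\Big|_{\mathcal{F}_t} = \exp\Bigl(V(x) - V(X_t)\Bigr)\,\exp\Bigl(-\tfrac{1}{2}\int_0^t \nabla\cdot b(X_s)\,ds - \tfrac{1}{2}\int_0^t|b(X_s)|^2\,ds\Bigr).
\end{equation*}

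To recover the transition density, I would evaluate the identity $\mathbb{E}^b_x[\varphi(X_t)] = \mathbb{E}_{\mathbb{Q}}[\varphi(X_t)\frac{d\mathbb{P}^b_x}{d\mathbb{Q}}]$ for test functions $\varphi$ and condition on $X_t = z$ under the Brownian bridge law. The factor $\exp(V(x) - V(z))$ comes out of the expectation. The remaining conditional expectation is bounded using $\|\nabla\cdot b\|_\infty \leq b_1$ (giving $\exp(b_1 t/2) \leq \exp(b_1/2)$ on $(0,1)$) and the non-positivity of $-\tfrac{1}{2}\int_0^t |b(X_s)|^2\,ds$, which makes the corresponding exponential bounded by $1$. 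This yields
\begin{equation*}
    p^b_t(x;z) \leq (2\pi t)^{-d/2}\,e^{b_1/2}\,\exp\Bigl(-\tfrac{|x-z|^2}{2t} + V(x) - V(z)\Bigr),
\end{equation*}
and we take $\kappa_3 = (2\pi)^{-d/2}e^{b_1/2}$. The main subtlety is the rigorous justification of Girsanov's theorem for the unbounded but Lipschitz drift $b$; this is standard but requires invoking either a localization argument or the linear growth of $b$ together with Gaussian moment bounds on $X$ to verify Novikov's condition uniformly over $b \in \Sigma(\bs{\mathfrak{b}})$, which is what gives the uniformity of $\kappa_3$.
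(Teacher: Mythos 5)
Your proposal is correct and follows the same route the paper takes: a Girsanov change of measure to a Brownian motion, an It\^o computation of $V(X_t)-V(x)$ (using $b=-\nabla V$) to turn the stochastic integral in the Radon--Nikodym derivative into the boundary term $V(x)-V(X_t)$, dropping the non-positive $-\frac12\int|b|^2$ term, and bounding $-\frac12\int\nabla\cdot b$ by $b_1t/2$. The paper's own proof is just a two-line sketch of exactly this argument (with the Novikov verification outsourced to Lemma~\ref{le:novi:1}), so your write-up is a correct fleshed-out version of the same idea; the only minor overcomplication is the mention of conditioning on a Brownian bridge, which isn't needed once the residual path-dependent factor is uniformly bounded by $e^{b_1 t/2}$ and one simply integrates $\varphi(z)e^{V(x)-V(z)}$ against the Gaussian density of $X_t$ under $\mathbb{Q}$.
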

This Lemma is particularly useful when $t \ll 1$. In that case, it is usually stated in the weaker form \eqref{eq:toy:boundtransition}. The sharper bound depending explicitly in the potential $V$ allow for better estimates in Proposition \ref{prop:variance}. Combining Lemmas \ref{lemma:bound:pxy} and \ref{lemma:bound:X}, we easily get the following corollary.
\begin{corollary}\label{co:bound:pntxy}
		Under Assumption \ref{assumption:boundedness} and \ref{assumption:potential}, let us assume that $p\Delta_n \leq \eta_1$, where $\eta_1 > 0$ is defined in Lemma \ref{lemma:bound:pxy}. Then, for each $x,y \in \R^d$, and any $t \in (0,1)$
	\begin{align*}
		\mathfrak{p}^b_{p,n,t}(x,y) \leq \kappa_1(2\pi)^{-d/2}e^{b_1/2} (p\Delta_n + 2t\lambda_1)^{-d/2} \exp\Big(-\frac{\lambda_1}{2t + p\Delta_n}|x-y|^2\Big).
	\end{align*}
\end{corollary}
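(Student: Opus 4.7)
The natural approach is to use the Markov property of $X$ at time $t$ to reduce the problem to the bounds on $p^b_t$ and $\mathfrak{p}^b_{p,n}$ already established in Lemmas \ref{lemma:bound:X} and \ref{lemma:bound:pxy}. Indeed, conditionally on $X_t = z$, the process $(X_{\ell\Delta_n + t})_{0 \leq \ell \leq p-1}$ has the same law as $(X_{\ell\Delta_n})_{0 \leq \ell \leq p-1}$ started from $z$, so by Chapman--Kolmogorov
\begin{equation*}
    \mathfrak{p}^b_{p,n,t}(x;y) \;=\; \int_{\R^d} p^b_t(x;z)\,\mathfrak{p}^b_{p,n}(z;y)\,dz.
\end{equation*}
I would then plug the estimate of Lemma \ref{lemma:bound:X} into the first factor and that of Lemma \ref{lemma:bound:pxy} into the second (the condition $p\Delta_n\leq \eta_1$ ensures applicability of the latter). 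A favorable cancellation occurs in the potential contributions: the $-V(z)$ from Lemma \ref{lemma:bound:X} exactly compensates the $+V(z)$ produced by applying Lemma \ref{lemma:bound:pxy} with starting point $z$.

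The remaining $z$-integral is a classical two-centered Gaussian integral, which I would evaluate using the identity
\begin{equation*}
    \alpha|x-z|^2 + \beta|y-z|^2 \;=\; (\alpha+\beta)\Big|z-\tfrac{\alpha x+\beta y}{\alpha+\beta}\Big|^2 + \tfrac{\alpha\beta}{\alpha+\beta}|x-y|^2
\end{equation*}
with $\alpha = 1/(2t)$ and $\beta = \lambda_1/(p\Delta_n)$. Integrating the Gaussian in $z$ produces a factor $(\pi/(\alpha+\beta))^{d/2} = \bigl(2\pi t\, p\Delta_n/(p\Delta_n + 2t\lambda_1)\bigr)^{d/2}$, which multiplied by the $t^{-d/2}(p\Delta_n)^{-d/2}$ prefactors simplifies exactly to $(2\pi)^{d/2}(p\Delta_n+2t\lambda_1)^{-d/2}$, matching the form of the claim. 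The $(x,y)$-dependent exponential becomes $\exp\!\bigl(-\tfrac{\lambda_1}{p\Delta_n + 2t\lambda_1}|x-y|^2\bigr)$; since one may assume without loss of generality that $\lambda_1\leq 1$ (otherwise replace $\lambda_1$ by $\min(\lambda_1,1)$, which only degrades the estimate), we have $p\Delta_n + 2t\lambda_1 \leq p\Delta_n + 2t$, and hence the desired exponent $-\lambda_1|x-y|^2/(p\Delta_n+2t)$.

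The main bookkeeping obstacle is converting the residual $V(x)$ term produced by the combination of the two lemmas into the uniform prefactor $e^{b_1/2}$ appearing in the statement. This is handled by exploiting the underlying Girsanov representation used in the proofs of Lemmas \ref{lemma:bound:X} and \ref{lemma:bound:pxy}: the $V$-contribution $V(x)-V(X_T)$ together with the Itô correction $\tfrac{1}{2}\int_0^T \Delta V(X_s)\,ds$ can be bounded by $\tfrac{b_1}{2}(p\Delta_n+t)$ using $\|\nabla\cdot b\|_\infty\leq b_1$ from Assumption \ref{assumption:boundedness} and the lower bound $V\geq V_0$ from Assumption \ref{assumption:potential}, which after absorbing constants yields the factor $e^{b_1/2}$. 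With this refinement the proof is a direct assembly of the pieces above.
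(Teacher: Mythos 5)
Your main line of attack is exactly the paper's: use the Markov property/Chapman--Kolmogorov at time $t$ to write $\mathfrak{p}^b_{p,n,t}(x;y)=\int p^b_t(x;z)\,\mathfrak{p}^b_{p,n}(z;y)\,\mathrm{d}z$, plug in Lemma~\ref{lemma:bound:X} and Lemma~\ref{lemma:bound:pxy}, observe the $-V(z)+V(z)$ cancellation, and evaluate the resulting two-centred Gaussian integral (the paper invokes Lemma~\ref{le:normal} for this step, which is the same completing-the-square identity you wrote out). Your computation of the prefactor and of the resulting exponent $\lambda_1/(p\Delta_n+2t\lambda_1)$, and the remark that one may assume $\lambda_1\leq 1$ to pass to $\lambda_1/(p\Delta_n+2t)$, are all correct.

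The last paragraph, however, is not right and points to a genuine issue. After the $V(z)$'s cancel, the bound you obtain carries a residual $e^{V(x)}$ factor. This factor \emph{cannot} be absorbed into a uniform constant such as $e^{b_1/2}$: under Assumptions~\ref{assumption:potential} and~\ref{assumption:ergodicity}, $V$ is unbounded above (one has $V(x)\geq V_0 + \widetilde C_b|x|/2$ for $|x|$ large, as is used in the proof of Lemma~\ref{le:upbound}). The $e^{b_1/2}$ in the statement is simply the explicit value of $\kappa_3=(2\pi)^{-d/2}e^{b_1/2}$ from Lemma~\ref{lemma:bound:X} (coming from the It\^o/Girsanov correction $\tfrac12\int_0^t\nabla\cdot b$, using $t\leq 1$); it is a separate contribution from $V(x)$, and the pathwise difference $V(x)-V(X_T)$ is not bounded. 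The paper's own proof of the corollary in fact ends with the bound $\lesssim (p\Delta_n+2t\lambda_1)^{-d/2}\exp\bigl(-\tfrac{\lambda_1}{p\Delta_n+2t}|x-y|^2+V(x)\bigr)$, which \emph{does} carry the $e^{V(x)}$ factor; the statement of the corollary as printed is missing it (consistently with this, everywhere the corollary is applied it is paired with a factor $e^{-V}$ from $\widebar\pi^b$ via Lemma~\ref{le:invbound}, which is what actually disposes of the $e^{V(x)}$ in context). So the honest conclusion of the argument keeps the $e^{V(x)}$; it should not be claimed that it reduces to $e^{b_1/2}$.
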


We finish this section with a result concerning the invariant density of \eqref{eq:lifted:chain}.
\begin{lemma}\label{le:invbound}
	For all $(x,y) \in \R^{2d}$, there exists a constant $\kappa_2$ independent of $p$, $n$ and $b$ such that
	\begin{align*}
		\widebar\pi^b(y,z) \leq  \frac{\kappa_2}{(p\Delta_n)^{d/2}} \exp\Big(-V(z) - \frac{\lambda_1}{p\Delta_n} |y-z|^2\Big).
	\end{align*}
\end{lemma}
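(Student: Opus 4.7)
The plan is to leverage reversibility of the diffusion $X$ (ensured by Assumption \ref{assumption:potential}, which gives $b = -\nabla V$) in order to recognise $\widebar\pi^b(y,z)$ as a density already controlled by Corollary \ref{co:bound:pntxy}. Under $\mathbb{P}^b_{\overline{\mu}^b}$, the discrete skeleton satisfies the time-reversal identity
\begin{equation*}
    (X_0, X_{\Delta_n}, \dots, X_{p\Delta_n}) \stackrel{d}{=} (X_{p\Delta_n}, X_{(p-1)\Delta_n}, \dots, X_0).
\end{equation*}

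Pushing this identity forward through the measurable map $(x_0, \dots, x_p) \mapsto (p^{-1}\sum_{\ell=0}^{p-1} x_\ell,\, x_p)$ yields on the left the pair $(p^{-1}\sum_{\ell=0}^{p-1} X_{\ell\Delta_n},\, X_{p\Delta_n})$, whose density at $(y,z)$ is by definition $\widebar\pi^b(y,z)$, and on the right the pair $(p^{-1}\sum_{m=1}^{p} X_{m\Delta_n},\, X_0)$ after the re-indexing $m = p - \ell$. Conditioning this second description on $X_0 = z$ with $X_0 \sim \overline{\mu}^b$ and using Notation \ref{notation} with offset $t = \Delta_n$ (since the averaged indices are shifted by one step), we arrive at the key identity
\begin{equation*}
    \widebar\pi^b(y,z) = \overline{\mu}^b(z)\, \mathfrak{p}^b_{p,n,\Delta_n}(z;\, y).
\end{equation*}

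It then suffices to bound each factor separately. For the first, Assumption \ref{assumption:potential} gives $V(z) \geq V_0$, whence $\overline{\mu}^b(z) = Z_V^{-1} e^{-2V(z)} \leq Z_V^{-1} e^{-V_0} e^{-V(z)}$. For the second, Corollary \ref{co:bound:pntxy} with $t = \Delta_n$ provides a Gaussian-type bound of the form $C(p\Delta_n)^{-d/2}\exp(-c|y-z|^2/(p\Delta_n))$, where the constants $C$ and $c$ are obtained by absorbing $p\Delta_n + 2\lambda_1\Delta_n \geq p\Delta_n$ and $2\Delta_n + p\Delta_n \leq 3 p\Delta_n$ (both valid since $p \geq 1$). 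Multiplying the two estimates and redefining $\kappa_2$ and $\lambda_1$ to absorb the resulting numerical constants concludes the proof.

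The only real subtlety is the time-reversal step: one has to confirm that reversibility of the continuous-time process $X$ (a classical consequence of the gradient structure of $b$) transfers to the discrete skeleton, and to carefully track the shift of the averaging window from $\{0,\dots,p-1\}$ to $\{1,\dots,p\}$ produced by the index reversal, which is precisely what forces the offset $t = \Delta_n$ in the right-hand side. Every subsequent step is a routine application of estimates already established earlier in the section.
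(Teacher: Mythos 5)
Your argument is a genuinely different route from the paper's. The paper computes $\widebar\pi^b(y,z) = \int_{\R^d} \mathfrak{p}^b_{p,n}(x;\, y, z)\, \overline{\mu}^b(x)\, \mathrm{d}x$ by integrating over the initial condition, feeds in the joint-density estimate \eqref{eq:bound:pxyz}, and closes with the Gaussian integration of Lemma~\ref{le:normal}. You instead exploit the reversibility of the gradient diffusion under $\mathbb{P}^b_{\overline{\mu}^b}$ (valid because $b = -\nabla V$ and the discrete skeleton is started from its invariant measure) to produce the factorization $\widebar\pi^b(y,z) = \overline{\mu}^b(z)\, \mathfrak{p}^b_{p,n,\Delta_n}(z;\, y)$, and you track the index-shift $\{0,\ldots,p-1\} \mapsto \{1,\ldots,p\}$ correctly through the offset $t = \Delta_n$ in Notation~\ref{notation}. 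This is cleaner: it eliminates the Gaussian convolution over $x$ and makes the probabilistic origin of each factor transparent, using only the single-argument bound of Corollary~\ref{co:bound:pntxy} rather than the full joint-density bound. The paper does not invoke reversibility at this point, although it is available under Assumption~\ref{assumption:potential}.

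There is, however, a $V$-bookkeeping subtlety you should address. As printed, Corollary~\ref{co:bound:pntxy} carries no $e^{V(x)}$ factor, but its own proof produces precisely such a term ($\ldots \exp(\ldots + V(x))$), and the later use of the corollary in Lemma~\ref{lem:corr:mid} relies on that factor being present, so the displayed statement almost certainly has a typo. If the corollary is read correctly, so that $\mathfrak{p}^b_{p,n,\Delta_n}(z;\,y)$ carries an $e^{V(z)}$, then your step $\overline{\mu}^b(z) \le Z_V^{-1} e^{-V_0} e^{-V(z)}$ cancels all the $V$-dependence and leaves a bound \emph{without} the required $e^{-V(z)}$ decay, which is strictly weaker than the lemma. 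The fix is to use the exact identity $\overline{\mu}^b(z) = Z_V^{-1} e^{-2V(z)}$ rather than a one-sided bound: then $\overline{\mu}^b(z)\, e^{V(z)} = Z_V^{-1} e^{-V(z)}$ and the target estimate follows with the constants you describe. With that adjustment your proof is complete.
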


 \subsection*{Acknowledgments} The authors would like to thank Marc Hoffmann, Pierre Cardaliaguet and Mathieu Rosenbaum for the helpful discussions during the preparation of this work. Gr\'egoire Szymanski gratefully acknowledge the financial support of the \'Ecole Polytechnique chairs {\it Deep Finance and Statistics}.

\bibliographystyle{alpha} 
\bibliography{main.bib}
~\\

\appendix

In the following sections, we assume that Assumptions \ref{assumption:boundedness}, \ref{assumption:potential} and \ref{assumption:ergodicity} hold and that $p\Delta_n \leq 1$. Furthermore $ a \lesssim b $ stands for $ a\leq C b$ where $C$  can change from line to line and depending only on $\bs{\mathfrak{b}}$.  

\section{Invertibility of $A$}
\label{sec:appendix:A}
We show that the matrix $A$ defined in Section \ref{sec:estimation_procedure} is invertible. To that extent, we compute explicitly $\det(A)$ and we show that it does not vanish. First, we write $a_{k,i,j} = {k \choose j} (-1)^j m_j i^{k-j}$ for $0 \leq j \leq k$ so that the $(k,i)$-th coefficient of $A$ defined in Equation \eqref{eq:def:a} is given by $a_{k,i} = \sum_{j=0}^k a_{k,i,j}$. By multi-linearity of the determinant, we have then
\begin{align*}
\det(A) 
&= 
\sum_{j_0=0}^0 
\sum_{j_1=0}^1
\cdots 
\sum_{j_l=0}^\ell
\det((a_{k,i,j_k})_{k,i})
\\
&=
\sum_{j_0=0}^0 
\sum_{j_1=0}^1
\cdots 
\sum_{j_l=0}^\ell
\det(( i^{k-j_k})_{k,i})
\prod_{k=0}^{l}
\Big(
{k \choose j_k} (-1)^{j_k} m_{j_k} \Big).
\end{align*}
Note now that $\det(( i^{k-j_k})_{k,i}) = 0$ when two lines are the same. This happens when $k-j_k = k'-j_{k'}$ for some $k \neq k'$. Thus $\det(( i^{k-j_k})_{k,i}) = 0$ except when the $k-j_k, \, 0 \leq k \leq \ell$ are all distinct. Since $0 \leq j_k \leq k$, we show by induction that the only possibility is $j_0=j_1=\dots=j_l=0$ so that
\begin{align*}
\det(A) 
&=
\det(( i^{k})_{k,i})
\prod_{k=0}^{\ell}
\Big(
{k \choose 0} (-1)^{0} m_{0} \Big) = \prod_{0 \leq k < i \leq \ell} (i-k)
\end{align*}
where the last equality is obtained using the expression of the Vandermonde determinant and using $m_{0} = 1$ (by definition).
\color{black}


\section{Proof of Proposition \ref{prop:bound:kernelbias}}
\label{sec:propo:bias:nu}

In this Section, we aim at studying the expectation $\mathbb{E}_{\overline{\mu}^b}^b [\widehat{\nu}_{n,\bs{h},p}(x) ]$. Let us assume in all the proof that there exists $L > 0$ such that
\begin{equation}\label{eq:cond:p}
	p^2\Delta_n \leq L,
\end{equation}
for all $n \geq 1$. By definition of $\widehat{\nu}_{n,\bs{h},p}$ in Equation \eqref{eq:estimator} and by stationarity of $X$ under $\mathbb{P}_{\overline{\mu}^b}^b$, we have
\begin{align*}
\mathbb{E}_{\overline{\mu}^b}^b [\widehat{\nu}_{n,\bs{h},p}(x) ] 
& = \int_{\mathbb{R}^d} \overline{\mu}^b(y) \mathbb{E}_y^b \Big[\bs{K}_{\bs{h}}\Big( x-\frac{\tau_n}{p^{1/2}} \widetilde{\xi}_{i,n}
- \frac{1}{p}\sum_{\ell=0}^{p-1} X_{\ell\Delta_n}\Big)
\Big] \,\mathrm{d}y.
\end{align*}

Recall that ${\mathbb{P}^b_y}$ is the probability measure under which $X_0 = y$ almost surely. When $p=1$, we observe that $p^{-1}\sum_{\ell=0}^{p-1} X_{\ell\Delta_n} = y$ holds almost surely under $\mathbb{P}_{y}^b$. This is not the case when $p>1$. Instead, we have
\begin{equation*}
    \frac{1}{p} \sum_{\ell=0}^{p-1} X_{\ell\Delta_n}
    =
    y
    + 
    \frac1p\sum_{\ell=0}^{p-1}\big (X_{(kp + \ell)\Delta_n} - X_{kp\Delta_n} \big).
\end{equation*}
The second part of the right hand side can be seen as a noise term that we decompose as
\begin{equation*}
    \frac1p\sum_{\ell=0}^{p-1}\big (X_{(kp + \ell)\Delta_n} - X_{kp\Delta_n} \big)
    =
    \frac{1}{p}\sum_{\ell=0}^{p-1} \bigg( \int^{(kp + \ell)\Delta_n}_{kp\Delta_n} b(X_s)\,
    \mathrm{d}s  \bigg)
    +
    \frac{1}{p}\sum_{\ell=0}^{p-1} \big (W_{(kp + \ell)\Delta_n} - W_{kp\Delta_n} \big).
\end{equation*}
We remove the effects of the drift $b$ through the Girsanov Theorem. For all admissible drift $b$, we define 
\begin{equation*}
    N^b_t = \int_0^tb(X_s) \cdot \mathrm{d}W_s.
\end{equation*}
The following technical lemma is needed to apply the Girsanov Theorem.
\begin{lemma}\label{le:novi:1}
There exists $\delta_0>0$ such that for any $T > 0$ and $x \in \R^d$, 
\begin{equation*}
\sup _{t \in[0, T-\delta]} \mathbb{E}_{x}^b\Big[\exp \Big(\frac{1}{2}(\langle N^b\rangle_{t+\delta}-\langle N^b_t\rangle_t)\Big)\Big] \leq C_1,
\end{equation*}
for every $0 \leq \delta \leq \delta_0$ and some $C_1$ that depends on $\mathfrak{b}$.
\end{lemma}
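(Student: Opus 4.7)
The plan is to rewrite $\langle N^b\rangle_{t+\delta} - \langle N^b\rangle_t = \int_t^{t+\delta}|b(X_s)|^2\,ds =: I_{t,\delta}$ and then exploit the gradient structure $b = -\nabla V$ from Assumption \ref{assumption:potential} by applying It\^o's formula to $V(X_s)$. Since $dX_s = -\nabla V(X_s)\,ds + dW_s$, It\^o's formula gives
\[ V(X_{t+\delta}) - V(X_t) = -\int_t^{t+\delta}|b(X_s)|^2\,ds + \tfrac{1}{2}\int_t^{t+\delta}\Delta V(X_s)\,ds - \int_t^{t+\delta}b(X_s)\cdot dW_s. \]
Rearranging and using $V \geq V_0$ from Assumption \ref{assumption:potential} together with $|\Delta V| = |\nabla\cdot b| \leq b_1$ from Assumption \ref{assumption:boundedness}, I would obtain
\[ I_{t,\delta} \leq V(X_t) - V_0 + \tfrac{b_1\delta}{2} + M_\delta, \quad \text{with } M_\delta := -\int_t^{t+\delta}b(X_s)\cdot dW_s \text{ and } \langle M\rangle_\delta = I_{t,\delta}. \]

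Next, I would handle the exponential of the martingale $M_\delta$ via a Dol\'eans--Dade plus Cauchy--Schwarz argument. Writing $e^{M_\delta/2} = e^{M_\delta/2 - \langle M\rangle_\delta/4}\cdot e^{\langle M\rangle_\delta/4}$ and using that $\exp(M - \tfrac12 \langle M\rangle)$ is a nonnegative supermartingale (hence has expectation at most $1$), Cauchy--Schwarz yields
\[ \mathbb{E}_x^b\!\left[e^{M_\delta/2}\mid\mathcal{F}_t\right]^{2} \leq \mathbb{E}_x^b\!\left[e^{M_\delta - \langle M\rangle_\delta/2}\mid\mathcal{F}_t\right]\cdot \mathbb{E}_x^b\!\left[e^{\langle M\rangle_\delta/2}\mid\mathcal{F}_t\right] \leq \mathbb{E}_x^b\!\left[e^{I_{t,\delta}/2}\mid\mathcal{F}_t\right]. \]
Injecting this into the pointwise bound $e^{I_{t,\delta}/2} \leq e^{(V(X_t)-V_0)/2 + b_1\delta/4}\,e^{M_\delta/2}$, taking conditional expectations and solving the resulting quadratic inequality in $\mathbb{E}_x^b[e^{I_{t,\delta}/2}\mid\mathcal{F}_t]^{1/2}$, I would arrive at
\[ \mathbb{E}_x^b\!\left[e^{I_{t,\delta}/2}\mid\mathcal{F}_t\right] \leq \exp\!\left(V(X_t) - V_0 + \tfrac{b_1\delta}{2}\right). \]

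Taking outer expectations then reduces the lemma to bounding $\sup_{t\in[0,T-\delta]}\mathbb{E}_x^b[e^{V(X_t)}]$. This is a Lyapunov moment estimate that I would establish by applying It\^o to $e^{V(X_s)}$: the generator satisfies $L(e^V) = \tfrac12 e^V(\Delta V - |b|^2)$, and combining $|\Delta V|\leq b_1$ with $|b(x)| \geq \widetilde C_b$ for $|x| \geq \widetilde\rho_b$ (an immediate consequence of Assumption \ref{assumption:ergodicity} via Cauchy--Schwarz on $\langle x, b(x)\rangle \leq -\widetilde C_b|x|$) shows that $e^V$ satisfies a Foster--Lyapunov drift condition $L(e^V) \leq -c\,e^V + C\,\mathbf{1}_{B(0,\widetilde\rho_b)}$ whenever $\widetilde C_b^2 > b_1$. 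This would give $\sup_t \mathbb{E}_x^b[e^{V(X_t)}] \leq C(\bs{\mathfrak{b}})\,e^{V(x)}$, finishing the proof.

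The main obstacle is this final Lyapunov step: when $\widetilde C_b^2 \leq b_1$, the function $e^V$ no longer has negative drift at infinity and one must work instead with $e^{\alpha V}$ for some $\alpha \in (0, 2-b_1/\widetilde C_b^2)$. This forces a proportional reduction of the Novikov exponent in the earlier Cauchy--Schwarz steps (replacing $I_{t,\delta}/2$ by $\alpha I_{t,\delta}/2$), and restoring the statement's exponent $1/2$ by iterating the trick is what pins down $\delta_0$ as a function of $\bs{\mathfrak{b}}$. A secondary point is that the constant obtained is of the form $C(\bs{\mathfrak{b}})\,e^{V(x)}$, so the asserted uniformity "$C_1$ depending only on $\bs{\mathfrak{b}}$" is to be read as: $C_1$ is finite and its dependence on $x$ is through $e^{V(x)}$, which is all that is needed for the downstream application of Girsanov's theorem in the proof of Proposition \ref{prop:bound:kernelbias}.
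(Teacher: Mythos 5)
Your proof takes a genuinely different route from the paper's. The paper argues directly from the linear growth of $b$ (Assumption~\ref{assumption:boundedness}): it applies Jensen's inequality to convert the time integral $\exp(\tfrac12\int_t^{t+\delta}|b(X_s)|^2\,ds)$ into $\exp(\tfrac\delta2 |b(X_s)|^2)$, then bounds the resulting Gaussian-type exponential moments of $|X_s|^2$ by a Gronwall estimate on $|X_t|$ and a series expansion; here $\delta_0$ is forced by the radius of convergence of that series. You instead exploit the gradient structure $b=-\nabla V$: It\^o applied to $V(X_s)$ expresses $\langle N^b\rangle_{t+\delta}-\langle N^b\rangle_t$ through $V(X_t)$, $\nabla\cdot b$ and the stochastic integral $M_\delta=-\int_t^{t+\delta}b(X_s)\cdot dW_s$, and the Dol\'eans--Dade/Cauchy--Schwarz bootstrap $Y\le CY^{1/2}$ (with $Y=\mathbb{E}[e^{I_{t,\delta}/2}\mid\mathcal{F}_t]$) turns this into $\mathbb{E}[e^{I_{t,\delta}/2}\mid\mathcal{F}_t]\le e^{V(X_t)-V_0+b_1\delta/2}$. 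This is cleaner: it reduces everything to controlling $\mathbb{E}_x^b[e^{V(X_t)}]$, and, as a bonus, imposes no genuine constraint on $\delta$ beyond $\delta\le T-t$, whereas the paper's route needs $\delta_0$ small to tame the exponential moments.

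Two points deserve attention. First, the bootstrap $Y\le CY^{1/2}\Rightarrow Y\le C^2$ is illegal unless you know $Y<\infty$ a priori; you should localize (stop $X$ on exiting $B(0,N)$, where $b$ is bounded and $Y_N<\infty$ trivially, run the argument for $Y_N$, and let $N\to\infty$ by monotone convergence). Second, and more importantly, the obstacle you flag in your last paragraph is self-inflicted: you do not need a Foster--Lyapunov drift condition, hence no requirement $\widetilde C_b^2>b_1$ and no reduction to $e^{\alpha V}$. Since $L(e^V)=\tfrac12 e^V(\Delta V-|b|^2)\le \tfrac{b_1}{2}e^V$ using only $\Delta V=-\nabla\cdot b\le b_1$ and $|b|^2\ge 0$, a Gronwall inequality gives $\mathbb{E}_x^b[e^{V(X_t)}]\le e^{b_1 t/2}e^{V(x)}$ for all $t\le T$, with no further assumption. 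This is exactly the dependence of $C_1$ on $(x,T)$ that the paper's own proof also displays (the statement's ``depends on $\mathfrak{b}$'' is loosely worded). With that replacement your argument closes, avoids the ``iterate the trick'' speculation entirely, and in fact yields a stronger conclusion (arbitrary $\delta$) than the paper's.
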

\begin{proof}
Using Assumption \ref{assumption:boundedness}, we get that for any $t\in [0,T]$,
	\begin{align*}
		|X_t| \leq |X_0| + b_0T + |b|_{lip}\int_0^t |X_s|\mathrm{d}s + \sup_{t\in [0,T]}|B_t|.	
	\end{align*}
Therefore, from Gronwall Lemma, we have for all $p \geq 1$ and all $0 \leq t \leq T$
	\begin{align}\label{eq:momentbound}
		\E[|X_t|^{2p}] \leq 3^{2p-1}e^{2pt|b|_{lip}}(|x|^{2p} + T^p + (b_0T)^{2p}).
	\end{align}
Moreover, for any $\delta > 0$ and $t \in [0,T-\delta]$, using Jensen inequality,
	\begin{align*}
		\E_x^{b}\Big[ \exp\Big(\frac{1}{2}\int_{t}^{t+\delta}|b(X_s)|^2 \mathrm{d}s \Big)\Big] \leq \frac{1}{\delta} \int_t^{t+\delta} \E_x^b\Big[\exp\Big(\frac{\delta}{2} |b(X_s)|^2 \Big)\Big] \mathrm{d}s.
	\end{align*}
Using Assumption \ref{assumption:boundedness}, we get the following bound
	\begin{align*}
		\E_x^{b}\Big[ \exp\Big(\frac{1}{2}\int_{t}^{t+\delta}|b(X_s)|^2 \mathrm{d}s \Big)\Big] & \leq \frac{1}{\delta} \int_t^{t+\delta} \E_x^b\Big[ \exp\Big( \frac{\delta}{2}(|b_0|^2 + |b|_{lip}^2|X_s|^2 )\Big)\Big]\mathrm{d}s \\
		& \leq \frac{e^{\delta|b_0|^2}}{\delta} \int_t^{t+\delta}\E_x^b\Big[ \exp\Big( \frac{\delta}{2}|b|_{lip}|X_s|^2\Big)\Big] \mathrm{d}s.
	\end{align*}
	Now, for $s \in [t, t+\delta]$, we have 
	\begin{align*}
		\E_x^b\Big[ \exp\Big( \frac{\delta}{2}|b|_{lip}|X_s|^2\Big)\Big]  = 1 + \sum_{k\geq 1}\Big(\frac{\delta |b|_{lip}}{2}\Big)^{p}\frac{\E_x^b[|X_s|^{2p}]}{p!}. 
	\end{align*}
	From Equation \eqref{eq:momentbound}, we get that   $\E_x^b\Big[ \exp\Big( \frac{\delta}{2}|b(X_s)|^2\Big)\Big]  < +\infty$, for all $\delta > 0$, which proves Lemma \ref{le:novi:1}.
\end{proof}

By Novikov’s criterion – in its version developed in the classical textbook \cite{karatzas1991brownian}, Lemma 5.14, p.198 – Lemma \ref{le:novi:1} shows that the local martingale $\mathcal{E}^b_t(N) = \exp(-N^b_t - 1/2\langle N^b\rangle_t)$ is indeed a (non-local) martingale under $\mathbb{P}_x^b$ so we can apply Girsanov theorem. Let ${\mathbb{Q}^b_x}$ be the probability defined by its restriction to $\mathcal{F}_t$ by
\begin{align*}
	\left.\frac{\mathrm{d}{\mathbb{Q}^b_x}}{\mathrm{d}\mathbb{P}_x^b} \right|_t = \mathcal{E}^b_t(N).
\end{align*}
Note that under ${\mathbb{Q}^b_x}$, the process $W^{{\mathbb{Q}^b}}$ definied for $t\geq 0$ by
\begin{align}\label{eq:BM}
	W_t^{{\mathbb{Q}^b}} = W_t + \int_{0}^t  b(X_s)\, \mathrm{d}s  =  X_t - X_0
\end{align}
is a $d$-dimensional Brownian motion. Then we have, using the Markov property
\begin{align*}
\mathbb{E}_{\overline{\mu}^b}^b[\widehat{\nu}_{n,\bs{h},p}(x) ] & = \int_{\R^d} \overline{\mu}^b(y) \mathbb{E}_{y}^b[\widehat{\nu}_{n,\bs{h},p}(x) ] \: \mathrm{d}y \\
& = \int_{\mathbb{R}^d} \overline{\mu}^b(y) \mathbb{E}_y^{{\mathbb{Q}^b}} \Big[\bs{K}_{\bs{h}}\Big( x-\frac{\tau_n}{p^{1/2}} \widetilde{\xi}_{i,n}
- \frac{1}{p}\sum_{\ell=0}^{p-1}X_{\ell\Delta_n}\Big)
 \frac{\mathrm{d}{{\mathbb{P}^b_y}}}{\mathrm{d}{{\mathbb{Q}^b_y}}}\Big|_{p\Delta_n}
\Big] \, \mathrm{d}y.
\end{align*}
Moreover, we have
$p^{-1}\sum_{\ell=0}^{p-1}X_{\ell\Delta_n} =
X_0 + p^{-1} \sum_{\ell=0}^{p-1}W^{{\mathbb{Q}^b}}_{\ell\Delta_n}
$ and thus for all $t \geq 0$
\begin{align*}
\left.\frac{\mathrm{d}{{\mathbb{P}^b_y}}}{\mathrm{d}{{\mathbb{Q}^b_y}}} \right|_{t}&= 
\exp \Big(\int_0^t  b(y +  W_s^{{\mathbb{Q}^b}}) \cdot \mathrm{d} W_s^{{\mathbb{Q}^b}} 
- \frac{1}{2} \int_0^t  | b(y +  W_s^{{\mathbb{Q}^b}})|^2 \mathrm{d}s \Big).
\end{align*}
and 
\begin{equation*}
\mathbb{E}_{\overline{\mu}^b}^b[\widehat{\nu}_{n,\bs{h},p}(x) ] 
= \int_{\mathbb{R}^d} \overline{\mu}^b(y) \mathbb{E}_y^{{\mathbb{Q}^b}} \Big[\bs{K}_{\bs{h}}\Big( x-y-\frac{\tau_n}{p^{1/2}} \widetilde{\xi}_{i,n}
- \frac{1}{p} \sum_{\ell=0}^{p-1}W^{{\mathbb{Q}^b}}_{\ell\Delta_n} \Big)
 \frac{\mathrm{d}{{\mathbb{P}^b_y}}}{\mathrm{d}{{\mathbb{Q}^b_y}}}\Big|_{p\Delta_n}
\Big] \, \mathrm{d}y.
\end{equation*}
Note that in the expectation on the right hand side, $\frac{\mathrm{d}{{\mathbb{P}^b_y}}}{\mathrm{d}{{\mathbb{Q}^b_y}}}\Big|_{p\Delta_n}$ is entirely determined by $W^{\mathbb{Q}^b}$ which is a standard Brownian motion under $\mathbb{Q}^b$. Therefore, we have
\begin{align*}
    & \mathbb{E}_y^{{\mathbb{Q}^b}} \Big[\bs{K}_{\bs{h}}\Big( x-y-\frac{\tau_n}{p^{1/2}} \widetilde{\xi}_{i,n}
- \frac{1}{p} \sum_{\ell=0}^{p-1}W^{{\mathbb{Q}^b}}_{\ell\Delta_n} \Big)
 \frac{\mathrm{d}{{\mathbb{P}^b_y}}}{\mathrm{d}{{\mathbb{Q}^b_y}}}\Big|_{p\Delta_n}
\Big] \\
= &\mathbb{E}\Big[\bs{K}_{\bs{h}}\Big( x - y -\frac{\tau_n}{p^{1/2}} \widetilde{\xi}_{0,n}
- \frac{1}{p}\sum_{\ell=0}^{p-1}W_{\ell\Delta_n}\Big)
M_{p\Delta_n}^b(y)
\Big]
\end{align*}
where $W$ is a standard Brownian motion, $\widetilde{\xi}_{0,n}$ is a standard Gaussian variable and where $M_{t}^b(y)$ is defined by
\begin{align*}
M_{t}^b(y)
&= \exp \Big(\int_0^t  b(y +  W_s) \cdot \mathrm{d}W_s
- \frac{1}{2} \int_0^t \left| b(y +  W_s)\right|^2 \, \mathrm{d}s \Big).
\end{align*}
Note that $(M_{t}^b(y))_t$ can be seen as the solution of the stochastic differential equation
\begin{align*}
	M_{t}^b(y) = 1+\int_{0}^t M_{s}^b(y)  b(y +  W_s) \cdot \mathrm{d}W_s. 
\end{align*}
Using Assumption \ref{assumption:potential} and Ito's formula, we have
\begin{align}\label{eq:Mt}
M_{t}^b(y)
&= \exp \Big(V(y) - V(y +  W_t) 
- \frac{1}{2} \int_0^t | b(y +  W_s) |^2 + \nabla \cdot b(y +  W_s) \, \mathrm{d}s \Big).
\end{align}
This expression ensures that $y \mapsto M_{t}^b(y)$ is continuous, and therefore $M_{t}^b(Y)$ is measurable for any random variable $Y$. Then we have
\begin{align*}
\mathbb{E}_{\overline{\mu}^b}^b[
\widehat{\nu}_{n,\bs{h},p}(x) 
]
&= \int_{\mathbb{R}^d} \overline{\mu}^b(y) \mathbb{E}\Big[\bs{K}_{\bs{h}}\Big( x - y -\frac{\tau_n}{p^{1/2}} \widetilde{\xi}_{0,n}
- \frac{1}{p}\sum_{\ell=0}^{p-1}W_{\ell\Delta_n}\Big)
M_{p\Delta_n}^b(y)
\Big] \, \mathrm{d}y \\
& = B_1(x) + B_2(x)
\end{align*}
where
\begin{align}
& B_1(x) := \int_{\mathbb{R}^d} \overline{\mu}^b(y) \mathbb{E} \Big[\bs{K}_{\bs{h}}\Big( x - y -\frac{\tau_n}{p^{1/2}} \widetilde{\xi}_{0,n}
- \frac{1}{p}\sum_{\ell=0}^{p-1}W_{\ell\Delta_n}\Big)
\Big] \, \mathrm{d}y, \label{eq:biais:B1}
\\
& B_2(x) := \int_{\mathbb{R}^d} \overline{\mu}^b(y) \mathbb{E} \Big[\bs{K}_{\bs{h}}\Big( x - y -\frac{\tau_n}{p^{1/2}} \widetilde{\xi}_{0,n}
- \frac{1}{p}\sum_{\ell=0}^{p-1}W_{\ell\Delta_n}\Big)
(M_{p\Delta_n}^b(y) - 1) \Big] \, \mathrm{d}y \label{eq:biais:B2}
\end{align}
and we study each term separately. More precisely, we will prove the bounds
\begin{equation*}
    | B_1(x) - \mathbb{E}[\overline{\mu}^b(x-\widetilde{\tau}_{n,p}\zeta)]  | \lesssim \sum_{i=1}^d h_i^{\alpha_i} 
    \;\;\text{ and }\;\; 
    | B_2  | \lesssim \sqrt{p\Delta_n}
\end{equation*}
which prove Proposition \ref{prop:bound:kernelbias}.

\subsubsection*{Control of $B_1(x)$.} Note that $p^{-1} \sum_{\ell=0}^{p-1}W_{\ell\Delta_n}$ is a centred Gaussian variable with covariance matrix $(p-1)(2p-1)/(12p)\Delta_n I_d$, independent from $\widetilde{\xi}_{0,n}$. Therefore, we get:
\begin{align*}
B_1(x) = \int_{\mathbb{R}^d} \overline{\mu}^b(y) \mathbb{E} \Big[\bs{K}_{\bs{h}}\Big( x - y - \widetilde{\tau}_{n,p}\zeta
\Big)
\Big] \, \mathrm{d}y,
\end{align*}
where $\widetilde{\tau}_{n,p}^2 =  \tau_n^2/p+(p-1)(2p-1)\Delta_n/12p$ is defined in Equation \eqref{eq:defwttau}, and $\zeta$ is a standard Gaussian random variable on $\R^d$. Moreover, we have
\begin{align*}
	\left| B_1(x)- \mathbb{E}[\overline{\mu}^b(x-\widetilde{\tau}_{n,p}\zeta)] \right| = \Big| \E\Big[ \int_{\R^d}K(z)[ \overline{\mu}^b\left(x - h\odot z - \widetilde\tau_{n,p}\zeta\right) - \overline{\mu}^b(x - \widetilde\tau_{n,p}\zeta)]\Big]\Big|,
\end{align*}
where $h \odot z = ( h_1z_1, \dots, h_dz_d)$. We then use a Taylor expansion and the Hölder regularity of the density $\overline{\mu}^b$ and the level of the kernel $K$, we get 
\begin{align*}
	\Big| \E\Big[ \int_{\R^d}\bs{K}(z)[ \overline{\mu}^b\left(x - h\odot z - \widetilde\tau_{n,p}\zeta\right) - \overline{\mu}^b(x - \widetilde\tau_{n,p}\zeta)]\Big]\Big| & \lesssim  \sum_{i=1}^d \frac{\int\left|z_j\right|^{\alpha_j}|\bs{K}(z)| \mathrm{d} z}{\left\lfloor \alpha_i\right\rfloor !} h_i^{\alpha_i} \lesssim \sum_{i=1}^d h_i^{\alpha_i}.
	\end{align*}

\subsubsection*{Control of $B_2(x)$.} First, when $p = 1$, we have
\begin{align*}
\int_{\mathbb{R}^d} \overline{\mu}^b(y) \mathbb{E}\Big[\bs{K}_{\bs{h}}\Big( x - y -\frac{\tau_n}{p^{1/2}} \widetilde{\xi}_{0,n}
\Big)
(M_{p\Delta_n}^b(y) - 1) \Big] \, \mathrm{d}y= 0
\end{align*}
since $(M_{t}^b(y))_t$ is a martingale independent of $\widetilde{\xi}_{0,n}$. For $p\geq 2$, the situation becomes more intricate. We first state the following lemma.
\begin{lemma}\label{le:upbound}
	Under Assumptions \ref{assumption:boundedness}, \ref{assumption:potential} and \ref{assumption:ergodicity}, there exists a constant $M>0$ which is uniform over $\Sigma(\mathfrak{b})$, such that for all $x\in\R^d$
	\begin{align*}
		|b(x) e^{-V(x)}| \leq M. 
	\end{align*}
\end{lemma}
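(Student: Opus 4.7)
The plan is to combine the linear growth of $|b|$ (which follows from Assumption \ref{assumption:boundedness}) with an exponential decay of $e^{-V}$ derived from the ergodicity condition of Assumption \ref{assumption:ergodicity}. Since $b = -\nabla V$, the condition $\langle x, b(x)\rangle \leq -\widetilde C_b |x|$ for $|x| \geq \widetilde\rho_b$ translates into $\nabla V(x) \cdot x/|x| \geq \widetilde C_b$, which gives at least linear growth of $V$ at infinity along every ray. This will dominate the polynomial growth of $|b|$.

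First, I would show the linear growth of $|b|$: applying the fundamental theorem of calculus to each coordinate of $b$ and using $|\!|\partial_i b|\!|_\infty \leq b_1/d$ together with $|b(0)| \leq b_0$ yields $|b(x)| \leq b_0 + b_1 |x|$ for all $x\in \R^d$. Next, for any $x \neq 0$, I would study the scalar function $g(t) = V(t x/|x|)$ on $[0, \infty)$. Its derivative is $g'(t) = \nabla V(tx/|x|) \cdot x/|x|$. For $t \geq \widetilde\rho_b$, setting $y = tx/|x|$ (so $|y|=t$), Assumption \ref{assumption:ergodicity} gives $g'(t) = -\langle y, b(y)\rangle/|y| \geq \widetilde C_b$. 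Integrating from $\widetilde\rho_b$ to $|x|$, and using $V \geq V_0$ at the starting point, I obtain the key lower bound
\begin{equation*}
V(x) \geq V_0 + \widetilde C_b\bigl(|x| - \widetilde\rho_b\bigr) \qquad \text{for all } |x| \geq \widetilde\rho_b.
\end{equation*}

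Equipped with these two estimates, the conclusion is straightforward. For $|x| \leq \widetilde \rho_b$, we simply use $|b(x)|e^{-V(x)} \leq (b_0 + b_1 \widetilde\rho_b) e^{-V_0}$. For $|x| \geq \widetilde \rho_b$, we combine the linear growth of $|b|$ with the exponential decay of $e^{-V(x)}$:
\begin{equation*}
|b(x)| e^{-V(x)} \leq (b_0 + b_1|x|)\, e^{-V_0 + \widetilde C_b \widetilde\rho_b}\, e^{-\widetilde C_b |x|},
\end{equation*}
and the right hand side is uniformly bounded in $x$ since $t \mapsto (b_0 + b_1 t) e^{-\widetilde C_b t}$ is bounded on $[0, \infty)$. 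Taking $M$ to be the maximum of these two bounds, which depend only on $b_0, b_1, V_0, \widetilde C_b$ and $\widetilde \rho_b$, gives the claim uniformly over $\Sigma(\bs{\mathfrak{b}})$. There is no genuine obstacle here; the only subtle point is ensuring that the uniformity is tracked, which is immediate once one observes that all the constants produced above depend only on parameters encoded in $\bs{\mathfrak{b}}$.
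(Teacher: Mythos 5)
Your proof is correct and follows essentially the same route as the paper: linear growth of $|b|$ from Assumption \ref{assumption:boundedness}, linear growth of $V$ at infinity from Assumption \ref{assumption:ergodicity} (the paper calls this ``a classical Taylor expansion argument'' and states $V(x)\geq V_0 + \widetilde C_b|x|/2$ on $|x|\geq 2\widetilde\rho_b$; you make the same estimate explicit by integrating $g(t)=V(tx/|x|)$ along a ray, getting $V(x)\geq V_0 + \widetilde C_b(|x|-\widetilde\rho_b)$ for $|x|\geq\widetilde\rho_b$), and then dominating the linear factor by the exponential. The one place where your write-up is actually a little sharper than the paper's is the bounded region: the paper invokes continuity on the ball of radius $2\widetilde\rho_b$, which gives a finite bound but does not by itself show that this bound is uniform over $\Sigma(\bs{\mathfrak{b}})$; your explicit estimate $(b_0+b_1\widetilde\rho_b)e^{-V_0}$ expresses the bound directly in terms of the class parameters and so delivers the uniformity cleanly.
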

\begin{proof}
First, note that the function $x \mapsto b(x)e^{-V(x)}$ is continuous and hence bounded on any compact set. Precisely, this function is bounded on the ball centered to 0  and of radius $2\widetilde\rho_b$, where $\widetilde\rho_b$ is defined in Assumption \ref{assumption:ergodicity}.  We now consider $x$, such that $|x| \geq 2\widetilde\rho_b$.  With a classical Taylor expansion argument, we get that $V(x) \geq V_0 + \widetilde C_b |x|/2$.

Moreover, Assumption $\ref{assumption:boundedness}$ ensures that
\begin{align*}
	|b(x)| \leq C_1(1+|x|),
\end{align*}
for some constant $C_1 > 0$ so we get
\begin{align*}
	|b(x)e^{-V(x)}| \leq C_1(1+|x|)e^{-\widetilde C_b|x| + V_0},
\end{align*}
which is bounded, concluding the proof.  
\end{proof}
We can now come back to the control of $B_2(x)$, we write $\overline{W}^n_p = p^{-1}\sum_{k=0}^{p-1}W_{k\Delta_n}$ for conciseness. Note that since $\widetilde\xi_{i,n}$ and $\overline{W}^n_p$ are independent, we have
\begin{equation}\label{eq:bias:laststep}
    \begin{aligned}
	B_2(x) & = \int_{\mathbb{R}^d} \overline{\mu}^b(y) \mathbb{E} \Big[\bs{K}_{\bs{h}}\Big( x - y -\frac{\tau_n}{p^{1/2}} \widetilde{\xi}_{0,n}- \overline{W}^n_p\Big)(M_{p\Delta_n}^b(y) - 1) \Big]\,\mathrm{d}y \\
	& = \E\Big[\int_{\mathbb{R}^d} \overline{\mu}^b(y) \bs{K}_{\bs{h}}\Big( x - y -\frac{\tau_n}{p^{1/2}} \widetilde{\xi}_{0,n}- \overline{W}^n_p\Big)(M_{p\Delta_n}^b(y) - 1)\,\mathrm{d}y\Big] \\
	& = \E\Big[\int_{\mathbb{R}^d}  \bs{K}_{\bs{h}}\Big( x - y -\frac{\tau_n}{p^{1/2}} \widetilde{\xi}_{0,n}\Big) \overline{\mu}^b\Big(y - \overline{W}^n_p\Big)\Big(M_{p\Delta_n}^b\Big(y -\overline{W}^n_p\Big) - 1\Big)\, \mathrm{d}y\Big]\\
	& = \int_{\mathbb{R}^d}  \mathbb{E} \Big[\bs{K}_{\bs{h}}\Big( x - y -\frac{\tau_n}{p^{1/2}} \widetilde{\xi}_{0,n}\Big)\Big] \E\Big[ \overline{\mu}^b\Big(y - \overline{W}^n_p\Big)\Big(M_{p\Delta_n}^b\Big(y - \overline{W}^n_p\Big) - 1\Big) \Big]\, \mathrm{d}y.
\end{aligned}
\end{equation}
Since 
\begin{equation*}
    \int_{\R^d} |\bs{K}_{\bs{h}}(x)| \mathrm{~d}x = \int_{\R^d} |\bs{K}(x)| \mathrm{~d}x, 
\end{equation*}
the proof of Proposition \ref{prop:bound:kernelbias} is down to proving that
\begin{align*}
	| \E[ \overline{\mu}^b(y - \overline{W}^n_p)(M_{p\Delta_n}^b(y - \overline{W}^n_p) - 1) ] | \lesssim \sqrt{p\Delta_n}
\end{align*}
uniformly for $y \in \mathbb{R}^d$.\\

\textit{Step 1.} The main idea of the proof is to perform a Taylor expansion of our quantity of interest around $y$. To do so, let us introduce for any $y \in \R^d$, $s\geq 0$ and $j \in \{ 1, \dots, d\}$,
\begin{equation*}
    Y^j_{y,s} : \begin{array}{ll}
        \R \to \R^d \\
        u \mapsto (y_1 + W^1_s, \dots, y_{j-1} + W^{j-1}_s, u + W^j_s, y_{j+1} + W^{j+1}_s, \dots, y_{d} + W^{d}_s).
    \end{array}
\end{equation*}
We also define for all $s \geq 0$, $j \in \{1, \dots, d\}$ and $y \in \mathbb{R}^d$ the function $\phi_{y,s}^j$ for $u \in \R$ by
\begin{equation*}
    \phi_{y,s}^j(u) =M^b_s(y_1, \dots, y_{j-1}, u, y_{j+1}, \dots, y_d).
\end{equation*}
The purpose of this first step is to show that for any $s \geq 0$, $j \in \{1, \dots, d\}$ and $y \in \mathbb{R}^d$, the function $\phi_{y,s}^j$ is continuously differentiable. Since $\overline{\mu}^b$ is continuously differentiable and for any $z \in \R^d$ $M_t^b(z)$ is positive, it is equivalent to prove that $\log(\phi_{y,s}^j)$ is continuously differentiable. In fact for any $y \in \R^d$, we write 
\begin{align*}
	\log(M_t^b(y)) = -\sum_{i=1}^d\int_0^tb^i(y + W_s) \,\mathrm{d}W^i_s + \frac{1}{2}\int_0^t |b^i(y + W_s)|^2 \,\mathrm{d}s,
\end{align*}
where $W^i$ and $b^i$ stand for the $i$-th component of $W$ and $b$ respectively. Moreover, using the fact that $b$ is continuously differentiable in any direction, we obtain
\begin{equation*}
	b^i(Y^j_{y,s}(y_j)) = \int_0^{y_j} \partial_{j}b^i( Y^j_{y,s}(u)) \,\mathrm{d}u +b^i(Y^j_{y,s}(0)).
\end{equation*}
Then we can rewrite the previous equation 
\begin{align*}
	\log(M_t^b(y)) = & -\sum_{i=1}^d \Big\{ \int_0^t \Big(\int_0^{y_j} \partial_{j}b^i(Y_{y,s}^j(u)) \,\mathrm{d}u +b^i( Y_{y,s}^j(0)) \Big)\,\mathrm{d}W^i_s \Big\}\\
	& -\frac{1}{2} \sum_{i=1}^d \Big\{ \int_0^t \Big|\int_0^{y_j} \partial_{j}b^i(Y_{y,s}^j(u)) \,\mathrm{d}u +b^i(Y_{y,s}^j(0))\Big|^2 \,\mathrm{d}s \Big\}.
\end{align*}
It is clear that the second double integral is continuously differentiable with respect to $y_j$. The situation is less straightforward for the first one due to the stochastic integral. We plan to use the following Fubini's Theorem for stochastic integrals.
\begin{theorem}[Theorem 2.2 in \cite{verrar2012fubini}]\label{th:fubini}
	Let $(X, \Sigma, \mu)$ be a  $\sigma$-finite measure space. Let $S=M+A$ be a continuous semimartingale. Let $\psi: X \times$ $[0, T] \rightarrow \mathrm{R}$ be progressively measurable and such that almost surely, we have,
$$
\begin{aligned}
& \int_X\left(\int_0^T|\psi(x, t)|^2 \mathrm{d}\langle M\rangle_t \right)^{\frac{1}{2}} \mathrm{d} \mu(x)<\infty, \\
& \int_X \int_0^T|\psi(x, t)| \mathrm{d}A_t \mathrm{d} \mu(x) < \infty .
\end{aligned}
$$
Then, for all $t \in[0, T]$, one has that almost surely
$$
\int_X \int_0^t \psi(r,x) \: \mathrm{d}S_r \: \mathrm{d} \mu(x)=\int_0^t \int_X \psi(r,x) \: \mathrm{d} \mu(x) \: \mathrm{d}S_r.
$$
\end{theorem}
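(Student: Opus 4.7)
The plan is to prove the identity by the classical decompose--and--approximate strategy used for any stochastic Fubini statement. Write $S=M+A$, with $M$ a continuous local martingale and $A$ continuous of finite variation, and treat the two pieces separately since both sides are linear in the integrator. The finite--variation part is essentially free: the second integrability hypothesis $\int_X\int_0^T |\psi(x,t)|\,\mathrm{d}|A|_t\,\mathrm{d}\mu(x) < \infty$ is exactly the hypothesis of classical (pathwise) Fubini, so we immediately obtain $\int_X\int_0^t \psi(x,r)\,\mathrm{d}A_r\,\mathrm{d}\mu(x) = \int_0^t\int_X \psi(x,r)\,\mathrm{d}\mu(x)\,\mathrm{d}A_r$ almost surely for each $t$.

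For the martingale part, the plan is standard density/approximation. First check that both sides make sense: progressive measurability of $(r,\omega)\mapsto \int_X \psi(x,r,\omega)\,\mathrm{d}\mu(x)$ follows from progressive measurability of $\psi$ together with Fubini--Tonelli applied to positive and negative parts, and its square--integrability against $\mathrm{d}\langle M\rangle$ follows from Minkowski's integral inequality,
\begin{equation*}
\Bigl(\int_0^T \Bigl|\int_X \psi(x,r)\,\mathrm{d}\mu(x)\Bigr|^{2}\,\mathrm{d}\langle M\rangle_r\Bigr)^{1/2} \leq \int_X \Bigl(\int_0^T |\psi(x,r)|^{2}\,\mathrm{d}\langle M\rangle_r\Bigr)^{1/2} \mathrm{d}\mu(x),
\end{equation*}
whose $\mathbb{E}$ is finite by the first hypothesis. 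Then verify the identity on elementary integrands of the form $\psi(x,r,\omega) = f(x)\,\mathbf{1}_{(s,u]}(r)\,Z(\omega)$ with $f\in L^{1}(\mu)$ and $Z$ bounded and $\mathcal{F}_s$--measurable, where both sides reduce by direct computation to $\bigl(\int f\,\mathrm{d}\mu\bigr)\,Z\,(M_{u\wedge t}-M_{s\wedge t})$. Extend by linearity to the linear span, and then by density to the full space of admissible integrands.

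The key analytical tool in the extension is the Burkholder--Davis--Gundy inequality combined with the Minkowski bound above: for two admissible $\psi,\widetilde\psi$,
\begin{equation*}
\mathbb{E}\Bigl[\sup_{t\leq T}\Bigl|\int_0^t \int_X(\psi-\widetilde\psi)\,\mathrm{d}\mu\,\mathrm{d}M_r\Bigr|\Bigr] \leq C\,\mathbb{E}\int_X\Bigl(\int_0^T|\psi-\widetilde\psi|^{2}\,\mathrm{d}\langle M\rangle_r\Bigr)^{1/2}\mathrm{d}\mu(x),
\end{equation*}
and the analogous bound for $\int_X \int_0^\cdot (\psi-\widetilde\psi)\,\mathrm{d}M_r\,\mathrm{d}\mu(x)$ (applying BDG inside the $\mu$--integral). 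Hence both sides of the claimed identity extend as continuous linear maps on the Banach space of integrands normed by the right--hand side of the first hypothesis, and they agree on the dense subspace of elementary tensors, so they coincide everywhere. Passing to a subsequence converts uniform convergence in probability on $[0,T]$ into the almost--sure identity stated.

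The main technical obstacle is the localization step: in general $M$ is only a local martingale, so the BDG estimate above must be applied after stopping at a reducing sequence $(\tau_n)$ with $\tau_n\uparrow T$ along which $M^{\tau_n}$ is a true $L^{2}$ martingale and the square--function norms are finite; one then checks that the identity on $[0,\tau_n]$ passes to $[0,T]$ as $n\to\infty$. A secondary but nontrivial point is to choose the approximating sequence of elementary integrands inside the weighted $L^{1}(\mu; L^{2}(\langle M\rangle))$ space defined by the first hypothesis; this is achieved by a monotone class argument or by approximating $\psi$ in $L^{2}(\mathrm{d}\mu \otimes \mathrm{d}\langle M\rangle \otimes \mathrm{d}\mathbb{P})$ by simple tensors after truncation.
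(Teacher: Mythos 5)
The paper does not actually prove this theorem: it is imported verbatim as Theorem~2.2 of \cite{verrar2012fubini} and used as a black box in Step~1 of the proof of Proposition~\ref{prop:bound:kernelbias}. So there is no internal proof to compare yours against; I can only assess your sketch on its own terms.

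As a standalone argument, your plan is the correct one and is essentially the route taken in Veraar's paper: split $S = M + A$, dispose of the finite-variation part by classical Fubini against $\mathrm{d}|A|_t$, verify the identity on elementary tensors $f(x)\,\mathbf{1}_{(s,u]}(r)\,Z(\omega)$, and extend by density in the Banach space $L^1(\mu; L^2(\mathrm{d}\langle M\rangle))$, with the Minkowski integral inequality guaranteeing that $\int_X \psi\,\mathrm{d}\mu$ remains an admissible integrand and BDG controlling the difference of both sides. Two points deserve more care than your sketch gives them. First, the hypotheses are stated to hold only almost surely, not in $L^1(\mathbb{P})$, so the BDG estimate cannot be applied directly; you need to localize not only to reduce $M$ but also to make the mixed norm $\int_X(\int_0^{t\wedge\tau_n}|\psi|^2\,\mathrm{d}\langle M\rangle)^{1/2}\mathrm{d}\mu$ uniformly bounded, e.g.\ by stopping when that process hits $n$. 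You flag localization for the local-martingale issue but not explicitly for the a.s.-versus-$L^1$ issue; Veraar's actual proof hinges on exactly this point, since the relaxation to a.s.\ integrability is the novelty of his version of the theorem. Second, the measurability of $r \mapsto \int_X\psi(x,r)\,\mathrm{d}\mu(x)$ and the fact that the $x$-integral is well-defined for $\mathrm{d}\langle M\rangle$-a.e.\ $r$ (not for all $r$) requires a short argument that you wave past with ``Fubini--Tonelli applied to positive and negative parts''; in fact it follows from the first hypothesis together with a measurable-selection/Fubini argument, but it is not literally Fubini--Tonelli applied to $\psi$ itself. Neither point is a fatal gap, but both are where the real work lies.
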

Using the boundedness of the partial derivatives, we can easily see that this result applies in our case. 
Therefore, we get
\begin{align*}
	\log(M_t^b(y)) = & -\sum_{i=1}^d \Big\{ \int_0^{y_j}\Big( \int_0^t\partial_{j}b^i(Y_{y,s}^j(u)) \,\mathrm{d}W^i_s \Big)\,\mathrm{d}u 
 + \int_0^t b^i(Y_{y,s}^j(0)) \,\mathrm{d}W^i_s \Big\}
 \\
	& - \frac{1}{2} \sum_{i=1}^d  \Big\{\int_0^t \Big|\int_0^{y_j} \partial_{j}b^i(Y_{y,s}^j(u)) \,\mathrm{d}u +b^i(Y_{y,s}^j(0))\Big|^2 \,\mathrm{d}s \Big\}.
\end{align*}
Note also that $Y_{y,s}^j(u)$ and $Y_{y,s}^j(0)$ do not depend on $y_j$ by definition. Therefore, for all $1 \leq j \leq d$, and for all $y \in \R^{d}$, the function $\phi_{y,s}^j$ is differentiable. Moreover, we have
\begin{align}\label{eq:logdif}
	\partial_j\log(M_t^b(y)) = -\sum_{i=1}^d \Big\{\int_0^t \partial_jb^i(y + W_s) \, \mathrm{d}W_s + \int_0^t (b^i\partial_jb^i)(y+W_s) \,\mathrm{d}s\Big\},
\end{align} 
as $y + W_s = Y_{y,s}^j(y_j)$. We now want to prove that these partial derivatives are continuous. To do so, we plan to apply component by component the following Kolmogorov–Chentsov theorem, due to \cite{andreev2014kolmogorov}
\begin{theorem}[Theorem 3.1. of \cite{andreev2014kolmogorov}]\label{th4}
    Let $D \subset \mathbb{R}^d$ be a bounded domain of cone type and let $X: \Omega \times D \rightarrow \mathbb{R}$ be a random field on $D$. Assume that there exist $m \geq1$, $p>1$, $\epsilon \in(0, p]$, and $C>0$ such that the weak derivatives $\partial^\beta X$ are in $L^p(\Omega \times D)$ and
\begin{equation*}
    \mathbb{E}\left(\left|\partial^\beta X(x)-\partial^\beta X(y)\right|^p\right) \leq C|x-y|^{d+\epsilon}
\end{equation*}
for all $x, y \in D$ and any multi-index $\beta \in \mathbb{N}^n$ with $|\beta| \leq d$. Then $X$ has a modification that is locally of class $\bar{C}^t$ for all $t<d+\min \{\epsilon / p, 1-d / p\}$.
\end{theorem}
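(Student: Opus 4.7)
The strategy is to extend the classical Kolmogorov--Chentsov continuity criterion to weak derivatives of random fields and then combine it with a Morrey--Sobolev embedding to upgrade the regularity. Concretely, the moment bound on $\partial^\beta X$ yields a continuous modification via a dyadic chaining argument, and the $L^p$ integrability of all weak derivatives up to order $d$ provides an extra Hölder exponent of $1-d/p$ through Sobolev embedding. Assembling these two sources of regularity gives the announced class $\bar{C}^t$ for $t < d + \min\{\epsilon/p, 1-d/p\}$.

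\textbf{Step 1: Continuous modifications of each weak derivative.} For each multi-index $\beta$ with $|\beta| \leq d$, I would apply the classical Kolmogorov--Chentsov theorem on $\mathbb{R}^d$ to the random field $\partial^\beta X$. The hypothesis $\mathbb{E}\bigl(|\partial^\beta X(x) - \partial^\beta X(y)|^p\bigr) \leq C|x-y|^{d+\epsilon}$ is exactly what is required: a dyadic partition of $D$ combined with Borel--Cantelli produces a modification $\widetilde{X}^\beta$ of $\partial^\beta X$ that is, almost surely, locally Hölder continuous of any order strictly less than $\epsilon/p$. This already gives us a continuous representative of every weak derivative up to order $d$.

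\textbf{Step 2: Consistency with differentiation.} The second step is to verify that the family $\{\widetilde{X}^\beta\}_{|\beta|\leq d}$ is compatible with differentiation, in the sense that for almost every $\omega$, the continuous function $\widetilde{X}^0(\omega,\cdot)$ admits classical derivatives which agree with $\widetilde{X}^\beta(\omega,\cdot)$. Since $\widetilde{X}^\beta = \partial^\beta X$ as elements of $L^p(\Omega \times D)$ and both are jointly measurable, Fubini ensures that for almost every $\omega$ they coincide as elements of $L^p(D)$; continuity then upgrades this to pointwise equality on $D$. This is the step I expect to be the main obstacle: one must choose the modifications jointly measurably (e.g., by first defining them on a countable dense set and extending by continuity) so that Fubini is applicable, and then propagate the identity of weak derivatives to classical derivatives through the continuous versions.

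\textbf{Step 3: Sobolev upgrade and assembly.} Once consistency is established, the Morrey--Sobolev embedding $W^{d,p}_{\mathrm{loc}}(D) \hookrightarrow C^{d-1,\,1-d/p}_{\mathrm{loc}}(D)$ (assuming $p > d$; otherwise one uses the appropriate endpoint version) provides an additional $(1-d/p)$-Hölder modulus for the highest-order derivatives, beyond the exponent $\epsilon/p$ obtained in Step 1. The cone-type condition on $D$ is used here to guarantee a uniform local extension operator. Taking the minimum of the two Hölder exponents yields the bound $t < d + \min\{\epsilon/p,\, 1-d/p\}$ claimed in the conclusion, and the proof is complete.
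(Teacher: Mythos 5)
This statement is not proved in the paper: it is quoted verbatim as Theorem~3.1 of Andreev and Lang~\cite{andreev2014kolmogorov} and invoked as a black box in Step~1 of Section~\ref{sec:propo:bias:nu} to conclude that the maps $\phi^j_{y,t}$ admit continuously differentiable modifications. There is therefore no internal proof for your sketch to be compared against, and any attempt to reprove it goes beyond what the paper itself does.

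On the merits of your sketch, the general strategy (a Kolmogorov--Chentsov chaining argument on the weak derivatives combined with a Sobolev--Morrey embedding on the cone-type domain $D$) is in the right spirit, but Step~3 contains a genuine gap in how the exponent $\min\{\epsilon/p,\,1-d/p\}$ is produced. You describe the Sobolev embedding as supplying an \emph{additional} $(1-d/p)$-H\"older modulus for the highest-order derivatives, to be compared with the $\epsilon/p$ exponent from Step~1 by taking a minimum. However, the embedding $W^{d,p}_{\mathrm{loc}}(D)\hookrightarrow C^{d-1,\gamma}_{\mathrm{loc}}(D)$ (valid for $\gamma<1-d/p$ when $p>d$) yields H\"older regularity of the $(d-1)$-th order derivatives only; it does not touch the $d$-th order ones, and on its own it gives $X\in\bar C^{t}$ merely for $t<d-d/p$, which is strictly weaker than the conclusion. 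Thus the two exponents $\epsilon/p$ and $1-d/p$ do not refer to the same object, and a pointwise minimum of them is not a legitimate way to combine the two sources of regularity. In the cited reference the threshold $1-d/p$ arises inside the chaining/approximation argument itself (through the rate at which the process is approximated in the Sobolev--Slobodeckij scale before the embedding is applied at the very end), rather than as an independently obtained H\"older modulus. Your Step~2 is the other delicate point and, as you yourself flag, would require a joint-measurability construction before Fubini can be used to identify the weak and classical derivatives; this is one of the reasons the reference works with the Sobolev norm of the whole field rather than gluing together separate modifications of each $\partial^\beta X$.
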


For all $u, v\in \R$, $p > 1$, and $t \geq 0$, we obtain
\begin{align*}
    \Big| \frac{\mathrm{d}}{\mathrm{d}u} \log(\phi_{y,t}(u)) - \frac{\mathrm{d}}{\mathrm{d}u} \log(\phi_{y,t}(v))\Big|^p & \lesssim \Big| \int_0^t \big(\partial_jb(Y_{y,s}^j(u)) - \partial_jb(Y_{y,s}^j(v))\big) \mathrm{d}W_s\Big|^p\\
    & + \Big|\int_0^t \big(b(Y_{y,s}^j(u)) - b(Y_{y,s}^j(u))\big)\partial_jb(Y_{y,s}^j(u)) \mathrm{d}s \Big|^p \\
    & + \Big| \int_0^t \big( \partial_jb(Y_{y,s}^j(u)) - \partial_jb(Y_{y,s}^j(v))\big) b(Y_{y,s}^j(v)) \mathrm{d}s\Big|^p.
\end{align*}
Now, taking the expectation, we get 
\begin{align*}
    \mathbb{E}\left[ \Big| \frac{\mathrm{d}}{\mathrm{d}u} \log(\phi_{y,s}^j(u)) - \frac{\mathrm{d}}{\mathrm{d}u} \log(\phi_{y,s}^j(v))\Big|^p\right] \lesssim \barroman{I} + \barroman{II} + \barroman{III}, 
\end{align*}
where
\begin{align*}
    &\barroman{I} = \mathbb{E}\left[ \Big| \int_0^t \big(\partial_jb(Y_{y,s}^j(u)) - \partial_jb(Y_{y,s}^j(v))\big) \mathrm{d}W_s\Big|^p\right]\: ;\\
    &\barroman{II} = \mathbb{E}\left[\Big|\int_0^t \big(b(Y_{y,s}^j(u)) - b(Y_{y,s}^j(v))\big)\partial_jb(Y_{y,s}^j(u) \mathrm{d}s \Big|^p \right]\: ; \\
    &\barroman{III} = \mathbb{E}\left[ \Big| \int_0^t \big( \partial_jb(Y_{y,s}^j(u) - \partial_jb(Y_{y,s}^j(v))\big) b(Y_{y,s}^j(v)) \mathrm{d}s\Big|^p\right].\\
\end{align*}
We first control the term $\barroman{I}$ using the Burkholder-Davis-Gundy's inequality and we get
\begin{equation*}
    \barroman{I} \leq  \Big| \int_0^t \mathbb{E}\left[|\partial_jb(Y_{y,s}^j(u)) - \partial_jb(Y_{y,s}^j(v))|^2\right] \mathrm{d}s\Big|^{p/2}.
\end{equation*}
Moreover, we know that $\partial_j b$ is $(\alpha_j-1) \wedge 1$-Hölder in the $j$-th variable. Using that $\alpha_j \geq 2$, we know that $\partial_j b$ is Lipschitz continuous. Then, 
\begin{equation*}
    \barroman{I} \lesssim t^{p/2}|u-v|^{p}.
\end{equation*}
For the second term, we use the boundedness of the partial derivatives to get
\begin{equation*}
    \barroman{II} \lesssim t^p |u-v|^{p}.  
\end{equation*}
Moreover, for the last term, using the fact that $b$ is at most linear and the control of moments of order $p$ of the Brownian motion, we get 
\begin{equation*}
    \barroman{III} \lesssim |u-v|^{p},
\end{equation*}
where the constant here may depend on $(y_1, \dots, y_{j-1}, y_{j+1}, \dots, y_d)$ and $t$. Finally, as $\alpha_j \geq 2$, we have that $p\geq 1/\varepsilon$, for some $\varepsilon\in(0,p]$. Applying Theorem \ref{th4}, we get that $\phi_{y,t}^j$ is continuously differentiable on $\R$.

\trash{
Indeed, for all $p \geq 1$, all $ 1 \leq j \leq d$ and all $y, z \in \mathbb{R}^d$, we have
\begin{align*}
    \partial_j\log(M_t^b(y)) - \partial_j\log(M_t^b(z)) = 
    &- \sum_{i=1}^d \int_0^t \Big(\partial_jb^i(y + W_s) - \partial_jb^i(z + W_s) \Big)\, \mathrm{d}W_s 
    \\&- \sum_{i=1}^d \int_0^t (b^i(y+W_s) - b^i(z+W_s)) \partial_jb^i(y+W_s) \,\mathrm{d}s
    \\&- \sum_{i=1}^d \int_0^t b^i(z+W_s) (\partial_jb^i(y+W_s) - \partial_jb^i(z+W_s)) \,\mathrm{d}s.
\end{align*}
For all $p \geq 1$, we get Using Burkholder-Davis-Gundy's inequality and Jensen's inequality
\begin{align*}
\E\Big[\Big|\partial_j\log(M_t^b(y)) - \partial_j\log(M_t^b(z))\Big|^p \Big] \lesssim
    & \sum_{i=1}^d 
    \E\Big[ 
        \Big| \int_0^t \Big(\partial_jb^i(y + W_s) - \partial_jb^i(z + W_s) \Big)^2 \, \mathrm{d}s  \Big|^{p/2} 
    \Big]
    \\&+ \sum_{i=1}^d 
    \E\Big[ \Big|\int_0^t (b^i(y+W_s) - b^i(z+W_s)) \partial_jb^i(y+W_s) \,\mathrm{d}s \Big|^p\Big]
    \\&+ \sum_{i=1}^d 
    \E\Big[ \Big| \int_0^t b^i(z+W_s) (\partial_jb^i(y+W_s) - \partial_jb^i(z+W_s)) \,\mathrm{d}s\Big|^p\Big]
\\
\lesssim_t
    & \sum_{i=1}^d 
         \int_0^t \E\Big[ \Big|\partial_jb^i(y + W_s) - \partial_jb^i(z + W_s) \Big|^{p}\Big] \, \mathrm{d}s  
    \\&+ \sum_{i=1}^d 
    \int_0^t \E\Big[ \Big| (b^i(y+W_s) - b^i(z+W_s)) \partial_jb^i(y+W_s) \Big|^p\Big] \,\mathrm{d}s 
    \\&+ \sum_{i=1}^d 
     \int_0^t \E\Big[ \Big| b^i(z+W_s) (\partial_jb^i(y+W_s) - \partial_jb^i(z+W_s)) \Big|^p\Big] \,\mathrm{d}s
\end{align*}
where $\lesssim_t$ indicates that the constant appearing in the inequality also depends on $t$.

\begin{todo}
    We cannot go further here because we don't have enough information about $\partial_jb^i$: We know that $b^i$ is $\alpha_j$-differentiable in the $i$-th variable by Definition \ref{def:anisotropic}. However, this does not imply that $\partial_jb^i$ is even continuous. It is continuous in the $i$-th variable but we know nothing about other variables.
    \\
    Two questions: (1) Can we prove that $\partial_jb^i$ is continuous (and Holder continuous...) when we know that $b^i$ is anisotropic as in Definition \ref{def:anisotropic}.
    \\
    (2) Do we really need that $\psi$ is $C^1$ or is differentiable enough ? Partial answer: The second fundamental theorem of calculus states that it is enough that the derivative is Riemann integrable \href{https://en.wikipedia.org/wiki/Fundamental_theorem_of_calculus}{See wikipedia}. In what follows, we don't differentiate $\psi$ directly but $\lambda \mapsto \psi(y - \lambda\overline{W}^n_p)$. So we should check that the derivative of this thing exists and is Riemann integrable... 
\end{todo}

This immediately gives that for any $t \geq 0$, $\psi : y \mapsto \overline{\mu}^b(y)(M_t^b(y)-1)$ belongs to $C^1(\R^d)$ since $\overline{\mu}^b \in C^{1}(\R^d)$. }

~\\

\textit{Step 2.} For any $t \geq 0$, we define $\psi : \R^d \ni y \mapsto \overline{\mu}^b(y)(M_t^b(y)-1)$. Moreover, recall that we write $\overline{W}^n_p = p^{-1}\sum_{\ell=0}^{p-1} W_{\ell\Delta_n}$. From the previous step, we write for any $y\in \R^d$,
\begin{equation*}
    \psi\big(y - \overline{W}^n_p) - \psi(y) = \sum_{j=1}^d E_j,
\end{equation*}
where 
\begin{equation*}
    E_j = \psi\big(\big(y - \overline{W}^n_p\big)_1, \dots, \big(y - \overline{W}^n_p\big)_j, y_{j+1}, \dots, y_d\big) - \psi\big(\big(y - \overline{W}^n_p\big)_1, \dots, \big(y - \overline{W}^n_p\big)_{j-1}, y_j, \dots, y_d\big),
\end{equation*}
for any $j \in \{ 2, \dots, d-1\}$, and 
\begin{align*}
    & E_1 = \psi\big(\big(y - \overline{W}^n_p\big)_1, y_2, \dots, y_d)\big) - \psi(y_1, \dots, y_d), \\
    & E_d = \psi\big(\big(y - \overline{W}^n_p\big)_1, \dots, \big(y - \overline{W}^n_p\big)_d\big) - \psi\big(\big(y - \overline{W}^n_p\big)_1, \dots, \big(y - \overline{W}^n_p\big)_{d-1}, y_d\big). 
\end{align*}
Then, using Step 1, we have for any $j \in \{1, \dots, d\}$
\begin{equation*}
    E_j = \int_0^1 (\partial_j\psi)\big(\big(y - \overline{W}^n_p\big)_1, \dots, \big(y - \overline{W}^n_p\big)_{j-1}, \big(y - \lambda\overline{W}^n_p\big)_{j},y_{j+1}, \dots, y_d\big)(\overline{W}^n_p)_j \: \mathrm{d}\lambda. 
\end{equation*}
Then, we have 
\begin{align*}
    & (p\Delta_n)^{-1/2}|\mathbb{E}[\psi(y - \overline{W}^n_p) - \psi(y)]| \\
    & \leq \sum_{j=1}^d\E\left[ \left|\int_0^1 (\partial_j\psi)\big(\big(y - \overline{W}^n_p\big)_1, \dots, \big(y - \overline{W}^n_p\big)_{j-1}, \big(y - \lambda\overline{W}^n_p\big)_{j},y_{j+1}, \dots, y_d\big)(\overline{W}^n_p)_j \: \mathrm{d}\lambda\right|^2\right]^{1/2}.
\end{align*}
We now introduce the following notation for any $\lambda \in (0,1)$,
\begin{equation*}
    Y^j_{y,n,p}(\lambda) = \left((y - \overline{W}^n_p)_1, \dots, (y - \overline{W}^n_p)_{j-1}, (y - \lambda\overline{W}^n_p)_{j},y_{j+1}, \dots, y_d \right)
\end{equation*}
Moreover, we have the following decomposition
\begin{equation*}
    \E\left[\left| \int_0^1 \partial_j\psi(Y^j_{y,n,p}(\lambda))\mathrm{d}\lambda \right|^2\right] \lesssim\barroman{I} + \barroman{II},
\end{equation*}
where 
\begin{align*}
    & \barroman{I} = \int_0^1 \mathbb{E}\left[ |\partial_j\bar\mu^b(Y^j_{y,n,p}(\lambda))(M_{p\Delta_n}^b(Y^j_{y,n,p}(\lambda)) -1)|^2\right] \,\mathrm{d}\lambda \\
    & \barroman{II} = \int_0^1 \mathbb{E}\left[ |\mu^b(Y^j_{y,n,p}(\lambda))\partial_j(M_{p\Delta_n}^b(Y^j_{y,n,p}(\lambda)) -1)|^2\right] \,\mathrm{d}\lambda .
\end{align*}
In order to control the term $\barroman{I}$, we write
\begin{align*}
    \barroman{I} = Z_V^{-1}\int_0^1\E\left[\left|b^j(Y^j_{y,n,p}(\lambda))\exp(-2Y^j_{y,n,p}(\lambda))\left(M_{p\Delta_n}^b(Y^j_{y,n,p}(\lambda)) -1\right)\right|^2\right]\mathrm{d}\lambda. 
\end{align*}
Moreover, using Lemma \ref{le:upbound}, we get that 
\begin{equation*}
    \barroman{I} \lesssim 1 + \int_0^1 \mathbb{E}\left[ |M_{p\Delta_n}^b(Y^j_{y,n,p}(\lambda))|^2|\right] \mathrm{d}\lambda.
\end{equation*}
Finally, using Equation \eqref{eq:Mt} we obtain for any $\lambda \in (0,1)$,
\begin{align*}
    M_{p\Delta_n}^b(Y^j_{y,n,p}(\lambda)) = \exp \Big(& V(Y^j_{y,n,p}(\lambda)) - V(Y^j_{y,n,p}(\lambda) +  W_t) 
\\
& - \frac{1}{2} \int_0^t | b(Y^j_{y,n,p}(\lambda) +  W_s) |^2 + \nabla \cdot b(Y^j_{y,n,p}(\lambda) +  W_s) \, \mathrm{d}s \Big).
\end{align*}
Finally, leveraging on the fact that $V$ is bounded from below, and both $\widebar\mu^b$ and $\nabla\cdot b$ are bounded, we get
\begin{equation*}
    \barroman{I} \lesssim 1. 
\end{equation*}

\textit{Step 3.} We now move on to the control of term $\barroman{II}$. First, for any $j\in \{1, \dots, d\}$ we have 
\small
\begin{equation*}
    \frac{\partial_j M_{p\Delta_n}^b(Y^j_{y,n,p}(\lambda))}{M_{p\Delta_n}^b(Y^j_{y,n,p}(\lambda))} = - \sum_{i=1}^d \left(\int_0^{p\Delta_n} \partial_jb^i(Y^j_{y,n,p}(\lambda) + W_s)\mathrm{d}W^i_s + ÷\int_0^{p\Delta_n} b^i\partial_jb^i(Y^j_{y,n,p}(\lambda) + W_s)\mathrm{d}s\right)
\end{equation*}
\normalsize
Then we have $\barroman{II} \leq \barroman{II}_A + \barroman{II}_B$, where 
\begin{align*}
    & \barroman{II}_A = 2\int_0^1\mathbb{E}\left[ |\mu^b(Y^j_{y,n,p}(\lambda))M_{p\Delta_n}^b(Y^j_{y,n,p}(\lambda)) \sum_{i=1}^d \int_0^{p\Delta_n} \partial_jb^i(Y^j_{y,n,p}(\lambda)+W_s)\: \mathrm{d}W^i_s|^2\right] \mathrm{d}\lambda \\
    & \barroman{II}_B = 2\int_0^1 \E[|\bar\mu^b(Y^j_{y,n,p}(\lambda)) M_{p\Delta_n}^b(Y^j_{y,n,p}(\lambda))\sum_{i=1}^d \int_0^{p\Delta_n}(b^i\partial_jb^i)(Y^j_{y,n,p}(\lambda)+W_s)|^2] \mathrm{d}\lambda
\end{align*}
Moreover using consecutively Jensen and Cauchy-Schwarz inequalities, we obtain 
\begin{equation}
\begin{aligned}\label{eq:IIA}
	 \barroman{II}_A & \lesssim \sum_{i=1}^d\int_0^1 \E\Big[\Big| \overline{\mu}^b(Y^j_{y,n,p}(\lambda))M_{p\Delta_n}^b(Y^j_{y,n,p}(\lambda)) \int_0^{p\Delta_n} \partial_jb^i(Y^j_{y,n,p}(\lambda) + W_u) \, \mathrm{d}W^i_u \Big|^2 \Big] \,\mathrm{d}\lambda \\
	& \nonumber \lesssim \sum_{i=1}^d \int_0^1\E[|\overline{\mu}^b(Y^j_{y,n,p}(\lambda))M_{p\Delta_n}^b(Y^j_{y,n,p}(\lambda))|^4]^{1/2}\E\Big[\Big| \int_0^{p\Delta_n} \partial_jb^i(Y^j_{y,n,p}(\lambda) + W_u) \, \mathrm{d}W^i_u \Big|^4\Big]^{1/2} \,\mathrm{d}\lambda. 
\end{aligned}
\end{equation}
Moreover, from Equation \eqref{eq:Mt} and the boundedness of $\bar\mu^b$, we get once again 
\begin{equation*}
    \int_0^1\E\left[\left|\overline{\mu}^b(Y^j_{y,n,p}(\lambda))M_{p\Delta_n}^b(Y^j_{y,n,p}(\lambda))\right|^4\right]^{1/2} \lesssim 1. 
\end{equation*}
Then, we only need to deal for each $i \in\{ 1, \dots, d\}$ with 
\begin{align*}
	\E\left[\left| \int_0^{p\Delta_n} \partial_jb^i(Y^j_{y,n,p}(\lambda) + W_u) \: \mathrm{d}W^i_u \right|^4\right].
\end{align*}
Let $\mathcal{G}_{p,\Delta_n} = \sigma(W_{k\Delta_n}, \: 0\leq k\leq p)$, the $\sigma$-field generated by the discrete observation of $W$ at times $k\Delta_n$ for $k \in \{ 0, \dots, p\}$, so that conditionally on $\mathcal{G}_{p,\Delta_n}$, 

\begin{align}
\left\{\begin{array}{ll}
	\mathrm{d}W_u = I_k \,\mathrm{d}u + \mathrm{d}\mathcal{W}^{\ast, k}_u, \\
	\\
	\mathrm{d}\mathcal{W}^{\ast, k}_u = \frac{-\mathcal{W}^{\ast, k}_u}{(k+1)\Delta_n - u}\,\mathrm{d}u + \mathrm{d} B^k_u,
\end{array} \right.
\end{align}
in distribution, where for each $k \in \{ 0, \dots, p-1\}$
\begin{align*}
	 I_k := \frac{W_{(k+1)\Delta_n} - W_{k\Delta_n}}{\Delta_n},
\end{align*}
$\mathcal{W}^{\ast, k}$ is a Brownian Bridge on $[k\Delta_n; (k+1)\Delta_n]$, and $B^k$ is a $d$-dimensional Brownian motion independent of $\mathcal{G}_{p,\Delta_n}$.  Then, we write
\begin{align*}
	& \E^b\Big[\Big| \int_0^{p\Delta_n} \partial_jb^i(Y^j_{y,n,p}(\lambda) + W_u) \,\mathrm{d}W^i_u \Big|^4\Big] = \E^b\Big[ \E^b\Big[ \Big| \int_0^{p\Delta_n} \partial_jb^i(Y^j_{y,n,p}(\lambda) + W_u)\,\mathrm{d}W^i_u \Big|^4\Big| \mathcal{G}_{p,\Delta}\Big]\Big] \\
	 = & \E^b\Big[ \E^b\Big[ \Big| \sum_{\ell=0}^{p-1}\int_{\ell\Delta_n}^{(\ell+1)\Delta_n} \partial_jb^i(Y^j_{y,n,p}(\lambda) + W_u) \,\mathrm{d}W^i_u \Big|^4\Big| \mathcal{G}_{p,\Delta}\Big]\Big] \\
	 \lesssim & \Big(\E^b\Big[ \E^b\Big[ \Big| \sum_{\ell=0}^{p-1}\int_{\ell\Delta_n}^{(\ell+1)\Delta_n} \partial_jb^i(Y^j_{y,n,p}(\lambda) + W_u) I_\ell^i\,\mathrm{d}u \Big|^4\Big| \mathcal{G}_{p,\Delta}\Big]\Big] \\
	& + \E^b\Big[ \E^b\Big[ \Big| \sum_{\ell=0}^{p-1}\int_{\ell\Delta_n}^{(\ell+1)\Delta_n} \partial_jb^i(Y^j_{y,n,p}(\lambda) + W_u) \frac{ \mathcal{W}^{\ast, \ell, i}_u}{(\ell+1)\Delta_n - u}\,\mathrm{d}u \Big|^4\Big| \mathcal{G}_{p,\Delta}\Big]\Big] \\
	& + \E^b\Big[ \E^b\Big[ \Big| \sum_{\ell=0}^{p-1}\int_{\ell\Delta_n}^{(\ell+1)\Delta_n} \partial_jb^i(Y^j_{y,n,p}(\lambda) + W_u) \,\mathrm{d}B^{\ell,i}_u \Big|^4\Big| \mathcal{G}_{p,\Delta}\Big]\Big]\Big). 
\end{align*}
Using both boundedness of the partial derivatives and Jensen inequality, we get 
\begin{align*}
	\E^b\Big[ \E^b\Big[ \Big| \sum_{\ell=0}^{p-1}\int_{\ell\Delta_n}^{(\ell+1)\Delta_n} \partial_jb^i(Y^j_{y,n,p}(\lambda) + W_u) I_\ell^i \,\mathrm{d}u \Big|^4\Big| \mathcal{G}_{p,\Delta}\Big]\Big] & \lesssim p^3\Delta_n^4\E\Big[ \sum_{\ell=0}^{p-1} I_\ell^4 \Big] \lesssim p^4\Delta_n^2.
\end{align*}
For the second term, let us remark that by definition of the Brownian Bridge, for any $i \in \{1, \dots, d\}$, $\ell \in \{ 0, \dots, p-1\}$ and $ u \in [\ell\Delta_n, (l+1)\Delta_n]$, we have $\mathcal{W}^{\ast, \ell, i}_u$ is gaussian variable with $\E[\mathcal{W}^{\ast, \ell, i}_u] = 0$ and $\mathrm{Var}[\mathcal{W}^{\ast, \ell, i}_u] = u$, as $\mathcal{W}^{\ast, \ell, i}_{\ell\Delta_n} = \mathcal{W}^{\ast, \ell, i}_{(\ell+1)\Delta_n} = 0$. This allows us to obtain
\begin{equation*}
	\E[|\mathcal{W}^{\ast, \ell, i}_u|^4] = \frac{((\ell +1)\Delta_n - u)^4(u - \ell\Delta_n)^4}{\Delta_n^4} . 
\end{equation*}
Using once again the boundedness of $\widebar\mu^b$, we obtain
\begin{equation*}
	 \E^b\Big[ \E^b\Big[ \Big| \sum_{\ell=0}^{p-1}\int_{\ell\Delta_n}^{(\ell+1)\Delta_n} \partial_jb^i(Y^j_{y,n,p}(\lambda) + W_u) \frac{\mathcal{W}^{\ast, \ell, i}_u}{(\ell+1)\Delta_n - u}\mathrm{d}u \Big|^4\Big| \mathcal{G}_{p,\Delta}\Big]\Big] \lesssim p^4\Delta_n^2.
\end{equation*}
Lastly, we use the Burkholder-Davis-Gundy Inequality to handle the final term. Every previous step has paved the way for this one. 
Conditionally to $\mathcal{G}_{p, \Delta_n}$, we have that for each $i\in \{1, \dots, d\}$
\begin{align*}
	\Big(\int_{0}^{t} \partial_jb^i(Y^j_{y,n,p}(\lambda) + W_u) \,\mathrm{d}B^{\ell,i}_u \Big)_{t \geq 0},
\end{align*}
is a martingale. This would not have been true without the conditioning. Then, we get
\begin{align*}
	\E^b\Big[ \E^b\Big[ \Big| \sum_{\ell=0}^{p-1}\int_{\ell\Delta_n}^{(\ell+1)\Delta_n} \partial_jb^i(Y^j_{y,n,p}(\lambda) + W_u) \mathrm{d}B^{\ell,i}_u \Big|^4\Big| \mathcal{G}_{p,\Delta}\Big]\Big] & \lesssim \Big(\int_0^{p\Delta_n}  \partial_jb^i(Y^j_{y,n,p}(\lambda) + W_u)^2\mathrm{d}u\Big)^2 \\
	& \lesssim p^2\Delta_n^2.
\end{align*}
We obtain
\begin{align*}
	\E^b\Big[\Big| \int_0^{p\Delta_n} \partial_jb^i(Y^j_{y,n,p}(\lambda) + W_u) \mathrm{d}W^i_u \Big|^4\Big] \lesssim p^4\Delta_n^2.
\end{align*}
Combining this to \eqref{eq:IIA} gives $\barroman{II}_A \lesssim p^2\Delta_n$ which is bounded, thanks to Equation \eqref{eq:cond:p}. \\

\textit{Step 4.} Control of $\barroman{II}_B$.
Using Cauchy-Schwarz inequality, we get
\begin{align}
	\barroman{II}_B \lesssim \sum_{i=1}^d \int_0^1\E^b\Big[\Big| \int_0^{p\Delta_n} \sqrt{\overline{\mu}^b(Y^j_{y,n,p}(\lambda))}(b^i\partial_jb^i)(Y^j_{y,n,p}(\lambda)+ W_u) \mathrm{d}u \Big|^4\Big]^{1/2} \mathrm{d}\lambda.
\end{align}
Moreover for $i \in \{ 1, \dots, d\}$ and $\lambda \in (0,1)$, 
\begin{align*}
	& \E\Big[\Big| \int_0^{p\Delta_n} \sqrt{\overline{\mu}^b(Y^j_{y,n,p}(\lambda))}(b^i\partial_jb^i)(Y^j_{y,n,p}(\lambda)+ W_u) \,\mathrm{d}u \Big|^4\Big] \\
	& \lesssim (p\Delta_n)^3\int_0^{p\Delta_n} \E[{\overline{\mu}^b(Y^j_{y,n,p}(\lambda))}^2(b^i\partial_jb^i)(Y^j_{y,n,p}(\lambda)+ W_u)^4] \,\mathrm{d}u \\
	& \lesssim (p\Delta_n)^3\int_0^{p\Delta_n} \E[{\overline{\mu}^b(Y^j_{y,n,p}(\lambda))}^2b^i(Y^j_{y,n,p}(\lambda)+ W_u)^4] \,\mathrm{d}u \\
	& \lesssim (p\Delta_n)^4,
\end{align*}
where, at the last line we used a similar argument as the one of the proof of Lemma \ref{le:upbound} to bound $\E^b[{\overline{\mu}^b(Y^j_{y,n,p}(\lambda))}^2b^i(Y^j_{y,n,p}(\lambda)+ W_u)^4]$ uniformly in $n,p,y,u,i$ and $\lambda$.  Therefore $\barroman{II}_B \lesssim (p\Delta_n)^{2}$, and $\barroman{II} \lesssim 1$. \\

\textit{Conclusion.}
Combining the previous four steps, we have
\begin{align*}
	| \E[ \overline{\mu}^b(y - \overline{W}^n_p)(M_{p\Delta_n}^b(y - \overline{W}^n_p) - 1) ] | \lesssim \sqrt{p\Delta_n}
\end{align*}
and therefore, for all $x \in \R^d$, 
\begin{equation}\label{eq:control:B2}
	B_2(x) \lesssim \sqrt{p\Delta_n},
\end{equation} 
which concludes the proof.

\section{Proof of Proposition \ref{propo:small:bias:mu}}
\label{sec:propo:bias:mu}

We now plan to use Proposition \ref{prop:bound:kernelbias} to prove the estimate \eqref{eq:propo:small:bias:mu}. Recall that for all $x\in\R^d$, $\widehat\mu_{n,\bs{h},p}(x)$ is defined in Equation \eqref{eq:def:est:mu} by
\begin{align*}
	\widehat\mu_{n,\bs{h},p}(x) = \sum \bs{u}_{\bs{\gamma}} \widehat{\nu}_{n,\bs{h},p}(x+\bs{\gamma}\widetilde{\tau}_{n,p})
\end{align*}
where the sum holds over all $\bs{\gamma} = (\gamma_1, \dots, \gamma_d) \in \{0, \dots, \ell\}^d$. Thus Proposition \ref{prop:bound:kernelbias} ensures that \eqref{eq:propo:small:bias:mu} is proved once we control the following quantity 
\begin{align}
\label{eq:goal:debias}
\big|
	 \sum \bs{u}_{\bs{\gamma}}
	\mathbb{E}[\overline{\mu}^b(x+ \widetilde{\tau}_{n,p} (\bs{\gamma}-\zeta))] 
	- \overline{\mu}^b(x)\big|
\end{align}

We first provide a technical Lemma in dimension $d=1$.
\begin{lemma}
\label{lemma:debias:d1}
Let $f : \R\to\R$ be a $\alpha$-Hölder function and $\zeta$ be a standard real Gaussian variable. Then we have
\begin{align*}
	\big|  \sum_{\gamma = 0}^l u_\gamma \mathbb{E}[f(x + \tau (\gamma - \zeta))]- f(x)\big| \leq \tau^\alpha \frac{\mathcal{L}}{\lfloor \alpha\rfloor}\sum_{\gamma=0}^l\E[|\gamma-\zeta|^{\alpha}].
\end{align*}
\end{lemma}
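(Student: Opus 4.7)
The plan is to prove Lemma \ref{lemma:debias:d1} by performing a Taylor expansion of $f(x + \tau(\gamma - \zeta))$ around $x$ up to order $\lfloor \alpha \rfloor$ and then invoking the defining property of the coefficients $u_\gamma$ stated in Equation \eqref{eq:def:u} to eliminate every polynomial term except the constant $f(x)$.

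More concretely, since $f$ is $\alpha$-Hölder with constant $\mathcal{L}$, the standard Taylor formula with Hölder remainder yields, for any $h \in \mathbb{R}$,
\begin{equation*}
	f(x+h) = \sum_{k=0}^{\lfloor \alpha \rfloor} \frac{f^{(k)}(x)}{k!} h^k + R(x,h), \qquad |R(x,h)| \leq \frac{\mathcal{L}}{\lfloor \alpha \rfloor !} |h|^\alpha.
\end{equation*}
Applying this with $h = \tau(\gamma-\zeta)$, taking expectations, multiplying by $u_\gamma$ and summing over $\gamma$, I would then expand $(\gamma-\zeta)^k$ by the binomial formula to obtain
\begin{equation*}
	\sum_{\gamma=0}^l u_\gamma \mathbb{E}[f(x+\tau(\gamma-\zeta))] = \sum_{k=0}^{\lfloor \alpha \rfloor} f^{(k)}(x) \tau^k \sum_{\gamma=0}^{l} u_\gamma \sum_{j=0}^{k}\frac{(-1)^j m_j \gamma^{k-j}}{j!(k-j)!} + \sum_{\gamma=0}^l u_\gamma \mathbb{E}[R(x,\tau(\gamma-\zeta))].
\end{equation*}
By Equation \eqref{eq:def:u}, the inner double sum equals $1$ when $k=0$ and vanishes for $1\leq k \leq \lfloor \alpha \rfloor \leq \ell$, so that the whole polynomial part collapses to $f(x)$.

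It remains to bound the remainder by $|R(x,\tau(\gamma-\zeta))| \leq \tau^\alpha \mathcal{L}/\lfloor \alpha \rfloor ! \cdot |\gamma - \zeta|^\alpha$, triangle inequality then gives the claimed estimate (up to the absolute values $|u_\gamma|$, which can be absorbed in the constant). There is no real obstacle here: the only ingredients are the Hölder assumption on $f$, the binomial identity, and the algebraic property \eqref{eq:def:u} of the vector $\bs{u}$, which was precisely designed to kill the low-order moments appearing in the expansion; hence the argument is purely computational. The one point that requires a little care is that $\lfloor \alpha \rfloor \leq \ell$ (so that the moment cancellations \eqref{eq:def:u} are available for all relevant $k$), which holds by the choice of the kernel order and is used implicitly throughout Section \ref{sec:estimation_procedure}.
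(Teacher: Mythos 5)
Your argument is correct and follows essentially the same route as the paper: Taylor-expand $f(x+\tau(\gamma-\zeta))$, expand $(\gamma-\zeta)^k$ by the binomial theorem so the moments $m_j$ appear, invoke the algebraic identity \eqref{eq:def:u} defining $\bs{u}$ (valid for $k\le l$, which is where $\lfloor\alpha\rfloor\le l$ enters) to collapse the polynomial part to $f(x)$, and bound the Hölder remainder by $\tau^\alpha\,\mathcal{L}/\lfloor\alpha\rfloor!\cdot|\gamma-\zeta|^\alpha$. The paper organizes the expansion slightly differently — polynomial part to order $\lfloor\alpha\rfloor-1$ plus a Lagrange remainder whose $f^{(\lfloor\alpha\rfloor)}(x)$ part is killed by the $k=\lfloor\alpha\rfloor$ instance of \eqref{eq:def:u} and whose fluctuation is controlled by the Hölder increment — but this is the same computation packaged differently, and your caveat about the missing $|u_\gamma|$ factors in the stated constant matches a harmless imprecision in the lemma's bound.
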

\begin{proof}
By Taylor formula and using the $\alpha$-Hölder property of $f$, we get the existence of $\varepsilon \in [0,1]$ such that
\begin{align*}
 \sum_{\gamma = 0}^l u_\gamma \mathbb{E}[ f(x + \tau (\gamma - \zeta)) ] & = \sum_{\gamma = 0}^l u_\gamma \sum_{k = 0}^{\lfloor \alpha\rfloor -1} \frac{f^{(k)}(x)}{k!}\mathbb{E}[(\gamma - \zeta)^k]+ R_{\alpha, l}(\varepsilon) \\
&= \sum_{k = 0}^{\lfloor \alpha \rfloor -1} \frac{f^{(k)}(x)}{k!}\sum_{\gamma = 0}^l u_\gamma \sum_{\beta = 0}^k \frac{k!}{\beta!(k-\beta)!} (-1)^\beta m_\beta \gamma^{k-\beta} +R_{\alpha, l}(\varepsilon), \\
\end{align*}
where 
\begin{align*}
	R_{\alpha, l}(\varepsilon) = \E\Big[\sum_{\gamma=0}^l \frac{f^{(\lfloor\alpha\rfloor)}(x + \varepsilon\tau(\gamma-\zeta))}{(\lfloor \alpha\rfloor)!} (\tau\varepsilon)^{\lfloor\alpha\rfloor}(\gamma-\xi)^{\lfloor \alpha\rfloor}\Big].
\end{align*}
Then, from Equation \eqref{eq:def:u}, we get that $\sum_{\gamma = 0}^l u_\gamma \sum_{k = 0}^{\lfloor \alpha\rfloor -1} f^{(k)}(x)\mathbb{E}[(\gamma - \zeta)^k]/k! = f(x).$ Moreover as $f$ is $\alpha$-Hölder, we conclude that 
\begin{align*}
	R_{\alpha, l}(\varepsilon) \leq \tau^{\alpha} \frac{\mathcal{L}}{\lfloor \alpha\rfloor}\sum_{\gamma=0}^l\E[|\gamma-\zeta|^{\alpha}]. 
\end{align*}
\end{proof}
We now extend Lemma \ref{lemma:debias:d1} to our setup using an appropriate induction. We first introduce $\theta= \theta(x, \bs{\gamma},n,p) = x + \bs{\gamma}\widetilde\tau_{n,p}\in \R^d$. We prove by induction on $0 \leq I \leq d$ that for any $0 \leq \gamma_{I+1}, \dots, \gamma_{d} \leq l$,
\begin{align*}
\big|
	\sum_{\gamma_{I} = 0}^l &\dots \sum_{\gamma_1 = 0}^l 
	\prod_{i=1}^I u_{\gamma_i}
	\mathbb{E}[\overline{\mu}^b(\theta-\widetilde{\tau}_{n,p}\zeta)] - 
	\mathbb{E}[\overline{\mu}^b(x_1, \dots, x_I, \theta_{I+1} -\widetilde{\tau}_{n,p}\zeta_{I+1}, \dots ,\theta_{d} -\widetilde{\tau}_{n,p}\zeta_{d})]
|,
\end{align*}
is bounded up to some constant by $\sum_{i=1}^I \widetilde{\tau}_{n,p}^{\alpha_i}$.
For $I=0$, this is trivial and for $I=d$, the second expectation does not contain any random variable any-more and thus equals $\overline{\mu}^b(x)$, which proves Equation \eqref{eq:goal:debias}. Let $I < d$ at which this property is proved. We fix $0 \leq \gamma_{I+2}, \dots, \gamma_{d} \leq l$. We have
\begin{align*}
& \sum_{\gamma_{I+1} = 0}^l \dots \sum_{\gamma_1 = 0}^l 
	\prod_{i=1}^{I+1} u_{\gamma_i}
	\mathbb{E}[\overline{\mu}^b(\theta-\widetilde{\tau}_{n,p}\zeta)] 
=
\sum_{\gamma_{I+1} = 0}^l u_{\gamma_{I+1}} \sum_{\gamma_{I} = 0}^l\dots \sum_{\gamma_1 = 0}^l 
	\prod_{i=1}^{I}u_{\gamma_i}
	\mathbb{E}[\overline{\mu}^b(\theta-\widetilde{\tau}_{n,p}\zeta)] 
\end{align*}
Recall also that by Assumption, $\overline{\mu}^b$ is $\alpha_{I+1}$-Hölder regular in the $(I+1)$-$th$ variable. Therefore, by induction and using Lemma \ref{lemma:debias:d1} we get 
\begin{align*}
\Big| \sum_{\gamma_{I+1} = 0}^l &u_{\gamma_{I+1}} 
	\mathbb{E}[\overline{\mu}^b(x_1, \dots, x_I, \theta_{I+1} -\widetilde{\tau}_{n,p}\zeta_{I+1}, \dots ,\theta_{d} -\widetilde{\tau}_{n,p}\zeta_{d})| \zeta_{I+2}, \dots, \zeta_{d}] 
\\
&- \overline{\mu}^b(x_1, \dots, x_{I+1}, \theta_{I+2} -\widetilde{\tau}_{n,p}\zeta_{I+2}, \dots ,\theta_{d} -\widetilde{\tau}_{n,p}\zeta_{d})
\Big| \lesssim \widetilde{\tau}_{n,p}^{\alpha_{I+1}}.
\end{align*}
Note that Lemma \ref{lemma:debias:d1} ensures that the last inequality holds up to a constant uniformly in $n,p, b$.

\section{Proof of Proposition \ref{prop:variance}}
\label{sec:appendix:prop:variance}

\subsection{Structure and completion of the proof}

First note that by \eqref{eq:varmu:varnu}, it enough to prove {Proof of Proposition \ref{prop:variance} with $\widehat\nu_{n,\bs{h},p}(x)$ instead of $\widehat\mu_{n,\bs{h},p}(x)$.
We now introduce the main notations used in this proof. Let $n_p = \lfloor n/p \rfloor$. For each $j$, we write
\begin{equation*}
    \widetilde\xi_j = \frac{1}{\sqrt{p}} \sum_{\ell = 0}^{p-1} \xi_{j + \ell}
\end{equation*}
so that $(\widetilde\xi_j)_j$ is a sequence of i.i.d standard $d$-dimensional Gaussian variables. Then we write 
\begin{equation}
\label{eq:def:C}
	\mathfrak{C}^b_p(k ; x) = \mathrm{Cov}_{\overline{\mu}^b}^b\Big( \bs{K}_{\bs{h}}(x -  p^{-1}\sum_{\ell=0}^{p-1}X_{(kp + \ell)\Delta_n} + \frac{\tau_n}{\sqrt{p}} \widetilde\xi_{k} \: , \: \bs{K}_{\bs{h}}(x -  p^{-1}\sum_{\ell=0}^{p-1}X_{ \ell\Delta_n} + \frac{\tau_n}{\sqrt{p}} \widetilde\xi_{0} \Big).
\end{equation}
for each $k$ and $x$, and 
\begin{align}
\label{eq:def:Cuv}
    \mathfrak{C}^b_p(k;x,u,v) &= \mathrm{Cov}_{\overline{\mu}^b}^b\Big( \bs{K}_{\bs{h}}(x -  p^{-1}\sum_{\ell=0}^{p-1}X_{(kp + \ell)\Delta_n} + u) \, , \bs{K}_{\bs{h}}(x -  p^{-1}\sum_{\ell=0}^{p-1}X_{\ell\Delta_n} +v )\Big)
    \\
   \label{eq:def:Duv} 
        \mathfrak{D}^b_p(k;x,u,v) &= \E^b_{\overline{\mu}^b}[\bs{K}_{\bs{h}}(x -  \frac{1}{p}\sum_{\ell=0}^{p-1}X_{(kp + \ell)\Delta_n} + u)\bs{K}_{\bs{h}}(x -  \frac{1}{p}\sum_{\ell=0}^{p-1}X_{\ell\Delta_n} + v)]
\end{align}
for each $k$, $x$, $u$ and $v$. Note that for $k>0$, the noises $\widetilde\xi_{0}$ and $\widetilde\xi_{k}$ are independent and therefore
\begin{align}
    \label{eq:bound:C:Cuv}
	\mathfrak{C}^b_p(k;x) 
 &\leq \int_{\R^{d}}\int_{\R^{d}}\mathfrak{C}^b_p(k;x,u,v) \, \chi_{p,n}(\mathrm{d}u) \, \chi_{p,n}(\mathrm{d}v)
\end{align}
where $\chi_{p,n}$ stands for the law of $\tau_n p^{-1/2} \widetilde\xi_0$ so that $\chi_{p,n}(\mathrm{d}u) = \varphi_{\tau_n p^{-1/2}}(u) \, \mathrm{d}u$. Moreover, we also have
\begin{equation}
    \label{eq:bound:C:Duv}
	\mathfrak{C}^b_p(k;x) \leq \int_{\R^{d}}\int_{\R^{d}}\mathfrak{D}^b_p(k;x,u,v) \, \chi_{p,n}(\mathrm{d}u) \, \chi_{p,n}(\mathrm{d}v)
\end{equation}

By definition of $\widehat\nu_{n,\bs{h},p}(x)$, we have
\begin{align*}
	& \mathrm{Var}_{\overline{\mu}^b}^b(\widehat\nu_{n,\bs{h},p}(x)) = \mathrm{Var}_{\overline{\mu}^b}^b\Big( \frac{1}{n_p}\sum_{k=0}^{n_p-1} \bs{K}_{\bs{h}}(x - \frac{1}{p}\sum_{\ell=0}^{p-1}Y_{kp + \ell,n})\Big) \\
	& = \frac{1}{n_p^2}\sum_{i, j=0}^{n_p-1}
 \mathrm{Cov}_{\overline{\mu}^b}^b\Big( \bs{K}_{\bs{h}}(x -  \frac{1}{p}\sum_{\ell=0}^{p-1}X_{(ip + \ell)\Delta_n} + \frac{\tau_n}{\sqrt{p}} \widetilde\xi_{i}) \, , \bs{K}_{\bs{h}}(x -  \frac{1}{p}\sum_{\ell=0}^{p-1}X_{(jp + \ell)\Delta_n} + \frac{\tau_n}{\sqrt{p}} \widetilde\xi_{j} )\Big).
\end{align*}

Since $X$ and $\xi$ are both ergodic and mutually independent, we deduce that
\begin{align}
\label{eq:variancedevelopped}
	\mathrm{Var}_{\overline{\mu}^b}^b\Big( \frac{1}{n_p}\sum_{k=0}^{n_p-1} \bs{K}_{\bs{h}}(x - \frac{1}{p}\sum_{\ell=0}^{p-1}Y_{kp + \ell,n})\Big) = \frac{2}{n_p^2}\sum_{k=0}^{n_p-1} (n_p-k) \mathfrak{C}^b_p(k ; x).
\end{align}

We now present several bounds on the coefficients $ \mathfrak{C}^b_p(k ; x)$.

\begin{lemma}[Instantaneous correlations]
\label{lem:corr:inst}
    For each $k \geq 0$, we have
    \begin{equation*}
        \mathfrak{C}^b_p(k ; x) \lesssim \prod_{i=1}^d h_i^{-1}.
    \end{equation*}
\end{lemma}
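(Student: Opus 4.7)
The plan is to control $\mathfrak{C}^b_p(k;x)$ by the second moment of a single pre-averaged kernel evaluation, and then bound this second moment using a uniform bound on the density of the pre-averaged Markov chain (augmented by the preaveraged noise).

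First, by Cauchy-Schwarz applied to the covariance together with the stationarity of $X$ under $\mathbb{P}^b_{\overline{\mu}^b}$ and the i.i.d.\ nature of $(\widetilde\xi_k)_k$, both factors entering the covariance share the same law, hence
\begin{equation*}
\mathfrak{C}^b_p(k;x) \leq \operatorname{Var}^b_{\overline{\mu}^b}\Big(\bs{K}_{\bs{h}}\big(x - p^{-1}\textstyle\sum_{\ell=0}^{p-1}X_{\ell\Delta_n} + \tau_n p^{-1/2}\widetilde\xi_0\big)\Big) \leq \mathbb{E}^b_{\overline{\mu}^b}\Big[\bs{K}_{\bs{h}}\big(x - Z\big)^2\Big],
\end{equation*}
where I have set $Z = p^{-1}\sum_{\ell=0}^{p-1}X_{\ell\Delta_n} - \tau_n p^{-1/2}\widetilde\xi_0$.

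Next, since $\widetilde\xi_0$ is independent of $X$, the density $f_Z$ of $Z$ under $\mathbb{P}^b_{\overline{\mu}^b}$ factorizes as $f_Z = g * \varphi_{\tau_n p^{-1/2}}$ with
\begin{equation*}
    g(y) = \int_{\mathbb{R}^d} \overline{\mu}^b(x)\, \mathfrak{p}^b_{p,n}(x;y)\,\mathrm{d}x.
\end{equation*}
The key step is to show that $g$ is bounded by a constant depending only on $\bs{\mathfrak{b}}$, uniformly in $n, p, y$. To that end, I would substitute Lemma \ref{lemma:bound:pxy} (valid since $p\Delta_n \leq 1$) together with the explicit form $\overline{\mu}^b(x) = Z_V^{-1}\exp(-2V(x))$ from Assumption \ref{assumption:potential}; the $V(x)$ term in the exponential bound of Lemma \ref{lemma:bound:pxy} combines with the $-2V(x)$ of $\overline{\mu}^b$ to leave only $\exp(-V(x)) \leq \exp(-V_0)$, so that
\begin{equation*}
    g(y) \leq \frac{\kappa_1 Z_V^{-1} e^{-V_0}}{(p\Delta_n)^{d/2}} \int_{\mathbb{R}^d} \exp\!\Big(-\lambda_1 \frac{|y-x|^2}{p\Delta_n}\Big)\,\mathrm{d}x = C(\bs{\mathfrak{b}}),
\end{equation*}
a constant independent of $n, p, y$. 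Convolving with the probability density $\varphi_{\tau_n p^{-1/2}}$ preserves this $L^\infty$ bound, so $\|f_Z\|_\infty \leq C(\bs{\mathfrak{b}})$.

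Finally, I conclude by a change of variables $u_i = (x_i - z_i)/h_i$:
\begin{equation*}
    \mathbb{E}^b_{\overline{\mu}^b}[\bs{K}_{\bs{h}}(x - Z)^2] \leq \|f_Z\|_\infty \int_{\mathbb{R}^d} \bs{K}_{\bs{h}}(y)^2\,\mathrm{d}y = \|f_Z\|_\infty\, \|K\|_2^{2d}\, \prod_{i=1}^d h_i^{-1} \lesssim \prod_{i=1}^d h_i^{-1},
\end{equation*}
which is the claim. The only nontrivial step here is the uniform density bound, which crucially uses Assumption \ref{assumption:potential} (through the explicit form of $\overline{\mu}^b$) to cancel the potentially unbounded factor $e^{V(x)}$ appearing in Lemma \ref{lemma:bound:pxy}; everything else is a routine Cauchy-Schwarz plus change-of-variables argument.
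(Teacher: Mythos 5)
Your proof is correct and follows the same skeleton as the paper's: Cauchy--Schwarz to reduce the covariance to a second moment, a uniform $L^\infty$ bound on the density of the pre-averaged stationary observation, and a change of variables using $K\in L^2$. The one place where you deviate is the density bound. The paper simply pulls out $\|\overline{\mu}^b\|_\infty$ from the representation $\overline{\mu}^b_{p,n}(z)=\int\int\overline{\mu}^b(y)\mathfrak{p}^b_{p,n}(y;u)\varphi_{\tau_n p^{-1/2}}(z-u)\,\mathrm{d}y\,\mathrm{d}u$ and then asserts that the remaining integral is at most $1$; this implicitly uses $\int_{\R^d}\mathfrak{p}^b_{p,n}(y;u)\,\mathrm{d}y\leq 1$, an integration of the transition density over its \emph{starting} point, which is not a priori obvious (it is an equality for $p=1$ and for pure Brownian motion, but not clearly in general). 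You instead combine the short-time bound of Lemma~\ref{lemma:bound:pxy} with the explicit Gibbs form $\overline{\mu}^b=Z_V^{-1}e^{-2V}$, cancel the troublesome $e^{V(x)}$ factor against $e^{-2V(x)}$, and integrate the resulting Gaussian. This is a heavier but fully self-contained route to the same uniform bound, and it is in fact exactly the mechanism the paper later uses in the proof of Lemma~\ref{le:invbound}; note only that Lemma~\ref{lemma:bound:pxy} requires $p\Delta_n\leq\eta_1$, which is harmless since the appendix assumes $p\Delta_n\leq 1$ and any finite range of $n$ can be absorbed into the constant.
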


\begin{lemma}[Short-term correlations]
    \label{lem:corr:short}
    For any $1 \leq k_1 \leq d$ and any $k \geq 2$, we have
    \begin{equation*}
        \mathfrak{C}^b_p(k ; x)  \lesssim (kp\Delta_n)^{-k_1/2} \prod_{i=k_1+1}^d h_i^{-1}.
    \end{equation*}
\end{lemma}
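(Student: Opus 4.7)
The plan is to reduce, via \eqref{eq:bound:C:Duv}, to a uniform-in-$(u,v)$ bound on $\mathfrak{D}^b_p(k;x,u,v)$. Since $\chi_{p,n}$ is a probability measure, such a pointwise bound transfers immediately to $\mathfrak{C}^b_p(k;x)$. The key ingredient is a Gaussian-type estimate for the joint density $q(y_1, y_2)$ of the pair $\big(p^{-1}\sum_{\ell=0}^{p-1} X_{\ell\Delta_n},\, p^{-1}\sum_{\ell=0}^{p-1} X_{(kp+\ell)\Delta_n}\big)$ under $\mathbb{P}_{\overline{\mu}^b}^b$, measuring how quickly the lifted Markov chain \eqref{eq:lifted:chain} decorrelates over $k$ steps.

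I would first use the Markov property of \eqref{eq:lifted:chain} together with the stationarity of $X$ to decompose
\begin{equation*}
    q(y_1, y_2) \;=\; \int_{\R^{2d}} \widebar{\pi}^b(y_1, z_1)\, p^b_{(k-1)p\Delta_n}(z_1, w)\, \mathfrak{p}^b_{p,n}(w; y_2)\, \mathrm{d}z_1\, \mathrm{d}w.
\end{equation*}
Applying Lemma \ref{le:invbound} to $\widebar{\pi}^b$, Lemma \ref{lemma:bound:X} to $p^b_{(k-1)p\Delta_n}$ and Lemma \ref{lemma:bound:pxy} to $\mathfrak{p}^b_{p,n}$, one checks that the exponential factors involving $V$ telescope to zero. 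Two successive Gaussian convolutions (first in $z_1$, then in $w$) with variances of order $p\Delta_n$, $(k-1)p\Delta_n$ and $p\Delta_n$ then yield
\begin{equation*}
    q(y_1, y_2) \;\lesssim\; (kp\Delta_n)^{-d/2} \exp\Big(-c\,\tfrac{|y_1 - y_2|^2}{kp\Delta_n}\Big),
\end{equation*}
for some constant $c > 0$ uniform in $k \geq 2$ (using $p\Delta_n \leq 1$).

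Injecting this estimate into \eqref{eq:def:Duv}, the product structure of both $\bs{K}_{\bs{h}}$ and the Gaussian decouples the integral across the $d$ coordinates. Each resulting one-dimensional factor
\begin{equation*}
    I_i \;:=\; \int_{\R^2} |K_{h_i}(x_i - y_{1,i} + v_i)|\, |K_{h_i}(x_i - y_{2,i} + u_i)|\, e^{-c(y_{1,i} - y_{2,i})^2 / (kp\Delta_n)}\, \mathrm{d}y_{1,i}\, \mathrm{d}y_{2,i}
\end{equation*}
admits two competing bounds: majorising the Gaussian by $1$ gives $I_i \lesssim \|K\|_{L^1}^2$, while a change of variables and explicit Gaussian integration (using $\|K\|_\infty < \infty$) gives $I_i \lesssim h_i^{-1}\sqrt{kp\Delta_n}$, both uniform in $u, v$. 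Using the first bound for the $k_1$ indices $i \leq k_1$ and the second for the remaining $d-k_1$ indices yields
\begin{equation*}
    \mathfrak{D}^b_p(k;x,u,v) \;\lesssim\; (kp\Delta_n)^{-d/2} \cdot (kp\Delta_n)^{(d - k_1)/2} \prod_{i=k_1+1}^d h_i^{-1} \;=\; (kp\Delta_n)^{-k_1/2} \prod_{i=k_1+1}^d h_i^{-1},
\end{equation*}
uniformly in $u, v \in \R^d$, and the conclusion follows from integrating against $\chi_{p,n}(\mathrm{d}u)\chi_{p,n}(\mathrm{d}v)$.

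The main difficulty is the Gaussian estimate on $q$: the cancellation of the $V$ factors relies on the matching form of the estimates in Lemmas \ref{le:invbound}, \ref{lemma:bound:X} and \ref{lemma:bound:pxy}, and the two Gaussian convolutions must be tracked carefully so that the effective variance combines to $kp\Delta_n$ with constants independent of $k \geq 2$, $p$ and $n$. Once this estimate is in hand, the splitting $(k_1, d-k_1)$ is a purely combinatorial optimisation, corresponding to choosing which directions are integrated against the Gaussian tail and which are absorbed into $\|K\|_{L^1}$.
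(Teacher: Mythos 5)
Your proof is correct and follows essentially the same route as the paper. The paper's own proof also decomposes $\mathfrak{D}^b_p(k;x,u,v)$ via the invariant density $\widebar\pi^b$ of the lifted chain and the transition density $\mathfrak{p}^b_{p,n,(k-1)p\Delta_n}$ of the pre-averaged process, bounds each piece by a Gaussian using Lemma \ref{le:invbound} and Corollary \ref{co:bound:pntxy} (where the $e^{-V}$ factors cancel), combines with Lemma \ref{le:normal} to obtain a kernel of variance $\asymp kp\Delta_n$, and finishes by applying per coordinate the two competing estimates $\lesssim s^{-1/2}$ (using $K \in L^1$) and $\lesssim h_i^{-1}$ (using $K \in L^\infty$) to $k_1$ and $d-k_1$ coordinates respectively. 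The only real difference is that you inline the decomposition behind Corollary \ref{co:bound:pntxy}, splitting the kernel $\mathfrak{p}^b_{p,n,(k-1)p\Delta_n}(z;\cdot)$ further into $p^b_{(k-1)p\Delta_n}$ (Lemma \ref{lemma:bound:X}) followed by $\mathfrak{p}^b_{p,n}$ (Lemma \ref{lemma:bound:pxy}), which makes the cancellation of $V$ visible in one step instead of two; and you phrase the intermediate result as a pointwise bound on the joint density $q$ rather than directly on the triple integral defining $\mathfrak{D}$. Both organizations lead to the same per-coordinate optimisation and the same final bound.
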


\begin{lemma}[Mid-term correlations]
    \label{lem:corr:mid}
    For any $k\geq 1$ such that $(k-1)p\Delta_n \geq 1$, we have
    \begin{equation*}
        \mathfrak{C}^b_p(k ; x)  \lesssim 1.
    \end{equation*}
\end{lemma}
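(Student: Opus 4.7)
The plan is to show that the joint density $\pi^b_{0,k}$ of $(\bar X_0, \bar X_k)$—where $\bar X_j := p^{-1}\sum_{\ell=0}^{p-1} X_{(jp+\ell)\Delta_n}$—is uniformly bounded by a constant, and then conclude via the $L^1$-integrability of the effective (noise-smoothed) kernel. I would first integrate out the Gaussian noises $\widetilde\xi_0, \widetilde\xi_k$, which for $k\geq 1$ are mutually independent and independent of $X$, reducing the covariance to $\mathrm{Cov}^b_{\overline{\mu}^b}(\phi(\bar X_0), \phi(\bar X_k))$ with
\begin{equation*}
\phi(y) := \bigl(\bs{K}_{\bs{h}} * \varphi_{\tau_n p^{-1/2}}\bigr)(x-y).
\end{equation*}
Young's inequality then gives $\|\phi\|_{L^1(\R^d)} \leq \|\bs{K}_{\bs{h}}\|_{L^1}\,\|\varphi_{\tau_n p^{-1/2}}\|_{L^1} = \|K\|_1^d \lesssim 1$.

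The next step is to exploit the Markov property at time $t^* := (p-1)\Delta_n$, which separates the interval $[0,t^*]$ supporting $\bar X_0$ from the interval $[kp\Delta_n, kp\Delta_n + t^*]$ supporting $\bar X_k$; the remaining time gap is $t_k := kp\Delta_n - t^* = ((k-1)p+1)\Delta_n \geq 1$ by hypothesis. Denoting by $\eta(z_0; y_0, z_1)$ the joint density of $(\bar X_0, X_{t^*})$ conditional on $X_0 = z_0$, and recognising that conditionally on $X_{t^*} = z_1$ the law of $\bar X_k$ coincides (by time homogeneity) with $\mathfrak{p}^b_{p,n,t_k}(z_1,\cdot)$ as defined in Notation \ref{notation}, the Markov property yields
\begin{equation*}
\pi^b_{0,k}(y_0,y_k) \;=\; \int \overline{\mu}^b(z_0)\,\eta(z_0; y_0, z_1)\,\mathfrak{p}^b_{p,n,t_k}(z_1,y_k)\,\mathrm{d}z_0\,\mathrm{d}z_1 \;\leq\; \rho(y_0)\,\sup_{z_1} \mathfrak{p}^b_{p,n,t_k}(z_1,y_k),
\end{equation*}
where $\rho(y_0) := \int \overline{\mu}^b(z_0)\mathfrak{p}^b_{p,n}(z_0,y_0)\,\mathrm{d}z_0$ is the density of $\bar X_0$ under $\mathbb{P}^b_{\overline{\mu}^b}$.

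It remains to bound both factors uniformly. Corollary \ref{co:bound:pntxy} gives, for $t_k \geq 1$, $\mathfrak{p}^b_{p,n,t_k}(z_1,y_k) \leq \kappa_1 (2\pi)^{-d/2} e^{b_1/2} (p\Delta_n + 2t_k \lambda_1)^{-d/2} \lesssim 1$, since the prefactor $(p\Delta_n + 2t_k\lambda_1)^{-d/2}$ is now $O(1)$ uniformly. For $\rho$, I would combine Lemma \ref{lemma:bound:pxy} with the explicit form $\overline{\mu}^b(z_0) = Z_V^{-1} e^{-2V(z_0)}$ to produce an integrand proportional to $(p\Delta_n)^{-d/2} e^{-V(z_0)} e^{-\lambda_1 |z_0 - y_0|^2/(p\Delta_n)}$; bounding $e^{-V(z_0)} \leq e^{-V_0}$ and computing the resulting Gaussian integral in $z_0$ produces a constant. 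Consequently $\|\pi^b_{0,k}\|_\infty \lesssim 1$, and combined with $\|\phi\|_{L^1}\lesssim 1$ I obtain
\begin{equation*}
\bigl|\mathbb{E}^b_{\overline{\mu}^b}[\phi(\bar X_0)\phi(\bar X_k)]\bigr| \;\leq\; \|\pi^b_{0,k}\|_\infty\,\|\phi\|_{L^1}^2 \;\lesssim\; 1,
\end{equation*}
and identically $|\mathbb{E}[\phi(\bar X_j)]| \leq \|\rho\|_\infty \|\phi\|_{L^1} \lesssim 1$, whence the result.

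The delicate point is the cancellation of the potentially large factor $e^{V(z_0)}$ appearing in Lemma \ref{lemma:bound:pxy} against the $e^{-2V(z_0)}$ coming from $\overline{\mu}^b$: this is where Assumption \ref{assumption:potential} is essential, since it guarantees a genuine potential bounded from below, so that one surviving factor $e^{-V(z_0)} \leq e^{-V_0}$ can be pulled out of the integral, leaving a pure Gaussian integrand in $z_0$ whose mass does not depend on the starting point $y_0$.
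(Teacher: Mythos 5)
Your argument has a genuine gap at the step bounding $\sup_{z_1}\mathfrak{p}^b_{p,n,t_k}(z_1,y_k)$. You invoke Corollary~\ref{co:bound:pntxy} for $t_k \geq 1$, but that corollary is stated and proved only for $t\in(0,1)$: its proof goes through Lemma~\ref{lemma:bound:X}, whose Girsanov estimate carries a factor $e^{b_1 t/2}$ that degenerates as $t$ grows, so it cannot be applied verbatim once $t\geq 1$. The paper's own proof of Lemma~\ref{lem:corr:mid} opens precisely by extending the transition-density bound to $t>1$ (Chapman--Kolmogorov at time $1/2$, then the short-time bound on $p^b_{1/2}$), and the bound this extension delivers is $\mathfrak{p}^b_{p,n,t}(z;\cdot)\lesssim e^{V(z)}$, which is \emph{not} uniform in the starting point $z$: under Assumption~\ref{assumption:ergodicity}, $V$ grows at least linearly at infinity, so $\sup_{z_1}\mathfrak{p}^b_{p,n,t_k}(z_1,y_k)=+\infty$ and the inequality $\|\pi^b_{0,k}\|_\infty\lesssim 1$ that your argument relies on does not follow.

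You are right that a Gibbs-type cancellation between the $e^{V}$ growth of the pre-averaged transition density and the $e^{-2V}$ weight of the invariant law is the crux, but you only deploy it inside $\rho(y_0)$, where the estimate is benign; the dangerous $e^{V(z_1)}$ sits in the factor you bound by a supremum, and taking a supremum is exactly what destroys the cancellation. The paper keeps the integral over the intermediate variable (its analogue of your $z_1$ is $X_{p\Delta_n}$) rather than factorising it out: the long-time bound contributes $e^{V(z)}$ and the invariant density of the lifted chain $\widebar\pi^b$ (Lemma~\ref{le:invbound}) contributes $e^{-V(z)}$, and they cancel under the integral sign, leaving a Gaussian in $z$ that integrates to a constant. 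To repair your route you would have to replace the supremum bound by a pointwise bound $\mathfrak{p}^b_{p,n,t_k}(z_1,y_k)\lesssim e^{V(z_1)}$ kept inside the $z_1$-integral, and then pair it against the $e^{-V(z_1)}$-type decay carried by the density of $(\bar X_0, X_{t^*})$ under stationarity; this is essentially the paper's argument.
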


\begin{lemma}[Long-term correlations]
    \label{lem:corr:long}
    For any $k\geq 1$, we have
\begin{equation*}
    \mathfrak{C}^b_p(k;x) \lesssim e^{-C_{PI}^{-1}(k-1)p\Delta_n}\Big( \prod_{i=1}^d h_i^{-1}\Big)^2.
\end{equation*}
\end{lemma}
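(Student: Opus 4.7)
The plan is to combine a noise-integration step with the Poincar\'e-based exponential return to equilibrium stated in \eqref{eq:ergodicity}. Set $\widetilde\xi_j := p^{-1/2}\sum_{\ell=0}^{p-1}\xi_{jp+\ell}$, which are i.i.d.\ standard Gaussian vectors independent of $X$, and let $\bar X_j := p^{-1}\sum_{\ell=0}^{p-1} X_{(jp+\ell)\Delta_n}$. Define $g(y) := \mathbb{E}[\bs{K}_{\bs{h}}(x - y + \tau_n p^{-1/2}\widetilde\xi_0)]$. Since the $\widetilde\xi_j$ are mutually independent and independent of $X$, a conditioning argument reduces $\mathfrak{C}^b_p(k;x)$ to $\mathrm{Cov}^b_{\overline{\mu}^b}(g(\bar X_0), g(\bar X_k))$, eliminating the additive noise from the problem.

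Next, I exploit the Markov property. Since $\bar X_0$ is $\mathcal{F}_{(p-1)\Delta_n}$-measurable whereas $\bar X_k$ only depends on $X$ at times $\geq kp\Delta_n$, conditioning on $\mathcal{F}_{(p-1)\Delta_n}$ and a time-shift yields
\begin{equation*}
    \mathbb{E}[g(\bar X_k)\mid \mathcal{F}_{(p-1)\Delta_n}] = (P^b_{(k-1)p\Delta_n}\psi)(X_{(p-1)\Delta_n}),
\end{equation*}
where $\psi(y) := \mathbb{E}^b_y[g(p^{-1}\sum_{\ell=0}^{p-1} X_{(\ell+1)\Delta_n})]$. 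Cauchy--Schwarz combined with the stationarity of $X$ under $\mathbb{P}^b_{\overline{\mu}^b}$ gives
\begin{equation*}
    |\mathfrak{C}^b_p(k;x)|^2 \leq \mathbb{E}^b_{\overline{\mu}^b}[g(\bar X_0)^2]\cdot \mathrm{Var}^b_{\overline{\mu}^b}\bigl(P^b_{(k-1)p\Delta_n}\psi(X_0)\bigr),
\end{equation*}
and the Poincar\'e inequality \eqref{eq:ergodicity} applied to $\psi - \mathbb{E}^b_{\overline{\mu}^b}[\psi(X_0)]$ bounds the variance factor by $e^{-2C_{PI}^{-1}(k-1)p\Delta_n}\,\mathbb{E}^b_{\overline{\mu}^b}[\psi(X_0)^2]$. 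Jensen's inequality and stationarity then yield $\mathbb{E}^b_{\overline{\mu}^b}[\psi(X_0)^2] \leq \mathbb{E}^b_{\overline{\mu}^b}[g(\bar X_0)^2]$, and the latter is at most $\|\bs{K}_{\bs{h}}\|_\infty^2 \lesssim \prod_i h_i^{-2}$, which gives the announced bound.

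The main obstacle is the careful bookkeeping of time indices needed to obtain exactly $(k-1)p\Delta_n$ in the exponent: the gap between the last time index used by $\bar X_0$ and the first one used by $\bar X_k$ is $((k-1)p+1)\Delta_n$, so one must absorb the extra $\Delta_n$ into the definition of $\psi$ rather than leaving it for the semigroup action $P^b_{(k-1)p\Delta_n}$. Everything else reduces to routine $L^\infty$, Cauchy--Schwarz, and Jensen manipulations; notably, no fine transition-density estimate (as in Lemmas~\ref{lemma:bound:pxy}--\ref{le:invbound}) is needed here, as the Poincar\'e decay alone governs the long-time regime.
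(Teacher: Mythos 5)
Your proof is correct and follows essentially the same route as the paper: integrate out the noise to replace $\bs{K}_{\bs{h}}$ by its smoothed version (the paper's $\bs{\mathcal{K}}_{\bs{h}}$, your $g$), center, apply Cauchy--Schwarz after conditioning, invoke the Poincar\'e bound \eqref{eq:ergodicity} on the semigroup-smoothed centered function, and close with a supremum bound $\lesssim \prod_i h_i^{-1}$ on the relevant $L^2$ quantities. The only cosmetic difference is the choice of conditioning $\sigma$-algebra — you condition on $\mathcal{F}_{(p-1)\Delta_n}$ (the smallest one making $\bar X_0$ measurable) and absorb the residual $\Delta_n$ into $\psi$, whereas the paper conditions on the lifted-chain filtration $\mathcal{A}_0$ (effectively $\mathcal{F}_{p\Delta_n}$) and defines its $\bs{\varphi}$ at time $kp\Delta_n$; both place exactly $(k-1)p\Delta_n$ worth of gap under the semigroup. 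Your use of Jensen to get $\mathbb{E}[\psi(X_0)^2]\leq \mathbb{E}[g(\bar X_0)^2]$ is a small alternative to the paper's direct $\|\bs{\varphi}\|_\infty\leq 2\|\bs{K}_{\bs{h}}\|_\infty$ estimate; both yield the same $(\prod_i h_i^{-1})^2$ prefactor.
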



Following \cite{amorino2023estimation}, the rest of the proof consists in identifying the best way to combine these lemmas with \eqref{eq:variancedevelopped}, depending on $d$ and $k_0$. Recall that $k_0$ is such that $\alpha_1 = \dots = \alpha_{k_0} < \alpha_{k_0+1}\leq \dots \leq \alpha_d$. We now introduce 
$0 \leq j_1 \leq j_2 \leq j_3 \leq j_4$ such that $j_3 = \lceil (p \Delta_n)^{-1} \rceil$ and 
\begin{equation*}
    j_4 =  \max \left( \min\left(\left\lfloor\frac{-2C_{PI} \log\prod_{i=1}^d h_i }{p\Delta_n}\right\rfloor, n_p \right), j_3\right).
\end{equation*}
Then we have
\begin{align*}
	\frac{1}{n_p^2} \sum_{k=0}^{n_p-1} (n_p - k) \mathfrak{C}_p^b(k ; x) & = \frac{1}{n_p^2}\Big( \sum_{k=0}^{j_1} + \sum_{k=j_1 + 1}^{j_2} +  \sum_{k=j_2 + 1}^{j_3} +  \sum_{k=j_3 + 1}^{j_4} +  \sum_{k=j_4 + 1}^{n_p - 1}\Big) (n_p-k)\mathfrak{C}_p^b(k ; x). 
\end{align*}
For the first sum, we use Lemma \ref{lem:corr:inst} so that we have
\begin{equation*}
    \frac{1}{n_p^2} \sum_{k=0}^{j_1} (n_p - k) \mathfrak{C}_p^b(k ; x) \lesssim \frac{1}{n_p \prod_{i=1}^d h_i} (j_1 + 1).
\end{equation*}
For the second and third sums, we use Lemma \ref{lem:corr:short} for some $1 \leq k_1 \leq d$ and $1 \leq k_2 \leq d$ to be chosen later. In that case, we get
\begin{empheq}[left=\hspace*{3cm}\empheqlbrace]{flalign}
  &n_p^{-2} \sum_{k=j_1+1}^{j_2} (n_p - k) \mathfrak{C}_p^b(k ; x) \lesssim n_p^{-1}  \sum_{k=j_1+1}^{j_2} (kp\Delta_n)^{-k_1/2} \prod_{i=k_1+1}^d h_i^{-1},& \nonumber\\
  &n_p^{-2}  \sum_{k=j_2+1}^{j_3} (n_p - k) \mathfrak{C}_p^b(k ; x) \lesssim n_p^{-1} \sum_{k=j_2+1}^{j_3}  (kp\Delta_n)^{-k_2/2} \prod_{i=k_2+1}^d h_i^{-1}.\nonumber&.
\end{empheq}
Since $j_3 k \Delta_n \geq 1$, we use Lemma \ref{lem:corr:mid} for the fourth sum and we get
\begin{equation*}
        \frac{1}{n_p^2} \sum_{k=j_3+1}^{j_4} (n_p - k) \mathfrak{C}_p^b(k ; x) \lesssim \frac{1}{n_p} (j_4 - j_3) .
\end{equation*}
Using the definition of $j_4$, we have
\begin{equation*}
   \frac{1}{n_p^2} \sum_{k=j_3+1}^{j_4} (n_p - k) \mathfrak{C}_p^b(k ; x) 
   \lesssim  \min\left(\frac{-2C_{PI} \log\prod_{i=1}^d h_i }{n_p p\Delta_n}, 1 \right).
\end{equation*}
Using also that $n_p p \Delta_n \gtrsim T_n$, we get
\begin{equation}
\label{eq:bound:sum4}
   \frac{1}{n_p^2} \sum_{k=j_3+1}^{j_4} (n_p - k) \mathfrak{C}_p^b(k ; x) 
   \lesssim  \min\left(\frac{\sum_{i=1}^d | \log h_i | }{T_n}, 1 \right).
\end{equation}

For the last sum, we use Lemma \ref{lem:corr:long} and we have
\begin{align*}
\frac{1}{n_p^2} \sum_{k=j_4+1}^{n_p-1} (n_p - k) \mathfrak{C}_p^b(k ; x) 
&\lesssim \frac{1}{n_p} \sum_{k=j_4+1}^{n_p-1} \mathfrak{C}_p^b(k ; x)
\\
&\lesssim \frac{1}{n_p} \sum_{k=j_4+1}^{n_p-1}
e^{-C_{PI}^{-1}(k-1)p\Delta_n}\Big( \prod_{i=1}^d h_i^{-1}\Big)^2\\
&\lesssim \frac{1}{T_n}
e^{-C_{PI}^{-1}j_4p\Delta_n}\Big( \prod_{i=1}^d h_i^{-1}\Big)^2.\end{align*}
Using the definition of $j_4$, we have
\begin{equation}
\label{eq:bound:sum5}
    \frac{1}{n_p^2} \sum_{k=j_4+1}^{n_p-1} (n_p - k) \mathfrak{C}_p^b(k ; x) 
    \lesssim 
    \frac{1}{T_n}
    +
     \frac{1}{T_n}
    e^{-C_{PI}^{-1} T_n}\Big( \prod_{i=1}^d h_i^{-1}\Big)^2. 
\end{equation}
\\

\textit{Case $d = 1, 2$}. Taking $j_1 = 0$, $j_2 = j_3$ and $k_1 = d$, we obtain
\begin{align*}
     \frac{2}{n_p^2}\sum_{k=j_1+1}^{j_2} \mathfrak{C}_p^b(k ; x) 
     &\lesssim \frac{2}{n_p^2}\sum_{k=1}^{j_2} (n_p - k) \frac{1}{kp\Delta_n}
     \\
    &\lesssim \frac{\log(j_2)}{n_p p \Delta_n} = \frac{|\log(p\Delta_n)|}{T_n}.
\end{align*}
and therefore
\begin{align*}
	\mathrm{Var}_{\overline{\mu}^b}^b(\widehat\nu_{n,\bs{h},p}(x))
 &\lesssim 
 \frac{p\Delta_n}{T_n}\prod_{i=1}^d h_i^{-1} + \frac{|\log(p\Delta_n)|}{T_n} + \sum_{i=1}^d\frac{|\log(h_i)|}{T_n} + \frac{1}{T_n} e^{-CT_n} \left( \prod_{i=1}^d h_i^{-1}\right)^{2}. 
\end{align*}

\textit{Case $d \geq 3$, $k_0 = 1$ and $\alpha_2 < \alpha_3$ or $k_0 = 2$. } We take $k_1=2$ and $k_2 = d$ and 
\begin{equation*}
    j_1 = \left\lfloor \frac{h_1h_2}{p\Delta_n} \right\rfloor
    \;\;
    \text{ and }
    \;\;
	j_2 = \left\lfloor \frac{(\prod_{i \geq 3} h_i)^{\frac{2}{d-2}}}{p\Delta_n}\right\rfloor.
\end{equation*}
Therefore we have
\begin{align*}
    \frac{1}{n_p^2} \sum_{k=0}^{j_1} (n_p - k) \mathfrak{C}_p^b(k ; x) 
    &\lesssim \frac{1}{n_p \prod_{i=1}^d h_i} \frac{h_1h_2}{p\Delta_n} + \frac{1}{n_p}\prod_{i=1}^dh_i^{-1}
\\
    &\lesssim \frac{1}{T_n \prod_{i=3}^d h_i} + \frac{1}{n_p}\prod_{i=1}^dh_i^{-1}
\end{align*}
and
\begin{align*}
n_p^{-2} \sum_{k=j_1+1}^{j_2} (n_p - k) \mathfrak{C}_p^b(k ; x) &\lesssim n_p^{-1}  \sum_{k=j_1+1}^{j_2} (kp\Delta_n)^{-1} \prod_{i=3}^d h_i^{-1}
\\
&\lesssim n_p^{-1} \log(j_2/j_1)(p\Delta_n)^{-1} \prod_{i=3}^d h_i^{-1}
\\
&\lesssim T_n^{-1} \sum_{i=1}^d |\log(h_i) \prod_{i=3}^d h_i^{-1}
\end{align*}
and
\begin{align*}
    n_p^{-2}  \sum_{k=j_2+1}^{j_3} (n_p - k) \mathfrak{C}_p^b(k ; x) &\lesssim n_p^{-1} \sum_{k=j_2+1}^{j_3}  (kp\Delta_n)^{-d/2}
    \\
    &\lesssim n_p^{-1} j_2^{1-d/2} (p\Delta_n)^{-d/2}
    \\
    &\lesssim T_n^{-1} (\prod_{i \geq 3} h_i)^{-1}.
\end{align*}
Combining these three bounds with \eqref{eq:bound:sum4} and \eqref{eq:bound:sum5}, we get
\begin{align*}
	\mathrm{Var}_{\overline{\mu}^b}^b(\widehat\nu_{n,\bs{h},p}(x))
 &\lesssim 
 \frac{1}{T_n \prod_{i=3}^d h_i} + \frac{1}{n_p}\prod_{i=1}^dh_i^{-1}
 +
  \frac{1}{T_n} \sum_{i=1}^d |\log(h_i)| \prod_{i=3}^d h_i^{-1}
\end{align*}
which proves \eqref{eq:variance:k01_al2lal3}.
\\

\textit{Case $d \geq 3$, $k_0 \geq 3$. } We take $k_1=k_0$ and $k_2=d$ and
\begin{equation*}
    j_1 = \left\lfloor \frac{\Big(\prod_{i=1}^{k_0} h_i\Big)^{2/k_0}}{p\Delta_n} \right\rfloor
    \;\;
    \text{ and }
    \;\;
    j_2 = \left\lfloor \frac{1}{p\Delta_n}\right\rfloor.
\end{equation*}
Therefore, we have
\begin{align*}
    \frac{1}{n_p^2} \sum_{k=0}^{j_1} (n_p - k) \mathfrak{C}_p^b(k ; x) 
    &\lesssim \frac{1}{n_p \prod_{i=1}^d h_i} \frac{\Big(\prod_{i=1}^{k_0} h_i\Big)^{2/k_0}}{p\Delta_n} + \frac{1}{n_p}\prod_{i=1}^dh_i^{-1}
\\
&\lesssim \frac{1}{T_n} \prod_{i=1}^{k_0} (h_i)^{(2-k_0)/k_0}  \prod_{i=k_0+1}^d h_i^{-1} + \frac{1}{n_p}\prod_{i=1}^dh_i^{-1},
\end{align*}
and
\begin{align*}
n_p^{-2} \sum_{k=j_1+1}^{j_2} (n_p - k) \mathfrak{C}_p^b(k ; x) &\lesssim n_p^{-1}  \sum_{k=j_1+1}^{j_2} (kp\Delta_n)^{-k_0/2} \prod_{i=k_0+1}^d h_i^{-1}
\\
&\lesssim n_p^{-1} j_1^{1-k_0/2} (p\Delta_n)^{-k_0/2} \prod_{i=k_0+1}^d h_i^{-1}
\\
&\lesssim T_n^{-1} \prod_{i=1}^{k_0} (h_i)^{(2-k_0)/k_0}  \prod_{i=k_0+1}^d h_i^{-1},
\end{align*}
and using that $j_3 \leq j_2 + 1$
\begin{align*}
    n_p^{-2}  \sum_{k=j_2+1}^{j_3} (n_p - k) \mathfrak{C}_p^b(k ; x)
    &\lesssim n_p^{-1} (j_2+1)^{-d/2} (p\Delta_n)^{-d/2}
\lesssim \frac{1}{n_p}.
\end{align*}

Combining these three bounds with \eqref{eq:bound:sum4} and \eqref{eq:bound:sum5}, we get
\begin{align*}
	\mathrm{Var}_{\overline{\mu}^b}^b(\widehat\nu_{n,\bs{h},p}(x))
 &\lesssim 
T_n^{-1} \prod_{i=1}^{k_0} (h_i)^{(2-k_0)/k_0}  \prod_{i=k_0+1}^d h_i^{-1} + \frac{1}{n_p}\prod_{i=1}^dh_i^{-1} + \frac{\sum_{i=1}^d | \log h_i | }{T_n}
\end{align*}
which proves \eqref{eq:variance:k0geq3}.\\

\textit{Case $d \geq 3$, $k_0 = 1$ and $\alpha_2 = \alpha_3$ } We take $k_1=1$ and $k_2=3$,
\begin{equation*}
    j_1 = 0
    \;\;
    \text{ and }
    \;\;
    j_2 = \left\lfloor \frac{h_2h_3}{p\Delta_n}\right\rfloor.
\end{equation*}
Therefore, we have
\begin{align*}
    \frac{1}{n_p^2} \sum_{k=0}^{j_1} (n_p - k) \mathfrak{C}_p^b(k ; x) 
    &\lesssim \frac{1}{n_p \prod_{i=1}^d h_i}
\end{align*}
and
\begin{align*}
n_p^{-2} \sum_{k=j_1+1}^{j_2} (n_p - k) \mathfrak{C}_p^b(k ; x) &\lesssim n_p^{-1}  \sum_{k=j_1+1}^{j_2} (kp\Delta_n)^{-1/2} \prod_{2}^d h_i^{-1}
\\
&\lesssim n_p^{-1} j_2^{1/2} (p\Delta_n)^{-1/2} \prod_{i=2}^d h_i^{-1}
\\
&\lesssim T_n^{-1} (h_2 h_3)^{-1/2} \prod_{i=4}^d h_i^{-1}
\end{align*}
and
\begin{align*}
n_p^{-2} \sum_{k=j_2+1}^{j_3} (n_p - k) \mathfrak{C}_p^b(k ; x) &\lesssim n_p^{-1}  \sum_{k=j_2+1}^{j_3} (kp\Delta_n)^{-3/2} \prod_{i=4}^d h_i^{-1}
\\
&\lesssim n_p^{-1} j_2^{-1/2} (p\Delta_n)^{-3/2} \prod_{i=4}^d h_i^{-1}
\\
&\lesssim T_n^{-1} (h_2 h_3)^{-1/2} \prod_{i=4}^d h_i^{-1}
\end{align*}

Combining these three bounds with \eqref{eq:bound:sum4} and \eqref{eq:bound:sum5}, we get
\begin{align*}
	\mathrm{Var}_{\overline{\mu}^b}^b(\widehat\nu_{n,\bs{h},p}(x))
 &\lesssim 
 \frac{\sum_{i=1}^d | \log h_i | }{T_n}
    +
    \frac{1}{n_p\prod_{i=1}^d h_i}
    +
    \frac{1}{T_n\sqrt{h_2 h_3}}\prod_{i=4}^d h_i^{-1}
\end{align*}
which proves \eqref{eq:variance:k0eg1}.

\subsection{Proof of Lemma \ref{lem:corr:inst}}

Since $X$ and $\xi$ are both ergodic and mutually independent we have by Cauchy-Schwarz inequality
\begin{align*}
	\mathfrak{C}^b_p(k ; x)
 & \leq \mathrm{Var}_{\overline{\mu}^b}^b(\bs{K}_{\bs{h}}(x -  p^{-1}\sum_{\ell=0}^{p-1}X_{(kp + \ell)\Delta_n} + \frac{\tau_n}{\sqrt{p}} \widetilde\xi_{k}))  
 \\
 &\leq \mathbb{E}_{\overline{\mu}^b}^b \Big[\bs{K}_{\bs{h}}(x -  p^{-1}\sum_{\ell=0}^{p-1}X_{(kp + \ell)\Delta_n} + \frac{\tau_n}{\sqrt{p}} \widetilde\xi_{k})^2 \Big].
\end{align*}
Using the notations of Section \ref{sec:UpperBound}, we know that  $p^{-1}\sum_{\ell=0}^{p-1}X_{(kp + \ell)\Delta_n} + \tau_np^{-1/2} \widetilde\xi_{k}$ has a density under $\mathbb{P}_{\overline{\mu}^b}^p$ given by
\begin{equation*}
    \overline{\mu}^b_{p,n}(z) := 
    \int_{\mathbb{R}^d}
    \int_{\mathbb{R}^d}
    \overline{\mu}^b(y)
    \mathfrak{p}^b_{p,n}(y;\, u) 
    \varphi_{\tau_n p^{-1/2}}(z-u)
     \, \mathrm{d}y  \, \mathrm{d}u
\end{equation*}
Moreover, for all $z \in \R^d$,  
\begin{align*}
	|\overline{\mu}^b_{p,n}(z)| & \leq \| \widebar\mu^b\|_{\infty}\int_{\R^d} \left( \int_{\R^d} \mathfrak{p}^b_{p,n}(y;\, u) \mathrm{~d}y\right) \varphi_{\tau_n p^{-1/2}}(z-u)\mathrm{~d}u \\
	& \leq  \| \widebar\mu^b\|_{\infty}.
\end{align*}
Then, performing a change of variable gives
\begin{align*}
\mathfrak{C}^b_p(k ; x) 
= \int_{\mathbb{R}^d}
\overline{\mu}^b_{p,n}(z) \bs{K}_{\bs{h}}(x -  z)^2
\, \mathrm{d}z
= \Big( \prod_{i=1}^d h_i^{-1} \Big) \int_{\mathbb{R}^d}
\overline{\mu}^b_{p,n}(x + \bs{h}z) \bs{K}(z)^2
\, \mathrm{d}z
\lesssim \prod_{i=1}^d h_i^{-1}
\end{align*}
since $\bs{K} \in L^2$ .

\subsection{Proof of Lemma \ref{lem:corr:short}}

First recall from Section \ref{sec:UpperBound} that the process
\begin{equation*}
    (p^{-1} \sum_{\ell=0}^{p-1} X_{(kp + \ell)\Delta_n}, X_{(k+1)p\Delta_n})_{k\in\mathbb{N}}
\end{equation*}
is stationary with law denoted $\widebar\pi^b$ on $\R^{2d}$. Using also that $X$ is a Markov process, we get that for all $(u,v) \in \R^d\times\R^d$, we have
\begin{align*}
	&\mathfrak{D}^b_p(k;x,u,v) \\
	& = \int_{\R^d} \int_{\R^d}  \E^b[\bs{K}_{\bs{h}}(x -  p^{-1}\sum_{\ell=0}^{p-1}X_{(kp + \ell)\Delta_n} + v)| X_{p\Delta_n} = z] \bs{K}_{\bs{h}}(x -  \widetilde\omega + v)\bar\pi^b(\widetilde\omega; \, z) \mathrm{d}z\, \mathrm{d}\widetilde\omega \\
	& =  \int_{\R^d}\int_{\R^{d}}\int_{\R^d}  \bs{K}_{\bs{h}}(x + v -  \omega)\bs{K}_{\bs{h}}(x + u -  \widetilde\omega) \mathfrak{p}^b_{p,n,(k-1)p\Delta_n}(z;\, \omega) \widebar\pi^b(\widetilde\omega; \, z) \:\mathrm{d}\widetilde\omega\:\mathrm{d}\omega\:\mathrm{d}z .
\end{align*}
where $\mathfrak{p}^b_{p,n,(k-1)p\Delta_n}$ is also defined in Section \ref{sec:UpperBound}. Moreover, we know from Lemma \ref{le:invbound} that there exists $\lambda_1 > 0$ such that
\begin{align*}
	\widebar\pi^b(\widetilde\omega;\,z) \lesssim \frac{1}{(p\Delta_n)^{d/2}}e^{-V(z) - \frac{\lambda_1}{p\Delta_n} |\widetilde\omega-z|^2} \lesssim \frac{1}{(p\Delta_n)^{d/2}}e^{- \frac{\lambda_1}{p\Delta_n} |\widetilde\omega-z|^2}.
\end{align*}
From Corollary \ref{co:bound:pntxy}, we also have
\begin{align*}
	\mathfrak{p}^b_{p,n,(k-1)p\Delta_n}(x,y) \lesssim \frac{1}{( 2(k-1)p\lambda_1)^{d/2}(p\Delta_n)^{d/2}} \exp\Big(-\frac{\lambda_1}{(1+2(k-1))\Delta_n}|x-y|^2\Big).
\end{align*}
Then, we combine these two estimates using  Lemma \ref{le:normal}. For all $\omega, \widetilde\omega \in \R^d$, we have
\begin{align*}
\int_{\R^d} \mathfrak{p}^b_{p,n,(k-1)p\Delta_n}(z, \widetilde\omega) \widebar\pi^b(\omega, z) \mathrm{d}z
\lesssim \frac{1}{((1+(k-1)\lambda_1)p\Delta_n)^{d/2}} \exp\Big( -\frac{|\omega - \widetilde\omega|^2}{(1+(k-1)\lambda_1)p\Delta_n}\Big). 
\end{align*}
Then,
\begin{align*}
	\mathfrak{D}^b_p(k;x,u,v) &\lesssim \int_{\R^d}\int_{\R^{d}} | \bs{K}_{\bs{h}}(x + v -  \omega)\bs{K}_{\bs{h}}(x + u -  \widetilde\omega)|  s^{-d/2}\exp\Big(-\frac{|\omega-\widetilde\omega|^2}{s}\Big) \mathrm{d}\widetilde\omega\mathrm{d}\omega\\
	& \leq \int_{\R^d}|\bs{K}_{\bs{h}}(x + v + \omega)|\int_{\R^d} |\bs{K}_{\bs{h}}(x + u - \widetilde\omega)| s^{-d/2}\exp\Big(-\frac{|\omega-\widetilde\omega|^2}{s}\Big) \mathrm{d}\widetilde\omega\mathrm{d}\omega.
\end{align*}
where $s = (1 +(k-1)\lambda_1)p\Delta_n$.
We first focus on the inner integral. We recall that for any $z = (z_1, \dots, z_d) \in \R^d$, 
\begin{equation*}
	\bs{K}_{\bs{h}}(z) = \prod_{i=1}^d h_i^{-1}K(h_i^{-1}z_i),
\end{equation*}
and using that $K$ is integrable, we obtain for any $i \in \{1, \dots, d\}$,
\begin{equation}
\begin{aligned}\label{eq:variance:T2:1}
\int_{\mathbb{R}} 
h_i^{-1} \Big| K\Big(\frac{x_i + u_i + \widetilde\omega_i}{h_i}\Big) \Big| \frac{1}{\sqrt{s}}\exp\Big(-\frac{|\omega_i-\widetilde\omega_i|^2}{s}\Big) \, \mathrm{d}\widetilde\omega_i
&\lesssim 
\int_{\mathbb{R}} 
h_i^{-1} \Big| K\Big(\frac{x_i + u_i + \widetilde\omega_i}{h_i}\Big) \Big| \frac{1}{\sqrt{s}} \, \mathrm{d}\widetilde\omega_i
\\
&\lesssim 
\frac{1}{\sqrt{s}}
\int_{\mathbb{R}} 
\Big| K(\widehat\omega_i) \Big| \, \mathrm{d}\widehat\omega_i
\\
&\lesssim 
\frac{1}{\sqrt{s}}.
\end{aligned}
\end{equation}
Using that $K$ is bounded, we also have
\begin{equation}
\begin{aligned}\label{eq:variance:T2:2}
\int_{\mathbb{R}} 
h_i^{-1} \Big| K\Big(\frac{x_i + u_i + \widetilde\omega_i}{h_i}\Big) \Big| \frac{1}{\sqrt{s}}\exp\Big(-\frac{|\omega_i-\widetilde\omega_i|^2}{s}\Big) \, \mathrm{d}\widetilde\omega_i
&\lesssim 
\int_{\mathbb{R}} 
\frac{1}{\sqrt{s} h_i}\exp\Big(-\frac{|\omega_i-\widetilde\omega_i|^2}{s}\Big) \, \mathrm{d}\widetilde\omega_i
\\
&\lesssim 
\frac{1}{h_i}.
\end{aligned}
\end{equation}
Combining the bounds given by Equations \eqref{eq:variance:T2:1} and \eqref{eq:variance:T2:1}, we obtain that for any $1 \leq k_1 \leq d$,
\begin{equation*}
    \int_{\R^d} |\bs{K}_{\bs{h}}(x + u - \widetilde\omega)| s^{-d/2}\exp\Big(-\frac{|\omega-\widetilde\omega|^2}{s}\Big) \mathrm{d}\widetilde\omega
    \lesssim 
    \frac{1}{s^{k_1/2} \prod_{i=k_1+1}^d h_i},
\end{equation*}
and therefore we have
\begin{align*}
\mathfrak{D}^b_p(k;x,u,v)
&\lesssim
    \frac{1}{s^{k_1/2} \prod_{i=k_1+1}^d h_i} \int_{\R^d}|\bs{K}_{\bs{h}}(x + v + \omega)| \mathrm{d}\omega
\lesssim \frac{1}{s^{k_1/2} \prod_{i=k_1+1}^d h_i}.
\end{align*}
By definition of $s$, we obtain
\begin{align*}
\mathfrak{D}^b_p(k;x,u,v)
&
\lesssim \frac{1}{(kp\Delta_n)^{k_1/2} \prod_{i=k_1+1}^d h_i}.
\end{align*}
The results on $\mathfrak{C}^b_p(k;x)$ follows by integrating \eqref{eq:bound:C:Duv}.\\

\subsection{Proof of Lemma \ref{lem:corr:mid}}

We begin by extending the bound on $\mathfrak{p}^b_{p,n,t}(\cdot,\cdot)$ given by Corollary \ref{co:bound:pntxy} for $t > 1$. In fact, for $x,y\in \R^d$ and $t > 1$, we get using Markov property
\begin{align*}
	\mathfrak{p}^b_{p,n, t}(x;\, y) & = \int_{\mathbb{R}^d} p^b_t(x; \,z) \mathfrak{p}^b_{p,n}(z;\, y) \,\mathrm{d}z. 
\end{align*}
Moreover, using the fact that $X$ is a Markov process, we have that
\begin{align*}
	\mathfrak{p}^b_{p,n, t}(x;\, y) & = \int_{\mathbb{R}^d} \Big(\int_{\R^d} p^b_{1/2}(x, \omega) p^b_{t-1/2}(\omega, z) \mathrm{d}\omega\Big) \mathfrak{p}^b_{p,n}(z;\, y) \,\mathrm{d}z \end{align*}
From Lemma \ref{lemma:bound:X}, we know that
\begin{equation*}
    p^b_{1/2}(x, \omega)
    \lesssim
    \exp\big(-|x-\omega|^2 + V(x) - V(\omega) \big)
\end{equation*}
and therefore using that $V$ is bounded below, we have
\begin{align*}
	\mathfrak{p}^b_{p,n, t}(x;\, y) & \lesssim e^{V(x)} \int_{\R^d}\Big( \int_{\R^d} p^b_{t-1/2}(\omega; z) \mathrm{d}\omega \Big)\mathfrak{p}^b_{p,n}(z;y) \mathrm{d}z \lesssim e^{V(x)}. 
\end{align*}
Moreover, we know from Lemma \ref{le:invbound} that there exists $\lambda_1 > 0$ such that
\begin{align*}
	\widebar\pi^b(\widetilde\omega;\,z) \lesssim \frac{1}{(p\Delta_n)^{d/2}}e^{-V(z) - \frac{\lambda_1}{p\Delta_n} |\widetilde\omega-z|^2}.
\end{align*}
Proceeding as for Lemma \ref{lem:corr:short}, we have
\begin{align*}
	& \mathfrak{D}^b_p(k;x,u,v) \\
	& =  \int_{\R^d}\int_{\R^{d}}\int_{\R^d}  \bs{K}_{\bs{h}}(x + v -  \omega)\bs{K}_{\bs{h}}(x + u -  \widetilde\omega) \mathfrak{p}^b_{p,n,(k-1)p\Delta_n}(z;\, \omega) \widebar\pi^b(\widetilde\omega; \, z) \:\mathrm{d}\widetilde\omega\:\mathrm{d}\omega\:\mathrm{d}z
 \\
 	& \lesssim  \int_{\R^d}\int_{\R^{d}}\int_{\R^d}  \bs{K}_{\bs{h}}(x + v -  \omega)\bs{K}_{\bs{h}}(x + u -  \widetilde\omega) \frac{1}{(p\Delta_n)^{d/2}}e^{ - \frac{\lambda_1}{p\Delta_n} |\widetilde\omega-z|^2} \:\mathrm{d}\widetilde\omega\:\mathrm{d}\omega\:\mathrm{d}z.
\end{align*}
Using that
\begin{align*}
	\frac{1}{(p\Delta_n)^{d/2}} \int_{\R^d} \exp\Big( -\frac{\lambda_1}{p\Delta_n} |z - \omega|^2\Big) \,\mathrm{d}z = \Big(\frac{4\pi}{\lambda_1}\Big)^{d/2},
\end{align*}
we obtain
\begin{align*}
	\mathfrak{D}^b_p(k;x,u,v)
 	& \lesssim  \int_{\R^{d}}\int_{\R^d}  \bs{K}_{\bs{h}}(x + v -  \omega)\bs{K}_{\bs{h}}(x + u -  \widetilde\omega) \:\mathrm{d}\widetilde\omega\:\mathrm{d}\omega
\end{align*}
and using the usual change of variable and the fact that $|K|$ is integrable, we get
\begin{align*}
    \mathfrak{D}^b_p(k;x,u,v) \lesssim 1.
\end{align*}
The result on $\mathfrak{C}^b_p(k;x)$ follows by integrating \eqref{eq:bound:C:Duv}.

\subsection{Proof of Lemma \ref{lem:corr:long}}

For $\bs{h} = (h_1, \dots, h_d) \in (0,1)^d$ and $y \in \mathbb{R}^d$, we define
\begin{align*}
	\bs{\mathcal{K}}_{\bs{h}}(y) = \int_{\R^d} \bs{K}_{\bs{h}}(x-y+u) \chi_{p,n}(\mathrm{d}u), 
\end{align*}
and
\begin{align*}
	\bs{\mathcal{K}}_{\bs{h}}^c(y) = \bs{\mathcal{K}}_{\bs{h}}(y) - \E_{\overline{\mu}^b}[\bs{\mathcal{K}}_{\bs{h}}(\frac{1}{p}\sum_{\ell=0}^{p-1}X_{\ell\Delta_n})]. 
\end{align*}
Then we have by definition
\begin{align*}
	 \mathfrak{C}^b_p(k;x) & \leq \int_{\R^d}\int_{\R^{d}} \mathfrak{C}^b_p(k;x,u,v) \chi_{p,n}(\mathrm{d}u)\chi_{p,n}(\mathrm{d}v) \\
	 & \leq \E^b_{\overline{\mu}^b}\Big[\bs{\mathcal{K}}_{\bs{h}}\Big(\frac{1}{p}\sum_{\ell=0}^{p-1}X_{(kp + \ell)\Delta_n}\Big)\bs{\mathcal{K}}_{\bs{h}}\Big(\frac{1}{p}\sum_{\ell=0}^{p-1}X_{\ell\Delta_n}\Big)\Big] - \E^b_{\overline{\mu}^b}\Big[\bs{\mathcal{K}}_{\bs{h}}\Big(\frac{1}{p}\sum_{\ell=0}^{p-1}X_{\ell\Delta_n}\Big)\Big]^2, \nonumber
\\ 
&\leq \E_{\overline{\mu}^b}^b\Big[\bs{\mathcal{K}}^c_{\bs{h}}\Big(\frac{1}{p}\sum_{\ell=0}^{p-1}X_{(kp + \ell)\Delta_n}\Big)\bs{\mathcal{K}}^c_{\bs{h}}\Big(\frac{1}{p}\sum_{\ell=0}^{p-1}X_{\ell\Delta_n}\Big)\Big].
\end{align*}
Moreover, we write $
\{ \mathcal{A}_k, \: k \geq 0\}$ for the natural filtration induced  by the lifted Markov chain $(p^{-1} \sum_{\ell=0}^{p-1} Y_{kp + \ell,n}, X_{(k+1)p\Delta_n})_{k\in\mathbb{N}}$. Then using Cauchy-Schwarz inequality, we have
\begin{align*}
	\Big|\E^b_{\overline{\mu}^b}\Big[& \bs{\mathcal{K}}^c_{\bs{h}}\Big(\frac{1}{p}\sum_{\ell=0}^{p-1}X_{(kp + \ell)\Delta_n}\Big)\bs{\mathcal{K}}^c_{\bs{h}}\Big(\frac{1}{p}\sum_{\ell=0}^{p-1}X_{\ell\Delta_n}\Big)\Big]\Big| \\
	& = \Big|\E^b_{\overline{\mu}^b}\Big[\E^b_{\overline{\mu}^b}\Big[\bs{\mathcal{K}}^c_{\bs{h}}\Big(\frac{1}{p}\sum_{\ell=0}^{p-1}X_{(kp + \ell)\Delta_n}\Big)\Big| \mathcal{A}_0\Big]\bs{\mathcal{K}}^c_{\bs{h}}\Big(\frac{1}{p}\sum_{\ell=0}^{p-1}X_{\ell\Delta_n}\Big)\Big]\Big|\\
	& \leq \E_{\overline{\mu}^b}^b\Big[\E^b_{\overline{\mu}^b}\Big[\bs{\mathcal{K}}^c_{\bs{h}}\Big(\frac{1}{p}\sum_{\ell=0}^{p-1}X_{(kp + \ell)\Delta_n}\Big)\Big| \mathcal{A}_0\Big]^2\Big]^{1/2} \E_{\overline{\mu}^b}^b\Big[\bs{\mathcal{K}}^c_{\bs{h}}\Big(\frac{1}{p}\sum_{\ell=0}^{p-1}X_{\ell\Delta_n}\Big)^2\Big]^{1/2}.
\end{align*}
The second expectation is treated as in Lemma \ref{lem:corr:inst} and we have
\begin{align*}
    \E_{\overline{\mu}^b}^b\Big[\bs{\mathcal{K}}^c_{\bs{h}}\Big(\frac{1}{p}\sum_{\ell=0}^{p-1}X_{\ell\Delta_n}\Big)^2\Big]^{1/2} \lesssim \prod_{i=1}^d h_i^{-1}.
\end{align*}
For the first one, we use that $X$ is a stationary Markov process to get
\begin{align*}
	 \E^b_{\overline{\mu}^b}\Big[\E^b & \Big[\bs{\mathcal{K}}^c_{\bs{h}}\Big(\frac{1}{p}\sum_{\ell=0}^{p-1}X_{(kp + \ell)\Delta_n}\Big) \Big| \mathcal{A}_0\Big]^2\Big]
  =
  \E^b_{\overline{\mu}^b}\Big[\E^b\Big[\bs{\mathcal{K}}^c_{\bs{h}}\Big(\frac{1}{p}\sum_{\ell=0}^{p-1}X_{(kp + \ell)\Delta_n}\Big)\Big| \mathcal{F}_{p\Delta_n}\Big]^2\Big]
  \\
  &= \int_{\R^d}  \E^b\Big[\bs{\mathcal{K}}^c_{\bs{h}}\Big(\frac{1}{p}\sum_{\ell=0}^{p-1}X_{(kp + \ell)\Delta_n}\Big) | X_{p\Delta_n} = x \Big]^2 \overline{\mu}^b(\mathrm{d}x) \\
	 & = \int_{\R^d}  \E^b\Big[ \E^b\Big[ \bs{\mathcal{K}}^c_{\bs{h}}\Big(\frac{1}{p}\sum_{\ell=0}^{p-1}X_{(kp + \ell)\Delta_n}\Big)\Big| X_{kp\Delta_n}\Big] | X_{p\Delta_n} = x \Big]^2 \overline{\mu}^b(\mathrm{d}x)
\end{align*}
Defining $\bs{\varphi} : y \mapsto  \E^b\Big[ \bs{\mathcal{K}}^c_{\bs{h}}\Big(\frac{1}{p}\sum_{\ell=0}^{p-1}X_{(kp + \ell)\Delta_n}\Big)\Big| X_{kp\Delta_n} =y \Big]$, we get that for any $x \in \R^d$,
\begin{align*}
	\E^b[\bs{\varphi}(X_{kp\Delta_n}) | X_{p\Delta_n} = x] & = \E^b_x[\bs{\varphi}(X_{(k-1)p\Delta_n})]  = P^b_{(k-1)p\Delta_n}\bs{\varphi}(x),
\end{align*}
where $(P^b_t)$ is defined in Equation \eqref{eq:defSG}. 
Moreover, from the definition of $\bs{\mathcal{K}}_c$, we get $\E^b_{\overline{\mu}^b}[\bs{\varphi}(X_0)] = 0$. Then, using Equation \eqref{eq:ergodicity}, we have
\begin{align*}
	\E^b_{\overline{\mu}^b}\Big[\E^b\Big[\bs{\mathcal{K}}^c_{\bs{h}}\Big(\frac{1}{p}\sum_{\ell=0}^{p-1}X_{(kp + \ell)\Delta_n}\Big)\Big| \mathcal{F}_{p\Delta_n}\Big]^2\Big] & = \mathrm{Var}_{\overline{\mu}^b}^b[ P^b_{(k-1)p\Delta_n}\bs{\varphi}(X_0)] \\
	& \leq e^{-2C_{PI}^{-1}(k-1)p\Delta_n} \E^b_{\overline{\mu}^b}[\bs{\varphi}(X_0)^2]. 
\end{align*}
Moreover, for all $x\in \R^d$, $|\bs{\varphi}(x)| \leq 2\|\bs{\mathcal{K}}^c_{\bs{h}}\|_{\infty} \leq 2\|\bs{K}_{\bs{h}}\|_{\infty}$. Then, we have $\| \bs{\varphi}\|_{\infty} \lesssim \prod_{i=1}^d h_i^{-1}$. Combining all previous inequalities, we get
\begin{equation*}
    \mathfrak{C}^b_p(k;x) \lesssim e^{-C_{PI}^{-1}(k-1)p\Delta_n}\Big( \prod_{i=1}^d h_i^{-1}\Big)^2.
\end{equation*}

\section{Proof of the results of Section \ref{sec:upper_bounds}}

\subsection{Preliminary results}
\label{sec:preliminary:upper_bounds}

In this section, we aim at proving Lemma \ref{lem:simplification} and other results used in the proof of the results of Section \ref{sec:upper_bounds}. We start by introducing a concise notation. We say that $A(\theta) \lesssim B(\theta)$ implies $C(\theta) \lesssim D(\theta)$, if for all $c_1 > 0$, there exists $c_2 > 0$ such that $A(\theta) \leq c_1 B(\theta)$ implies $C(\theta) \leq c_2 D(\theta)$. We define similarly the equivalence between $A(\theta) \lesssim B(\theta)$ and $C(\theta) \lesssim D(\theta)$. We can now state the main Lemma of this section.

\begin{lemma}
    \label{lem:simplificationp}
For $p \geq 1$, the condition $p\Delta_n \lesssim w_n^{HF}$ is equivalent to 
$p\Delta_n \lesssim \widetilde{w}_{n,p}^{HF}$ with
\begin{equation*}
\widetilde{w}_{n,p}^{HF}
=
    \begin{cases}
        \left( \frac{p}{n}\right)^{\frac{1}{2}}\log\left(\frac{n}{p}\right)^{\frac{1}{2}} & \, \text{ if } d=1,2
        \\
        \Big(\frac{p}{n}\Big)^{\frac{\overline{\alpha}}{2\overline{\alpha} + d} ( \frac{1}{\alpha_1} + \frac{1}{\alpha_2} )}
        \log\Big(\frac{n}{p}\Big)
        & \, \text{ if } d \geq 3 \text{ and } (\bs{\alpha}, k_0)\in D_1,
        \\
        \Big(\frac{p}{n}\Big)^{\frac{2\overline{\alpha}}{2\overline{\alpha} + d} \frac{1}{\alpha_1}} & \, \text{ if } d \geq 3 \text{ and } (\bs{\alpha}, k_0)\in D_2,
        \\
        \Big(\frac{p}{n}\Big)^{\frac{\overline{\alpha}}{(2\overline{\alpha} + d)} \Big( \frac{1}{\alpha_1} + \frac{1}{\alpha_2} \Big)} & \, \text{ if } d\geq 3 \text{ and } d \geq 3 \text{ and } (\bs{\alpha}, k_0)\in D_3.
    \end{cases}
\end{equation*}
\end{lemma}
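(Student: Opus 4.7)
The plan is to reduce both conditions $p\Delta_n \lesssim w_n^{HF}$ and $p\Delta_n \lesssim \widetilde{w}_{n,p}^{HF}$ to a common algebraic inequality of the form $p\Delta_n T_n^{\gamma} \lesssim (\log X)^{\mu}$, where $\gamma, \mu$ depend on the case and $X$ is either $T_n$ (for $w_n^{HF}$) or $n/p$ (for $\widetilde{w}_{n,p}^{HF}$), and then to compare the two parameterizations. The central algebraic tool is the identity $p\Delta_n = (p/n) T_n$ (which follows from $T_n = n\Delta_n$), yielding the crucial rewriting $p\Delta_n T_n^{\gamma} = (p/n) T_n^{1+\gamma}$.

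First, in each case I will verify that the exponent $\delta$ on $p/n$ in $\widetilde{w}_{n,p}^{HF}$ and the exponent $\gamma$ on $T_n^{-1}$ in $w_n^{HF}$ are linked by $\gamma = \delta/(1-\delta)$. For $d \geq 3$, writing $A = \alpha_1^{-1}+\alpha_2^{-1}$ and $B = \sum_{i \geq 3}\alpha_i^{-1}$, a direct computation gives $\delta = A/(2+A+B)$ and $\gamma = A/(2+B)$, which indeed satisfy $\delta/(1-\delta) = \gamma$. For $d = 1, 2$, one has $\delta = 1/2$ and $\gamma = 1$, again satisfying this identity. Using it, $p\Delta_n \leq c_1 (p/n)^\delta (\log(n/p))^\beta$ rearranges via $(p\Delta_n)^{1-\delta} \leq c_1 T_n^{-\delta}(\log(n/p))^\beta$ into $p\Delta_n T_n^{\gamma} \leq c_1^{1+\gamma}(\log(n/p))^{\beta(1+\gamma)}$, while $p\Delta_n \leq c_1 w_n^{HF}$ directly becomes $p\Delta_n T_n^{\gamma} \leq c_1 (\log T_n)^{\mu}$. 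A term-by-term check shows $\beta(1+\gamma) = \mu$ in each of the four cases: $\mu = 0$ for $D_2 \cup D_3$, $\mu = 1$ for $d = 1,2$, and $\mu = 1+\gamma$ for $D_1$.

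It then remains to show the equivalence between $y \leq c_1 (\log T_n)^{\mu}$ and $y \leq c_1 (\log(n/p))^{\mu}$, where $y := p\Delta_n T_n^{\gamma} = (p/n) T_n^{1+\gamma}$. The case $\mu = 0$ is trivial. For $\mu > 0$, the forward direction follows by noting that $y \leq c_1 (\log T_n)^{\mu}$ yields $n/p \geq T_n^{1+\gamma}/(c_1 (\log T_n)^{\mu})$, and taking logarithms gives $\log(n/p) \geq (1+\gamma)\log T_n - \mu \log\log T_n - \log c_1 \gtrsim \log T_n$ for $T_n$ large. For the reverse direction, I will split on the value of $y$: when $y \geq 1$, the identity $(p/n) T_n^{1+\gamma} \geq 1$ implies $T_n \geq (n/p)^{1/(1+\gamma)}$, so $\log T_n \geq (1+\gamma)^{-1}\log(n/p)$, and substituting into the hypothesis yields $y \leq c_1(1+\gamma)^\mu (\log T_n)^\mu$; when $y < 1$, the inequality $y \leq (\log T_n)^\mu$ holds for $n$ large since $\log T_n \to \infty$.

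The main obstacle is verifying the algebraic identity $\delta/(1-\delta) = \gamma$, which uniformly reconciles the two parameterizations across the four cases (and in particular requires unifying $D_2$ and $D_3$ despite the different-looking exponent forms, using that $\alpha_1 = \alpha_2$ on $D_2$). Once this and the matching $\beta(1+\gamma) = \mu$ are in hand, the comparison of logarithmic factors is a routine asymptotic analysis.
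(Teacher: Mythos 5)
Your proposal is correct and follows essentially the same path as the paper's proof: both hinge on the identity $p\Delta_n = (p/n)T_n$ to convert a bound of the form $p\Delta_n \lesssim T_n^{-\gamma}(\log T_n)^\mu$ into one of the form $p\Delta_n \lesssim (p/n)^{\gamma/(1+\gamma)}(\log(n/p))^{\mu/(1+\gamma)}$, and both verify the same algebraic identity relating the exponents (your $\delta/(1-\delta)=\gamma$ is exactly the paper's computation that $u/(1+u)$ gives the claimed exponent). You are somewhat more explicit about justifying the interchangeability of $\log T_n$ and $\log(n/p)$, which the paper treats as implicit in the equivalence, but this is a refinement in presentation rather than a different method.
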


\begin{proof}
First note that since $T_n = \frac{np\Delta_n}{p} $, the condition
\begin{equation*}
p\Delta_n 
\lesssim 
T_n^{-u} \log(T_n)^v
\end{equation*}
for some $u,v \geq 0$, is equivalent to
\begin{equation*}
p\Delta_n
\lesssim 
\Big(\frac{p}{n}\Big)^{\frac{u}{1+u}} \log\Big(\frac{n}{p}\Big)^{\frac{v}{1+u}}.
\end{equation*}

If $u = \frac{\overline{\alpha}_3}{2\overline{\alpha}_3 + d - 2} \Big( \frac{1}{\alpha_1} + \frac{1}{\alpha_2} \Big)$, we have
\begin{equation*}
u = \frac{1}{2 + \frac{d - 2}{\overline{\alpha}_3}}
\frac{\alpha_1 + \alpha_2}{\alpha_1\alpha_2}
= \frac{1}{2 + \frac{d}{\overline{\alpha}} - \frac{1}{\alpha_1} - \frac{1}{\alpha_2}}
\frac{\alpha_1 + \alpha_2}{\alpha_1\alpha_2}
= \frac{\overline{\alpha}(\alpha_1 + \alpha_2)}{\alpha_1\alpha_2(2\overline{\alpha} + d) - \overline{\alpha}(\alpha_1+\alpha_2)}
\end{equation*}
and therefore
\begin{equation*}
\frac{u}{1+u} = \frac{\overline{\alpha}(\alpha_1 + \alpha_2)}{\alpha_1\alpha_2(2\overline{\alpha} + d)} = 
\frac{\overline{\alpha}}{2\overline{\alpha} + d} \Big( \frac{1}{\alpha_1} + \frac{1}{\alpha_2} \Big),
\end{equation*}
which proves the case $d \geq 3$ and $k_0 = 1$ with  $\alpha_2 < \alpha_3$ or $k_0 = 2$. The other cases are done similarly.
\end{proof}

A particular case is the case where $p = 1$, restated as follows
\begin{lemma}
\label{lem:simplification0}
The condition $\Delta_n \lesssim w_n^{HF}$ is equivalent to $\Delta_n \lesssim \widetilde{w}_n^{HF}$ with $\widetilde{w}_n^{HF} = \widetilde{w}_{n,1}^{HF}$
\end{lemma}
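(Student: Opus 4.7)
The plan is to recognize that Lemma \ref{lem:simplification0} is simply the special case $p = 1$ of Lemma \ref{lem:simplificationp}. By definition of $\widetilde{w}_n^{HF}$, the claimed equivalence reads exactly as $\Delta_n \lesssim w_n^{HF}$ iff $\Delta_n \lesssim \widetilde{w}_{n,1}^{HF}$, which is the $p = 1$ instance of Lemma \ref{lem:simplificationp}. So the proof will consist of one sentence invoking that result.

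If one wants to make the statement fully self-contained rather than cite Lemma \ref{lem:simplificationp}, the argument is the same algebraic manipulation carried out in its proof, specialized to $p = 1$. In each regime one has $w_n^{HF} = T_n^{-u} \log(T_n)^v$ for the appropriate exponents $(u,v)$ read off from the definition in Section \ref{sec:upper_bounds}. Substituting $T_n = n\Delta_n$ (since $p=1$), the condition $\Delta_n \lesssim T_n^{-u}\log(T_n)^v$ becomes $\Delta_n^{1+u} \lesssim n^{-u}\log(n\Delta_n)^v$, and since $\log(n\Delta_n) \asymp \log(n)$ whenever $\Delta_n$ is polynomial in $n$ (which is the regime of interest), this is equivalent to $\Delta_n \lesssim n^{-u/(1+u)} \log(n)^{v/(1+u)}$. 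Reading off the exponents gives precisely $\widetilde{w}_{n,1}^{HF}$ in each of the four cases.

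There is no real obstacle here: the content is purely the bookkeeping of exponents, already performed in the proof of Lemma \ref{lem:simplificationp}, for instance the identity
\begin{equation*}
\frac{u}{1+u} = \frac{\overline{\alpha}}{2\overline{\alpha} + d}\Big(\frac{1}{\alpha_1} + \frac{1}{\alpha_2}\Big)
\quad \text{when} \quad u = \frac{\overline{\alpha}_3}{2\overline{\alpha}_3 + d - 2}\Big(\frac{1}{\alpha_1} + \frac{1}{\alpha_2}\Big),
\end{equation*}
together with its analogues in the other regimes. The proof therefore reduces to the single line: apply Lemma \ref{lem:simplificationp} with $p = 1$ and recall that $\widetilde{w}_n^{HF} = \widetilde{w}_{n,1}^{HF}$ by definition.
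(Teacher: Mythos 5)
Your proof is correct and matches the paper exactly: the paper itself introduces Lemma \ref{lem:simplification0} with the words ``A particular case is the case where $p=1$, restated as follows,'' and gives no separate proof, relying on Lemma \ref{lem:simplificationp} specialised to $p=1$. Your one-line invocation (plus the optional redundant unpacking) is precisely what is intended.
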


\begin{proof}[Proof of Lemma \ref{lem:simplification}] From Lemma \ref{lem:simplificationp}, we know that $p\Delta_n \lesssim w_n^{HF}$ is equivalent to 
$p\Delta_n \lesssim \widetilde{w}_{n,p}^{HF}$. Using that $\alpha_1 > 2$, we check that
\begin{equation*}
    \widetilde{w}_{n,p}^{HF} \geq \Big(\frac{p}{n}\Big)^{\frac{2\overline{\alpha}}{2\overline{\alpha} + d}}
\end{equation*}
and therefore we have Lemma \ref{lem:simplification}.
\end{proof}

\subsection{Proof of Proposition \ref{prop:smallnoise:highsampling}}
Suppose that $p^{\ast}\Delta_n \leq w_n^{HF}$ and that $\tau_n^{2\alpha_1} \leq \Delta_n$ and recall that in that case, we take
\begin{equation*}
    p^* = 1 \;\;\text{ and }\;\; \bs{h}^* = \bs{h}^{*,HF}
\end{equation*}

From Proposition \ref{propo:small:bias:mu}, we see by plugging these values that
\begin{equation*}
    \mathrm{B}^b_{n,\bs{h},p}(x) \lesssim \tau_n^{2\alpha_1} + v_n^{HF}.
\end{equation*}
Note also that $|\log(h_i^*)|$ is of the order of $\log(T_n)$ and that by definition of $\bs{h}^*$, we have 
\begin{equation*}
    v_n^{HF} {\lesssim}
    \: T_n^{-1}\begin{cases}
        \sum_{i=1}^d|\log(h_i^*)|
	& 
	\, \text{ if } d=1,2
        \\
         \sum_{i=1}^d |\log(h_i^*)| \prod_{i=3}^d (h_i^*)^{-1}
	& 
	\, \text{ if } d \geq 3 \text{ and } (\bs{\alpha}, k_0) \in D_1
        \\
         (\prod_{i=1}^{k_0} (h_i^*)^{(2-k_0)/k_0}  \prod_{i=k_0+1}^d (h_i^*)^{-1} 
	& 
	\, \text{ if } d \geq 3 \text{ and } (\bs{\alpha}, k_0) \in D_2
        \\
        (h_2^* h_3^*)^{-1/2} \prod_{i=4}^d (h_i^*)^{-1}
    	& 
	\, \text{ if } d \geq 3 \text{ and } (\bs{\alpha}, k_0) \in D_3.
    \end{cases}
\end{equation*}
Therefore, by factorizing by $v_n^{HF}$ the variance bound from Proposition \ref{prop:variance}, we have
\begin{equation*}
	\mathrm{V}^b_{n,\bs{h}^*,p^*}(x) \lesssim v_n^{HF}(1 + r_n^{HF}), 
\end{equation*}
with 
\begin{equation*}
r_n^{HF}
=
    \begin{cases}
        0
	& 
	\, \text{ if } d=1, 2
        \\
	\Delta_n (h_1^*h_2^*)^{-1} \log(T_n)^{-1}
	& 
	\, \text{ if } d \geq 3 \text{ and } (\bs{\alpha}, k_0)\in D_1
        \\
	    \Delta_n  \prod_{i=1}^{k_0} (h_i^*)^{-2/k_0}  
	   +
	   \log(T_n)  \prod_{i=1}^{k_0} (h_i^*)^{-2/k_0}  \prod_{i=1}^d h_i^*
	& 
	\, \text{ if } d\geq 3 \text{ and } (\bs{\alpha}, k_0)\in D_2
        \\
 	      \log(T_n) (h_2^* h_3^*)^{1/2} \prod_{i=4}^d h_i^*
    	+
    	\Delta_n h_1^{-1} (h_2^* h_3^*)^{-1/2}
    	& 
	\, \text{ if } d\geq 3 \text{ and } (\bs{\alpha}, k_0)\in D_3
    \end{cases}
\end{equation*}
\normalsize
and it remains to prove that in each case, the remainder can be ignored so that $\mathrm{V}^b_{n,\bs{h},p}(x)
\lesssim
v_n^{HF}$. First note that the condition $\Delta_n \lesssim w_n^{HF}$ and the definition of $\bs{h}^*$ ensures that
\begin{equation*}
    \Delta_n \lesssim 
        \begin{cases}
        h_1^*\log(T_n)
	& 
	\, \text{ if } d=1
        \\
        h_1^*h_2^*\log(T_n)
	& 
	\, \text{ if } d=2
        \\
    h_1^*h_2^* \log(T_n)
	& 
	\, \text{ if } d \geq 3 \text{ and } (\bs{\alpha}, k_0) \in D_1
        \\
        \prod_{i=1}^{k_0} (h_i^*)^{2/k_0}  
	& 
	\, \text{ if } d \geq 3 \text{ and } (\bs{\alpha}, k_0) \in D_2
        \\
    	h_1^* (h_2^* h_3^*)^{1/2}
    	& 
	\, \text{ if } dd \geq 3 \text{ and } (\bs{\alpha}, k_0) \in D_3
    \end{cases}
\end{equation*}
and therefore
\begin{equation*}
r_n^{HF}
\lesssim
    \begin{cases}
        0
	& 
	\, \text{ if } d = 1,2 \text{ or } d \geq 3 \text{ and } (\bs{\alpha}, k_0) \in D_1
        \\
	   \log(T_n)  \prod_{i=1}^{k_0} (h_i^*)^{-2/k_0}  \prod_{i=1}^d h_i^*
	& 
	\, \text{ if } d\geq 3 \text{ and } (\bs{\alpha}, k_0) \in D_2
        \\
 	      \log(T_n) (h_2^* h_3^*)^{1/2} \prod_{i=4}^d h_i^*
    	& 
	\, \text{ if } d\geq 3 \text{ and } (\bs{\alpha}, k_0) \in D_3
    \end{cases}
\end{equation*}

Then we easily check that if $d\geq 3$ and $k_0 \geq 3$, we have
\begin{equation*}
    \log(T_n)  \prod_{i=1}^{k_0} (h_i^*)^{-2/k_0}  \prod_{i=1}^d h_i^*
    =
    \log(T_n) T_n^{-\frac{\bar\alpha_2}{\bar\alpha_3 + d -2}\sum_{j=k_0+1}^d 1/\alpha_j} \to 0.
\end{equation*}
Analogously, if $d\geq 3$ and $k_0 = 1$ with $\alpha_2 = \alpha_3$, we have
\begin{equation*}
    \log(T_n) (h_2^* h_3^*)^{-1/2} \prod_{i=4}^d h_i^*
    =
    \log(T_n)^{c+1} T_n^{-c} \to 0,
\end{equation*}
with 
\begin{equation*}
	c = \frac{\bar\alpha_3}{2\bar\alpha_3 + d -2}\left( \frac{1}{2\alpha_2} + \frac{1}{2\alpha_3} + \sum_{i=4}^d \frac{1}{\alpha_i}\right) > 0 
\end{equation*}
so that 
\begin{equation*}
\mathrm{V}^b_{n,\bs{h}^*,p^*}(x)
\lesssim
v_n^{HF}
\end{equation*}
which concludes the proof of Proposition \ref{prop:smallnoise:highsampling}.

\subsection{Proof of Proposition \ref{prop:largenoise:highsampling}}

Suppose that $p^{\ast}\Delta_n \leq w_n^{HF}$ and that $\tau_n^{2\alpha_1} \geq \Delta_n$ and recall that in that case, we take
\begin{equation*}
        p^* = \Big\lceil \big(\tau_{n}^{2\alpha_1} \Delta_n^{-1} \big)^{1/(1+\alpha_1)} \Big\rceil \;\;\text{ and }\;\; \bs{h}^* = \bs{h}^{*,HF}
\end{equation*}
From Proposition \ref{propo:small:bias:mu}, we see by plugging these values that
\begin{equation*}
    \mathrm{B}^b_{n,\bs{h}^*,p^*}(x) \lesssim \frac{\tau_n^{2\alpha_1}}{(p^*)^{\alpha_1}} + (p^*\Delta_n)^{1/2} + v_n^{HF} \lesssim \big(\tau_n^2 \Delta_n\big)^{\frac{\alpha_1}{1+\alpha_1}} + v_n^{HF}.
\end{equation*}

For the variance, we proceed as for Proposition \ref{prop:largenoise:highsampling}: using that $p^*\Delta_n \lesssim w_n^{HF}$ ad $\bs{h}^* = \bs{h}^{*, HF}$, we have
from Proposition \ref{prop:variance} that
\begin{equation*}
    \mathrm{V}_{n,\bs{h}^*,p^*}^b(x) \lesssim v_n^{HF}.
\end{equation*}
so we can conclude.

\subsection{Proof of Proposition \ref{prop:smallnoise:lowsampling}}

Suppose that $\Delta_n \geq w_n^{HF}$ and that $\tau_n^{2\alpha_1} \leq \Delta_n$ and recall that in that case, we take
\begin{equation*}
        p^* = 1 \;\;\text{ and }\;\; \bs{h}^* = \bs{h}^{*,1} = \bs{h}^{*,LF}
\end{equation*}
From Proposition \ref{propo:small:bias:mu}, we see by plugging these values that
\begin{equation*}
    \mathrm{B}^b_{n,\bs{h},p}(x) \lesssim \tau_n^{2\alpha_1} + v_n^{HF}.
\end{equation*}

We now study the variance. First, note that
\begin{equation*}
    T_n^{-1} \Delta_n \prod_{i=1}^d (h_i^*)^{-1} = n^{\frac{-2\overline{\alpha}}{2\overline{\alpha}+d}} = v_n^{LF}.
\end{equation*}
Therefore, by factorizing by $v_n^{LF}$ the variance bound from Proposition \ref{prop:variance} and using that $|\log(h_i^*)|$ is of the order of $\log(n)$, we have
\begin{equation*}
	\mathrm{V}^b_{n,\bs{h}^*,p^*}(x)\lesssim v_n^{LF} ( 1 + r_n^{LF}),
\end{equation*}
with
\begin{equation*}
r_n^{LF}
=
    \begin{cases}
        0
	& 
	\, \text{ if } d=1,2
        \\
        \Delta_n^{-1} \log(n) h_1^* h_2^*
    	& 
	\, \text{ if } d \geq 3 \text{ and } (\bs{\alpha}, k_0) \in D_1
        \\
        \Delta_n^{-1} \prod_{i=1}^{k_0} (h_i^*)^{2/k_0} 
        + 
        \log (n) \Delta_n^{-1} \prod_{i=1}^d h_i^*
	& 
	\, \text{ if } d\geq 3 \text{ and } (\bs{\alpha}, k_0) \in D_2
        \\
         \log(n)  \Delta_n^{-1} \prod_{i=1}^d h_i^*
    +
    \Delta_n^{-1} h_1 (h_2^* h_3^*)^{1/2} 
    	& 
	\, \text{ if } d\geq 3 \text{ and } (\bs{\alpha}, k_0) \in D_3
    \end{cases}
\end{equation*}
Using Lemma \ref{lem:simplification0} and the fact that $\Delta_n \gtrsim w_n^{HF}$, we know that $\Delta_n \gtrsim \widetilde{w}_n^{HF}$. Using also the definition of $\bs{h}^*$, we check that  
\begin{equation}
\label{eq:bound:delta_n:prop:smallnoise:lowsampling}
    \Delta_n \gtrsim 
        \begin{cases}
        h_1^*|\log(h_1^*)|
	& 
	\, \text{ if } d=1
        \\
        h_1^*h_2^*|\log(h_1^*h_2^*)| 
	& 
	\, \text{ if } d=2
        \\
    h_1h_2 \log(n)
	& 
	\, \text{ if } d \geq 3 \text{ and } (\bs{\alpha}, k_0) \in D_1
        \\
        \prod_{i=1}^{k_0} (h_i^*)^{2/k_0}  
	& 
	\, \text{ if } d\geq 3 \text{ and } (\bs{\alpha}, k_0) \in D_2
        \\
    	h_1^* (h_2^* h_3^*)^{1/2}
    	& 
	\, \text{ if } d\geq 3 \text{ and } (\bs{\alpha}, k_0) \in D_3.
    \end{cases}
\end{equation}
and therefore
\begin{equation*}
\mathrm{V}^b_{n,\bs{h}^*,p^*}(x)
\lesssim
v_n^{LF}
    \begin{cases}
        1 
	& 
	\, \text{ if } d=1, 2 \text{ or } d \geq 3 \text{ and } (\bs{\alpha}, k_0) \in D_1
        \\
         1
        + 
        \log (n) \Delta_n^{-1} \prod_{i=1}^d h_i^*
	& 
	\, \text{ if } d \geq 3 \text{ and } (\bs{\alpha}, k_0) \in D_2
        \\
        1
        +
         \log(n)  \Delta_n^{-1} \prod_{i=1}^d h_i^*
    	& 
	\, \text{ if } d \geq 3 \text{ and } (\bs{\alpha}, k_0) \in D_3.
    \end{cases}
\end{equation*}
We conclude the proof of Proposition \ref{prop:smallnoise:lowsampling} using \eqref{eq:bound:delta_n:prop:smallnoise:lowsampling} to get 
\begin{equation*}
    \log(n)  \Delta_n^{-1} \prod_{i=1}^d h_i^* 
    \lesssim
    \begin{cases}
        \log(n)
        \prod_{i=1}^{k_0} (h_i^*)^{(2-k_0)/k_0}
        \prod_{i=k_0+1}^d h_i^*
          	& 
	\, \text{ if } d \geq 3 \text{ and } (\bs{\alpha}, k_0) \in D_2
        \\
        \\
    	(h_2^* h_3^*)^{1/2} \prod_{i=4}^d h_i^* 
    	& 
	\, \text{ if } d \geq 3 \text{ and } (\bs{\alpha}, k_0) \in D_3.
    \end{cases}
\end{equation*}
so that
\begin{equation*}
    \log(n)  \Delta_n^{-1} \prod_{i=1}^d h_i^*  \to 0,
\end{equation*}
as $n \to \infty$.

\subsection{Proof of Proposition \ref{prop:largenoise:lowsampling}}

Suppose that $p^*\Delta_n \geq w_n^{HF}$ and that $\tau_n^{2\alpha_1} \geq \Delta_n$. and recall that in that case, we take
\begin{equation*}
        p^* = \Big\lceil \big(\tau_{n}^{2\alpha_1} \Delta_n^{-1} \big)^{1/(1+\alpha_1)} \Big\rceil \;\;\text{ and }\;\; \bs{h}^* = \bs{h}^{*,p^*}
\end{equation*}
From Proposition \ref{propo:small:bias:mu}, we see by plugging these values that
\begin{equation*}
    \mathrm{B}^b_{n,\bs{h}^*,p^*}(x) \lesssim \big(\tau_n^2 \Delta_n\big)^{\frac{\alpha_1}{1+\alpha_1}} + \Big(\frac{p^*}{n}\Big)^{\frac{2\overline{\alpha}}{2\overline{\alpha}+d}}
\end{equation*}

For the variance, we proceed as for Proposition \ref{prop:smallnoise:lowsampling}: using that $p^*\Delta_n \gtrsim w_n^{HF}$ (and therefore that $p^*\Delta_n \gtrsim \widetilde{w}_n^{HF}$ by Lemma \ref{lem:simplification}) and $\bs{h}^* = \bs{h}^{*, p^*}$, we have from Proposition \ref{prop:variance} that
\begin{equation*}
    \mathrm{V}_{n,\bs{h}^*,p^*}^b(x) \lesssim \Big(\frac{p^*}{n}\Big)^{\frac{2\overline{\alpha}}{2\overline{\alpha}+d}}
\end{equation*}
and thus
\begin{equation*}
\mathcal{R}(\widehat\mu_{n,\bs{h}^*,p^*}, b; \, x) 
\lesssim (\tau_n^{2}\Delta_n)^{\frac{\alpha_1}{1+\alpha_1}} + \Big (\frac{p^*}{n}\Big )^{\frac{2\overline{\alpha}}{2\overline{\alpha}+d}}.
\end{equation*}
It remains to prove that $(\tau_n^{2}\Delta_n)^{\frac{\alpha_1}{1+\alpha_1}}$ is always dominating here. This term is indeed of the order of $p^* \Delta_n$. By Lemma \ref{lem:simplificationp}, we have $p^* \Delta_n \gtrsim \widetilde{w}_{n,p^*}^{HF}$ so it is enough to prove that
\begin{equation}
\label{eq:goal:lastconvergence}
    \Big (\frac{p^*}{n}\Big )^{\frac{2\overline{\alpha}}{2\overline{\alpha}+d}} \lesssim 
    \widetilde{w}_{n,p^*}^{HF}
\end{equation}
Moreover, since $p^*\Delta_n \to 0$, it is clear that $p^*/n \to 0$. Using the definition of $\widetilde{w}_{n,p^*}^{HF}$, we see that \eqref{eq:goal:lastconvergence} is equivalent to
\begin{equation*}
        \begin{cases}
        2\bar\alpha \geq d & \, \text{ if } d=1,2
        \\
        \frac{\overline{\alpha}}{2\overline{\alpha} + d} ( \frac{1}{\alpha_1} + \frac{1}{\alpha_2} ) \geq \frac{2\overline{\alpha}}{2\overline{\alpha}+d} 
        & \, \text{ if } d \geq 3 \text{ and } (\bs{\alpha}, k_0) \in D_1
        \\
        \frac{2\overline{\alpha}}{2\overline{\alpha} + d} \frac{1}{\alpha_1} \geq \frac{2\overline{\alpha}}{2\overline{\alpha}+d}
        & \, \text{ if } d \geq 3 \text{ and } (\bs{\alpha}, k_0) \in D_2
        \\
        \frac{\overline{\alpha}}{(2\overline{\alpha} + d)} \Big( \frac{1}{\alpha_1} + \frac{1}{\alpha_2} \Big) \geq \frac{2\overline{\alpha}}{2\overline{\alpha}+d}
        & \, \text{ if } d \geq 3 \text{ and } (\bs{\alpha}, k_0) \in D_3.
    \end{cases}
\end{equation*}
These inequalities always hold since $\alpha_1 > 1$ and $\alpha_2 > 1$ so we can conclude.

\section{Proof of the results of Section \ref{sec:bernstein}}
\label{sec:proof:bernstein}

\subsection{Proof of Theorem \ref{th:bernstein:loc}}

The main idea of this proof is to use general results for Markov chains to the lifted Markov chain
\begin{align}
\label{eq:bernstein:lifted}
	\Big(p^{-1} \sum_{\ell=0}^{p-1} Y_{kp + \ell,n}, X_{(k+1)p\Delta_n}\Big)_{k\in\mathbb{N}}.
\end{align}
We first introduce few definitions for Markov chains and we refer to \cite{kontoyiannis2012geometric} for more details. Consider a Markov chain $\Upsilon$ on a given space $(\mathrm{X}, \mathcal{X})$. We say that $\Upsilon$ is $\psi$-ireeductible and aperiodic if there exists a $\sigma$-finite measure $\psi$ on $(\mathrm{X}, \mathcal{X})$ such that for all $A \in \mathcal{X}$ such that $\psi(A) > 0$, any $x \in \mathrm{X}$ and any $n$ large enough, we have
\begin{equation*}
    P^n(x, A) > 0
\end{equation*}
where $P$ denotes the transition semigroup of $\Upsilon$.
We say that $\Upsilon$ is geometrically ergodic if it admits an invariant measure $\pi$ and functions $\rho : \mathrm{X} \to (0,1)$ and $\rho : \mathrm{X} \to [1,\infty)$ such that for all $n \geq 0$ and for $\pi$-almost $x\in \mathrm{X}$, we have
\begin{equation*}
    \| P^n(x, \cdot) - \pi \|_{TV} \leq C(x) \rho(x)^n.
\end{equation*}
Intuitively, geometrically ergodic converge fast to their invariant distribution starting from almost any point. Therefore, they must satisfy concentration properties such as Bernstein inequality. In this section, we plan to use the following result, from \cite{lemanczyk2021general}.

\begin{theorem}[Theorem 1.1 in \cite{lemanczyk2021general}]\label{th:bernstein}
Let $\Upsilon$ be a geometrically ergodic Markov chain with state space $\mathcal{X}$, and let $\pi$ be its unique stationary probability measure. Moreover, let $f: \mathcal{X} \rightarrow \mathbb{R}$ be a bounded measurable function such that $\mathbb{E}_\pi f=0$. Furthermore, let $x \in \mathcal{X}$. Then, we can find constants $K, \tau>0$ depending only on $x$ and the transition probability $P(\cdot, \cdot)$ such that for all $t>0$,
$$
\mathbb{P}_x\left(\frac{1}{N}\sum_{i=0}^{N-1} f\left(\Upsilon_i\right)>t\right) \leq K \exp \left(-\frac{N^2t^2}{32 N \sigma_{M r v}^2+\tau t\|f\|_{\infty} N\log N}\right),
$$
where
$$
\sigma_{M r v}^2=\operatorname{Var}_\pi\left(f\left(\Upsilon_0\right)\right)+2 \sum_{i=1}^{\infty} \operatorname{Cov}_\pi\left(f\left(\Upsilon_0\right), f\left(\Upsilon_i\right)\right)
$$
denotes the asymptotic variance of the process $\left(f\left(\Upsilon_i\right)\right)_i$.
\end{theorem}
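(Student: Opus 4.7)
The plan is to reduce this Bernstein-type inequality for a geometrically ergodic Markov chain to Bernstein's inequality for i.i.d.\ sums of sub-exponential random variables, via the classical Nummelin splitting construction. Geometric ergodicity together with $\psi$-irreducibility implies (Meyn--Tweedie) the existence of a small set $C \subset \mathcal{X}$, a minorization measure $\nu$, a constant $\delta \in (0,1)$ with $P(y,\cdot) \ge \delta \nu(\cdot)$ for $y \in C$, and a Lyapunov function $V$ satisfying a geometric drift condition $PV \le \gamma V + b \mathbf{1}_C$. Enlarging the state space with an auxiliary Bernoulli flag that is set to $1$ with probability $\delta$ whenever the chain enters $C$ produces regeneration times $0 < \tau_1 < \tau_2 < \cdots$, and the successive blocks $B_j = (\Upsilon_{\tau_j}, \ldots, \Upsilon_{\tau_{j+1}-1})$ are i.i.d.\ for $j \ge 1$ with geometrically distributed lengths, so that $\mathbb{E}[e^{\lambda(\tau_2-\tau_1)}] < \infty$ for some $\lambda > 0$.

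Next, I would write
\[
\sum_{i=0}^{N-1} f(\Upsilon_i) \;=\; R_0 \;+\; \sum_{j=1}^{M_N} S_j \;+\; R_N, \qquad S_j = \sum_{i=\tau_j}^{\tau_{j+1}-1} f(\Upsilon_i),
\]
where $M_N$ counts the complete blocks before time $N$ and $R_0, R_N$ are boundary terms. Kac's formula combined with $\mathbb{E}_\pi f = 0$ yields $\mathbb{E}[S_j] = 0$, and the asymptotic variance identity gives $\mathrm{Var}(S_j) = \sigma_{Mrv}^2 \, \mathbb{E}[\tau_2 - \tau_1]$. Since $|S_j| \le \|f\|_\infty (\tau_{j+1}-\tau_j)$ and the block lengths have geometric tails, the $S_j$ are sub-exponential with scale proportional to $\|f\|_\infty$. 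I would then apply a truncation at level $L = c_0 \|f\|_\infty \log N$: a union bound over the at-most-$N$ blocks shows that $\mathbb{P}(\exists j \le M_N : |S_j|>L) \le K N^{-\kappa}$ for $c_0$ large enough, an error that is absorbed into the overall constant $K$. On the complementary event the truncated i.i.d.\ sum $\sum_j S_j^{\mathrm{tr}}$ is bounded and centred up to a negligible bias, and Bennett's/Bernstein's inequality for bounded i.i.d.\ sums with envelope $L$ and per-term variance $\lesssim \sigma_{Mrv}^2 \mathbb{E}[\tau_2-\tau_1]$ gives
\[
\mathbb{P}\Big(\Big|\sum_j S_j^{\mathrm{tr}}\Big| > Nt/2\Big) \le 2\exp\!\Big(-\frac{N^2 t^2}{c_1 N \sigma_{Mrv}^2 + c_2 \|f\|_\infty (\log N)\, N t}\Big),
\]
which matches the stated form. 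Concentration of $M_N$ around $N/\mathbb{E}[\tau_2-\tau_1]$ and the geometric tails of the boundary blocks $R_0$ (whose distribution depends on $x$) and $R_N$ are absorbed into the multiplicative constant $K(x)$.

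The main obstacle will be the careful accounting needed to obtain the precise universal coefficient $32$ multiplying $N\sigma_{Mrv}^2$ and, more importantly, to ensure that the constants $K$ and $\tau$ depend only on $x$ and the transition kernel $P$ — not on the specific bounded function $f$. This requires that the truncation level, the variance equivalence from Kac's formula, and all union bounds be performed in a way that separates the $\|f\|_\infty$-scale (which enters only through the sub-exponential term) from the asymptotic variance $\sigma_{Mrv}^2$ (which is a property of $f$ but packaged exactly as in the target bound). Controlling the dependence on $x$ forces $K(x)$ to scale like the drift function $V(x)$, since the distribution of the first regeneration time $\tau_1$ under $\mathbb{P}_x$ is controlled by $V(x)$ through the drift inequality. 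A secondary technical point is the treatment of the final partial block $R_N$, which lacks the regenerative i.i.d.\ structure and must be handled either by coupling its end to an independent stationary copy or by a direct geometric-tail argument, while preserving the claimed Bernstein form.
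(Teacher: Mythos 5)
This statement is not proved in the paper at all: it is quoted verbatim as Theorem~1.1 of \cite{lemanczyk2021general} and used as a black box, so there is no internal proof to compare against. Your sketch is nonetheless essentially the strategy of the cited reference (Nummelin splitting, block decomposition at regeneration times, and a Bernstein-type bound for the block sums), so the route is the right one.

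As a standalone proof, however, the sketch has gaps you should be aware of. First, the claim that the blocks $B_j$ are i.i.d.\ for $j\ge 1$ holds only when the minorization $P(y,\cdot)\ge\delta\nu(\cdot)$ is available on the one-step kernel; for a general geometrically ergodic chain one may only have an $m$-step small set, in which case the split-chain blocks are one-dependent and the argument must be run separately on even- and odd-indexed blocks (this is exactly where part of the bookkeeping for the constant $32$ lives). Second, the identities $\mathbb{E}[S_j]=0$ and $\mathrm{Var}(S_j)=\sigma_{Mrv}^2\,\mathbb{E}[\tau_2-\tau_1]$ are invoked without proof; the first is Kac's occupation-measure representation of $\pi$, but the second is the regenerative representation of the \emph{asymptotic} variance and requires justifying the interchange of the infinite covariance sum with the block structure (it is not the per-block variance of $f$ alone). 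Third, your truncation at level $c_0\|f\|_\infty\log N$ plus a union bound produces an additive error $KN^{-\kappa}$ that cannot simply be ``absorbed into $K$'': the target inequality is multiplicative in the exponential, and for $t$ small the polynomial error term can dominate the claimed bound, so one must either show the exponential bound is trivially larger than $N^{-\kappa}$ in that regime or avoid truncation altogether by applying a Bernstein inequality for sub-exponential summands directly to the unbounded block sums, as the reference does. Finally, the dependence of $K$ on $x$ through the distribution of $\tau_1$ (controlled by the drift function) is asserted but not established. None of this undermines the plan, but each point is a genuine step, not a remark.
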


We want to apply this Theorem to \eqref{eq:bernstein:lifted} and we need to check that this chain is geometrically ergodic. An easy way to check for geometric ergodicity is to use Chapter 15 from \cite{meyn2012markov}: any $\psi$-ireeductible and aperiodic Markov chain satisfying the drift condition \ref{assumption:condition:V4} stated below is geometrically ergodic.
\settheoremtag{V4}
\begin{assumption}
\label{assumption:condition:V4}
    There exists a function $W : \mathrm{X} \to [1, \infty)$, a set $C \in \mathcal{X}$ and constants $\delta > 0$ and $b < \infty$ such that
    \begin{equation}
        PW \leq (1-\delta) W + b \1_C
    \end{equation}
    and such that there exists $n \geq 1$, $\varepsilon > 0$ and a probability measure $\nu$ on $(\mathrm{X}, \mathcal{X})$ such that
    \begin{equation}
        P^n(x,A) \geq \varepsilon \1_C(x) \nu(A)
    \end{equation}
    for all $x \in \mathrm{X}$ and $A \in \mathcal{X}$.
    In that case, we say that $W$ is a \textit{Lyapunov function} and $C$ is a \textit{small} set.
\end{assumption}

Thus the plan is to prove that the lifted chain \eqref{eq:bernstein:lifted} is $\psi$-ireeductible and aperiodic and satisfies Assumption \ref{assumption:condition:V4}.\\

First remark that the Markov chain $(X_{pk\Delta_n})_k$ is clearly aperiodic and $\psi$-irreductible. In fact for any $x \in \R^d$ and any Borelian $A \in \mathcal{B}_{\R^d}$ of positive Lebesgue measure, and for any $k \geq 1$, we have
\begin{equation}\label{eq:psi:irreductible}
    \mathbb{P}^b\left( X_{pk\Delta_n} \in A \, \big| \,X_0 = x\right) > 0. 
\end{equation}
We deduce that the lifted Markov chain \eqref{eq:bernstein:lifted}
is also aperiodic and $\psi$-irreductible by using that the density $\mathfrak{p}^b_{p,n}(x;\, \cdot,\cdot)$ is almost everywhere positive. \\

Moreover, using Assumption \ref{assumption:ergodicity}, we know that the chain is geometrically ergodic. Combined with aperiodicity and $\psi$-irreductibility, this ensures that $(X_{pk\Delta_n})_k$ satisfies Assumption \ref{assumption:condition:V4} and we consider $W$ and $C$ as in Assumption \ref{assumption:condition:V4}. We want to prove that the same property hold for \eqref{eq:bernstein:lifted}. We consider $\widetilde W : (x,y) \mapsto W(y)$ and $\widetilde C = \mathbb{R}^d\times C$. Then, for all $(x,y)\in \R^d\times\R^d$, we have
\begin{align*}
	\mathfrak{P}\widetilde W(x,y) &= \E\Big[ \widetilde W\Big( p^{-1} \sum_{\ell=0}^{p-1} Y_{p + \ell,n}, X_{2p\Delta_n}\Big)\Big| p^{-1} \sum_{\ell=0}^{p-1} Y_{p + \ell,n}=x, X_{p\Delta_n}=y\Big] \\
	& = \E\Big[ \widetilde W\Big( p^{-1} \sum_{\ell=0}^{p-1} Y_{p + \ell,n}, X_{2p\Delta_n}\Big)\Big| X_{p\Delta_n}=y\Big].
\end{align*}
and therefore
\begin{align*}	\mathfrak{P}\widetilde W(x,y)\leq \lambda\widetilde W(x,y) + b\mathds{1}_C(y).
\end{align*}
We only need to check that $\widetilde C$ is indeed a \textit{small} set. We alreadu know that $C$ is a \textit{small} set and therefore there exists $\delta > 0$ and $\nu \in \mathcal{P}(\R^d)$ such that for some $k \geq 1$,
\begin{align*}
    \forall x \in C, \quad \forall B \in \mathcal{B}_{\R^d}, \quad P^k(x, B) \geq \delta \nu(B). 
\end{align*}
We now define the following probability measure on $\R^{2d}$
\begin{align*}
    \widetilde\nu(\mathrm{d}x, \mathrm{d}y) = \int_{\R^d} \mathfrak{p}_{p,n}^b(y; x, z) \, \mathrm{d}z\: \nu(\mathrm{d}x) \:\mathrm{d}y.
\end{align*}
Then, for all $(\bar x, x) \in \widetilde C$, we get for any Borel set of $\R^d$, $\bar B, B \in \mathcal{B}_{\R^d}$, conditioning with respect to $X_{pk\Delta_n}$ and using the Markov property:
\begin{align*}
    \mathfrak{P}^{k+1}((\bar x, x), \bar B \times B) & = \int_{\R^d} p_{kp\Delta_n}(x,x') \int_{\bar B\times B} \mathfrak{p}_{p,n}^b(x'; y,z)\: \mathrm{d}y\: \mathrm{d}z\: \mathrm{d}x' \\
    & \geq \delta\int_{\bar B\times B}\int_{\R^d}  \mathfrak{p}_{p,n}^b(x'; y,z) \nu(\mathrm{d}x')\: \mathrm{d}y \: \mathrm{d}z \\
    & \geq \delta \widetilde\nu(\bar B \times B),
\end{align*}
where we used that $x \in C$ and $C$ is a \textit{small} set. \\

We are now ready to apply Theorem \ref{th:bernstein} to \eqref{eq:bernstein:lifted}. We apply it with $f(x_1,x_2) = \bs{K}_{\bs{h}}(x - x_1)$ for all $x_1, x_2 \in \mathbb{R}^d$. Thus we get
\begin{align*}
	\mathbb{P}_{\overline{\mu}^b}^b\Big(| \widehat{\nu}_{n,\bs{h},p}(x) - \mathbb{E}[\widehat{\nu}_{n,\bs{h},p}(x)] |> \varepsilon\Big)\leq K\exp \Big(-\frac{N^2\varepsilon^2}{32 N v^2(\alpha, n, \bs{h},p)+\tau \varepsilon\|\bs{K}_{\bs{h}}\|_{\infty} N\log N}\Big)
\end{align*}
where 
$N = \lfloor n/p \rfloor$, and
\begin{align*}
v^2(\alpha, n, \bs{h},p) = & \operatorname{Var}^b_{\bar\mu^b}\Big(\bs{K}_{\bs{h}}(x - p^{-1} \sum_{\ell=0}^{p-1} Y_{\ell,n})\Big) \\
& +2 \sum_{k=1}^{\infty} \operatorname{Cov}^b_{\bar\mu^b}\Big(\bs{K}_{\bs{h}}(x - p^{-1} \sum_{\ell=0}^{p-1} Y_{\ell,n}), \bs{K}_{\bs{h}}(x - p^{-1} \sum_{\ell=0}^{p-1} Y_{kp + \ell,n})\Big).
\end{align*}
In order to control the variance term, everything works exactly as in the proof of Proposition \ref{prop:variance}, except for the last term as the sum is infinite. 

\subsection{Proof of Proposition \ref{prop:oracle}} \label{proof:prop:oracle} 
In this section, we will repeatedly use estimates of the form
\begin{equation}\label{ineq:exp:1}
\int_\nu^{\infty} \exp \left(-z^r\right) d z \leq 2 r^{-1} \nu^{1-r} \exp \left(-\nu^r\right), \quad \nu, r>0, \quad \nu \geq(2 / r)^{1 / r} .
\end{equation}
and
\begin{equation}\label{ineq:exp:2}
\exp \left(-\frac{a z^p}{b+c z^{p / 2}}\right) \leq \exp \left(-\frac{a z^p}{2 b}\right)+\exp \left(-\frac{a z^{p / 2}}{2 c}\right), \quad a,b,c,z>0.
\end{equation}
Before delving into the proof of Proposition \ref{prop:oracle}, let us state and prove the following Lemma 
\begin{lemma}\label{lemma:control:A}
	Under Assumptions \ref{assumption:boundedness}, \ref{assumption:potential}, \ref{assumption:ergodicity}, there exists $\gamma > 1$ such that for any $\bs{h} \in \mathcal{H}_p^n$, 
	\begin{equation*}
		\E_{\widebar\mu^b}^b[A_n^p(\bs{h})] \lesssim  V_p^n(\bs{h}) + \mathcal{B}_{n, \bs{h}, p}(x)^2 + n_p^{-\gamma} + \sqrt{p\Delta_n} + \frac{\tau_n^{2\alpha_1}}{p^{\alpha_1}}. 
	\end{equation*}
\end{lemma}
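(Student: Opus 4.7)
The proof follows the Goldenshluger--Lepski methodology, combining Theorem \ref{th:bernstein:loc} to control the stochastic deviations with Proposition \ref{propo:small:bias:mu} to control the bias. The algebraic backbone is the convolution identity
\begin{equation*}
\mathbb{E}_{\overline{\mu}^b}^b[\widehat{\mu}_{n,(\bs{h},\bs{\eta}),p}(x)] = (\bs{K}_{\bs{\eta}} \ast m_{n,\bs{h},p})(x) = (\bs{K}_{\bs{h}} \ast m_{n,\bs{\eta},p})(x),
\end{equation*}
which follows from the linearity of $\widehat\mu_{n,\bs{h},p}$ in the kernel, the commutation of the $u_{\bs{\gamma}}$-shifts with convolution, and the definition of the auxiliary estimator via the kernel $\bs{K}_{\bs{h}} \ast \bs{K}_{\bs{\eta}}$. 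Writing $m_{\bs L}(x) := \mathbb{E}_{\overline{\mu}^b}^b[\widehat{\mu}_{n,\bs L,p}(x)]$ for $\bs L\in\{\bs h, \bs\eta, (\bs h,\bs\eta)\}$ and using $(a+b+c)^2 \leq 3(a^2+b^2+c^2)$, we decompose
\begin{equation*}
A_n^p(\bs h) \leq \max_{\bs\eta \in \mathcal{H}_n^p}\bigl[3 \Sigma_1(\bs h,\bs\eta) - \tfrac12 V_n^p(\bs\eta)\bigr]_+ + 3 \max_{\bs\eta \in \mathcal{H}_n^p}\Sigma_2(\bs h,\bs\eta) + \max_{\bs\eta \in \mathcal{H}_n^p}\bigl[3 \Sigma_3(\bs\eta) - \tfrac12 V_n^p(\bs\eta)\bigr]_+,
\end{equation*}
where $\Sigma_1 = |\widehat{\mu}_{n,(\bs h,\bs\eta),p}(x) - m_{(\bs h,\bs\eta)}(x)|^2$, $\Sigma_2 = |m_{(\bs h,\bs\eta)}(x) - m_{\bs\eta}(x)|^2$ and $\Sigma_3 = |\widehat{\mu}_{n,\bs\eta,p}(x) - m_{\bs\eta}(x)|^2$.

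The deterministic term $\Sigma_2$ is handled by inserting and subtracting $\overline{\mu}^b(x)$:
\begin{equation*}
m_{(\bs h,\bs\eta)}(x) - m_{\bs\eta}(x) = \bigl(\bs K_{\bs\eta} \ast [m_{\bs h} - \overline{\mu}^b]\bigr)(x) + \bigl(\bs K_{\bs\eta} \ast \overline{\mu}^b - \overline{\mu}^b\bigr)(x) - \bigl(m_{\bs\eta}(x) - \overline{\mu}^b(x)\bigr).
\end{equation*}
Since $\int \bs K_{\bs\eta} = 1$, the first and third pieces are controlled by $\mathrm{B}_{n,\bs h,p}^b$ and $\mathrm{B}_{n,\bs\eta,p}^b$ respectively, while the middle one is a standard kernel bias absorbed by $\mathrm{B}_{n,\bs h,p}^b$ type estimates. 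Plugging in Proposition \ref{propo:small:bias:mu} uniformly over $\bs\eta \in \mathcal{H}_n^p$ and squaring gives
\begin{equation*}
\max_{\bs\eta \in \mathcal{H}_n^p} \Sigma_2(\bs h,\bs\eta) \lesssim \mathrm{B}_{n,\bs h,p}^b(x)^2 + \sqrt{p\Delta_n}\, \mathds{1}_{p\geq 2} + \frac{\tau_n^{2\alpha_1}}{p^{\alpha_1}},
\end{equation*}
which supplies exactly the non-stochastic remainders in the target bound.

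For the stochastic terms we apply Theorem \ref{th:bernstein:loc} to $\bs K_{\bs\eta}$ (for $\Sigma_3$) and to $\bs K_{\bs h} \ast \bs K_{\bs\eta}$ (for $\Sigma_1$). Young's inequalities $\|\bs K_{\bs h} \ast \bs K_{\bs\eta}\|_\infty \leq \|\bs K_{\bs h}\|_1 \|\bs K_{\bs\eta}\|_\infty$ and $\|\bs K_{\bs h} \ast \bs K_{\bs\eta}\|_2 \leq \|\bs K_{\bs h}\|_1 \|\bs K_{\bs\eta}\|_2$ imply, after revisiting Lemmas \ref{lem:corr:inst}--\ref{lem:corr:long} with the convolved kernel, the variance comparison $v^2(\alpha,n,(\bs h,\bs\eta),p) \lesssim v^2(\alpha,n,\bs\eta,p)$. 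With the penalty $V_n^p(\bs\eta) = \widebar{\omega}\,\log(n_p)\, v_n^p(\bs\eta)$ and $\widebar{\omega}$ taken large, the Bernstein bound together with \eqref{ineq:exp:1}--\eqref{ineq:exp:2} yields, for $j\in\{1,3\}$,
\begin{equation*}
\mathbb{P}_{\overline{\mu}^b}^b\bigl(3\Sigma_j(\cdot) > \tfrac12 V_n^p(\bs\eta)\bigr) \leq K\, n_p^{-(\gamma + 1)}
\end{equation*}
for any prescribed $\gamma > 1$. Integrating the exponential tails and summing over $\mathcal{H}_n^p$ (whose cardinality is at most $T_n$) gives $\mathbb{E}[\max_{\bs\eta}[3\Sigma_j - V_n^p(\bs\eta)/2]_+] \lesssim n_p^{-\gamma}$. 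The $V_n^p(\bs h)$ term on the right-hand side is harmless and can be added as a non-negative slack to obtain the stated inequality.

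\emph{Main obstacle.} The central technical point is the variance comparison in the stochastic step: Theorem \ref{th:bernstein:loc} is stated for a single kernel, so reusing it with $\bs K_{\bs h} \ast \bs K_{\bs\eta}$ requires re-running the argument behind $v^2(\alpha,n,\bs\eta,p)$ with the convolved kernel and verifying that the dependence on $\bs h$ vanishes thanks to $\|\bs K_{\bs h}\|_1 = 1$. A secondary issue is that the union bound over $\mathcal{H}_n^p$ must be absorbed by the exponent in the Bernstein tail, which forces $\widebar{\omega}$ to be taken larger than a universal constant depending on $\gamma$ and the entropy of $\mathcal{H}_n^p$; this is where the condition $\#\mathcal{H}_n^p \leq T_n$ is used.
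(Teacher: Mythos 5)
Your overall decomposition mirrors the paper's proof: $\Sigma_3$, $\Sigma_1$ and $\Sigma_2$ play exactly the roles of the paper's $\barroman{I}$, $\barroman{II}$ and $\barroman{III}$, the stochastic terms are handled via Theorem \ref{th:bernstein:loc} plus the inequalities \eqref{ineq:exp:1}--\eqref{ineq:exp:2} and a union bound of size $\#\mathcal{H}_n^p \lesssim T_n$, and the variance comparison $v^2(\alpha,n,(\bs h,\bs\eta),p) \lesssim v^2(\alpha,n,\bs\eta,p)$ via $\|\bs K_{\bs h}\ast\bs K_{\bs\eta}\|_\infty \leq \|\bs K\|_1\|\bs K_{\bs\eta}\|_\infty$ and $\|\bs K_{\bs h}\ast\bs K_{\bs\eta}\|_1 \leq \|\bs K\|_1^2$ is precisely what the paper uses. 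That part is fine.

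There is, however, a genuine gap in your treatment of the deterministic term $\Sigma_2$. You write
\begin{equation*}
m_{(\bs h,\bs\eta)} - m_{\bs\eta} = \bs K_{\bs\eta}\ast[m_{\bs h}-\overline{\mu}^b] + (\bs K_{\bs\eta}\ast\overline{\mu}^b - \overline{\mu}^b) - (m_{\bs\eta}-\overline{\mu}^b)
\end{equation*}
and then claim the middle piece is ``a standard kernel bias absorbed by $\mathrm{B}_{n,\bs h,p}^b$ type estimates,'' while the third is controlled by $\mathrm{B}_{n,\bs\eta,p}^b$. But both the middle and third pieces carry an $\bs\eta$-dependent contribution of order $\sum_i \eta_i^{\alpha_i}$, and the grid $\mathcal{H}_n^p$ allows $\eta_i$ up to $1$. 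If you bound these two pieces separately and take $\max_{\bs\eta\in\mathcal{H}_n^p}$, you produce an $O(1)$ term, not the advertised $\sqrt{p\Delta_n}\1_{p\geq 2} + \tau_n^{2\alpha_1}/p^{\alpha_1}$. The correct route, and the one taken in the paper, is to observe that the last two pieces must be combined \emph{before} any estimate is applied: they recombine into $\bs K_{\bs\eta}\ast\overline{\mu}^b - m_{\bs\eta} = -\varepsilon_{n,p}$, where $\varepsilon_{n,p}$ is the non-kernel part of the bias of $\widehat\nu_{n,\bs\eta,p}$ (the Girsanov term $B_2$ plus the debiased convolution error), which is $\lesssim \sqrt{p\Delta_n}+\tau_n^{2\alpha_1}/p^{\alpha_1}$ \emph{uniformly in $\bs\eta$}. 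The $\bs\eta$-dependent kernel biases cancel exactly; nothing ``absorbs'' them. Your stated conclusion for $\max_{\bs\eta}\Sigma_2$ is correct, but as written your argument does not deliver it. The fix is short --- replace the sentence about the middle piece by the cancellation $\bs K_{\bs\eta}\ast\overline{\mu}^b - m_{\bs\eta} = -\varepsilon_{n,p}$ and invoke the uniform bound on $\varepsilon_{n,p}$ --- but it is the crux of why the penalty $V_n^p(\bs\eta)$ need not compensate any $\bs\eta$-dependent bias.
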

\begin{proof}[Proof of Lemma \ref{lemma:control:A}]
	For any $\bs{\eta} \in \mathcal{H}_n^p$, we have
	\reqnomode
\begin{align}
	\left\{\left|\widehat{\mu}_{n,(\bs{h}, \bs{\eta}),p}\left(x\right)-\widehat{\mu}_{n,\bs{\eta},p}\left(x\right)\right|^2-V^p_n(\bs{\eta})\right\}_{+} \lesssim \barroman{I}^{n,p}_{\bs{h}, \bs{\eta}}(x) + \barroman{II}^{n,p}_{\bs{h}, \bs{\eta}}(x) + \barroman{III}^{n,p}_{\bs{h}, \bs{\eta}}(x).
\end{align}
where 
\begin{align*}
&\barroman{I}^{n,p}_{\bs{h}, \bs{\eta}}(x) = \left\{ \left| \widehat\mu_{n, \bs{\eta}, p}(x) - \E[\widehat\mu_{n, \bs{\eta}, p}(x)]\right|^2 - V_n^p(\bs{\eta})\right\}_+, \\
	  & \barroman{II}^{n,p}_{\bs{h}, \bs{\eta}}(x) = \left\{ \left| \widehat\mu_{n, (\bs{h}, \bs{\eta}), p}(x) - \E[\widehat\mu_{n, (\bs{h}, \bs{\eta}), p}(x)]\right|^2 - V_n^p(\bs{\eta})\right\}_+, \\
	 & \barroman{III}^{n,p}_{\bs{h}, \bs{\eta}}(x) =\left| \E^b_{\widebar{\mu}^b}[\widehat\mu_{n, \bs{\eta}, p}(x)] - \E^b_{\widebar{\mu}^b}[\widehat\mu_{n, (\bs{h}, \bs{\eta}), p}(x)]\right|^2.
\end{align*}
\leqnomode
Let us begin with the term $\barroman{I}^{n,p}_{\bs{h}, \bs{\eta}}(x)$. We write
\begin{align*}
	& \E^b_{\widebar{\mu}^b}\left[\left\{ \left| \widehat\mu_{n, \bs{\eta}, p}(x) - \E^b_{\widebar{\mu}^b}[\widehat\mu_{n, \bs{\eta}, p}(x)]\right|^2 - V_n^p(\bs{\eta})\right\}_+\right]\\
	 & = \int_{V_n^p(\bs{\eta})}^{+\infty} \mathbb{P}_{\widebar\mu^b}\left( \left| \widehat\mu_{n, \bs{\eta}, p}(x) - \E^b_{\widebar{\mu}^b}[\widehat\mu_{n, \bs{\eta}, p}(x)]\right| \geq z^{1/2}\right) \mathrm{d}z\\
	& \leq K\int_{V_n^p(\bs{\eta})}^{+\infty} \exp\left( -\frac{n_p^2\beta^2 z}{32n_pv^2(\alpha, n, \bs{\eta}, p) + \tau\beta\|\mathbb{K}_{\bs{\eta}}\|n_p\log(n_p) z^{1/2}}\right)\mathrm{d}z,
\end{align*}
where we used at the last line the concentration inequality of Bernstein's type given by Theorem \ref{th:bernstein:loc}. 
Then, using Equation \eqref{ineq:exp:2}, we obtain 
\begin{equation*}
	\E^b_{\widebar{\mu}^b}\left[\left\{ \left| \widehat\mu_{n, \bs{\eta}, p}(x) - \E^b_{\widebar{\mu}^b}[\widehat\mu_{n, \bs{\eta}, p}(x)]\right|^2 - V_n^p(\bs{\eta})\right\}_+\right] \lesssim \barroman{I} + \barroman{II},
\end{equation*}
where
\begin{align*}
		& \barroman{I} := K \int_{V_n^p(\bs{\eta})}^{+\infty} \exp\left( -\frac{n_p\beta^2 z}{32v^2(\alpha, n, \bs{\eta}, p)}\right) \mathrm{d}z \: ; \\
		& \barroman{II} := K \int_{V_n^p(\bs{\eta})}^{+\infty} \exp\left( -\frac{n_p\beta z^{1/2}}{\tau \log(n_p) \| \mathbb{K}_{\bs{\eta}}\|_{\infty}}\right) \mathrm{d}z.
	\end{align*}
The first term can be computed explicitly
\begin{equation*}
	 \barroman{I} = \frac{32Kv^2(\alpha, n, \bs{\eta}, p)}{n_p\beta^2}\exp\left( -\frac{n_p \beta^2V_n^p(\bs{\eta})}{32 v^2(\alpha, n, \bs{\eta}, p)}\right).
\end{equation*}
Moreover, by definition of $v^2(\alpha, n, \bs{\eta}, p)$ in Theorem \ref{th:bernstein:loc}, and considering the fact that $\eta_1 \leq \dots \leq \eta_d$, we obtain $v^2(\alpha, n, \bs{\eta}, p) = n_pv^p_n(\bs{\eta})$. Then,
\begin{align*}
	\barroman{I} \lesssim & v^p_n(\bs{\eta})\exp\left( -\frac{\widebar{\omega}\log(n_p)  \beta^2}{32}\right). 
\end{align*}
Moreover, we know that 
\begin{equation*}
	v^p_n(\bs{\eta}) = \frac{1}{T_n}\max\left\{p\Delta_n \prod_{i=1}^d \eta_i^{-1} , \min\left\{ \sum_{i=1}^d|\log(\eta_i)|\prod_{i=3}^d \eta_i^{-1}, (\eta_2\eta_3)^{-1/2}\prod_{i=4}^d \eta_i^{-1}\right\}\right\}. 
\end{equation*}
Using the particular structure of the grid $\mathcal{H}_n^p$, we obtain $v^p_n(\bs{h}) \lesssim 1 + n_p/T_n$, and 
\begin{equation}
	\barroman{I} \lesssim n_p^{-\frac{\widebar\omega\beta^2}{32}}\left(1+ \frac{n_p}{T_n}\right).
\end{equation}

For the second term $\barroman{II}$, using Equation \eqref{ineq:exp:1} we obtain
\begin{equation*}
	\barroman{II} \lesssim \frac{\log(n_p)\| \mathbb{K}_{\bs{\eta}}\|_{\infty} \sqrt{V_n^p(\bs{\eta})}}{n_p} \exp\left( -\frac{\beta n_p V_n^p(\bs{\eta})^{1/2}}{\tau\log(n_p)\| \mathbb{K}_{\bs{\eta}} \|_{\infty}}\right). 
\end{equation*}
Moreover, we know that $\| \mathbb{K}_{\bs{\eta}} \|_{\infty} \lesssim \prod_{i=1}^d \eta_i^{-1}$. Then, 
\begin{equation*}
	\frac{\log(n_p)\| \mathbb{K}_{\bs{\eta}}\|_{\infty} \sqrt{V_n^p(\bs{\eta})}}{n_p} \lesssim \frac{v^p_n(\bs{h})^{1/2}\log(n_p)^{2} {\widebar\omega}^{1/2}}{n_p\prod_{i=1}^d \eta_i}. 
\end{equation*}

Using the definition of $v^p_n(\bs{h})$, we obtain
\small
\begin{align*}
	\prod_{i=1}^d \eta_i^{-1}v^p_n(\bs{h})^{1/2} = \frac{1}{\sqrt{T_n}} \max&\left\{  \sqrt{p\Delta_n} \prod_{i=1}^d h_i^{-3/2}\right. , \\
	& \left. \min\left\{ (h_1h_2)^{-1}\sum_{i=1}^d|\log(h_i)|\prod_{i=3}^d h_i^{-3/2}, h_1^{-1}(h_2h_3)^{-5/4}\prod_{i=4}^d h_i^{-3/2}\right\}\right\}
\end{align*}
\normalsize
Using once again the lower bound on the $\eta_i$ for $i = 1, \dots, d$, we obtain
\begin{equation*}
	\prod_{i=1}^d \eta_i^{-1}v^p_n(\bs{h})^{1/2}  \lesssim 1 + \frac{\sqrt{n_p}}{\sqrt{T_n}}.
\end{equation*}
Finally, we can write
\begin{equation}\label{eq:GL:2}
	\frac{ n_p V_n^p(\bs{\eta})^{1/2}}{\log(n_p)\| \mathbb{K}_{\bs{\eta}} \|_{\infty}} \gtrsim \widebar\omega^{1/2} \log(n_p)
\end{equation}
In fact, 
\begin{align*}
	\frac{ n_p V_n^p(\bs{\eta})^{1/2}}{\log(n_p)\| \mathbb{K}_{\bs{\eta}} \|_{\infty}} \gtrsim \frac{ \widebar\omega^{1/2} n_p \prod_{i=1}^d \eta_i v_n^p(\bs{\eta})^{1/2}}{\log(n_p)^{1/2}},
\end{align*}
and 
\begin{align*}
	\prod_{i=1}^d \eta_i v_n^p(\bs{\eta})^{1/2} = \left(\prod_{i=1}^d \eta_i\right)^{1/2}\frac{1}{\sqrt{T_n}}\max\left\{ \sqrt{p\Delta_n}, \: \min\left\{\left({\sum_{i=1}^d |\log(\eta_i)| \eta_1 \eta_2}\right)^{1/2}, \sqrt{\eta_1}(\eta_2\eta_3)^{1/4} \right\}\right\}
\end{align*}
This ensures that 
\begin{align*}
	\prod_{i=1}^d \eta_i v_n^p(\bs{\eta})^{1/2} & \geq \frac{1}{\sqrt{n_p}}\left(\prod_{i=1}^d \eta_i\right)^{1/2} \\
	& \geq  \frac{\log(n_p)^{3/2}}{n_p},
\end{align*}
implying Equation \eqref{eq:GL:2}. Finally, this ensures that for any $\bs{h}, \bs{\eta} \in \mathcal{H}_n^p$, we have
\begin{equation}\label{eq:upper:t1}
	\barroman{I}^{n,p}_{\bs{h}, \bs{\eta}}(x) \lesssim \frac{n_p^{1-\widebar\omega\beta/32}}{T_n} + \frac{n_p^{1/2-\widebar\omega^{1/2}\beta/\tau}}{\sqrt{T_n}}.
\end{equation}
For the term $\barroman{II}^{n,p}_{\bs{h}, \bs{\eta}}(x)$. Observe that such a term can be treated exactly as the previous one noting that 
\begin{equation*}
	\| \bs{K}_{\bs{h}} \ast \bs{K}_{\bs{\eta}} \|_{\infty} \leq \| \bs{K}_{\bs{\eta}}\|_{\infty} \| \bs{K} \|_{1} \quad \text{and} \quad \| \bs{K}_{\bs{h}} \ast \bs{K}_{\bs{\eta}}\|_1 \leq \| \bs{K}\|_{1}^2. 
\end{equation*}
In fact, following the proofs of Lemmas \ref{lem:corr:inst}, \ref{lem:corr:short}, \ref{lem:corr:mid} and \ref{lem:corr:long}, we get that for all $\varepsilon > 0$
\begin{equation*}
	\mathbb{P}_{\overline{\mu}^b}^b\left(\left| \widehat{\mu}_{n,(\bs{h}, \bs{\eta}),p}(x) - \mathbb{E}^b_{\bar\mu^b}[\widehat{\mu}_{n,(\bs{h}, \bs{\eta}),p}(x)] \right|> \varepsilon\right)\leq K\exp \Big(-\frac{n_p\varepsilon^2\beta^2}{32  v^2(\alpha, n, \bs{\eta},p) +\tau \beta\varepsilon\|\bs{K}_{\bs{\eta}}\|_{\infty} \log n_p}\Big).
\end{equation*}
Finally, we obtain the same bound for $\barroman{II}^{n,p}_{\bs{h}, \bs{\eta}}(x)$, 
\begin{equation*}
	\barroman{II}^{n,p}_{\bs{h}, \bs{\eta}}(x) \lesssim \frac{n_p^{1-\widebar\omega\beta/32}}{T_n} + \frac{n_p^{1/2-\widebar\omega^{1/2}\beta/\tau}}{\sqrt{T_n}}.
\end{equation*}
Finally, we consider the term $\barroman{III}^{n,p}_{\bs{h}, \bs{\eta}}(x)$. We recall
\begin{equation*}
	\barroman{III}^{n,p}_{\bs{h}, \bs{\eta}}(x)  = \left| \E^b_{\widebar{\mu}^b}[\widehat\mu_{n, \bs{\eta}, p}(x)] - \E^b_{\widebar{\mu}^b}[\widehat\mu_{n, (\bs{h}, \bs{\eta}), p}(x)]\right|^2.
\end{equation*} 
In following, let us denote $\mu_{\bs{\eta}}(x) = \E^b_{\widebar{\mu}^b}[\widehat\mu_{n, \bs{\eta}, p}(x)]$ and $\mu_{(\bs{h}, \bs{\eta})}(x) =  \E^b_{\widebar{\mu}^b}[\widehat\mu_{n, (\bs{h}, \bs{\eta}), p}(x)]$. 
Moreover, recalling that $\widetilde{\tau}_{n,p}$ is defined in Equation \eqref{eq:defwttau} and denoting by $\widebar\rho_{n,p}$ the law of the preaveregaed process defined in Equation \eqref{eq:decomposition_X} under $\mathbb{P}_{\widebar{\mu}^b}^b$, we write 
\begin{equation}\label{eq:GL:Biais:T1}
\begin{aligned}
	\mu_{(\bs{h}, \bs{\eta})}(x) & = \sum_{\bs{\gamma}} \bs{u}_{\bs{\gamma}} \E^b_{\widebar{\mu}^b}\left[\widehat{\nu}_{n, (\bs{h}, \bs{\eta}), p}(x + \bs{\gamma} \widetilde \tau_{n,p})\right]  \\
	& = \sum_{\bs{\gamma}} \bs{u}_{\bs{\gamma}} \int_{\R^d} \bs{K}_{\bs{h}}\ast \bs{K}_{\bs{\eta}}(x + \bs{\gamma}\widetilde\tau_{n,p} -y) \widebar\rho_{n,p}(\mathrm{d}y) \\
	& = \sum_{\bs{\gamma}} \bs{u}_{\bs{\gamma}} \left( (\bs{K}_{\bs{h}}\ast \bs{K}_{\bs{\eta}})\ast \widebar\rho_{n,p}\right)(x + \bs{\gamma}\widetilde\tau_{n,p})  \\
	& =    \bs{K}_{\bs{\eta}}\ast \left(\sum_{\bs{\gamma}} \bs{u}_{\bs{\gamma}} \bs{K}_{\bs{h}}\ast \widebar\rho_{n,p}(\cdot + \bs{\gamma}\widetilde\tau_{n,p})\right)(x) \\
	& = \bs{K}_{\bs{\eta}}\ast\mu_{\bs{h}}(x).
\end{aligned}
\end{equation}
Now, let us perform the same decomposition as in Section \ref{sec:propo:bias:nu} (see more precisely Equations \eqref{eq:biais:B1} and \eqref{eq:biais:B2}), and write
\begin{align*}
	\mu_{\bs{\eta}}(x) &= \sum_{\bs{\gamma}} \bs{u}_{\bs{\gamma}} \E^b_{\widebar{\mu}^b}[\widehat\nu_{n, \bs{\eta}, p}(x + \bs{\gamma}\widetilde\tau_{n,p})] \\
	& = \sum_{\bs{\gamma}} \bs{u}_{\bs{\gamma}} B_1(x + \bs{\gamma}\widetilde\tau_{n,p}) + \sum_{\bs{\gamma}} \bs{u}_{\bs{\gamma}} B_2(x + \bs{\gamma}\widetilde\tau_{n,p}).
\end{align*}
For the second term on the right hand side, we obtain thanks to Equation \eqref{eq:control:B2} that
\begin{equation}
	\sum_{\bs{\gamma}} \bs{u}_{\bs{\gamma}} B_2(x + \bs{\gamma}\widetilde\tau_{n,p}) \lesssim \sqrt{p\Delta_n}. 
\end{equation}
Moreover, 
\begin{align*}
	\sum_{\bs{\gamma}} \bs{u}_{\bs{\gamma}} B_1(x + \bs{\gamma}\widetilde\tau_{n,p}) & = \sum_{\bs{\gamma}} \bs{u}_{\bs{\gamma}}\bs{K}_{\bs{\eta}} \ast(\widebar \mu^b\ast\varphi_{\widetilde \tau_{n,p}})(x + \bs{\gamma}\widetilde\tau_{n,p}) \\
	& = \bs{K}_{\bs{\eta}} \ast\left(\sum_{\bs{\gamma}} \bs{u}_{\bs{\gamma}}(\widebar \mu^b\ast\varphi_{\widetilde \tau_{n,p}})(\cdot + \bs{\gamma}\widetilde\tau_{n,p})\right)(x).
\end{align*}
Moreover, it is easy to see using the definition of $(\bs{u}_{\bs{\gamma}})_{\bs{\gamma}}$, that for any $z\in \R^d$, 
\begin{equation*}
	\left|\sum_{\bs{\gamma}}\bs{u}_{\bs{\gamma}}(\widebar \mu^b\ast\varphi_{\widetilde \tau_{n,p}})(z + \bs{\gamma}\widetilde\tau_{n,p}) - \bar\mu^b(z)\right| \lesssim \frac{\tau_n^{2\alpha_1}}{p^{\alpha_1}}.  
\end{equation*}
Then, we can write
\begin{equation}\label{eq:GL:biais:T2}
	\mu_{\bs{\eta}}(x) = \bs{K}_{\bs{\eta}}\ast\widebar\mu^b(x) + \varepsilon_{n,p}(x),
\end{equation}
for some function $\varepsilon_{n,p}$ such that for all $x\in \R^d$, $|\varepsilon_{n,p}(x)| \lesssim \sqrt{p\Delta_n} + \tau_n^{2\alpha_1}/p^{\alpha_1}$. Then, combining Equations \eqref{eq:GL:Biais:T1} and \eqref{eq:GL:biais:T2}, we obtain that for all $x \in \R^d$,
\begin{equation*}
	\barroman{III}^{n,p}_{\bs{h}, \bs{\eta}}(x) \lesssim \mathrm{B}_{n,\bs{h}, p}(x)^2 + p\Delta_n + \frac{\tau_n^{2\alpha_1}}{p^{\alpha_1}}. 
\end{equation*}
Finally, using $\# \mathcal{H}_n^p \lesssim T_n$,  we obtain that 
\begin{align*}
	\sup_{\bs{\eta} \in \mathcal{H}_n^p}\left\{\left|\widehat{\mu}_{n,(\bs{h}, \bs{\eta}),p}\left(x\right)-\widehat{\mu}_{n,\bs{\eta},p}\left(x\right)\right|^2-V^p_n(\bs{\eta})\right\}_{+} & \lesssim  \mathrm{B}_{n,\bs{h}, p}(x)^2 + n_p^{1-\widebar\omega\beta/32} \\
	&  + \sqrt{T_n}n_p^{1/2-\widebar\omega^{1/2}\beta/\tau}  + p\Delta_n + \frac{\tau_n^{2\alpha_1}}{p^{\alpha_1}}.
\end{align*}
Finally, using the fact that $T_n \leq n_p$ and taking $\widebar\omega > 2\tau^2/{\beta^2}\vee64/\beta$, we get the expected result. 
\end{proof}
We are now ready to move on to the proof of Proposition \ref{prop:oracle}.
\begin{proof}[Proof of Proposition \ref{prop:oracle}]
Let us consider $\bs{h} \in \mathcal{H}_n^p$, then one can write
\begin{align*}
	\E^b_{\widebar{\mu}^b}\left[ \left| \widehat \mu_{n, \bs{h}^*, p}(x) - \widebar\mu^b(x)\right|^2\right] & \lesssim  \E^b_{\widebar{\mu}^b}\left[ \left| \widehat \mu_{n, \bs{h}, p}(x) - \widebar\mu^b(x)\right|^2\right] \\
	& + \E^b_{\widebar{\mu}^b}\left[ \left| \widehat \mu_{n, \bs{h}^*, p}(x) - \widehat \mu_{n, \bs{h}, p}(x)\right|^2\right].
\end{align*}
We know that the first term on the right-hand side can be controlled in the following way
\begin{equation*}
	\E^b_{\widebar{\mu}^b}\left[ \left| \widehat \mu_{n, \bs{h}, p}(x) - \widebar\mu^b(x)\right|^2\right] \leq \mathrm{B}_{n, \bs{h}, p}(x)^2 + V_n^p(\bs{h}). 
\end{equation*}
For the second term, we obtain
\begin{align*}
	\E^b_{\widebar{\mu}^b}\left[ \left| \widehat \mu_{n, \bs{h}^*, p}(x) - \widehat \mu_{n, \bs{h}, p}(x)\right|^2\right] \lesssim \: & \E^b_{\widebar{\mu}^b}\left[ \left| \widehat \mu_{n, \bs{h}^*, p}(x) - \widehat \mu_{n, (\bs{h}^*,\bs{h}), p}(x)]\right|^2\right] \\ 
	+ & \E^b_{\widebar{\mu}^b}\left[ \left| \widehat \mu_{n, \bs{h}, p}(x) -  \widehat \mu_{n, (\bs{h}^*,\bs{h}), p}(x)\right|^2\right].
\end{align*}
This finally gives
\begin{equation*}
	\E^b_{\widebar{\mu}^b}\left[ \left| \widehat \mu_{n, \bs{h}^*, p}(x) - \widebar\mu^b(x)\right|^2\right] \lesssim \E_{\widebar\mu^b}^b[A_n^p(\bs{h})] + V_n^p(\bs{h}) + \E^b_{\widebar{\mu}^b}\left[ \left| \widehat \mu_{n, \bs{h}, p}(x) - \widebar\mu^b(x)\right|^2\right].
\end{equation*}
Moreover, using the result of Lemma \ref{lemma:control:A}, we obtain
\begin{equation*}
	\E^b_{\widebar{\mu}^b}\left[ \left| \widehat \mu_{n, \bs{h}^*, p}(x) - \widebar\mu^b(x)\right|^2\right] \lesssim \inf_{\bs{h} \in \mathcal{H}_n^p} \{\mathrm{B}_{n,\bs{h}, p}(x)^2 + V_n^p(\bs{h}) \}+ n^{-\gamma} + p\Delta_n + \tau_n^{2\alpha_1}/p^{\alpha_1},
\end{equation*}
for some $\gamma > 1$. 

\end{proof}

\section{Proof of the results of Section \ref{sec:UpperBound}}
\label{appendix:densities}

First recall the following technical lemma which extensively used in the following.
\begin{lemma}\label{le:normal}
	For  $a_1, a_2 \in \mathbb{R}^d$, $\nu_1, \nu_2 \geq 0$, such that $\nu_1 + \nu_2 > 0$, we have 
	\begin{align*}
		\int_{\mathbb{R}^d}
		e^{-\nu_1|x-a_1|^2-\nu_2|x-a_2|^2} \, \mathrm{d}x
		=
		\frac{\kappa_1}{(\nu_1 + \nu_2 )^{d/2}}
		e^{-\frac{\nu_1\nu_2}{\nu_1+\nu_2}|a_1-a_2|^2},
	\end{align*}
	for some constant $\kappa_1 > 0$ depending only on the dimension $d$.
\end{lemma}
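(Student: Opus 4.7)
The plan is to reduce the integral to a standard Gaussian integral by completing the square in $x$. First, I would expand the quadratic expression in the exponent:
\begin{equation*}
\nu_1|x-a_1|^2 + \nu_2|x-a_2|^2 = (\nu_1+\nu_2)|x|^2 - 2 \langle x, \nu_1 a_1 + \nu_2 a_2\rangle + \nu_1|a_1|^2 + \nu_2|a_2|^2.
\end{equation*}
Since $\nu_1+\nu_2>0$, I would set $m = (\nu_1 a_1 + \nu_2 a_2)/(\nu_1+\nu_2)$ and rewrite the right-hand side as $(\nu_1+\nu_2)|x-m|^2 + C$ for a constant $C$ independent of $x$.

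The key algebraic step is to identify $C$. A direct computation gives
\begin{equation*}
C = \nu_1|a_1|^2 + \nu_2|a_2|^2 - \frac{|\nu_1 a_1 + \nu_2 a_2|^2}{\nu_1+\nu_2} = \frac{\nu_1\nu_2}{\nu_1+\nu_2}|a_1-a_2|^2,
\end{equation*}
where the second equality follows from expanding the square and simplifying the cross terms. This is the identity that produces the exponential prefactor in the statement.

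Once this is established, the integral factorises as
\begin{equation*}
\int_{\mathbb{R}^d} e^{-\nu_1|x-a_1|^2 - \nu_2|x-a_2|^2} \, \mathrm{d}x = e^{-C} \int_{\mathbb{R}^d} e^{-(\nu_1+\nu_2)|x-m|^2} \, \mathrm{d}x,
\end{equation*}
and after translating by $m$, the remaining integral is the standard $d$-dimensional Gaussian integral, which equals $(\pi/(\nu_1+\nu_2))^{d/2}$. This gives the desired formula with $\kappa_1 = \pi^{d/2}$, depending only on $d$.

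There is no serious obstacle: the computation is entirely algebraic. The only mild check is to ensure the formula remains valid in the degenerate case where one of $\nu_1,\nu_2$ vanishes (say $\nu_2 = 0$), in which case the quantity $\nu_1\nu_2/(\nu_1+\nu_2) = 0$ and the right-hand side correctly reduces to the single-Gaussian integral $(\pi/\nu_1)^{d/2}$.
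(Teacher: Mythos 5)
Your proof is correct. The paper states Lemma \ref{le:normal} without proof, treating it as a standard fact, and your completing-the-square argument (expand the quadratic, collect around the weighted mean $m = (\nu_1 a_1 + \nu_2 a_2)/(\nu_1+\nu_2)$, verify $C = \frac{\nu_1\nu_2}{\nu_1+\nu_2}|a_1-a_2|^2$, and evaluate the remaining Gaussian integral to get $\kappa_1 = \pi^{d/2}$) is exactly the standard derivation one would supply.
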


\subsection{Proof of Lemma \ref{lemma:bound:pxy}}\label{lemma:proof:bound:pxy}
 We first consider the joint density of $(p^{-1} \sum_{k=0}^{p-1} X_{k\Delta_n}, \, X_{p\Delta_n})$ conditional on $X_0 = x$, denoted $\mathfrak{p}^b_{p,n}(x;\, \cdot,\cdot)$. We claim that under the assumptions of Lemma \ref{lemma:bound:pxy}, there exists constants $C_1$, $\lambda_1$, $c_1$ and $\eta_1 > 0$ such that if $p\Delta_n \leq \eta_1$, we have for any $x,y,z$
\begin{align}
	\label{eq:bound:pxyz}
	\mathfrak{p}^b_{p,n}(x;\, y,z)
	&\leq \frac{C_1}{(p\Delta_n)^d} \exp\Big(-\lambda_1 \frac{
		|y-x|^2+|z-x|^2
	}{p\Delta_n}
	+ V(x) - V(z)
	\Big)
\end{align}

Using Lemma \ref{le:normal}, \eqref{eq:bound:density} is a mere consequence of \eqref{eq:bound:pxyz}: since $V(z)$ is bounded below, it suffices to integrate \eqref{eq:bound:pxyz} with respect to $z$ to get \eqref{eq:bound:density}. The rest of this proof is devoted to showing \eqref{eq:bound:pxyz}.
~\\

Note first that $\mathfrak{p}^b_{p,n}(x;\, y,z) = (p\Delta_n)^{-d} \mathfrak{q}^b_{p,n}(x;\, \sqrt{1/(p\Delta_n)} (y-x) ,\sqrt{1/(p\Delta_n)} (z-x))$ where $\mathfrak{q}^b_{p,n}(x;\, \cdot,\cdot)$ is the joint density of $(p^{-1}({p\Delta_n})^{-1/2} \sum_{k=0}^{p-1} (X_{k\Delta_n} - x), \, ({p\Delta_n})^{-1/2}(X_{p\Delta_n} - x))$ under the condition that $X_0 = x$. Therefore, it is enough to prove that
\begin{align}
	\label{eq:bound:qxyz}
	\mathfrak{q}^b_{p,n}(x;\,y,z) \leq c_2^{-1} e^{-c_2 (y^2+z^2) + V(x) - V\left(x + (p\Delta_n)^{1/2} z\right)},
\end{align}
for some $c_2 > 0$. The methodology we use to prove \eqref{eq:bound:qxyz} closely resembles the one of Theorem 4 in \cite{gloter2008lamn}; we refer to the comments of Lemma \ref{lemma:bound:pxy} for more details. We introduce the process  $\mathcal{X}^{p,n}_t = (p\Delta_n)^{1/2} (X_{tp\Delta_n} - x)$ which satisfies
\begin{align*}
	\mathrm{d} \mathcal{X}^{p,n}_t = b_{p,n}(\mathcal{X}^{p,n}_t) \, \dt + \mathrm{d}W^{p,n}_t,
\end{align*}  
where $W^{p,n}$ is a $d$-dimensional Brownian motion and $b_{p,n}(w) = (p\Delta_n)^{1/2} b(x+(p\Delta_n)^{1/2} w)$. Moreover,
\begin{align*}
	\frac{1}{p}(\frac{1}{p\Delta_n})^{1/2} \sum_{k=0}^{p-1} (X_{k\Delta_n} - x) = \frac{1}{p}\sum_{\ell = 0}^{p-1} \mathcal{X}^{p,n}_{\ell / p}
	\,\,\text{ and }\,\,
	(\frac{1}{p\Delta_n})^{1/2}(X_{p\Delta_n} - x) = \mathcal{X}^{p,n}_{1}.
\end{align*}
Let us now define the stochastic process 
\begin{equation*}
\left(\mathcal{E}^{p,n}_t\right)_{t \geq 0} =  \left( \exp ( - \int_0^t b_{p,n}(\mathcal{X}^{p,n}_s) \, \mathrm{d}W^{p,n}_s - 1/2 \int_0^t |b_{p,n}(\mathcal{X}^{p,n}_s)|^2 \, \mathrm{d} s)\right)_{t \geq 0}.
\end{equation*} 
Under Assumption \ref{assumption:boundedness}, we get that Novikov criterion holds following the steps of the proof of Lemma \ref{le:novi:1}. This ensures that $(\mathcal{E}^{p,n}_t )_{t\geq0}$ is a martingale with constant expectation equal to 1. Then it defines a change of measure and we can consider a probability measure ${\mathbb{Q}^b_x}$, under which we get rid of the influence of the drift for the dynamic of $\mathcal{X}^{n,p}$. More precisely, we define the probability measure $\mathbb{Q}_x^b$ on $\sigma(\{ W_t^{p,n}, t \leq 1\})$ 
by
\begin{align*}
	\frac{\mathrm{d}{\mathbb{Q}^b_x}}{\mathrm{d}\mathbb{P}^b_x}
	= \exp \Big( 
	- \int_0^1 b_{p,n}(\mathcal{X}^{p,n}_t) \, \mathrm{d} W^{p,n}_t
	- \frac{1}{2} \int_0^1 |b_{p,n}(\mathcal{X}^{p,n}_t)|^2 \, \mathrm{d}t
	\Big).
\end{align*}
Using the Itô formula, we obtain
\begin{align*}
	\frac{\mathrm{d}{\mathbb{P}^b_x}}{\mathrm{d}\mathbb{Q}^b_x} & = \exp \Big( \int_0^1 b_{p,n}(\mathcal{X}^{p,n}_t) \, \mathrm{d}\mathcal{X}^{p,n}_t - \frac{1}{2} \int_0^1 |b_{p,n}(\mathcal{X}^{p,n}_t)|^2 \, \mathrm{d} t\Big) \\
	& = \exp \Big( B_{p,n}(\mathcal{X}^{p,n}_t) - \frac{1}{2} \int_0^1 |b_{p,n}(\mathcal{X}^{p,n}_t)|^2  + \nabla \cdot b_{p,n}(\mathcal{X}^{p,n}_t) \,\mathrm{d} t\Big),
\end{align*}
where for all $w \in \R^d$, $B_{p,n}(w) := V(x) - V(x+ (p\Delta_n)^{1/2} w)$. Now let $f$ and $g$ be two bounded positive functions. Then we have
\begin{align*}
	& \mathbb{E}_x
	\Big[
	f \Big(
	\frac{1}{p}\sum_{\ell = 0}^{p-1} \mathcal{X}^{p,n}_{\ell / p}
	\Big)
	g (
	\mathcal{X}^{p,n}_{1}
	)
	\Big] \\
	&= \mathbb{E}^{{\mathbb{Q}^b_x}}
	\Big[
	f \Big(
	\frac{1}{p}\sum_{\ell = 0}^{p-1} \mathcal{X}^{p,n}_{\ell / p}
	\Big)
	g (
	\mathcal{X}^{p,n}_{1}
	)
	\exp \Big(
	B_{p,n}(\mathcal{X}^{p,n}_t)
	- \frac{1}{2} \int_0^1 |b_{p,n}(\mathcal{X}^{p,n}_t)|^2  + \nabla \cdot b_{p,n}(\mathcal{X}^{p,n}_t) \, \mathrm{d} t
	\Big)
	\Big].
\end{align*}
Moreover, we have
$\nabla \cdot b_{p,n}(w) \leq C p\Delta_n
$. Since Girsanov Theorem ensures that $\mathcal{X}^{p,n}$ is a Brownian motion under ${\mathbb{Q}^b_x}$, we get the bound 
\begin{align}\label{jfaekfjk}
	\mathbb{E}_x^b\Big[f \Big(\frac{1}{p}\sum_{\ell = 0}^{p-1} \mathcal{X}^{p,n}_{\ell / p}\Big)g (\mathcal{X}^{p,n}_{1})\Big]
	&\leq C_1\mathbb{E}^b\Big[f \Big(\frac{1}{p}\sum_{\ell = 0}^{p-1} W_{\ell / p}\Big)g (W_{1})e^{B_{p,n}(W_1)}\Big],
\end{align}
for some Brownian motion $W$ and some constant $C_1$, which does not depends on $p$ nor $n$. Therefore, it is enough to prove that
\begin{align}
	\label{eq:equivalent_lemma4gobet}
	\mathbb{E}^b
	\Big[
	f \Big(
	\frac{1}{p}\sum_{\ell = 0}^{p-1} W_{\ell / p}
	\Big)
	g (
	W_{1}
	)
	e^{B_{p,n}(W_1)}
	\Big]
	\leq C'_1 \int_{\R^d}\int_{\mathbb{R}^{d}}
	f(u)g (v)
	e^{-C'_2(|u|^2 + |v|^2) +B_{p,n}(v)}
	\,\mathrm{d}u\,\mathrm{d}v
\end{align}
holds for some positive constants $C'_1$ and $C'_2$. Again, this is closely related to Lemma 4 of \cite{gloter2008lamn}. Recall that $\mathcal{W}^*_t = W_t - tW_1$ defines a Brownian bridge on $[0,1]$, independent of $W_1$. Thus, if $h  (v) = g(v) e^{B_{p,n}(v)}$, we get:
\begin{align}\label{eq:eq12}
	\E^b\Big[ f\Big( \frac{1}{p}\sum_{\ell = 0}^{p-1} W_{\ell/p}\Big)h(W_1)\Big] & = \E^b\Big[ f\Big(\frac{1}{p}\sum_{\ell = 0}^{p-1}\mathcal{W}^*_{\ell/p} + \frac{\ell}{p}W_1 \Big)h(W_1)\Big] \\
	& = \E^b \Big[ \E^b\Big[ f\Big(\frac{1}{p}\sum_{\ell = 0}^{p-1}\mathcal{W}^*_{\ell/p} + \frac{\ell}{p}W_1 \Big)h(W_1)\Big|W_1\Big]\Big]\nonumber \\
	& = \E^b \Big[ \psi_{p,n}^b(W_1)\Big],\nonumber
\end{align}
where for all $\omega \in \R^d$,
\begin{align*}
	\psi_{p,n}^b(\omega) = \E^b\Big[ f\Big(\frac{1}{p}\sum_{\ell = 0}^{p-1}\mathcal{W}^*_{\ell/p} + \frac{\ell}{p}\omega \Big)h(\omega)\Big].
\end{align*} 
We know that the Brownian Bridge $\mathcal{W}^{\ast}$ itself admits the following decomposition, see $e.g$ \cite{gloter2008lamn}:
$$
\mathcal{W}_t^*=\xi \eta_t+\mathcal{W}_t^{* *},
$$
where $\xi$ is a standard random variable, $\eta$ is the deterministic function
$$
\eta_t= \begin{cases}t & \text { if } t \in[0,1 / 2], \\ (1-t) & \text { if } t \in[1 / 2,1],\end{cases}
$$
and $\mathcal{W}^{* *}$ is the process on $[0,1]$ constructed as the concatenation of two independent Brownian bridges, on $[0,1 / 2]$ and $[1 / 2,1]$ respectively. Moreover in this decomposition the random variable $\eta$ and the process $\mathcal{W}^{* *}$ are independent. Then,
\begin{align}\label{eq:eq123}
	&\psi_{p,n}^b(\omega) = h(\omega)\E\left[ f\left(\frac{\xi}{p}\sum_{\ell =0}^{p-1} \eta_{\ell/p} + \frac{1}{p}\sum_{\ell = 0}^{p-1} \mathcal{W}^{**}_{\ell/p} + \frac{p-1}{2p}\omega\right)\right].
\end{align}
Let $c_p = p^{-1}\sum_{\ell =0}^{p-1} \eta_{\ell/p} $, which is bounded uniformly in $p$. Using the independence between $\xi$ and $\mathcal{W}^{**}$, we have:
\begin{align*} 
	\E^b\Big[ \Big( \frac{\xi}{p}\sum_{\ell =0}^{p-1} \eta_{\ell/p} + \frac{1}{p}\sum_{\ell = 0}^{p-1} \mathcal{W}^{**}_{\ell/p} + \frac{p-1}{2p}\omega\Big)\Big] & = \E^b\Big[ \int_{\R^d} f\Big(c_p v + \frac{1}{p}\sum_{\ell = 0}^{p-1} \mathcal{W}^{**}_{\ell/p} + \frac{p-1}{2p}\omega\Big)\frac{1}{(2\pi)^{d/2}}e^{-|v|^2/2} ~\mathrm{d}v\Big] \\
	& = \E^b\Big[ \int_{\R^d} f(u)(\sqrt{2\pi} c_p)^{-d}e^{-\frac{1}{2c_p^2}| u- \frac{1}{p}\sum_{\ell = 0}^{p-1} \mathcal{W}^{**}_{\ell/p} - \frac{p-1}{2p}\omega|^2}\, \mathrm{d}u\Big].
\end{align*}
We then use the fact that for $\varepsilon \in (0,1)$, $|x-y|^2 \geq \varepsilon/(1+\varepsilon)|x|^2 - \varepsilon |y|^2$, to get 
\begin{align}\label{bzfbje}
	\E^b\left[ \psi_{p,n}^b(W_1)\right] & \leq \int_{\R^d}\frac{1}{(\sqrt{2\pi} c_p)^d} f(u)e^{-\frac{|u|^2\varepsilon}{2c_p^2(1+\varepsilon)} + \frac{c_p^2\varepsilon(p-1)^2|\omega|^2}{4c_p^2p^2}}\E\left[ e^{c_p^{-2} \varepsilon\sup_{t \in [0,1]} |\mathcal{W}^{**}_t|^2}\right].
\end{align}
Using that $\mathcal{W}^{**}$ is a concatenation of two Brownian bridges, we know that there exists $\varepsilon_+ > 0$ and $C_\varepsilon$ such that for all $\varepsilon \leq \varepsilon_+$, 
\begin{equation*}
    \E\left[ e^{c_p^2 \varepsilon\sup_{t \in [0,1]} |\mathcal{W}^{**}_t|^2}\right] \leq C_{\varepsilon}.
\end{equation*}
Moreover, thanks to the boundedness of $c_p$ with respect to $p$, we get that $C_{\varepsilon}$ does not depend on $p$. Then, plugging \eqref{bzfbje} into Equation \eqref{eq:eq12}, we get that for $\varepsilon \leq \varepsilon_+$,
\begin{align*}
	\E^b\Big[ f\Big( \frac{1}{p}\sum_{\ell = 0}^{p-1} W_{\ell/p}\Big)h(W_1)\Big] & \leq \frac{C_{\varepsilon}}{(\sqrt{2\pi}c_p)^d}\int_{\R^{d}}\int_{\R^{d}} f(u)h(v)e^{-\frac{c_p^2u^2\varepsilon}{2(1+\varepsilon)} + \frac{c_p^2\varepsilon(p-1)^2|v|^2}{4p^2}}e^{-|v|^2/2} \,\mathrm{d}u \, \mathrm{d}v \\
	& \leq  \frac{C_{\varepsilon}}{(\sqrt{2\pi}c_p)^d}\int_{\R^{d}}\int_{\R^{d}} f(u)g(v)e^{B_{p,n}(v)}e^{-\frac{u^2\varepsilon}{2c_p^2(1+\varepsilon)} + \frac{\varepsilon(p-1)^2|v|^2}{4c_p^2p^2}}e^{-|v|^2/2} \,\mathrm{d}u \,\mathrm{d}v.
\end{align*}
Then, we get Equation \eqref{eq:equivalent_lemma4gobet} as soon as $\varepsilon < \min(2c_pp^2/(p-1)^2, \varepsilon_+)$, which concludes the proof. 

\subsection{Proof of Lemma \ref{lemma:bound:X}}
We consider $\varphi : \R^d \to \R$ non-negative bounded with compact support. From Girsanov Theorem, we can show that for any $x\in \R^{d}$,
	\begin{align*}
		\E^b_{x}[\varphi(X_t)] \leq (2\pi)^{-d/2} e^{b_1t/2} \frac{e^{V(x)}}{t^{d/2}} \int_{\R^d} \varphi(y) e^{-\frac{|x-y|^2}{2t} - V(y)} \mathrm{d} y
	\end{align*}
	and we conclude using $t \leq 1$, and the boundedness of $\bar\mu^b$.

\subsection{Proof of Corollary \ref{co:bound:pntxy}}
First note that since $X$ is a Markov process, we have
	\begin{align*}
		\mathfrak{p}^b_{p,n, t}(x;\, y) = \int_{\mathbb{R}^d} p^b_t(x; \,z) \mathfrak{p}^b_{p,n}(z;\, y) \,\mathrm{d}z.
	\end{align*}
	Using the bounds given by Lemmas \ref{lemma:bound:pxy} and \ref{lemma:bound:X}, combined with Lemma \ref{le:normal}, we get
	\begin{align*}
		\mathfrak{p}^b_{p,n, t}(x;\, y)\lesssim (p\Delta_n + 2t\lambda_1)^{-d/2}\exp\Big(\frac{\lambda_1}{p\Delta_n + 2t}|x-y|^2 + V(x)\Big).
	\end{align*}

\subsection{Proof of Lemma \ref{le:invbound}}

Note that since the distribution of $X_0$ is $\bar\mu^b$ which is the invariant measure of $X$, the distribution of $(p^{-1} \sum_{\ell=0}^{p-1} X_{\ell\Delta_n}, X_{p\Delta_n})$ is $\widebar\pi^b$. Thus for any non negative function $\varphi : \R^d\times \R^d \to \R$, we have
\begin{align*}
	& \E^b_{\bar\mu^b}\Big[\varphi\Big(\frac{1}{p} \sum_{\ell=0}^{p-1} X_{\ell\Delta_n}, X_{p\Delta_n}\Big)\Big]  = \int_{\R^d} \E_x^b\Big[ \varphi\Big(\frac{1}{p} \sum_{\ell=0}^{p-1} X_{\ell\Delta_n}, X_{p\Delta_n}\Big)\Big] \bar\mu^b(x) \; \mathrm{d}x \\
	& = \int_{\R^d} \int_{\R^d} \int_{\R^d} \varphi(y,z)\mathfrak{p}^b_{p,n}(x; \, y, z) \bar\mu^b(x) \mathrm{d}x \, \mathrm{d}z\, \mathrm{d}y.
\\
\end{align*}
From Equation \eqref{eq:bound:pxyz}, we get
\begin{align*}
    &\int_{\R^d} \int_{\R^d} \int_{\R^d} \varphi(y,z)\mathfrak{p}^b_{p,n}(x; \, y, z) \bar\mu^b(x) \mathrm{d}x \, \mathrm{d}z\, \mathrm{d}y \\
    & \leq \frac{C_1}{(p\Delta_n)^{d}}\int_{\R^d} \int_{\R^d} \int_{\R^d} \varphi(y,z) \exp\left( -\lambda_1 \frac{|y-z|^2 + |z-x|^2}{p\Delta_n}\right)e^{-V(z) + V(x)}\frac{e^{-2V(x)}}{Z_V} \mathrm{d}x \, \mathrm{d}z\, \mathrm{d}y\\
	& \leq \frac{C_1\|\bar\mu^b\|_{\infty}^{1/2}}{Z_V^{1/2} (p\Delta_n)^d}\int_{\R^d}\int_{\R^d}\varphi(y,z) e^{-V(z)}\Big( \int_{\R^d} \exp\big(-\frac{\lambda_1}{p\Delta_n} (|y-x|^2 + |z-x|^2)\big) \, \mathrm{d}x\Big) \, \mathrm{d}y\,\mathrm{d}z. \\
\end{align*}
Use Lemma \ref{le:normal} to control the integral in the variable $x$, and get 
\begin{align*}
    & \int_{\R^d} \int_{\R^d} \int_{\R^d} \varphi(y,z)p_{p,n}(x; \, y, z) \bar\mu^b(x) \mathrm{d}x \, \mathrm{d}z\, \mathrm{d}y\\
    & \leq \frac{C_1\kappa_1\|\bar\mu^b\|_{\infty}^{1/2}}{Z_V^{1/2} (p\Delta_n)^{d/2}}\int_{\R^d}\int_{\R^d}\varphi(y,z) e^{-V(z)}\exp\Big(-\frac{\lambda_1}{2p\Delta_n} |y-z|^2\Big) \, \mathrm{d}y\,\mathrm{d}z.
\end{align*}
We can conclude that for all $(y,z) \in \R^{2d}$, we have
\begin{align*}
	\widebar\pi^b(y,z)  \leq \frac{C_1\kappa_1\|\bar\mu^b\|_{\infty}^{1/2}}{Z_V^{1/2} (p\Delta_n)^{d/2}}\exp\Big(-\frac{\lambda_1}{2p\Delta_n}|y-z|^2 - V(z)\Big). 
\end{align*}

\end{document}